\title{ On the maximum of the C$\beta$E field }
\author{Reda \textsc{Chhaibi}        \footnote{\texttt{reda.chhaibi@math.univ-toulouse.fr}},
        Thomas \textsc{Madaule}      \footnote{\texttt{thomas.madaule@math.univ-toulouse.fr}},
        Joseph \textsc{Najnudel}     \footnote{\texttt{joseph.najnudel@math.univ-toulouse.fr}}
       } 
\DeclareMathOperator{\eqlaw}{\stackrel{\Lc}{=}}
\DeclareMathOperator{\Var}{Var}
\def\half{\frac{1}{2}}
\def\quart{\frac{1}{4}}
\def\D{\Delta}
\def\e{\mathtt e}
\def\k{\mathtt k}
\def\N{{\mathbb N}}
\def\Z{{\mathbb Z}}
\def\Q{{\mathbb Q}}
\def\R{{\mathbb R}}
\def\C{{\mathbb C}}
\def\U{{\mathbb U}}
\def\D{{\mathbb D}}
\def\G{{\mathbb G}}
\def\P{{\mathbb P}}
\def\E{{\mathbb E}}
\def\Bc{{\mathcal B}}
\def\Dc{{\mathcal D}}
\def\Fc{{\mathcal F}}
\def\Gc{{\mathcal G}}
\def\Lc{{\mathcal L}}
\def\Nc{{\mathcal N}}
\def\Oc{{\mathcal O}}
\def\Pc{{\mathcal P}}
\def\I{\mathfrak I}
\newtheorem{thm}{Theorem}[section]
\newtheorem{proposition}[thm]{Proposition}
\newtheorem{corollary}[thm]{Corollary}
\newtheorem{conjecture}[thm]{Conjecture}
\newtheorem{lemma}[thm]{Lemma}
\newtheorem{rmk}[thm]{Remark}
\numberwithin{equation}{section}
\begin{document}

\maketitle

\begin{abstract}
In this paper, we investigate the extremal values of (the logarithm of) the characteristic polynomial of a random unitary matrix whose spectrum is distributed according the Circular Beta Ensemble (C$\beta$E). More precisely, if $X_n$ is this characteristic polynomial and $\U$ the unit circle, we prove that:
$$\sup_{z \in \U} \Re \log X_n(z) = 
  \sqrt{\frac{2}{\beta}} 
  \left(\log n - \frac{3}{4} \log \log n + \Oc(1) \right)\ ,$$
as well as an analogous statement for the imaginary part. The notation $\Oc(1)$ means that the corresponding family of random variables, indexed by $n$, is tight. This answers a conjecture of Fyodorov, Hiary and Keating, originally formulated for the $\beta=2$ case, which corresponds to the CUE field. 
\end{abstract}
{\bf Keywords: Hierarchical structure, Extremas of log-correlated fields, Random Matrix Theory, Circular $\beta$ Ensembles, Orthogonal polynomials on the unit circle (OPUC). }

\indent
\hrule
\tableofcontents
\indent
\hrule

\section{Introduction}
\label{section:introduction}
Consider $n$ identically charged particles on the unit circle $\U$ with a logarithmic interaction potential and inverse temperature parameter $\beta$. This gives rise to a probability distribution on $\U^n$ given by:
\begin{align}
\label{eq:beta_ens}
\frac{1}{(2\pi)^n Z_{n,\beta}} \prod_{1 \leq j < k \leq n}
\left|e^{ i \theta_j } - e^{ i \theta_k} \right|^{\beta} d\theta_1 \dots d\theta_n\ .
\end{align}
Such a probability distribution is called the Circular $\beta$ ensemble. In the paper \cite{bib:KN04}, Killip and
Nenciu give matrix models made of CMV matrices whose spectrum is distributed exactly according to the distribution \eqref{eq:beta_ens}.

From the Circular $\beta$ ensemble, one can construct the corresponding characteristic polynomial $X_n$, given by: 
\begin{align}
\label{def:Z_n}
X_n(z) =  \prod_{j=1}^{n} (1 - z e^{i \theta_j} ) =  \prod_{j=1}^n (1 - \lambda_j z), 
\end{align}
where $\lambda_j = e^{i \theta_j}$.

Because of the multiplicative structure of $X_n$, it is natural to consider its logarithm, defined on the simply connected domain
$\C \backslash \left( \{\overline{\lambda_1} , \dots, \overline{ \lambda_n}\} [1, \infty) \right)$, and
given as follows: 
$$\log X_n(z) = \sum_{j=1}^n \log (1 - \lambda_j z),$$
where, in order to avoid ambiguity, the branch of $\log (1 - \lambda_j z)$ is chosen in such a way that $\Im \log (1 - \lambda_j z) \in (-\pi, \pi)$: with this choice, $\log X_n$ is continuous on the domain where it is defined. 

The most classical case corresponding to this setting is the Circular Unitary Ensemble (CUE),
given by the eigenvalues of a Haar-distributed random matrix on the unitary group $U(n)$. In this case, the joint
distribution of the eigenvalues is given by Equation \eqref{eq:beta_ens} specialized to $\beta = 2$. This particular
value of $\beta$ implies that $(\lambda_j)_{1 \leq j \leq n}$ is a {\it determinantal process}: more precisely, for $1 \leq p \leq n$,
and for a measurable  function $f$ from $\U^p$ to $\mathbb{R}_+$, 
$$\E \left[ \sum_{j_1 \neq j_2 \neq \dots \neq j_p } f(\lambda_{j_1}, \dots, \lambda_{j_p}) \right] 
= \int_{\U^n} 
f(z_1, \dots, z_p)  \det
\left( K_n(z_j, z_k) \right)_{1 \leq j, k \leq p} d\mu(z_1) \dots d \mu(z_p),$$
where
$$K_n(z,z') = \sum_{\ell=0}^{n-1} (\bar{z} z')^{\ell},$$
 $\mu$ denoting the uniform probability measure on $\U$. 

This determinantal structure can be used to get exact formulas for moments of linear statistics of the $\lambda_j$'s. A particularly remarkable formula has been proven by  Diaconis and Shahshahani \cite{bib:DS94}, by using some representation theory of the unitary group $U(n)$, namely the combinatorics of Schur functions. It can also be proven by using the determinantal formula above. This result says the following: if the matrix $M_n$ is Haar-distributed on $U(n)$, if $(\Nc_k^{\C})_{k \geq 1}$ is a family of i.i.d. {\it complex} Gaussians, such that
 $$\E [\Nc_k^{\C}] 
 = \E [(\Nc_k^{\C})^2] = 0, \, 
 \E [|\Nc_k^{\C}|^2] = 1,
 $$
 and if $\Nc_{-k}^{\C} 
 := \overline{\Nc_{k}^{\C}} $ then 
 $$\E \left[\prod_{r=1}^m\operatorname{Tr}(M_n^{k_r}/\sqrt{k_r}) \right] 
  = \E \left[\prod_{r=1}^m 
  \Nc_{k_r}^{\C} \right].$$
 for all non-zero integers $k_1, \dots, k_m$ such that $\sum_{j=1}^m |k_m| \leq n$. Notice that moments match exactly those of Gaussians up to a certain order depending on $n$. This matching of moments is not exact for general $\beta$ as shown by Jiang and Matsumoto \cite{bib:JM15} using the combinatorics of Jack functions.
 
 Nevertheless, the following convergence in distribution remains, for the finite-dimensional marginals: 
 $$\left(\operatorname{Tr}(M_n^{k})/\sqrt{k} \right)_{k \geq 1} 
 \underset{n \rightarrow \infty}{\longrightarrow} \sqrt{\frac{2}{\beta}}\left( \Nc_{k}^{\C} \right)_{k \geq 1}.
$$
As suggested by the exact matching of moments, the speed of this convergence is super-exponential in the case of $\beta=2$ 	(\cite{bib:J97}). From the formula: 
$$\log X_n(z) = - \sum_{k = 1}^{\infty} 
\frac{\operatorname{Tr}(M_n^k) z^k }{k},$$
one deduces that the field $(\log X_n(z))_{z \in \mathbb{D}}$ on the open unit disc converges in distribution to a centered complex Gaussian field $(\mathbb{G}(z))_{z \in \mathbb{D}}$, whose correlation structure is given by 
$$\E [ \mathbb{G}(z)\mathbb{G}(z')]
 = 0, \; \E \left[ \overline{\mathbb{G}(z)} \mathbb{G}(z') 
 \right] = - \frac{2}{\beta} \log \left( 1 - \bar{z} z' \right).$$
It is easy to see that such a field has the series representation:
\begin{align}
\label{eq:G_series}
\forall z \in \D, \ \G(z) & := \sqrt{\frac{2}{\beta}} \sum_{k=1}^\infty \frac{\Nc_k^\C}{\sqrt{k}} z^k \ ,
\end{align}
where the $\left( \Nc_k^\C , k \geq 1 \right)$ are i.i.d standard complex Gaussians.

For sake of simplicity, we focus in next paragraphs on the $\beta=2$ case, which has been much more studied in the literature. The variance of $\mathbb{G}$ has a logarithmic singularity when we approach the unit circle, and $\log X_n(z)$ does not converge for $|z| = 1$ to a bona fide function. More precisely, Keating and Snaith \cite{bib:KSn} have proven the convergence in distribution: 
\begin{align}
\label{eq:ks_logX_lim}
\frac{\log X_n(z)}{\sqrt{\log n}}
& \ \underset{n \rightarrow \infty}{\longrightarrow} \Nc_1^{\C}. 
\end{align}
In other words, for $|z| = 1$, $\log X_n(z)$ behaves like a complex Gaussian variable with total variance $\log n$. 
 
On the other hand, Hughes, Keating and O'Connell \cite{bib:HKO} have proven that  one can still get a convergence of $(\log X_n(z))_{z \in \U}$ without  normalization, if we don't ask for the limiting object to be a well-defined function at single points on the unit circle. In fact, for every $\epsilon > 0$, the expression in Eq. \eqref{eq:G_series} gives a well-defined  object in the Sobolev space $H^{-\epsilon}$. It has a meaning only upon convoluting with a sufficiently regular function. In that sense, we have the following convergence in law, for the corresponding random distributions on the unit circle $\U$: 
\begin{align}
\label{eq:log_X_CV}
(\log X_n(z))_{z \in \U}
& \underset{n \rightarrow \infty}{\longrightarrow} \G .
\end{align}

\paragraph{Extremal statistics:} By an explicit computation (\cite{bib:FHK}), it is possible to prove that 
$$ \Var\left( \log \left|X_n(z)\right| \right) \sim \half \log n $$
and that the correlation saturates at the scale $|\theta - \theta'| \sim \frac{1}{n}$. By correlation saturation, we simply mean that the order of magnitude of the correlation remains the same for $\theta - \theta'$ going to zero and for $|\theta - \theta'| \sim 1/n$. Thus, the naive analogy consists in approximating the function $\log \left|X_n(z)\right|$ on the circle by its values at $\Oc(n)$ points. Each point would be assigned an independent copy of a Gaussian with variance $\half \log n$, in accordance with Eq. \eqref{eq:ks_logX_lim}. It is classical that the maximum of such independent Gaussians is of order $\log n$, which intuitively explains the leading order. One hopes to show that the proof of this first order does not depend on the correlation structure. The story is different for the second order term. If not for the correlations, the asymptotic expansion would be $ \log n - \frac{1}{4} \log \log n$ by approximating the field by $\Oc(n)$ independent Gaussians.

From this discussion, one sees that $\left( \log \left|X_n(z)\right| \right)_{z \in \U}$ is a complicated (yet integrable) regularization of the log-correlated Gaussian field $\left( \G(z) \right)_{z \in \U}$. In terms of global features, it is in every way similar to the ``cone construction'' (see Arguin, Zindy \cite[Fig. 1]{bib:AZ14}): correlation is of logarithmic nature and saturates at the scale $\frac{1}{n}$. In that universality class, one expects:
$$ \max_{z \in \U} \log \left|X_n(z)\right| \sim \log n - \frac{3}{4} \log \log n,$$
which is an established result in many cases. In the case of tree models such as branching Brownian motion and branching random walks, the result holds at fairly large level of generality (See \cite{HSh09, AShi10, Aid11}). By ``tree model'', we mean a model where a tree structure is apparent and explicit. Among non-tree models, where one needs to identify an approximate branching structure, the result holds for log-correlated Gaussian fields \cite{bib:Mad13, RDZ15}, discrete GFF (Gaussian Free Fields) as described in \cite{BZe10, BDZ13}, and cover times \cite{BK14}. The constant $\frac{3}{4}$ is strongly related to such an underlying hierarchical structure.

It is also worth mentioning that the field $\left( \Re \G\left( e^{i\theta} \right) \right)_{\theta \in [0, 2\pi)}$ can be regularized into a Gaussian field by evaluating the random field $z \mapsto \Re \G(z)$ in the interior of the unit disk. The existing technology for Gaussian log-correlated fields is applicable to $\theta \mapsto \Re \G\left( e^{-\frac{1}{n}+i\theta} \right)$, with mild modifications. It yield the expected results for this simple regularization where all the Random Matrix Theory is lost. Here, we will be exclusively concerned with $\left( \log X_n(z)\right)_{z \in \U}$.

In two very insightful papers \cite{bib:FyKe, bib:FHK}, Fyodorov, Hiary and Keating formulate the following conjecture.
\begin{conjecture}
\label{conjecture:max}
$$\sup_{z \in \U} \log |X_n(z)|
 - \left( \log n - \frac{3}{4} \log \log n\right) \underset{n \rightarrow \infty}{\longrightarrow} \frac{1}{2} (K_1 + K_2)$$
where $K_1$ and $K_2$ are two independent Gumbel random variables. 
\end{conjecture}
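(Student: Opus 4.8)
}
The plan is to realize $\Re\log X_n=\log|X_n|$ on the circle as an approximate branching random walk, invoke the general theory of the maximum of such walks to reach convergence in law towards a randomly shifted Gumbel, and then use the exact integrability of the C$\beta$E to compute the shift; the point of the identification is that the shift and the Gumbel fluctuation each turn out to be one half of a standard Gumbel, which is the conjectured form. The first step is the hierarchical reduction that occupies the body of this paper: through the Killip--Nenciu CMV model one expands $\log X_n$ along the Szeg\H{o}/Verblunsky recursion, groups the contributions of the independent Verblunsky coefficients into dyadic scales, and produces a field on a tree of depth $\sim\log n$ whose generation increments are centered, approximately independent, of variance $\sim 1/\beta$, and such that $\Re\log X_n(e^{i\theta})$ equals the value $S_{v(\theta)}$ of the walk at the leaf $v(\theta)$ containing $\theta$ up to a tight error, uniformly in $\theta$. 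The leading term $\sqrt{2/\beta}\,\log n$ and the correction $-\tfrac34\sqrt{2/\beta}\,\log\log n$ are then the Hu--Shi / Addario-Berry--Reed estimates for the barrier of a branching random walk \cite{HSh09, AShi10}, whose refinement to the C$\beta$E at the level of tightness is the main theorem of the present paper; we take it as the starting point.

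The second step controls the critical derivative martingale of the Gaussian skeleton: with $\vartheta$ its speed and $\gamma_c=\sqrt{2\beta}$ ($=2$ for $\beta=2$) its critical inverse temperature, set $\mathcal{D}_k:=\sum_{|v|=k}(\vartheta k-S_v)\,e^{-\gamma_c(\vartheta k-S_v)}$, and one must show $\mathcal{D}_k\to\mathcal{D}_\infty\in(0,\infty)$ in the appropriate sense, with $\mathcal{D}_\infty$ a functional of the limit field $\G$ alone. For a genuine branching random walk this is the Biggins--Kyprianou / A\"id\'ekon--Shi theory; here the increments are only approximately Gaussian and approximately independent, so the spine change of measure and the truncated second-moment estimates already needed for tightness must be carried to this finer order. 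Granting it, the now-standard scheme for the convergence in law of the extremal process of a log-correlated field (A\"id\'ekon \cite{Aid11}; Madaule \cite{bib:Mad13}; Ding--Roy--Zeitouni \cite{RDZ15}; Bramson--Ding--Zeitouni \cite{BDZ13}) --- first- and second-moment bounds for the high points, tightness of the decorated near-maxima, and a Laplace-functional computation --- yields
\[
\sup_{z\in\U}\Re\log X_n(z)\;-\;\sqrt{\tfrac{2}{\beta}}\Big(\log n-\tfrac34\log\log n\Big)\;\xrightarrow[n\to\infty]{\text{(law)}}\;\frac{1}{\gamma_c}\log\!\big(C\,\mathcal{D}_\infty\big)\;+\;\frac{1}{\gamma_c}\,G,
\]
with $G$ a standard Gumbel independent of $\mathcal{D}_\infty$ and $C$ a constant; equivalently $\P\big(\sup_\U\Re\log X_n-\sqrt{2/\beta}(\log n-\tfrac34\log\log n)\le x\big)\to\E\big[\exp(-C\,\mathcal{D}_\infty\,e^{-\gamma_c x})\big]$.

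The third step identifies the law of $\mathcal{D}_\infty$, and this is where the model is used essentially. The field $\Re\G$ on $\U$ is log-correlated with covariance $-\tfrac1\beta\log|e^{i\theta}-e^{i\theta'}|$, and $\mathcal{D}_\infty$ is, up to a deterministic Seneta--Heyde constant, the total mass of the critical Gaussian multiplicative chaos it generates on the circle. For the subcritical masses $\mu_\gamma(\U)$ built from $\Re\G$ (with $0<\gamma<\gamma_c$) the Fyodorov--Bouchaud formula --- conjectured in that circle of ideas, cf.\ \cite{bib:FyKe, bib:FHK}, and since established by Selberg/Jack-integral methods, the C$\beta$E characteristic polynomial itself furnishing an integrable finite-$n$ pre-limit via Keating--Snaith/Selberg-type moment formulas \cite{bib:KSn} --- identifies $\mu_\gamma(\U)$, in law, as a deterministic constant times a negative power of a standard exponential variable. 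Letting $\gamma\uparrow\gamma_c$ and using the behaviour of the Gamma prefactor one obtains $\mathcal{D}_\infty\overset{\text{law}}{=}c\,E_0^{-1}$ for a standard exponential $E_0$, hence $\log(C\mathcal{D}_\infty)\overset{\text{law}}{=}K_1$, a standard Gumbel, provided the residual deterministic constant vanishes (checking this constant is part of the step). Substituting into the display above and writing $G=K_2$ gives, with $\gamma_c=2$,
\[
\sup_{z\in\U}\log|X_n(z)|\;-\;\Big(\log n-\tfrac34\log\log n\Big)\;\xrightarrow[n\to\infty]{\text{(law)}}\;\tfrac12\big(K_1+K_2\big),
\]
$K_1,K_2$ independent Gumbels, which is the conjecture. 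For general $\beta$ the same computation gives the limit $\tfrac1{\sqrt{2\beta}}(K_1+K_2)=\sqrt{2/\beta}\cdot\tfrac12(K_1+K_2)$, consistent with the $\sqrt{2/\beta}$ scaling of the main result, and the statement for $\Im\log X_n$ follows identically, being log-correlated with the same covariance.

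Two obstacles stand out, and both are serious. The first is the transfer of integrability: the Fyodorov--Bouchaud computation is natural for the Gaussian field $\Re\G$, but feeding it into the maximum of the genuine polynomial $X_n$ requires a quantitative comparison --- ideally in total variation, restricted to the $\mathcal{O}(\log n)$ dominant scales with only crude control on the last $\mathcal{O}(\log\log n)$ --- between the C$\beta$E field and its Gaussian skeleton, which is delicate precisely because, by Jiang--Matsumoto \cite{bib:JM15}, the moments do not match for $\beta\neq2$, and even at $\beta=2$ the matching fails past order $n$, so one cannot simply invoke moment identities. The second, which I expect to be the true crux, is the critical renormalization of the third step: proving that $\mathcal{D}_\infty$ is \emph{exactly} a shifted exponential --- in particular that the additive constant $\gamma_c^{-1}\log(Cc)$ vanishes, i.e.\ that the Gumbels have location zero --- requires pushing the subcritical Selberg-integral formula to the boundary of its range of validity while simultaneously pinning down the Seneta--Heyde normalization of the critical chaos, something that for general log-correlated fields is currently available only through a substantial amount of additional analysis.
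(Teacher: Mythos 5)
The statement you are proving is Conjecture \ref{conjecture:max}, which the paper does \emph{not} prove: its main result, Theorem \ref{thm:main}, establishes only the tightness of $\sup_{z}\log|X_n(z)|-(\log n-\tfrac34\log\log n)$, i.e.\ the $\Oc(1)$ version, and the authors explicitly say they do not know what the limiting distribution should be. So there is no proof in the paper to compare yours against, and your text is not a proof either --- it is a program, and you say so yourself. The two obstacles you flag at the end are not technicalities to be ``granted''; each is a substantial open problem relative to what this paper actually establishes, so the argument as written has genuine gaps at exactly the places you identify.

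Concretely: (i) the paper's comparison between $\log\Phi_k^*$ and the Gaussian-marginal field $Z_k$ (Proposition \ref{proposition:comparison_to_Z}) controls the difference only by an almost surely finite, non-explicit random variable, uniformly in $k$ and $\theta$. That is precisely enough for tightness and precisely not enough for convergence in law of the recentered maximum, since an $\Oc(1)$ random perturbation changes the limit law. Your step two therefore needs a genuinely new coupling at $o(1)$ precision on the dominant scales, plus the convergence of the Seneta--Heyde-normalized derivative martingale for the \emph{non-Gaussian} field built from Verblunsky coefficients --- none of which follows from the second-moment machinery of Sections \ref{section:upper_bound}--\ref{section:lower_bound}, which only produces two-sided bounds on barrier probabilities up to multiplicative constants. (ii) Your step three rests on the Fyodorov--Bouchaud identification of the total mass of the (critical) chaos as an inverse exponential; at the level of rigour of this paper that identity is itself a conjecture (Webb's result \cite{W} gives convergence of the subcritical measures for $\alpha<\sqrt2$ and $\beta=2$ only, with no identification of the law of the mass, and nothing at criticality). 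Asserting that the Selberg/Keating--Snaith moment formulas ``furnish'' it, and that the residual constant vanishes, is the crux, not a remark. The architecture you describe is the right one --- it is how the conjecture was ultimately approached --- but as submitted it proves nothing beyond what Theorem \ref{thm:main} already gives.
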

Indeed, in the notations of these papers, $-\left( K_1 + K_2 \right)$ is a random variable with density
$$ p(x) = 2 e^x K_0(2 e^{\frac{x}{2}} ) = e^x \int_\R dy e^{-e^{x/2} \cosh(y) } \ .$$
Here $K_0$ is the modified Bessel function of the second kind. A quick computation of moment generating functions allows us to realize that we are dealing indeed with minus the sum of two independent Gumbel random variables.

It is a very challenging problem to prove (or disprove) such a precise conjecture. However, progress has recently been made in this direction. In a first breakthrough \cite{bib:ABB}, Arguin, Belius and Bourgade have proven that 
$$\frac{\sup_{z \in \U} \log |X_n(z)|}{\log n} \underset{n \rightarrow \infty}{\longrightarrow} 1$$
in probability, and shortly afterwards, using different methods, Paquette and Zeitouni \cite{bib:PZ} have refined this result by showing: 
$$\frac{\sup_{z \in \U} \log |X_n(z)| - \log n}{\log \log n}
\underset{n \rightarrow \infty}{\longrightarrow} - \frac{3}{4}$$
in probability. The refinement given by Paquette and Zeitouni is an important progress as the constant $\frac{3}{4}$ morally confirms the existence of hierarchical structures.

Note that the comparison between $\log X_n$, and the Gaussian field $\G$ can only be accurate in the macroscopic or the mesoscopic scale, i.e. large with respect to $1/n$. In the microscopic scale, the behavior of $\log X_n$ is not Gaussian anymore, and it has been studied by Chhaibi, Najnudel and Nikeghbali in \cite{CNN}. In this paper, the author have proven the following convergence in distribution on the space of holomorphic functions:
$$ \left( \frac{X_n(e^{2 i \pi z}/n)}{X_n(1)}  \right)_{z \in \C} 
\underset{n \rightarrow \infty}{\longrightarrow} \xi_\infty(z),$$
where $\xi_{\infty}$ is a random holomorphic function whose zeros are all real and form a determinantal sine-kernel process. It can be interesting to study the behavior of the large values of $|\xi_{\infty}|$, and to see if their behavior has an influence in the limiting behavior of the maximum of $|X_n|$ on the unit circle. 

\paragraph{Multiplicative chaos point of view:} Once the convergence \eqref{eq:log_X_CV} of $\log |X_n(z)|$ towards the Gaussian field $\G$ is stated, one can ask if it is possible to exponentiate in order to get results of convergence for the field $\left( |X_n(z)| \right)_{z \in \U}$. Such an exponential cannot be done in a classical way, since $\G(z)$ is not well-defined for a single $z \in \U$. 

In the 1980s, Kahane \cite{K} has constructed such an exponential as a random multifractal measure, called the Gaussian Multiplicative Chaos, which has also been used in the mathematical study of the two-dimensional quantum gravity (for example, see \cite{KPZ} and \cite{DS2}). For a survey, we recommend \cite{RV}. This measure can be defined as follows. For a given parameter $\alpha > 0$, and for all integers $L \geq 1$, one defines
 $$\G^L (z) 
   := \sum_{1 \leq k \leq L} \frac{\Nc_{k}^{\C} z^k}{\sqrt{k}}$$
and then the measure $\mu_L^{(\alpha)}$ whose density with respect to the uniform measure $\mu$ on $\U$ is given by 
$$\frac{d \mu_L^{(\alpha)}}{d \mu} (z)  
=  \frac{ e^{\alpha \Re \G^{L} (z) }}
        { \E\left[e^{\alpha \Re \G^{L} (z)} \right] }.$$
By using martingale arguments, one can prove that $\mu_L^{(\alpha)}$ converges almost surely to a finite random measure
$\mu^{(\alpha)}$ on $\U$. This construction gives a phase transition at $\alpha = 2$. For $\alpha \geq 2$, the limiting measure is almost surely equal to zero, whereas it is non-degenerate for $\alpha \in (0,2)$: in this case, it defines the Gaussian Multiplicative Chaos corresponding to the parameter $\alpha$. 
 
It has recently been proven by Webb \cite{W} that for $\alpha < \sqrt{2}$, one has the following result: if 
$\mu_{X_n}^{(\alpha)}$ is defined by 
$$\frac{d \mu_{X_n}^{(\alpha)}}{d \mu} 
:= \frac{|X_n(z)|^{\alpha}}{\E [|X_n(z)|^{\alpha}] },$$
then we have the convergence in distribution
$$\mu_{X_n}^{(\alpha)} \underset{n \rightarrow \infty}{\longrightarrow} 
\mu^{(\alpha)},$$
in the  space of Radon measures on the unit circle, equipped with the topology of weak convergence. 

This result has also been previously conjectured by Fyodorov, Hiary and Keating (see \cite{bib:FHK} and \cite{bib:FyKe}), and it is believed to remain true for all $\alpha < 2$, the restriction $\alpha < \sqrt{2}$ being only technical. In \cite{bib:FyKe}, the authors also study the dependency in  $\alpha$ of the behavior of the measure $\mu_{X_n}^{(\alpha)}$, in particular of their moments, and heuristically, they also find a phase transition at $\alpha = 2$. For $\alpha \geq 2$, the behavior of $\mu_{X_n}^{(\alpha)}$ is dominated by the large values of $|X_n|$ on the unit circle. From these heuristics, Fyodorov, Hiary and Keating motivate their Conjecture \ref{conjecture:max}. 

\paragraph{Number theoretic motivations:} Another interesting point corresponds to the analogy which is conjectured between the behaviors of $X_n$ and the Riemann zeta function. Some conjectures on the moments of $\zeta$, directly related to corresponding results on $X_n$, are given by Keating and Snaith in \cite{bib:KSn}. Moreover, in \cite{bib:FHK}, Fyodorov, Hiary and Keating make the following conjecture: for $U$ uniformly distributed on $[0,1]$, the family 
$$\left(\sup_{h \in [0,1]} \log \left|\zeta \left(\frac{1}{2} + i 
(UT + h)\right)\right|  - \log \log T + \frac{3}{4} \log  \log \log T\right)_{T \geq 3}$$
of random variables is tight.  Such a conjecture is consistent with the analogy between $\zeta$ and $X_n$, with the classical correspondance between $n$ and $\log T$.  Recently, Arguin, Belius and Harper \cite{ABH15} have proven a part of the conjecture by Fyodorov and Keating for a randomized model of the Riemann function. More precisely, they have proven that if $(U_p)_{p \in \Pc}$ is a family of i.i.d. uniform variables on $\U$, indexed by the set $\Pc$ of prime numbers, one has 
$$\frac{\sup_{h \in [0,1]}\left( \sum_{p \in 
\Pc \cap [0,T]}  \frac{\Re (U_p p^{-ih})}{\sqrt{p}}\right) - \log \log T}{\log \log \log T} \underset{T \rightarrow \infty}{\longrightarrow} -\frac{3}{4}.$$
in probability. 

\paragraph{Our result:} The main theorem of the present paper answers Conjecture \ref{conjecture:max} up to the third order, and in the setting of the Circular Beta Ensemble where $\beta > 0$ is not necessarily equal to $2$. For $\beta \neq 2$, the point process of the eigenvalue is not determinantal, and then it is more difficult to get exact formulas for this model. The tool we will use to deal with this problem is the theory of orthogonal polynomials on the unit circle, described for example in the book by Simon \cite{bib:Sim}. In \cite{bib:KN04}, Killip and Nenciu give the construction of an ensemble of random matrices whose eigenvalue distribution follows the C$\beta$E, and prove that the characteristic polynomial can be written as the last term of a sequence of orthogonal polynomials whose parameters, called Verblunsky coefficients, have a distribution which is explicitly given. In the beautiful paper \cite{bib:KSt09}, Killip and Stoiciu use this model in order to deduce the existence of a limiting point 
process for the microscopic behavior of the C$\beta$E. More details are given in the next section, along with the notions we will need. 

The precise statement of our main result is the following.  
\begin{thm} \label{thm:main} 
If $\mathbb{U}' := \mathbb{U} 
\backslash \{\overline{\lambda_1}, \dots, 
\overline{\lambda_n}\}$, the following family of random variables: 
$$ \sqrt{\frac{\beta}{2}} \left( \sup_{z \in \mathbb{U}'} \Re \log X_n(z)  - \left( \log
 n - \frac{3}{4} \log \log n \right) \right)_{n \geq 2}$$ 
 and for $\sigma \in \{-1,1\}$, 
 $$ \sqrt{\frac{\beta}{2}} \left( \sup_{z \in \mathbb{U}'} (\sigma \Im \log X_n(z))  -\left( \log
 n - \frac{3}{4} \log \log n \right) \right)_{n \geq 2}$$
  are tight. 
\end{thm}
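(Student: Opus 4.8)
The plan is to reduce the tightness statement to two one-sided tail bounds and then to run the by-now-standard machinery for extremes of log-correlated fields (first- and second-moment methods with a barrier), the point being to supply all the required inputs from the theory of orthogonal polynomials on the unit circle rather than from a determinantal structure. Put $Y_n(\theta):=\sqrt{\beta/2}\,\Re\log X_n(e^{i\theta})$ and $m_n:=\log n-\tfrac34\log\log n$, so that $\Var(Y_n(\theta))\sim\tfrac12\log n$, exactly as in the $\beta=2$ case; it suffices to show that for every $\epsilon>0$ there is $M$ such that, for all $n$, $\P(\sup_\theta Y_n(\theta)>m_n+M)<\epsilon$ and $\P(\sup_\theta Y_n(\theta)<m_n-M)<\epsilon$ (the supremum over the punctured circle $\U'$, and the two cases $\pm\Im\log X_n$, being handled the same way). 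The engine is the Killip--Nenciu model: $X_n$ is, in law, the reversed Szeg\H{o} polynomial $\Phi_n^{*}$ built from independent rotation-invariant Verblunsky coefficients $(\alpha_k)_{0\le k\le n-1}$ with $|\alpha_k|^2\sim\mathrm{Beta}(1,\tfrac\beta2(n-1-k))$ for $k<n-1$ and $\alpha_{n-1}$ uniform on $\U$. Iterating the Szeg\H{o} recursion on $|z|=1$ yields the telescoping identity
\[ \log X_n(e^{i\theta})\;=\;\sum_{k=0}^{n-1}\log\!\left(1-\alpha_k e^{i(\theta+\psi_k(\theta))}\right), \]
where $\psi_k(\theta)=\Arg\!\left(\Phi_k(e^{i\theta})/\Phi_k^{*}(e^{i\theta})\right)$ is the relative Pr\"ufer phase and depends only on $\alpha_0,\dots,\alpha_{k-1}$; by rotation invariance each summand is, conditionally on the past, centred, both in real and imaginary part. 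This identity is the substitute, valid for all $\beta>0$, for the Fourier/trace expansion of $\log X_n$ available only at $\beta=2$.

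The second step is to exhibit an approximate branching structure. Because $\Var(\Re\log(1-\alpha_ke^{i\phi}))\asymp\E|\alpha_k|^2\asymp\tfrac{1}{\beta(n-k)}$, the variance of $Y_n$ is equidistributed into $J_n\asymp\log_2 n$ blocks precisely when the $j$-th block gathers the indices $k$ with $n-k\asymp n\,2^{-j}$ (equivalently, on the natural ``time'' scale $t=\log\frac{n}{n-k}$, for which the coefficients with $k$ near $n$ carry the many fine scales); writing $D_j$ for the increment of $Y_n$ over the $j$-th block, one then has $\Var(D_j)\asymp1$ uniformly in $j$ and $Y_n=\sum_{j=0}^{J_n}D_j$. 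Discretising $\U$ into the $\asymp n$ arcs of length $1/n$ — the scale at which the correlation of $\log X_n$ saturates — the partial sums of the $D_j$ along these arcs should behave like a branching random walk: the $D_j$ are nearly independent across $j$, the increment $D_j$ has angular correlation length $\asymp2^{-j}$, and, equivalently, two angles at distance $\asymp 2^{-m}$ carry (up to small errors) the same Verblunsky increments for $k\le n(1-2^{-m})$ and nearly independent ones afterwards. Making this quantitative requires (i) near-Gaussian one-point control of each $D_j$, with upper tails of Gaussian type uniform in $n$ and $j$; (ii) the two-angle decorrelation just described — the point where the hierarchical structure, and with it the coefficient $\tfrac34$, actually enters — which rests on a quantitative analysis of the Pr\"ufer phases, namely of how the pair $(\psi_k(\theta),\psi_k(\theta'))$ stays locked and then equidistributes modulo $2\pi$ as $k$ grows, in the spirit of Killip--Stoiciu; and (iii) a separate treatment of the $O(1)$ finest blocks, where $n-k=O(1)$, $|\alpha_k|=\Theta(1)$ and the increments are genuinely non-Gaussian, showing that they displace the supremum by only $O(1)$.

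Granting this structure, the upper bound $\P(\sup Y_n>m_n+M)<\epsilon$ follows from a first-moment (union) estimate over the $\asymp n$ arcs, run against a \emph{barrier}: one keeps only the arcs whose partial sums stay below the straight line joining $0$ at the root to $m_n$ at the leaves (with the usual logarithmic bending near the two ends). The ballot-type probability that a near-Gaussian walk of length $\asymp\log n$ remains below such a line supplies the factor $\asymp(\log n)^{-3/2}$ on top of the bare bound $n\cdot e^{-m_n^2/\log n}\asymp(\log n)^{3/2}$, and it is exactly this factor that turns the naive exponent $\tfrac14$ into $\tfrac34$; a chaining argument then promotes the estimate from the $\asymp n$-point net to all of $\U'$ and absorbs the singularities $\overline{\lambda_j}$, near which $\Re\log X_n\to-\infty$ harmlessly. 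For the lower bound $\P(\sup Y_n<m_n-M)<\epsilon$ I would run a truncated second-moment (Paley--Zygmund) argument: count the arcs whose partial sum reaches $\approx m_n-M$ \emph{while} remaining inside a tube around the straight line, bound the first moment from below and the second moment from above with the one- and two-point estimates above — the tube truncation is essential, for otherwise the second moment is dominated by pairs of arcs with an anomalously high common ancestor — and conclude that some arc overshoots $m_n-M$ with probability bounded away from $0$; splitting $\U$ into a large but fixed number of disjoint sub-arcs whose restrictions are approximately independent then boosts this to probability $\ge1-\epsilon$.

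The genuinely hard part, and the new work compared with $\beta=2$, is inputs (ii)--(iii): with no determinantal formula available, every moment and decorrelation estimate has to be extracted from the OPUC recursion itself, which amounts to proving a quantitative, non-Gaussian local limit theorem for $\log X_n$ simultaneously at several angles and several scales, with errors small enough to survive being multiplied by the ballot probability and summed over $\asymp n$ arcs; this forces a delicate analysis of how the Pr\"ufer phase increments accumulate and of their joint behaviour at two angles as the Verblunsky coefficients are successively revealed, and the non-perturbative size of the last few coefficients makes the topmost scales the single most delicate ingredient. Finally, the statements for $\pm\Im\log X_n$ require no new idea: the same telescoping identity gives $\Im\log X_n(e^{i\theta})=\sum_k\Im\log(1-\alpha_ke^{i(\theta+\psi_k(\theta))})$, the imaginary increments are centred with the same conditional variances as the real ones, and the law of each $\alpha_k$ is invariant under $\alpha_k\mapsto\overline{\alpha_k}$ (a consequence of rotation invariance), so the sign $\sigma$ is immaterial and the whole multiscale analysis carries over verbatim, with the same centring $m_n$.
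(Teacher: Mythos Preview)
Your high-level strategy (OPUC telescoping, multiscale blocking, first moment with a barrier for the upper bound, truncated second moment for the lower bound) is exactly the one the paper follows. But two of the concrete inputs you state are wrong, and without them the machinery does not start.

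\textbf{The branching structure is set up backwards.} In the unreversed Killip--Nenciu parametrisation you use, $|\alpha_k|^2\sim\mathrm{Beta}(1,\tfrac\beta2(n-1-k))$, so the \emph{variance} clock is indeed $t=\log\frac{n}{n-k}$, and your block $D_j$ with $n-k\asymp n2^{-j}$ has variance $\asymp 1$. But the \emph{correlation} clock is governed by the Pr\"ufer phase, and the paper's identity $\Psi_k(\theta)=(k+1)\theta-2\Im\log\Phi_k^*(e^{i\theta})$ shows $\Psi_k(\theta)-\Psi_k(\theta')\approx (k+1)(\theta-\theta')$ regardless of the sizes of the $\alpha$'s. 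Thus the angular correlation length of the $k$-th increment is $\asymp 1/k$, and for your block $D_j$ (where $k\asymp n(1-2^{-j})$, hence $k\asymp n$ for every $j\ge 1$) the correlation length is $\asymp 2^j/n$, \emph{not} $2^{-j}$. In particular your claim that ``two angles at distance $\asymp 2^{-m}$ carry the same Verblunsky increments for $k\le n(1-2^{-m})$'' is false: the phase difference there is $\asymp k\,2^{-m}\asymp n\,2^{-m}\gg 1$. The resulting structure is an \emph{inverted} tree (coarse blocks carry the finest angular scale), for which the barrier/ballot argument along the natural filtration does not produce the $(\log n)^{-3/2}$ factor. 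The paper's very first move is precisely to fix this: it uses Proposition~B.2 of \cite{bib:KN04} to \emph{reverse} the first $n-1$ Verblunsky coefficients, so that $\E|\alpha_j|^2\asymp 1/(j+1)$ and the variance and correlation clocks align (block $l$ with $j\in[2^l,2^{l+1})$ has both variance $\asymp 1$ and correlation length $\asymp 2^{-l}$). As a bonus, the non-perturbative coefficients you worry about in~(iii) are moved from the leaves to the root and contribute a tight $O(1)$ (Lemma~2.1 and Corollary~\ref{corollary:phistar}).

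\textbf{The two-point decorrelation is not a local limit theorem.} Even after the reversal, the increments at $\theta$ and $\theta'$ are \emph{never} ``nearly independent Verblunsky increments'': they share the same $\alpha_j$ for every $j$, only the phases differ. The paper does not try to prove a non-Gaussian two-point local limit theorem as you propose. Instead it replaces the field by an auxiliary one with exactly Gaussian one-point marginals,
\[
Z_k(\theta)=\sum_{j=0}^{k-1}\frac{\Nc_j^\C e^{i\psi_j(\theta)}}{\sqrt{j+1}},
\]
shows $\sup_\theta\bigl|\log\Phi_k^*(e^{i\theta})-\sqrt{2/\beta}\,Z_k(\theta)\bigr|<\infty$ a.s.\ uniformly in $k$ (Proposition~\ref{proposition:comparison_to_Z}), and then, for the second moment, further replaces $Z$ by a field $Z^{(\Delta)}$ in which the Pr\"ufer phases are frozen at the beginning of each sub-block (Section~\ref{section:new_coupling}). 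It is for this last field that the increments $I_{l,p}(\theta)=\sum_j \Nc^\C_{\cdot}e^{ij\theta}/\sqrt{\cdot}$ at two angles become (nearly) \emph{independent} Gaussians after the branching time, by orthogonality of $(e^{ij\theta})$ and $(e^{ij\theta'})$. This is the technical heart of the paper, and your outline has no counterpart for it.
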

It seems reasonable to expect that these families of random variables have a limiting distribution, however, we are not sure about what this distribution should be. It is interesting to state the previous result with the imaginary part of the characteristic polynomial, since this gives some information about the number of points among $(\lambda_j)_{1 \leq j \leq n}$ which lie in a given arc of circle. In particular, we get the following corollary: 
\begin{corollary}
For $z_1, z_2 \in \U$, let $N_n(z_1,z_2)$ be the number of points of the C$\beta$E lying in the arc coming counterclockwise from $z_1$ to $z_2$, and let $N_{n}^{(0)}(z_1,z_2)$ be the expectation of $N_n(z_1,z_2)$, which is equal to the length of the arc multiplied by $n/2 \pi$. Then, the following family of random variables is tight:  
$$
\left( \pi \sqrt{ \frac{\beta}{8} } \sup_{z_1, z_2 \in \U}{|N_n(z_1,z_2) - N_{n}^{(0)} (z_1,z_2)|} 
-\left( \log
n - \frac{3}{4} \log \log n \right)  \right)_{n \geq 2}.  
$$
\end{corollary}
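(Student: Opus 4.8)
The plan is to deduce the corollary directly from Theorem \ref{thm:main} by expressing the arc-counting discrepancy $N_n(z_1,z_2) - N_n^{(0)}(z_1,z_2)$ in terms of the imaginary part of $\log X_n$. Writing $z_j = e^{i\phi_j}$ with $\phi_1 < \phi_2 < \phi_1 + 2\pi$, the number of eigenangles in the arc from $z_1$ to $z_2$ counted counterclockwise is, up to boundary terms, $\frac{1}{2\pi}$ times the variation of the argument of the characteristic polynomial across that arc. More precisely, one has the identity
\begin{align}
\label{eq:count-identity}
N_n(z_1,z_2) - N_n^{(0)}(z_1,z_2) = \frac{1}{\pi}\bigl( \Im \log X_n(\bar z_2) - \Im \log X_n(\bar z_1) \bigr)
\end{align}
(valid on $\U' $, with a suitable choice of representative for the argument), since $\Im \log X_n$ jumps by $\pi$ at each conjugate eigenvalue $\bar\lambda_j$ and its linear trend $-\frac{n}{2}\arg(\cdot)$ accounts for the expected count $N_n^{(0)}$. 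The precise form of this identity follows from the argument principle applied to $z \mapsto X_n(z)$, together with the fact that $X_n$ has $n$ zeros at the $\bar\lambda_j$, which I would state carefully using the continuous determination of $\log X_n$ introduced in the paper.

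Granting \eqref{eq:count-identity}, the supremum over $z_1, z_2 \in \U$ of $|N_n(z_1,z_2) - N_n^{(0)}(z_1,z_2)|$ equals $\frac{1}{\pi}$ times the oscillation $\sup_{w} \Im \log X_n(w) - \inf_{w} \Im \log X_n(w)$ of $\Im\log X_n$ over $\U'$. Since $\inf_w \Im\log X_n(w) = - \sup_w (-\Im\log X_n(w))$, this oscillation is exactly
\begin{align}
\label{eq:osc-decomp}
\sup_{w \in \U'} \Im \log X_n(w) + \sup_{w \in \U'} \bigl( - \Im \log X_n(w) \bigr).
\end{align}
Theorem \ref{thm:main}, applied with $\sigma = 1$ and with $\sigma = -1$, says precisely that each of the two terms in \eqref{eq:osc-decomp}, recentred by $\log n - \frac34 \log\log n$ and rescaled by $\sqrt{\beta/2}$, is tight. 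The sum of two tight families is tight, so $\sqrt{\beta/2}$ times \eqref{eq:osc-decomp} minus $2\bigl(\log n - \frac34\log\log n\bigr)$ is tight. Dividing by $2$ and multiplying by $\pi/\sqrt{2}$ — equivalently, noting that $\frac{\pi}{2}\sqrt{\beta/2} = \pi\sqrt{\beta/8}$ — converts this into the statement that $\pi\sqrt{\beta/8}\, \sup_{z_1,z_2} |N_n - N_n^{(0)}| - \bigl(\log n - \frac34\log\log n\bigr)$ is tight, which is the claim.

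The only genuinely delicate point is establishing \eqref{eq:count-identity} with the correct constants and sign conventions, i.e. matching the combinatorial count of eigenangles in an arc with the jump structure of the branch of $\Im \log X_n$ fixed in the introduction (each factor $\log(1 - \lambda_j z)$ having imaginary part in $(-\pi,\pi)$). One must check that the deterministic linear term appearing in $\Im \log X_n$ along the circle is exactly $N_n^{(0)}$ and that no extra integer-valued winding contributions are lost when taking suprema over all arcs; this is a bookkeeping exercise with the argument principle rather than a substantial difficulty. Once \eqref{eq:count-identity} is in place, the corollary is an immediate consequence of the tightness of sums, as sketched above. I would also remark that the two suprema in \eqref{eq:osc-decomp} are attained (or approached) at generic points of $\U'$, so restricting the supremum in the corollary to $\U$ rather than $\U'$ changes nothing, the excluded set $\{\bar\lambda_1,\dots,\bar\lambda_n\}$ being where $\Im\log X_n$ has its jump discontinuities but where the relevant one-sided limits still fall within the tight window.
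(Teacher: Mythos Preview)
Your approach is exactly the one the paper indicates (in a single sentence after the corollary): express the arc-count discrepancy as $\pi^{-1}$ times a difference of values of $\Im\log X_n$, so that the supremum over $(z_1,z_2)$ becomes the oscillation $\sup\Im\log X_n + \sup(-\Im\log X_n)$, and then add the two tightness statements of Theorem~\ref{thm:main} for $\sigma=\pm1$. Your arithmetic with the constant $\pi\sqrt{\beta/8}=\tfrac{\pi}{2}\sqrt{\beta/2}$ is correct and consistent with the normalization in the abstract; the only cosmetic point is that in your identity \eqref{eq:count-identity} the conjugations/signs need a moment's care, but as you note this is irrelevant once one takes absolute values and supremum over all pairs.
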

The values of $z_1$ and $z_2$ maximizing $|N_n(z_1,z_2) - N_n^{(0)}(z_1,z_2)|$ correspond to the extreme values of the imaginary part of $\log X_n$ on $\U$. 

\subsection*{Structure of the paper}
In section \ref{section:beta_and_opuc}, we start right away by presenting the general setting which hinges on the realization of the C$\beta$E via orthogonal polynomial techniques, as done by \cite{bib:KN04} and \cite{bib:KSt09}. The two essential players at that point are the Verblunsky coefficients and the Pr\"ufer phases. Then, we show that our main given in Theorem \ref{thm:main} can be reduced to the study of the auxiliary sequence of polynomials $\Phi_n^*$ thanks to Corollary \ref{corollary:phistar}. We go on proving the necessary estimates on Verblunsky coefficients and Pr\"ufer phases.

In section \ref{section:auxiliary_field}, we reduce the problem further to the study of an auxiliary field whose one-point marginals are Gaussian. 

Only then, we are ready to tackle the study of the maximum of the C$\beta$E field.  As customary, it is broken down into two steps: an asymptotic upper bound (Section \ref{section:upper_bound}) and an asymptotic lower bound (Section \ref{section:lower_bound}).

Section \ref{section:appendix} is an appendix containing classical estimates on Gaussian random walks and will only be invoked in the proofs of the upper and lower bounds.

\subsection*{Additional remarks}
In this subsection, we make a few remarks which go beyond the scope of this paper. 

From comparing to the cases which are better understood, e.g Gaussian log-correlated fields, the remainder in Theorem \ref{thm:main} is expected to have a non-universal limiting distribution which depends on the fine features of the model at hand. Moreover, it is understood that such fine properties of a fields's extrema are captured by the associated Multipliticative Chaos, for the critical exponent (see \cite{RV}). Thus, a necessary step would be to analyze the convergence to the Gaussian Multiplicative Chaos in the setting of the C$\beta$E, exactly as in the results of Webb \cite{W}.

A closely related question would be the microscopic landscape of the characteristic polynomial in the vicinity of its extrema. If the typical microscopic landscape has been described in \cite{CNN} as mentioned in the introduction, it is unclear whether this behavior remains typical in the neighborhood of extremal points. For branching Brownian motion, this microscopic panorama has been described by two groups of researchers (\cite{ABBS11}, \cite{ABK11}). For the discrete branching random walk, a similar description has been found by Madaule (\cite{Mad15}).  For the discrete GFF, a description of the large local maxima has been found by Biskup and Louidor \cite{BL16}.

Finally, it is natural to go beyond the characteristic polynomial of the C$\beta$E and examine other random polynomials. {\it In principle}, one can adapt our approach to the case of more general sequences of random orthogonal polynomials on the circle. More precisely, one would require the Verblunsky coefficients to be independent, rotationally invariant and within the critical decay regime of \cite[Theorem 1.7, (ii)]{bib:KSt09}. We chose to restrict ourselves to the C$\beta$E because of its relevance to Random Matrix Theory and because its Verblunsky coefficients are easily compared to Gaussians (Section \ref{section:auxiliary_field}). Developing the fine estimates we require for non-Gaussian random walks would have raised the technicality of the problem.

\subsection*{Acknowledgements}
The authors would like to acknowledge the review paper of N. Kistler \cite{bib:Kistler} which was very helpful in understanding hierarchical models. Also we thank A. Nikeghbali for pointing out the Verblunsky coefficients approach of Killip, Nenciu and Stoiciu for the C$\beta$E.

\subsection*{Notations}
Equality in law between random variables is denoted by $\eqlaw$. We will also make use of the Vinogradov symbol:
$$ f \ll g \Leftrightarrow f = \Oc(g) \ , $$
and in the case the implicit constant depend on parameters such as $\beta$, this dependence will be indicated thanks to subscripts (e.g $\ll_\beta$). Also $ \log_2 := \log \log , $ for shorter notations.

\section{OPUC and preliminary analysis}
\label{section:beta_and_opuc}

\subsection{Setting}
\label{subsection:setting}
Given a measure $\mu$ on the circle, the Gram-Schmidt orthogonalization procedure applied to the sequence $\left\{ 1, z, z^2, \dots \right\}$ gives rise to a sequence of monic orthogonal polynomials $\left( \Phi_k(z), k=0, 1, \dots \right)$. It is well-known in the literature of Orthogonal Polynomials on the Unit Circle (OPUC) that, if the measure is supported on $n$ points, $\Phi_n$ vanishes on exactly these points. The family of OPUCs follows the Szëgo recurrence relation:
$$ \Phi_{k+1}(z) = z \Phi_k(z) - \overline{\alpha}_k \Phi_k^*(z)$$
where $\left( \alpha_k; k=0, 1, \dots, n-1 \right)$ are the so-called Verblunsky coefficients and $\Phi_k^*(z) = z^k \overline{ \Phi_k\left( \bar{z}^{-1} \right) }$. The involution $*$ conjugates and reverses the order of coefficients. Moreover, $\Phi_k^*(z=0) = 1$ because $\Phi_k$ is monic. The Szëgo relationship can be written matrix-wise as:
\begin{align}
\label{def:matrix_szego}
\begin{pmatrix}
\Phi_{k+1}  (z) \\
\Phi_{k+1}^*(z)
\end{pmatrix}
= & 
\begin{pmatrix}
z            & - \overline{\alpha}_k \\
- \alpha_k z & 1
\end{pmatrix}
\begin{pmatrix}
\Phi_{k}  (z) \\
\Phi_{k}^*(z)
\end{pmatrix}
\end{align}

Another standard fact is that for $k \leq n-1$, the zeroes of $\Phi_k$ are inside the open unit disk $\D$, while the zeroes of $\Phi_k^*$ are outside the closed disk. This last property implies that we can define $\log \Phi_k^*$ as the unique version of the logarithm which vanishes at zero and which is continuous on the closed unit disc. 

It has been proven in \cite{bib:KN04} that for the C$\beta$E, the characteristic polynomial $\prod_{j=1}^n\left( z - \lambda_j \right)$ can be obtained as the last element $\Phi_n$ of a system of OPUCs, corresponding to a sequence of random independent Verblunsky coefficients $\alpha^{(n)}_0, \dots, \alpha^{(n)}_{n-1}$, their argument being uniform on $[0, 2\pi)$, $|\alpha^{(n)}_j|^2$ being a Beta random variable of parameters $\left(1, \beta_j := \frac{\beta (j+1)}{2} \right)$ for $0 \leq j \leq n-2$:
\begin{align}
\label{eq:beta_density} 
\P\left( |\alpha^{(n)}_j|^2 \in dx \right) = \beta_j \left( 1-x \right)^{ \beta_j - 1} \mathds{1}_{\left\{ 0 < x < 1 \right\} } dx,
\end{align}
 and $|\alpha^{(n)}_{n-1}|^2 = 1$. We use Proposition B.2 of the article \cite{bib:KN04} in order to reverse the traditional order of the $n-1$ first Verblunsky coefficients. Also, we will shift our interest from the orthogonal polynomials $(\Phi_k)_{0 \leq k \leq n}$ to those traditionally denoted $(\Phi^*_k)_{0 \leq k \leq n}$. Nevertheless, maximum modulii are identical. 

Now, let us couple all the dimensions together by considering the orthogonal polynomials $(\Phi_k, \Phi^*_k)_{k \geq 0}$
associated to an infinite family of independent Verblunsky coefficients $(\alpha_j)_{j \geq 0}$, such that $|\alpha_j|^2$ is a Beta random variable with parameters $(1, \beta_j )$ and the argument of $\alpha_j$ is uniform on $[0, 2\pi)$: from now, the notation $(\Phi_k, \Phi^*_k)$ will always refer to this setting.  The previous system can then be realized by taking $\alpha^{(n)}_j = \alpha_j$ for $0 \leq j \leq n-2$ and $\alpha^{(n)}_{n-1}$ independent, uniform on the unit circle.
With this realization, we have this useful lemma: 
\begin{lemma}
For all $\sigma \in \{1, i, -i\}$, the family of random variables
$$ \left( \sup_{z \in \mathbb{U}'} \Re (\sigma 
\log X_n(z)) - \sup_{z \in \mathbb{U}} \Re (\sigma 
\log \Phi^*_{n-1}(z))  \right)_{n \geq 1}$$
is tight. 
\end{lemma}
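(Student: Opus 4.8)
The plan is to use the Szeg\H{o} recursion \eqref{def:matrix_szego} to peel off the last Verblunsky coefficient. In the realization recalled above, $X_n$ coincides --- up to a random unimodular multiplicative constant, which shifts the suprema below by a bounded (hence harmless) amount --- with $\Phi_n^*$ for the system whose last coefficient $\alpha_{n-1} := \alpha_{n-1}^{(n)}$ is uniform on $\U$ and independent of $(\alpha_j)_{0 \le j \le n-2}$; the latter determine $\Phi_{n-1}$ and $\Phi_{n-1}^*$. Hence
$$ X_n(z) = \Phi_{n-1}^*(z) - \alpha_{n-1}\, z\, \Phi_{n-1}(z) = \Phi_{n-1}^*(z)\left( 1 - \alpha_{n-1} B_n(z)\right), \qquad B_n(z) := \frac{z\,\Phi_{n-1}(z)}{\Phi_{n-1}^*(z)}. $$
Since the zeroes of $\Phi_{n-1}$ lie in $\D$ and those of $\Phi_{n-1}^*$ outside $\overline{\D}$, the function $B_n$ is a finite Blaschke product: $|B_n| \le 1$ on $\overline{\D}$ and $|B_n| = 1$ on $\U$ (the latter also because $|\Phi_{n-1}(z)| = |\Phi_{n-1}^*(z)|$ for $|z| = 1$). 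In particular $\Phi_{n-1}^*$ does not vanish on $\U$, so the zeroes of $X_n$ on $\U$ are precisely the points where $1 - \alpha_{n-1} B_n$ vanishes, and $\mathbb{U}' = \{ z \in \U : 1 - \alpha_{n-1} B_n(z) \neq 0 \}$.

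Next I would track branches. On $\D$, both $\Phi_{n-1}^*$ and $1 - \alpha_{n-1} B_n$ are zero-free and take the value $1$ at the origin, and $1 - \alpha_{n-1} B_n$ has values in the disc $\{ w : |w-1| < 1 \}$ because $|B_n| < 1$ there; choosing all logarithms to vanish at $0$, one gets the identity $\log X_n = \log \Phi_{n-1}^* + w$ on $\D$, where $w(z) := \log(1 - \alpha_{n-1} B_n(z))$ satisfies $|\Im w| < \pi/2$. All three functions extend continuously to $\mathbb{U}'$, so the identity and the bound persist there; moreover $\Re w(z) = \log |1 - \alpha_{n-1} B_n(z)| \le \log 2$ on $\mathbb{U}'$. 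Hence, for every $\sigma \in \{ 1, i, -i \}$, we have $\Re(\sigma w(z)) \le \pi/2$ for all $z \in \mathbb{U}'$, which gives the deterministic upper bound
$$ \sup_{z \in \mathbb{U}'} \Re(\sigma \log X_n(z)) \le \sup_{z \in \U} \Re(\sigma \log \Phi_{n-1}^*(z)) + \frac{\pi}{2}. $$

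For the matching lower bound, let $z_\sigma^\star \in \U$ be a measurably chosen maximizer of $z \mapsto \Re(\sigma \log \Phi_{n-1}^*(z))$; it is a function of $(\alpha_j)_{0 \le j \le n-2}$ only, hence independent of $\alpha_{n-1}$. Since $\alpha_{n-1}$ is uniform on $\U$ and $|B_n(z_\sigma^\star)| = 1$, the variable $\alpha_{n-1} B_n(z_\sigma^\star)$ is, conditionally on $(\alpha_j)_{0 \le j \le n-2}$, uniform on $\U$; in particular it differs from $1$ almost surely, so $z_\sigma^\star \in \mathbb{U}'$ a.s., and $\Re(\sigma w(z_\sigma^\star))$ has the law of $\Re(\sigma \log(1 - V))$ with $V$ uniform on $\U$ --- a distribution that does not depend on $n$ and is almost surely finite, hence a tight family. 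Evaluating $\log X_n = \log \Phi_{n-1}^* + w$ at $z_\sigma^\star$,
$$ \sup_{z \in \mathbb{U}'} \Re(\sigma \log X_n(z)) \ge \Re(\sigma \log X_n(z_\sigma^\star)) = \sup_{z \in \U} \Re(\sigma \log \Phi_{n-1}^*(z)) + \Re(\sigma w(z_\sigma^\star)). $$
Together with the upper bound, this sandwiches the random variable of the statement between the tight family $\Re(\sigma w(z_\sigma^\star))$ and the constant $\pi/2$, which proves tightness. The main thing to get right is the branch/winding bookkeeping for $w$ --- this is exactly why it matters that $1 - \alpha_{n-1} B_n$ lands in a half-plane over $\D$ --- together with the observation that the independence and non-atomicity of the last Verblunsky coefficient force the error $\Re(\sigma w(z_\sigma^\star))$ to have an $n$-independent, almost surely finite law; the rest is soft.
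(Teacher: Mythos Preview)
Your argument is correct and follows essentially the same route as the paper: peel off the last Verblunsky coefficient via the Szeg\H{o} recursion, use that $B_n$ is a Blaschke product to get the deterministic upper bound on $\Re(\sigma w)$, and for the lower bound evaluate at a maximizer of $\Re(\sigma\log\Phi_{n-1}^*)$, exploiting that $\alpha_{n-1}^{(n)}$ is uniform on $\U$ and independent of $\Phi_{n-1}^*$. Two small remarks: (i) the paper actually identifies $X_n$ with $\Phi_n^*$ exactly (both equal $1$ at the origin), so your hedge about a ``random unimodular multiplicative constant'' is unnecessary; (ii) for $\sigma\in\{i,-i\}$ the paper observes that the bound $|\Im w|\le\pi/2$ is already two-sided, so no probabilistic lower bound is needed there --- your uniform treatment works too, but is slightly more than required.
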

\begin{proof}
Using the last step of the Szeg\"o recursion, we get for $z \in \overline{\mathbb{D}}$, 
$$X_n(z) = \Phi_{n-1}^*(z)
\left(1 - \alpha_{n-1}^{(n)} z \frac{\Phi_{n-1}(z)}{\Phi_{n-1}^*(z)} \right),$$
and by continuity, for all $z 
\in \overline{\mathbb{D}} \backslash \{\overline{\lambda_1}, \dots, \overline{\lambda_n}\}$, 
$$\log X_n(z) = \log \Phi_{n-1}^* (z) 
+ \log \left(1 - \alpha_{n-1}^{(n)} z \frac{\Phi_{n-1}(z)}{\Phi_{n-1}^*(z)} \right),$$
if we take a continuous version of the last logarithm. We have 
$$\left| \alpha_{n-1}^{(n)} z \frac{\Phi_{n-1}(z)}{\Phi_{n-1}^*(z)} \right| \leq 1$$
for all $z \in \overline{\mathbb{D}}$, from the maximum principle, the fact that $\Phi_{n-1}^*$ does not vanish on $\overline{\mathbb{D}}$ and that the modulus of the expression is equal to $1$ for $z \in \mathbb{U}$. Hence, for all $z \in 
\mathbb{U}'$,  
$$\left| \Im  \log \left(1 - \alpha_{n-1}^{(n)} z \frac{\Phi_{n-1}(z)}{\Phi_{n-1}^*(z)} \right)\right| \leq \frac{\pi}{2},$$
which implies the lemma for $\sigma \in \{i, -i\}$, since in the second supremum, 
we can replace $\mathbb{U}$ by $\mathbb{U}'$ by continuity. 
We also get, again for $z \in \mathbb{U}'$: 
$$\Re \log \left(1 - \alpha_{n-1}^{(n)} z \frac{\Phi_{n-1}(z)}{\Phi_{n-1}^*(z)} \right) \leq \log 2,$$
which, for $\sigma = 1$, implies the 
tightness of the positive part of the quantity involved in the lemma. 
On the other hand, by continuity, the maximum of $\Re \log \Phi_{n-1}^*$ on $\mathbb{U}$ is attained: let $z_0$ be the corresponding point (say) with smallest argument in $[0, 2 \pi)$.
Since $\alpha_{n-1}^{(n)}$ is uniform on $\mathbb{U}$ and independent of $\Phi_{n-1}^*$, we have 
$$X_n(z_0) = \Phi_{n-1}^* (z_0) (1 - U),$$
where $U$ is uniform on $\mathbb{U}$, independent of $\Phi_{n-1}^* (z_0)$. 
Hence, $z_0 \in \mathbb{U}'$ almost surely, and in this case, we get 
\begin{align*}\sup_{z \in \mathbb{U}'} \Re (
\log X_n(z)) - \sup_{z \in \mathbb{U}} \Re ( 
\log \Phi^*_{n-1}(z)) 
& = \sup_{z \in \mathbb{U}'} \Re (
\log X_n(z)) - \Re \log \Phi^*_{n-1}(z_0)
\\ & \geq \Re \log X_n(z_0) -  \Re \log \Phi^*_{n-1}(z_0) = \Re \log (1-U).
\end{align*}
Hence, for $\sigma = 1$, the negative part of the quantity involved in the lemma is stochastically smaller than $(\Re \log (1-U))_-$, independently of $n$, and then it is tight. 
\end{proof}
We deduce the following: 
\begin{corollary} \label{corollary:phistar}
In order to prove Theorem \ref{thm:main}, it is sufficient to show the same result with $X_n$ replaced by $\Phi^*_n$ and $\mathbb{U}'$ replaced by $\mathbb{U}$. 
\end{corollary}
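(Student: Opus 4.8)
The plan is to read off the Corollary from the Lemma together with two elementary remarks: one identifying which real parts occur in Theorem~\ref{thm:main}, and one absorbing the shift of index from $n-1$ to $n$. First I would observe that for any $w \in \C$ one has $\Im w = \Re(-iw)$, so the three families of suprema appearing in Theorem~\ref{thm:main} --- namely $\sup_{z \in \mathbb{U}'} \Re \log X_n(z)$ and $\sup_{z \in \mathbb{U}'}\bigl(\sigma \Im \log X_n(z)\bigr)$ for $\sigma \in \{-1,1\}$ --- are exactly the suprema $\sup_{z \in \mathbb{U}'}\Re(\sigma \log X_n(z))$ as $\sigma$ runs over $\{1,\,-i,\,i\}$. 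This is precisely the set of $\sigma$ for which the Lemma applies.

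Next I would invoke the Lemma: for each such $\sigma$, the family
$$\left(\sup_{z \in \mathbb{U}'}\Re(\sigma \log X_n(z)) - \sup_{z \in \mathbb{U}}\Re(\sigma \log \Phi^*_{n-1}(z))\right)_{n \geq 1}$$
is tight. Since tightness is preserved under adding a tight family and under multiplication by the fixed constant $\sqrt{\beta/2}$, the family
$$\sqrt{\tfrac{\beta}{2}}\left(\sup_{z \in \mathbb{U}'}\Re(\sigma \log X_n(z)) - \bigl(\log n - \tfrac{3}{4}\log\log n\bigr)\right)_{n \geq 2}$$
is tight if and only if the same family with $\sup_{z \in \mathbb{U}'}\Re(\sigma\log X_n(z))$ replaced by $\sup_{z \in \mathbb{U}}\Re(\sigma\log\Phi^*_{n-1}(z))$ is tight.

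Finally I would deal with the reindexing. Putting $m = n-1$ one has $\log n - \frac{3}{4}\log\log n = \log m - \frac{3}{4}\log\log m + \Oc(1/m)$, so the centerings for index $n$ and index $m$ differ by a bounded sequence; discarding the finitely many small values of the index for which $\log\log$ is not defined is likewise harmless for tightness. Hence tightness of the $\Phi^*_{n-1}$-family with centering $\log n - \frac{3}{4}\log\log n$ is equivalent to tightness of the $\Phi^*_n$-family with the same centering, over $\mathbb{U}$. Chaining these equivalences for each of the three values of $\sigma$ gives the Corollary.

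I do not expect any genuine obstacle here, since the argument is pure bookkeeping; the only points requiring a little care are checking that $\Im = \Re(-i\,\cdot)$ really exhausts the list of statements in Theorem~\ref{thm:main} and that the $n \mapsto n-1$ shift, with its induced $\Oc(1/n)$ change in the centering, does not disturb tightness. The substantive work --- establishing the tightness statement for $\Phi^*_n$ on all of $\mathbb{U}$ --- is exactly what the rest of the paper is devoted to.
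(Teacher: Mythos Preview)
Your proposal is correct and follows exactly the approach implicit in the paper, which does not even write out a separate proof for the Corollary: it simply states ``We deduce the following'' immediately after the Lemma. You have spelled out the three bookkeeping points the paper leaves tacit --- the identification of the three cases of Theorem~\ref{thm:main} with $\sigma\in\{1,i,-i\}$, the transfer of tightness via the Lemma, and the harmless $n\mapsto n-1$ reindexing --- and all three are handled correctly.
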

We will show this result in the sequel of the paper.  
The family of orthogonal polynomials $(\Phi^*_k)_{k \geq 0}$ associated to $(\alpha_j)_{j \geq 0}$ can be described as follows. See \cite{bib:BNR09} for a similar derivation and \cite{bib:KSt09} for a detailed study of the relative Prüfer phases, including the description of their diffusive limit.

\begin{lemma}
\label{lemma:expr}
There are continuous real functions $\Psi_k$ (Prüfer phases) such that for $\theta \in \mathbb{R}$, 
\begin{align}
\label{eq:Phi_product}
\log \Phi^*_k(e^{i\theta}) = & \sum_{j=0}^{k-1} \log \left( 1 - \alpha_{j} e^{i \Psi_{j}(\theta) } \right),
\end{align}
and
\begin{equation} \Psi_k(\theta) = (k+1) \theta - 2 \Im \log \Phi_{k}^*(e^{i \theta})
= (k+1) \theta - 2 \sum_{j=0}^{k-1} 
\Im \log   \left(1 - \alpha_{j}       e^{ i \Psi_{j}(\theta)} \right).
 \label{eq:Psi}
\end{equation}
In particular,  $(\Psi_k(\theta)-(k+1)\theta)_{k \geq 0}$ is a martingale starting at zero and $\E\left( \Psi_k(\theta) \right) = (k+1) \theta$.
\end{lemma}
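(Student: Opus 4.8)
\emph{Proof plan.} The strategy is to \emph{take the first displayed identity as the definition} of the Prüfer phase: set
$$\Psi_k(\theta) := (k+1)\theta - 2\Im\log\Phi_k^*(e^{i\theta}),$$
where $\log\Phi_k^*$ denotes the branch which is continuous on $\overline{\D}$ and vanishes at the origin, available thanks to the non-vanishing of $\Phi_k^*$ on $\overline{\D}$ recalled above. By construction $\Psi_k$ is a continuous, real-valued function of $\theta\in\R$, so what remains is to establish the product formula \eqref{eq:Phi_product}, the recursion \eqref{eq:Psi}, and the martingale statement.

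\emph{Step 1: the multiplicative recursion on the circle.} Consider the ratio $h_k(z):=\Phi_{k+1}^*(z)/\Phi_k^*(z)$. Since $\Phi_k^*$ is non-vanishing on a neighbourhood of $\overline{\D}$, $h_k$ is holomorphic there, and $h_k(0)=1$ because $\Phi_j^*(0)=1$. From the last line of the Szegő recursion \eqref{def:matrix_szego}, $\Phi_{k+1}^*(z)=\Phi_k^*(z)-\alpha_k z\,\Phi_k(z)$, hence $h_k(z)=1-\alpha_k z\,\Phi_k(z)/\Phi_k^*(z)$. Now $\Phi_k/\Phi_k^*$ is holomorphic on $\overline{\D}$ and has modulus $1$ on $\U$ (because $\Phi_k^*(e^{i\theta})=e^{ik\theta}\overline{\Phi_k(e^{i\theta})}$), so the maximum principle gives $|\Phi_k(z)|\le|\Phi_k^*(z)|$ on $\overline{\D}$, and therefore $|z\,\Phi_k(z)/\Phi_k^*(z)|\le 1$ on $\overline{\D}$. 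Consequently $h_k(\overline{\D})$ lies in the disc of radius $|\alpha_k|<1$ centred at $1$, in particular in the open right half-plane, so the principal logarithm $\log h_k$ is holomorphic on $\overline{\D}$ with $\log h_k(0)=0$. Then $\log\Phi_k^*+\log h_k$ is continuous on $\overline{\D}$, vanishes at $0$, and exponentiates to $\Phi_{k+1}^*$; by uniqueness of such a branch, $\log\Phi_{k+1}^*=\log\Phi_k^*+\log h_k$ on $\overline{\D}$. On the unit circle, $e^{i\theta}\Phi_k(e^{i\theta})/\Phi_k^*(e^{i\theta})=e^{i(k+1)\theta}\,\overline{\Phi_k^*(e^{i\theta})}/\Phi_k^*(e^{i\theta})=e^{i(k+1)\theta-2i\Im\log\Phi_k^*(e^{i\theta})}=e^{i\Psi_k(\theta)}$, so $h_k(e^{i\theta})=1-\alpha_k e^{i\Psi_k(\theta)}$ and $\log h_k(e^{i\theta})=\log(1-\alpha_k e^{i\Psi_k(\theta)})$ (the principal branch, legitimate since the argument has positive real part).

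\emph{Step 2: the two formulas.} Since $\Phi_0^*\equiv 1$, so $\log\Phi_0^*\equiv 0$, an immediate induction on $k$ using Step 1 gives \eqref{eq:Phi_product}. Taking imaginary parts there and substituting into the definition of $\Psi_k$ yields \eqref{eq:Psi}.

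\emph{Step 3: the martingale property.} Let $\Fc_k:=\sigma(\alpha_0,\dots,\alpha_{k-1})$; from \eqref{eq:Psi}, $\Psi_j(\theta)$ is $\Fc_j$-measurable for all $j$, so $M_k:=\Psi_k(\theta)-(k+1)\theta=-2\sum_{j=0}^{k-1}\Im\log(1-\alpha_j e^{i\Psi_j(\theta)})$ is $\Fc_k$-measurable, $M_0=0$, and $|M_k|\le\pi k$ since each summand has modulus $<\pi/2$ (its argument has positive real part); in particular each $M_k$ is integrable. The increment is $M_{k+1}-M_k=-2\Im\log(1-\alpha_k e^{i\Psi_k(\theta)})$; as $\alpha_k$ is independent of $\Fc_k$ with rotationally invariant law and $\Psi_k(\theta)$ is $\Fc_k$-measurable, conditionally on $\Fc_k$ the variable $\alpha_k e^{i\Psi_k(\theta)}$ has the same law as $\alpha_k$. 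Hence, conditioning further on $|\alpha_k|=\rho<1$ and using the mean value property for the analytic function $w\mapsto\log(1-\rho w)$ on $\overline{\D}$, $\E[\Im\log(1-\alpha_k e^{i\Psi_k(\theta)})\mid\Fc_k]=0$. Thus $(M_k)_{k\ge0}$, hence $(\Psi_k(\theta)-(k+1)\theta)_{k\ge0}$, is a martingale starting at $0$, and $\E[\Psi_k(\theta)]=(k+1)\theta$.

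\emph{Main obstacle.} The only genuinely delicate point is the branch bookkeeping in Step 1, i.e.\ verifying that the globally defined logarithms compose exactly rather than up to a multiple of $2\pi i$; the inclusion $h_k(\overline{\D})\subset\{\Re>0\}$, which follows from the elementary bound $|\Phi_k|\le|\Phi_k^*|$ on $\overline{\D}$, is precisely what makes this automatic.
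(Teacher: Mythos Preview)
Your proof is correct and follows essentially the same route as the paper: telescoping $\Phi_k^*$ via the Szeg\H{o} recursion, identifying $z\Phi_j/\Phi_j^*=e^{i\Psi_j}$ on the circle through the $*$ involution, and then taking logarithms. You are in fact more careful than the paper, which simply invokes ``the continuous versions of the logarithms'' and does not spell out the martingale argument; your branch bookkeeping via $h_k(\overline{\D})\subset\{\Re>0\}$ and the rotational-invariance/mean-value computation for the increments are both clean and correct.
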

\begin{proof}
Using the Sz\"ego recursion  \eqref{def:matrix_szego}, for $z = e^{i \theta}$, we get:
$$\Phi^*_k(z) = \prod_{j=0}^{k-1} \frac{\Phi_{j+1}^*(z)}{\Phi_{j}^*(z)}
              = \prod_{j=0}^{k-1} \left( 1 - \alpha_{j} z \frac{\Phi_{j}(z)}{\Phi_{j}^*(z)} \right).$$
               Now, because of the definition of the $*$ involution, 
$$ z \frac{\Phi_{j}(z)}{\Phi_{j}^*(z)} = z^{j+1} \frac{ \overline{\Phi_{j}^*(z)} }{\Phi_{j}^*(z)} = \exp \left(i[(j+1) \theta 
- 2 \Im \log \Phi_j^*(e^{i\theta})] \right),$$
 which shows 
 Equations \eqref{eq:Phi_product} and 
 \eqref{eq:Psi}, by taking the continuous versions of the logarithms. 
\end{proof}
In order to study the extremal values of $(\Phi^*_k)_{k \geq 0}$, the following martingale structure will be crucial. Let $\Fc_j$ be
the $\sigma$-algebra generated by $\alpha_0, \alpha_1, \dots \alpha_{j-1}$, the first $j$ Verblunsky coefficients. Thanks to Lemma \ref{lemma:expr}, the evaluation of the polynomial $\Phi_k^*$ at every point is a multiplicative martingale with respect to this filtration. 

In order to follow the Sz\"ego recursion, it is also useful to consider differences between Pr\"ufer phases at different angles. We get the following lemma: 
\begin{lemma}
Let us define the relative Pr\"ufer phases $(\psi_j)_{j \geq 0}$ and the deformed Verblunsky coefficients $(\gamma_j)_{j \geq 0}$ by 
$$\psi_j(\theta) := \Psi_j(\theta) - \Psi_j(0), \; \gamma_j := \alpha_j e^{i \Psi_j(0)}.$$
Then, the joint law of $(\gamma_j)_{j \geq 0}$ is the same as the law of $(\alpha_j)_{j \geq 0}$,   \begin{align}
\label{eq:Phi_relativeproduct}
\log \Phi^*_k(e^{i\theta}) = & \sum_{j=0}^{k-1} \log \left( 1 - \gamma_{j} e^{i \psi_{j}(\theta) } \right),
\end{align}
and
\begin{align} \psi_k(\theta) & = (k+1) \theta - 2 \left(\Im \log \Phi_{k}^*(e^{i \theta}) - \Im \log \Phi_{k}^*(1) \right) \nonumber
\\ & = (k+1) \theta - 2 \sum_{j=0}^{k-1} 
\left( \Im \log   \left(1 - \gamma_{j} e^{i \psi_j(\theta)} \right)  - \Im \log   \left(1 - \gamma_{j} \right)  \right).
 \label{eq:relativepsi}
\end{align}
In particular,  $(\psi_k(\theta)-(k+1)\theta)_{k \geq 0}$ is a martingale starting at zero and $\E\left( \psi_k(\theta) \right) = (k+1) \theta$. Moreover, for all $k \geq 0$, $\psi_k$ is a.s. increasing, and for all $\theta, \theta' \in \mathbb{R}$, $\psi_k(\theta') - \psi_k( \theta)$ has the same law as $\psi_k(\theta' - \theta)$, and 
$\psi_k(\theta + 2 \pi) = \psi_k(\theta) + 2 (k+1) \pi$. 
\end{lemma}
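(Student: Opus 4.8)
The plan is to read everything off from the Szegő recursion \eqref{eq:Psi}, the rotation invariance of the Verblunsky coefficients, and one structural observation: once written in terms of the deformed coefficients, the phase recursion is \emph{autonomous}, i.e. it presents $\psi_k$ as a fixed deterministic (and continuous) functional $F_k$ of $(\gamma_0,\dots,\gamma_{k-1})$ and $\theta$ alone, with $\psi_0(\theta)=\theta$. First, \eqref{eq:Phi_relativeproduct} and the two displays in \eqref{eq:relativepsi} are purely formal: substituting $\Psi_j(\theta)=\Psi_j(0)+\psi_j(\theta)$ turns $\alpha_j e^{i\Psi_j(\theta)}$ into $\gamma_j e^{i\psi_j(\theta)}$, which converts \eqref{eq:Phi_product} into \eqref{eq:Phi_relativeproduct}; and inserting $\Psi_j(0)=-2\Im\log\Phi_j^*(1)$ into \eqref{eq:Psi}, together with $\psi_j(0)=0$, gives the two expressions for $\psi_k$. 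It is worth recording at the outset that, since $|\gamma_j|=|\alpha_j|<1$ almost surely, the number $1-\gamma_j e^{i\phi}$ has real part at least $1-|\gamma_j|>0$ for every real $\phi$; hence $\Im\log(1-\gamma_j e^{i\phi})=\arg(1-\gamma_j e^{i\phi})\in(-\tfrac\pi2,\tfrac\pi2)$ is an unambiguous, smooth, $2\pi$-periodic function of $\phi$, the sums in \eqref{eq:relativepsi} involve no branch choices, and $\psi_k(\cdot)$ is smooth.

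For the law of $(\gamma_j)_{j\ge0}$ I would induct on $j$. From \eqref{eq:Psi} at $\theta=0$ one sees that $\Psi_j(0)$ is a measurable function of $\alpha_0,\dots,\alpha_{j-1}$, hence $\Fc_j$-measurable. Since $\alpha_j$ is independent of $\Fc_j$ and has rotation-invariant law, conditionally on $\Fc_j$ the variable $\gamma_j=\alpha_j e^{i\Psi_j(0)}$ is $\alpha_j$ times a deterministic unimodular factor, so its conditional law is that of $\alpha_j$ and does not depend on $\Fc_j$; thus $\gamma_j\perp\Fc_j$ and $\gamma_j\eqlaw\alpha_j$. As $\gamma_0,\dots,\gamma_{j-1}$ are $\Fc_j$-measurable, this forces mutual independence of the $\gamma_j$ with $\gamma_j\eqlaw\alpha_j$, i.e. $(\gamma_j)_{j\ge0}\eqlaw(\alpha_j)_{j\ge0}$ (in fact $\sigma(\gamma_0,\dots,\gamma_{j-1})=\Fc_j$, since $\alpha_j=\gamma_j e^{-i\Psi_j(0)}$). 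The martingale statement is then immediate: by \eqref{eq:relativepsi} the increment of $\psi_k(\theta)-(k+1)\theta$ at step $k$ equals $-2\bigl(\Im\log(1-\gamma_k e^{i\psi_k(\theta)})-\Im\log(1-\gamma_k)\bigr)$, in which $\psi_k(\theta)$ is $\Fc_k$-measurable and $\gamma_k\perp\Fc_k$ is rotation-invariant; freezing $\psi_k(\theta)$ and using $\gamma_k e^{i\phi}\eqlaw\gamma_k$ yields $\E[\Im\log(1-\gamma_k e^{i\psi_k(\theta)})\mid\Fc_k]=\E[\Im\log(1-\gamma_k)]$, so the conditional increment vanishes (all quantities are bounded). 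Since $\psi_0(\theta)-\theta=0$, this also gives $\E[\psi_k(\theta)]=(k+1)\theta$.

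Monotonicity I would obtain by differentiating \eqref{eq:relativepsi} in $\theta$: using $\frac{d}{d\phi}\arg(1-re^{i(\phi+\chi)})=\frac{r^2-r\cos(\phi+\chi)}{1-2r\cos(\phi+\chi)+r^2}$ for $\gamma_k=re^{i\chi}$, the recursion telescopes to
\[
\psi_{k+1}'(\theta)=1+\psi_k'(\theta)\,\frac{1-|\gamma_k|^2}{\,|1-\gamma_k e^{i\psi_k(\theta)}|^2\,},
\]
where the fraction is positive because $|\gamma_k|<1$; since $\psi_0'\equiv1$, induction gives $\psi_k'>0$, so $\psi_k$ is strictly increasing. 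For the stationarity of increments I would fix $\eta\in\R$, put $\tilde\gamma_j:=\alpha_j e^{i\Psi_j(\eta)}$ and $\tilde\psi_j(\theta):=\Psi_j(\eta+\theta)-\Psi_j(\eta)$, note that $\Psi_j(\eta)$ is $\Fc_j$-measurable (again from \eqref{eq:Psi}) so that $(\tilde\gamma_j)_{j\ge0}\eqlaw(\alpha_j)_{j\ge0}$ by the argument of the previous paragraph, and observe that substituting $\Psi_j(\eta+\theta)=\Psi_j(\eta)+\tilde\psi_j(\theta)$ into \eqref{eq:Psi} makes $\tilde\psi$ satisfy \emph{exactly} recursion \eqref{eq:relativepsi} with $\gamma_j$ replaced by $\tilde\gamma_j$, i.e. $\tilde\psi_k(\theta)=F_k(\tilde\gamma_0,\dots,\tilde\gamma_{k-1};\theta)$ with the same $F_k$ as for $\psi_k$. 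Hence $\psi_k(\eta+\theta)-\psi_k(\eta)=\tilde\psi_k(\theta)\eqlaw\psi_k(\theta)$, and choosing $\eta=\theta$ with increment $\theta'-\theta$ gives $\psi_k(\theta')-\psi_k(\theta)\eqlaw\psi_k(\theta'-\theta)$. Finally, $\log\Phi_k^*$ is single-valued and continuous on $\overline{\D}$ (no zeros there), so $\Im\log\Phi_k^*(e^{i(\theta+2\pi)})=\Im\log\Phi_k^*(e^{i\theta})$ and \eqref{eq:Psi} gives $\Psi_k(\theta+2\pi)=\Psi_k(\theta)+2(k+1)\pi$; subtracting $\Psi_k(0)$ yields $\psi_k(\theta+2\pi)=\psi_k(\theta)+2(k+1)\pi$.

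I expect the only genuinely delicate point to be the bookkeeping underlying the two distributional identities: one must check carefully that $\Psi_j(0)$ (resp. $\Psi_j(\eta)$) is $\Fc_j$-measurable while $\alpha_j$ is independent of $\Fc_j$ with rotation-invariant law, and --- crucially for the stationarity statement --- that the recursion governing the $\Psi$-increments after an arbitrary shift is \emph{formally identical} to \eqref{eq:relativepsi}, so that applying the same deterministic functional to an identically distributed family of coefficients produces an identically distributed phase. Everything else reduces to the elementary manipulations indicated above.
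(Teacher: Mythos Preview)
Your proof is correct and follows the paper's argument for the law of $(\gamma_j)_{j\ge0}$, the formulas \eqref{eq:Phi_relativeproduct}--\eqref{eq:relativepsi}, the martingale property, and the $2\pi$-periodicity. Two points are handled differently. For monotonicity, the paper rewrites $e^{i\psi_k(\theta)}$ as a finite Blaschke product in $e^{i\theta}$ (via $e^{i\psi_k(\theta)}=e^{-i\Psi_k(0)}e^{i(k+1)\theta}\overline{\Phi_k^*(e^{i\theta})}/\Phi_k^*(e^{i\theta})$) and invokes the fact that the argument of such a product is strictly increasing; your direct differentiation yielding $\psi_{k+1}'(\theta)=1+\psi_k'(\theta)\,(1-|\gamma_k|^2)/|1-\gamma_k e^{i\psi_k(\theta)}|^2$ is more elementary and self-contained. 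For stationarity, the paper reduces $\psi_k(\theta')-\psi_k(\theta)\eqlaw\psi_k(\theta'-\theta)$ to the statement that $z\mapsto\Phi_k^*(ze^{i\theta})$ has the same law as $\Phi_k^*$, observing that the rotated polynomial arises from multiplying the Verblunsky coefficients by deterministic unimodular factors; your ``autonomous recursion'' argument, introducing $\tilde\gamma_j=\alpha_j e^{i\Psi_j(\eta)}$ and checking that the shifted phases obey the identical recursion with $\tilde\gamma$ in place of $\gamma$, is really the same mechanism spelled out at the level of the recursion rather than the polynomial. Both routes are equally valid; yours is slightly more explicit, the paper's slightly more conceptual.
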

\begin{proof}
We have $\Psi_0(0) = 0$ and then $\gamma_0 = \alpha_0$. For $j \geq 1$, conditionally on $\mathcal{F}_j$, $e^{i \Psi_j(0)}$ is fixed, with modulus $1$, whereas $\alpha_j$ has the same law as before conditioning (it is independent of $\mathcal{F}_j$). By rotational invariance of this law, the conditional law of $\gamma_j$ given $\mathcal{F}_j$ is equal to the law of $\alpha_j$. This imples the equality in law between $(\alpha_j)_{j \geq 0}$ and $(\gamma_j)_{j \geq 0}$. 
The formula \eqref{eq:Phi_relativeproduct}
is a consequence of \eqref{eq:Phi_product} and the definition of $\gamma_j$, $\psi_j$. 
We then deduce \eqref{eq:relativepsi} from \eqref{eq:Psi}. Hence, the martingale property is clear, and the $2\pi$-periodicity of 
$\theta \mapsto \psi_k(\theta) - (k+1) \theta$ is easily proven from \eqref{eq:relativepsi}, by induction on $k$.  Now, for $\theta, \theta' \in \mathbb{R}$, 
$$ \psi_k(\theta') - \psi_k(\theta)
= (k+1) (\theta' - \theta) - 2 \left(\Im \log \Phi_{k}^*(e^{i \theta'}) - \Im \log \Phi_{k}^*(e^{i \theta}) \right),$$
and then it has the same law as 
$ \psi_k(\theta'- \theta) $, provided that we check that $z \mapsto \Phi_k^*(z e^{i \theta})$ has the same law as $\Phi_k^*$. 
Now, this invariance in law is due to the fact that one can go from one of the two polynomials to the other by multiplying  the Verblunsky coefficients by  deterministic complex numbers of modulus $1$, which does not change their joint distribution. Finally, from \eqref{eq:Phi_relativeproduct} and Szeg\"o 
recursion,  
$$e^{i \psi_k(\theta)} 
= \gamma_k^{-1} \left(1 - \frac{\Phi_{k+1}^* (e^{i \theta})}{\Phi_k^*(e^{i \theta})} \right) =  \alpha_k \gamma_k^{-1} 
\frac{e^{i \theta} \Phi_k(e^{i \theta})}{\Phi^*_k(e^{i \theta})}
= e^{- i \Psi_k(0)} \frac{e^{i (k+1)\theta} 
\overline{\Phi^*_k(e^{i \theta}})}{
\Phi^*_k(e^{i \theta})}.
$$
Now, the last quotient can be written as a finite Blaschke product in $e^{i \theta}$, and then its argument is strictly increasing in $\theta$. 
\end{proof}
\subsection{Subgaussianity estimates for Verblunsky coefficients}
\label{subsection:subgaussianity}
In the sequel, we will often need estimates on the size of the deformed Verblunsky coefficients $\left( \gamma_j \right)_{j \geq 0}$.

\begin{proposition}
\label{proposition:subgaussianity} 
The following inequalities hold for all $j \geq 0$. For all $(t, s) \in \R^2$:
\begin{align*}
     \E\left[ e^{s \Re \log (1- \gamma_j)  + t \Im \log (1- \gamma_j)} \right] & 
\leq \exp\left( \frac{s^2 + t^2}{2} \frac{1}{1+\beta(j+1)}\right) \\
     \E\left[ e^{t \Re \gamma_j  + s \Im \gamma_j} \right] & 
\leq \exp\left( \frac{s^2 + t^2}{2} \frac{1}{1+\beta(j+1)}\right) \ ,
\end{align*}
and for $\frac{8(t^2 + s^2)}{(1 + \beta(j+1))^2} < 1$:
\begin{align*}
     \E\left[ e^{t \Re \gamma_j^2  + s \Im \gamma_j^2} \right] & 
\leq \exp\left(  \frac{8(t^2 + s^2)}{(1 + \beta(j+1))^2} \right) \ .
\end{align*}
\end{proposition}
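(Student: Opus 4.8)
The three estimates share a common scheme, and I would begin from the representation $\gamma_j\eqlaw\sqrt{B_j}\,e^{i\Theta}$ with $B_j\sim\mathrm{Beta}(1,\beta_j)$, $\beta_j=\beta(j+1)/2$, and $\Theta$ uniform on $[0,2\pi)$ independent of $B_j$. In particular $\gamma_j$ is rotationally invariant in law and
\[
\E\bigl[|\gamma_j|^{2k}\bigr]=\E\bigl[B_j^{k}\bigr]=\frac{k!}{\prod_{i=1}^{k}(\beta_j+i)},\qquad k\ge 0 .
\]
Everything then reduces to combining rotational invariance with this moment identity and a term-by-term comparison with the Taylor series of the right-hand sides (recall $1+\beta(j+1)=1+2\beta_j$).

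The second estimate: writing $t\Re\gamma_j+s\Im\gamma_j=\rho\,\Re(e^{i\phi}\gamma_j)$ with $\rho^2=t^2+s^2$ and using $e^{i\phi}\gamma_j\eqlaw\gamma_j$, the left-hand side equals $\E[e^{\rho\sqrt{B_j}\cos\Theta}]=\E[I_0(\rho\sqrt{B_j})]$ with $I_0(x)=\sum_{k\ge0}(x/2)^{2k}/(k!)^2$; plugging in the moments gives $\sum_{k\ge0}\frac{(\rho^2/4)^k}{k!\,\prod_{i=1}^k(\beta_j+i)}$, which is dominated coefficient by coefficient by $\exp\!\bigl(\rho^2/(2(1+2\beta_j))\bigr)$ once one checks the trivial bound $(1+2\beta_j)^k\le\prod_{i=1}^k(2\beta_j+2i)$ (every factor on the right is at least $2\beta_j+1$). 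The third estimate is the same with $\gamma_j^2=B_j e^{2i\Theta}$ in place of $\gamma_j$: the left-hand side is $\E[I_0(\rho B_j)]=\sum_{k\ge0}\frac{(\rho/2)^{2k}}{(k!)^2}\E[B_j^{2k}]$, and bounding $\binom{2k}{k}\le 4^k$ and $\prod_{i=1}^{2k}(\beta_j+i)\ge\bigl((\beta_j+1)(\beta_j+2)\bigr)^k$ turns it into the geometric series $\sum_{k\ge0}\bigl(\rho^2/((\beta_j+1)(\beta_j+2))\bigr)^k$; the assumption $8\rho^2<(1+2\beta_j)^2$ together with $(1+2\beta_j)^2\le 4(\beta_j+1)(\beta_j+2)$ keeps its ratio below $1/2$, so it is at most $e^{2\rho^2/((\beta_j+1)(\beta_j+2))}\le e^{8\rho^2/(1+2\beta_j)^2}$.

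The first estimate is the one that demands real work, because $z\mapsto\log(1-z)$ is not rotationally covariant and is singular at $z=1$. Here I would condition on $|\gamma_j|=r$, set
\[
M(r)=\frac{1}{2\pi}\int_0^{2\pi}\bigl|1-re^{i\theta}\bigr|^{s}e^{t\arg(1-re^{i\theta})}\,d\theta
=\frac{1}{2\pi}\int_0^{2\pi}\bigl|(1-re^{i\theta})^{\,s-it}\bigr|\,d\theta ,
\]
observe that $M$ is real-analytic near $r=0$ and, by a parity argument, an even series $M(r)=\sum_{k\ge0}m_k(s,t)\,r^{2k}$ with $m_0=1$, so that the left-hand side equals $\sum_{k\ge0}m_k(s,t)\,\E[B_j^k]$, again to be compared termwise with $\exp\!\bigl((s^2+t^2)/(2(1+2\beta_j))\bigr)$ via the moment identity. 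A direct computation gives $m_1(s,t)=(s^2+t^2)/4$, and since $\frac1{4(\beta_j+1)}<\frac1{2(1+2\beta_j)}$ the leading term already fits with slack to spare, so the scheme is consistent. The hard part, which I expect to be the main obstacle, is to bound the higher $m_k(s,t)$ uniformly enough to close the comparison — this is precisely where the decay of the $\mathrm{Beta}(1,\beta_j)$ density at $x=1$ (equivalently, the exponential tail of $-\log|1-\gamma_j|$) must be used — possibly by perturbing in $t$ the classical formula $\frac{1}{2\pi}\int_0^{2\pi}|1-re^{i\theta}|^{-2a}\,d\theta={}_2F_1(a,a;1;r^2)$, and possibly by splitting the analysis according to whether $s^2+t^2$ is small (where a true Gaussian comparison is needed) or large (where the crude bounds $|1-\gamma_j|\le 2$, $|\arg(1-\gamma_j)|\le\pi/2$ and the tail bound above suffice).
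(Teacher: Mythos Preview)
Your treatment of the second and third inequalities is correct and is essentially the paper's argument in different packaging: your use of rotational invariance and the Bessel series $I_0$ is equivalent to the paper's direct expansion $\E[e^{u\gamma_j+v\bar\gamma_j}]=\sum_k \frac{(uv)^k}{(k!)^2}\E[|\gamma_j|^{2k}]$ with $u=\tfrac{t-is}{2}$, and the coefficient bounds you invoke are the same ones the paper uses (for the third inequality the paper also reduces to a geometric series and applies $\tfrac{1}{1-r}\le e^{2r}$ for $r<\tfrac12$).

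For the first inequality, however, there is a genuine gap: you have correctly identified that controlling the higher Taylor coefficients $m_k(s,t)$ of $M(r)$ is the obstacle, but you have not carried this out, and the ${}_2F_1$ perturbation / large--small splitting you sketch is not the route the paper takes. The paper bypasses the $r^{2k}$ expansion entirely by invoking a \emph{closed-form} Mellin-type identity (their reference is \cite{bib:BHNY}, Lemma~2.3):
\[
\E\bigl[e^{s\Re\log(1-\gamma_j)+t\Im\log(1-\gamma_j)}\bigr]
=\frac{\Gamma(a)\,\Gamma(a+s)}{\Gamma\!\bigl(a+\tfrac{s+it}{2}\bigr)\,\Gamma\!\bigl(a+\tfrac{s-it}{2}\bigr)},\qquad a=1+\beta_j .
\]
The infinite-product representation of $\Gamma$ then turns this into
\[
\prod_{m\ge 0}\frac{(a+m+\tfrac{s}{2})^2+(\tfrac{t}{2})^2}{(a+m)(a+m+s)}
=\prod_{m\ge 0}\Bigl(1+\frac{s^2+t^2}{4(a+m)(a+m+s)}\Bigr),
\]
and bounding each factor by its exponential and telescoping $\sum_m (a+m)^{-2}\le \sum_m \bigl((a+m-\tfrac12)(a+m+\tfrac12)\bigr)^{-1}=\tfrac{1}{a-1/2}=\tfrac{2}{1+2\beta_j}$ gives exactly the claimed bound. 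So the missing idea is this exact Gamma-ratio formula for the joint moments of $|1-\gamma_j|$ and $\arg(1-\gamma_j)$; once you have it, no termwise analysis of your $m_k(s,t)$ is needed at all.
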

\begin{proof}
The usual moments of $1 - \gamma_j$ can be computed, as in Lemma 2.3 of \cite{bib:BHNY} and extended by analytic continuation. We get, for all $s, t \in \C$, $\Re(s) > -1$,  
$$\E\left[ e^{s \Re \log (1- \gamma_j)  + t \Im \log (1- \gamma_j)} \right]
= \frac{\Gamma(1 + (\beta (j+1) /2)) \Gamma(1 + s + (\beta (j+1) /2))}{\Gamma(1 + (s+it + \beta (j+1))/2)) \Gamma(1 + (s-it + \beta (j+1))/2)}.   $$
 From the functional equation of the Gamma function and its asymptotics at infinity, we get, for $a > 0$, 
$$
 \frac{\Gamma(a) \Gamma(a+s)}{\Gamma(a + (s + it)/2) \Gamma(a + (s - it)/2) }
 = \prod_{m=0}^{\infty} \frac{(a + m
 + (s + it)/2)(a + m
 + (s - it)/2)}{(a + m) ( a+m+s)}.$$
 If $a > 1/2$, $s >0$, $t \in \mathbb{R}$, we get 
 \begin{align*}
  \frac{\Gamma(a) \Gamma(a+s)}{\Gamma(a + (s + it)/2) \Gamma(a + (s - it)/2) } & = \prod_{m=0}^{\infty}
 \frac{(a+m+(s/2))^2 + (t/2)^2}{(a+m)(a+m+s)}
\\ &  =  \prod_{m=0}^{\infty}
 \left(1 + \frac{s^2 + t^2}{4 (a+m)(a+m+s)} \right)
\\ & \leq \exp \left( \frac{s^2 + t^2}{4} \sum_{m=0}^{\infty} \frac{1}{(a+m)(a+m+s)} \right)
\\ & \leq \exp \left( \frac{s^2+ t^2}{4} \sum_{m=0}^{\infty} \frac{1}{(a+m)^2} \right)
\\ & \leq \exp \left( \frac{s^2+ t^2}{4} \sum_{m=0}^{\infty} \frac{1}{(a+m+(1/2))
(a+m-(1/2))} \right) \\ &  = \exp \left(
\frac{s^2+ t^2}{4 a - 2} \right). 
 \end{align*}
Applying this inequality to $a = 1 + (\beta (j+1)/2)$, $0 \leq j \leq k-1$ yields the first inequality. For the second inequality, we make the following computation for $\left(u, v \right) \in \C$:
\begin{align}
\label{eq:sub2} 
  \E\left[ e^{u \gamma_j  + v \bar{\gamma}_j} \right]
= & \sum_{k,l=0}^\infty \frac{u^k}{k!} \frac{v^l}{l!} \E\left( \gamma_j^k  \bar{\gamma}_l^l \right)
= \sum_{k=0}^\infty \frac{\left(uv\right)^k}{k!^2} \E\left( \left|\gamma_j \right|^{2k} \right) \ .
\end{align}
Putting $u = \frac{t-is}{2}, v=\bar{u}$ and because of the Beta integral $\E\left( \left| \gamma_j \right|^{2k} \right) = \frac{k!}{\prod_{l=1}^k (l + \beta_j)} \leq k! \left( 1+ \beta_j \right)^{-k}$, we turn Eq. \eqref{eq:sub2} into:
$$
       \E\left[ e^{t \Re \gamma_j  + s \Im \gamma_j} \right]
  \leq \sum_{k=0}^\infty \frac{1}{k!} \left( \frac{t^2 + s^2}{4 (1+\beta_j)} \right)^k
  =    \exp\left( \frac{t^2 + s^2}{4 (1+\beta_j)} \right)\ ,
$$
yielding the second inequality. Finally, in order to prove the last inequality, we start with the same computation as Eq. \eqref{eq:sub2}:
\begin{align}
\label{eq:sub3}
       \E\left[ e^{t \Re \gamma_j^2  + s \Im \gamma_j^2} \right]
  = &  \sum_{k=0}^\infty \left( \frac{t^2 + s^2}{4} \right)^k \frac{\E\left( |\gamma_j|^{4k} \right) }{k!^2} \ .
\end{align}
This time, we have from the Beta integral $\E\left( |\gamma_j|^{4k} \right) = \frac{(2k)!}{\prod_{l=1}^{2k} (l + \beta_j)}$, and hence Eq. \eqref{eq:sub3} becomes:
$$
       \E\left[ e^{t \Re \gamma_j^2  + s \Im \gamma_j^2} \right]
  =    \sum_{k=0}^\infty \left( \frac{t^2 + s^2}{4} \right)^k \binom{2k}{k} \frac{1}{\prod_{l=1}^{2k} (l + \beta_j)}
  \leq \sum_{k=0}^\infty \left( t^2 + s^2 \right)^k \frac{1}{\prod_{l=1}^{2k} (l + \beta_j)} \ .
$$
Combining the crude bound $\frac{1}{\prod_{l=1}^{2k} (l + \beta_j)} \leq 4^k \left( 1 + \beta(j+1) \right)^{-2k}$ and the hypothesis on $t^2+s^2$, we conclude:
$$
       \E\left[ e^{t \Re \gamma_j^2  + s \Im \gamma_j^2} \right]
  \leq \frac{1}{ 1-\frac{4(t^2 + s^2)}{(1 + \beta(j+1))^2} }
  \leq    \exp\left( \frac{8(t^2 + s^2)}{(1 + \beta(j+1))^2} \right) \ .
$$

\end{proof}

\subsection{On Prüfer phases and related heuristics}
Eq \eqref{eq:relativepsi} gives the following recursion, established in  
 \cite{bib:KSt09}:
 $$\psi_0(\theta)=\theta,$$
 \begin{align}
\label{eq:prufer_rec}
	\psi_{j+1}(\theta)= \psi_j(\theta) + \theta - 2 \left( \Im  \log \left( 1- \gamma_j e^{i\psi_j(\theta) }\right)
	- \Im  \log \left( 1- \gamma_j  \right)
	\right)
\end{align}
where
\begin{align}
\label{eq:prufer_im_log}
 \Im  \log \left( 1- \gamma_j  \right)
 - \Im  \log \left( 1- \gamma_j e^{i\psi_j(\theta) } \right)
	 & =  \Im \sum_{k=1}^\infty \frac{\gamma_j^k}{k} (e^{ik \psi_j(\theta)}-1).
\end{align}
Also, define the deviation of $\psi_j$ from its mean as $A_j$. We have for any $\theta\in [0,2\pi]$, 
\begin{align}
	A_j(\theta) & := \psi_j(\theta) - (j+1)\theta.
\end{align}

There are two heuristics that come to mind. On the one hand, for fixed $\theta$, the random sequence $\left( \psi_j(\theta) - (j+1) \theta \right)_{j\geq 0}$ should only be slowly varying. We formalise the intuition in the form of Gaussian tail estimates for increments in Proposition \ref{proposition:prufer_deviation}. On the other hand, $\psi_j(\theta)-\psi_j(\theta')$ should be of order $j \left|\theta-\theta'\right|$ with high probability. This is morally the content of Proposition \ref{proposition:prufer_modulus}, which gives a glimpse to the Prüfer phases' modulus of continuity.

\begin{proposition}
\label{proposition:prufer_deviation}
For fixed $\theta$, $k \geq 0$ and all $t > 0$:
$$   \P\left( \sup_{0\leq l\leq k} \left|A_{j+l}(\theta)-A_{j}(\theta)\right| \geq t \Big| \mathcal{F}_j \right)
\leq 4 \exp\left( -\frac{t^2 \beta}{32 \log\left( 1 + \frac{\beta k}{1+\beta j}\right)} \right)$$
\end{proposition}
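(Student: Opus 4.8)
The plan is to control the martingale increment $A_{j+l}(\theta) - A_j(\theta)$ via an exponential (Bernstein-type) bound, then upgrade the pointwise estimate to a uniform-in-$l$ estimate through Doob's maximal inequality applied to an exponential submartingale. From the recursion \eqref{eq:prufer_rec}, writing $D_m := A_{m+1}(\theta) - A_m(\theta) = -2\left( \Im \log(1 - \gamma_m e^{i\psi_m(\theta)}) - \Im \log(1 - \gamma_m) \right)$, Lemma \ref{lemma:expr} (and the relative version) tells us that $(A_m(\theta))_m$ is an $\Fc_m$-martingale, so $(\sum_{m=j}^{j+l-1} D_m)_{0 \le l \le k}$ is a martingale started at $0$. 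I would first establish a conditional sub-Gaussian estimate for each increment: conditionally on $\Fc_m$, the phase $\psi_m(\theta)$ is fixed, and $\gamma_m$ is rotationally invariant, so
$$
\E\left[ e^{\lambda D_m} \,\middle|\, \Fc_m \right]
= \E\left[ \exp\left( -2\lambda\, \Im \log\frac{1 - \gamma_m e^{i\psi_m(\theta)}}{1 - \gamma_m} \right) \,\middle|\, \Fc_m \right].
$$
Using that $\Im \log(1 - \gamma_m w) $ for $|w| = 1$ is, up to the rotation absorbed into $\gamma_m$, of the same form as $\Im \log(1 - \gamma_m)$, the first inequality of Proposition \ref{proposition:subgaussianity} gives a bound of the shape $\exp\left( c \lambda^2 / (1 + \beta(m+1)) \right)$ for a universal constant $c$ (one pays a factor for the difference of two such terms, hence the constant $32$ rather than something smaller). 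The precise constant is bookkeeping; the structure is that each $D_m$ is conditionally sub-Gaussian with variance proxy $\asymp 1/(1+\beta m)$.

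Next I would assemble these into an exponential supermartingale. Set $M_l := \exp\left( \lambda \sum_{m=j}^{j+l-1} D_m - c\lambda^2 \sum_{m=j}^{j+l-1} \frac{1}{1+\beta(m+1)} \right)$; by the tower property and the conditional sub-Gaussian bound, $(M_l)_{0 \le l \le k}$ is a supermartingale with $M_0 = 1$. Since $\sum_{m=j}^{j+k-1} \frac{1}{1+\beta(m+1)} \le \frac{1}{\beta}\log\left(1 + \frac{\beta k}{1+\beta j}\right) =: \frac{1}{\beta} L$ (comparing the sum to $\int$), Doob's maximal inequality yields
$$
\P\left( \sup_{0 \le l \le k} \sum_{m=j}^{j+l-1} D_m \ge t \,\middle|\, \Fc_j \right)
\le \exp\left( -\lambda t + \frac{c \lambda^2 L}{\beta} \right),
$$
and optimizing over $\lambda > 0$ (taking $\lambda = t\beta/(2cL)$) gives $\exp\left( - t^2 \beta / (4 c L) \right)$. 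Applying the same argument to $(-D_m)$ and adding the two tails produces the factor $2$; being slightly lossy with $c$ to accommodate the increment-difference bound produces the stated constant $32$ and the factor $4$.

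The main obstacle is the first step: justifying the conditional sub-Gaussian estimate for $D_m$ with a clean constant. Proposition \ref{proposition:subgaussianity} bounds $\E\left[ e^{s\Re\log(1-\gamma_m) + t\Im\log(1-\gamma_m)} \right]$, but here we need the MGF of a \emph{difference} $\Im\log(1-\gamma_m e^{i\psi_m}) - \Im\log(1-\gamma_m)$ with $\psi_m$ being $\Fc_m$-measurable, which is not literally one of the three listed inequalities. The resolution is that, conditionally on $\Fc_m$, the pair $(\gamma_m, \gamma_m e^{i\psi_m(\theta)})$ has $\gamma_m$ rotationally invariant, so I can write the difference as $\Im\log(1 - \gamma'_m) - \Im\log(1 - \gamma''_m)$ where each of $\gamma'_m, \gamma''_m$ individually has the law of $\gamma_m$ (though they are correlated), bound the MGF of the difference by Cauchy--Schwarz into a product of two single-term MGFs, and invoke Proposition \ref{proposition:subgaussianity} for each factor — the Cauchy--Schwarz step is exactly what inflates the variance proxy by a bounded factor and is the source of the somewhat large constant $32$. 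Everything downstream (Doob, optimization, summation of $1/(1+\beta m)$) is routine.
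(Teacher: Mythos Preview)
Your proposal is correct and shares the same backbone as the paper's proof: a Chernoff/exponential-martingale bound combined with Doob's maximal inequality, the sub-Gaussian input being Proposition~\ref{proposition:subgaussianity}, and the harmonic sum $\sum_{m=j}^{j+k-1} \frac{1}{1+\beta(m+1)}$ bounded by $\frac{1}{\beta}\log\bigl(1+\frac{\beta k}{1+\beta j}\bigr)$.

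The one genuine point of difference is how the two-term increment $D_m = \nu(\gamma_m) - \nu(\gamma_m,\psi_m(\theta))$ is handled. You keep $D_m$ intact and bound its conditional MGF via Cauchy--Schwarz, paying a factor~$2$ in the variance proxy. The paper instead uses the triangle inequality at the level of the partial sums,
\[
|A_{j+l}(\theta)-A_j(\theta)| \le \Bigl|\sum_{m=j}^{j+l-1}\nu(\gamma_m,\psi_m(\theta))\Bigr| + \Bigl|\sum_{m=j}^{j+l-1}\nu(\gamma_m)\Bigr|,
\]
and then exploits the distributional identity $(\nu(\gamma_m,\psi_m(\theta)))_{m\ge j}\eqlaw(\nu(\gamma_m))_{m\ge j}$ conditionally on~$\Fc_j$ (rotational invariance) to reduce to a single sum of \emph{independent} terms, to which Proposition~\ref{proposition:subgaussianity} applies directly without Cauchy--Schwarz. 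The paper's splitting costs a factor~$2$ from the union bound and another~$2$ from symmetry, giving the prefactor~$4$; your route gives the same exponent $t^2\beta/(32L)$ and in fact only a prefactor~$2$. Both arguments are clean; the paper's avoids invoking Cauchy--Schwarz by leveraging the equality in law, while yours stays closer to the martingale increments themselves.
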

\begin{proof}
For shorter notations, we will write $\nu(\gamma, \psi) = 2\Im \log(1-\gamma e^{i \psi})$ and $\nu(\gamma) = \nu(\gamma, 0)$. Thanks to the triangle inequality, we have
$$ \left|A_{j+l}(\theta)-A_{j}(\theta)\right| 
   \leq \left| \sum_{m=j}^{j+l-1} \nu(\gamma_m, \psi_m(\theta)) \right|
      + \left| \sum_{m=j}^{j+l-1} \nu(\gamma_m) \right|. $$
       By a union bound and the fact that $(\nu(\gamma_m, \psi_m(\theta))  )_{m\geq j} \eqlaw (\nu(\gamma_m))_{m\geq j}$ 
       conditionally on $\mathcal{F}_j$:
$$ \P\left( \sup_{0\leq l\leq k} \left|A_{j+l}(\theta)-A_{j}(\theta)\right| \geq t  \Big| \mathcal{F}_j \right)
\leq 2 \, \P\left( \sup_{0\leq l\leq k} \left| \sum_{m=j}^{j+l-1} \nu(\gamma_m) \right| \geq \frac{t}{2} \right) . $$
By the symmetry $\nu(\gamma_m) \eqlaw -\nu(\gamma_m)$, we reduce further to:
$$ \P\left( \sup_{0\leq l\leq k} \left|A_{j+l}(\theta)-A_{j}(\theta)\right| \geq t   \Big| \mathcal{F}_j \right)
\leq 4 \, \P\left( \sup_{0\leq l\leq k} \sum_{m=j}^{j+l-1} \nu(\gamma_m) \geq \frac{t}{2} \right) . $$
Now, we implement the martingale version of the Chernoff bound. For $\lambda>0$, consider the sub-martingale $M_l = e^{\lambda \sum_{m=j}^{j+l-1} \nu(\gamma_m) } $. By applying Doob's maximal inequality, we get:
\begin{align*}
       \P\left( \sup_{0\leq l\leq k} \left|A_{j+l}(\theta)-A_{j}(\theta)\right| \geq t   \Big| \mathcal{F}_{j} \right) 
\leq \quad \quad  & 4 \, \P\left( \sup_{0\leq l\leq k} \sum_{m=j}^{j+l-1} \nu(\gamma_m) \geq \frac{t}{2} \right)\\
=    \quad \quad  & 4 \, \P\left( \sup_{0\leq l\leq k} M_l \geq e^{\frac{\lambda t}{2} } \right)\\
\stackrel{\textrm{Doob}}{\leq} \quad 
           & 4 \, e^{-\frac{\lambda t}{2}} \E\left( M_k \right)\\
\stackrel{Prop. \ \ref{proposition:subgaussianity}}{\leq}
           & 4 \, e^{-\frac{\lambda t}{2} + 2 \lambda^2 \sum_{m=j}^{j+k-1} \frac{1}{1+\beta(m+1)} } \\
\leq \quad \quad & \ 4 \, e^{-\frac{\lambda t}{2} + 2 \lambda^2 \frac{1}{\beta}\log\left( 1 + \frac{\beta k}{1+\beta j}\right) } \ ,
\end{align*}
which holds for all $\lambda>0$. Optimizing over this variable concludes the proof.
\end{proof}

\begin{proposition}
\label{proposition:prufer_modulus}
There exists $\rho = \rho(\beta)>1$ such that:
$$ \forall \theta \in \R, \forall j \geq 0, \forall t>0, \ 
   \P\left( \left| \frac{\psi_j(\theta)}{(1+j) \theta} \right| \geq t \right) \ll_\beta \frac{1}{t^\rho}
$$
\end{proposition}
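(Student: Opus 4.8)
The plan is to control $\psi_j(\theta)$ by comparing it with the martingale increments appearing in the recursion \eqref{eq:prufer_rec}. By the stated invariance $\psi_j(\theta') - \psi_j(\theta) \eqlaw \psi_j(\theta'-\theta)$ and the periodicity $\psi_k(\theta+2\pi)=\psi_k(\theta)+2(k+1)\pi$, it suffices to treat $\theta \in (0, 2\pi]$, and in fact by monotonicity of $\psi_j$ it is enough to bound $\P(\psi_j(\theta) \geq t(1+j)\theta)$ and $\P(\psi_j(\theta) \leq -t(1+j)\theta + \text{something})$ separately; the second is handled symmetrically, or by noting $\psi_j$ is increasing and $\psi_j(0)=0$. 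The key observation is that, writing $A_j(\theta) = \psi_j(\theta) - (j+1)\theta$, we have $\psi_j(\theta) = (j+1)\theta + A_j(\theta)$, so $\left|\frac{\psi_j(\theta)}{(1+j)\theta}\right| \geq t$ forces $|A_j(\theta)| \geq (t-1)(1+j)\theta$ (when $t>1$). Thus the problem reduces to an upper tail bound on $|A_j(\theta)|$ at the single scale given by $\theta$.

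First I would set up the martingale bound. From \eqref{eq:relativepsi}, $A_j(\theta) = -2\sum_{m=0}^{j-1}\left(\Im\log(1-\gamma_m e^{i\psi_m(\theta)}) - \Im\log(1-\gamma_m)\right)$, and each summand is $\Fc_{m+1}$-measurable with conditionally centered increments (this is the martingale property already recorded in the lemma). Applying a Chernoff/Doob argument exactly as in the proof of Proposition \ref{proposition:prufer_deviation}, but now I cannot simply replace $\nu(\gamma_m,\psi_m(\theta))$ by $\nu(\gamma_m)$ because $\psi_m(\theta)$ is genuinely large here; instead I would use the subgaussian estimate of Proposition \ref{proposition:subgaussianity} conditionally: for each $m$, conditionally on $\Fc_m$, $\Im\log(1-\gamma_m e^{i\psi_m(\theta)})$ has the law of $\Im\log(1-\gamma_m)$ (rotational invariance of $\gamma_m$), hence
$$\E\left[ e^{\lambda\left(\Im\log(1-\gamma_m e^{i\psi_m(\theta)}) - \Im\log(1-\gamma_m)\right)} \,\Big|\, \Fc_m \right] \leq \exp\left(\frac{2\lambda^2}{1+\beta(m+1)}\right),$$
using the first inequality of Proposition \ref{proposition:subgaussianity} with $s=0$ for the first term and $|\Im\log(1-\gamma_m)| \leq$ its own subgaussian control for the second (or, more cleanly, bound the difference directly by combining both moment generating functions). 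Summing the conditional log-MGFs gives $\E[e^{\lambda A_j(\theta)}] \leq \exp\left(C\lambda^2 \sum_{m=0}^{j-1}\frac{1}{1+\beta(m+1)}\right) \leq \exp\left(\frac{C'\lambda^2}{\beta}\log(1+j)\right)$, so $A_j(\theta)$ is subgaussian with variance proxy $\asymp_\beta \log(1+j)$.

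Then I would combine this with the reduction above. For $t \geq 2$, say, and $\theta \in (0,2\pi]$,
$$\P\left(\left|\frac{\psi_j(\theta)}{(1+j)\theta}\right| \geq t\right) \leq \P\left(|A_j(\theta)| \geq (t-1)(1+j)\theta\right) \leq 2\exp\left(-\frac{c\beta\, (t-1)^2 (1+j)^2 \theta^2}{\log(1+j)}\right).$$
The worry is small $\theta$: when $\theta$ is of order $1/(1+j)$ or smaller the exponent degrades. But there the heuristic "$\psi_j(\theta) \sim j\theta$" is no longer the right scale — instead one should use that $\psi_j(\theta) \leq \psi_j(2\pi/(1+j))$-type monotonicity together with the fact that on scales $\theta \lesssim 1/j$ the phase is controlled by the increments at the bottom of the recursion. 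Concretely, for $\theta \le 1/(1+j)$ I would use monotonicity of $\psi_j$ to write $\psi_j(\theta) \le \psi_j(\theta_0)$ with $\theta_0 = 1/(1+j) \vee \theta$; but since we are dividing by $(1+j)\theta$ this does not immediately help, so instead the honest route for small $\theta$ is: $\psi_j(\theta)/\theta$ is, by the equality in law, distributed as $\psi_j(\theta)/\theta$ for the single angle $\theta$, and one bounds $\E[(\psi_j(\theta)/((1+j)\theta))^\rho]$ directly. Using $\psi_j(\theta) = (1+j)\theta + A_j(\theta)$ and the subgaussian bound on $A_j(\theta)$ gives $\E[|A_j(\theta)|^\rho] \le C_\rho (\log(1+j))^{\rho/2}$, which is still too weak for small $\theta$ uniformly.

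The cleanest fix, and the one I expect to use, is to exploit the scale invariance more carefully: the statement is invariant under $\theta \mapsto \theta + 2\pi$, and for $\theta$ in a dyadic range $[2\pi 2^{-(q+1)}, 2\pi 2^{-q}]$ one compares $\psi_j(\theta)$ with the martingale run only up to time $\sim 2^q$ (the Prüfer phase "sees" the angle $\theta$ at resolution $1/\theta$), after which the increments become genuinely decorrelating; this is precisely the kind of multiscale analysis that Killip–Stoiciu \cite{bib:KSt09} carry out for the diffusive limit. Concretely, one shows $\psi_j(\theta)$ for $j \gg 1/\theta$ behaves like $\psi_{\lceil 1/\theta\rceil}(\theta)$ plus $(j - 1/\theta)\theta$ plus a fluctuation of size $\sqrt{\theta\log(\cdots)}$, reducing everything to the regime $j\theta \asymp 1$ where a crude moment bound $\E[|\psi_j(\theta)/\theta|^\rho] \ll_\beta (1+j)^\rho$ suffices.

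The main obstacle is exactly this uniformity in $\theta$ down to $\theta \sim 1/j$ (and smaller): the naive subgaussian bound on $A_j$ has a variance proxy $\log(1+j)$ that does not shrink with $\theta$, whereas to get a clean polynomial tail $\frac{1}{t^\rho}$ with $\rho>1$ uniformly one needs the fluctuation of $\psi_j(\theta)$ to be genuinely smaller than $(1+j)\theta$ once $t$ is large, and this requires tracking how the phase localizes to the scale $1/\theta$ rather than using the full time horizon $j$. I expect the resolution to be a stopping-time / dyadic-scale decomposition combined with the increment estimate of Proposition \ref{proposition:prufer_deviation}, applied on the windows $[0, 2^q]$, $[2^q, j]$ separately.
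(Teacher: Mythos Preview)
Your subgaussian bound on $A_j(\theta)$ is correct as far as it goes, but as you yourself note, the variance proxy $\asymp_\beta \log(1+j)$ does not scale with $\theta$, and for $\theta$ of order $1/j$ the resulting tail $\exp\bigl(-c\beta t^2 j^2\theta^2/\log j\bigr)$ gives nothing. The dyadic/multiscale repair you sketch at the end is only a heuristic; you do not actually exhibit a recursion or a stopping-time argument that closes, and the reduction to ``the regime $j\theta\asymp 1$ where a crude moment bound suffices'' still requires you to control $\psi_j(\theta)/((1+j)\theta)$ in $L^\rho$ at that scale, which is precisely the missing estimate.

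The idea you are not using is that the martingale increment is \emph{multiplicatively} small in $\psi_j(\theta)$: from the series \eqref{eq:prufer_im_log} one gets
\[
\Var\bigl(\psi_{j+1}(\theta)\mid\Fc_j\bigr)
= 8\sum_{k\ge 1}\E(|\gamma_j|^{2k})\,\frac{\sin^2(k\psi_j(\theta)/2)}{k^2}
\le 2\,\psi_j(\theta)^2\,\E\Bigl(\frac{|\gamma_j|^2}{1-|\gamma_j|^2}\Bigr)
\le \frac{C_\beta}{j+1}\,\psi_j(\theta)^2
\]
for $j\ge j_0(\beta)$. This is what makes the estimate close. Combined with the conditional first moment $\E(\psi_{j+1}(\theta)\mid\Fc_j)=\theta+\psi_j(\theta)$, H\"older interpolation for $1<\rho<2$ gives
\[
\E\bigl(\psi_{j+1}(\theta)^\rho\mid\Fc_j\bigr)
\le (\theta+\psi_j(\theta))^\rho\Bigl(1+\tfrac{C_\beta}{j+1}\Bigr)^{\rho-1},
\]
and taking expectations and applying Gronwall yields $\|\psi_j(\theta)\|_{L^\rho}\ll_\beta (1+j)|\theta|$ provided $\rho-1$ is small enough that the product $\prod_{m\le j}(1+C_\beta/(m+1))^{\rho-1}\asymp j^{C_\beta(\rho-1)}$ is absorbed. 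Markov's inequality then gives the $t^{-\rho}$ tail directly. The initial range $j\le j_0$ needs a separate (but elementary) $L^\rho$ bound on the increments, since for small $j$ the expectation $\E(|\gamma_j|^2/(1-|\gamma_j|^2))$ may be infinite when $\beta$ is small; this is handled by a direct H\"older estimate on $\Im\log\frac{1-\gamma_j}{1-\gamma_j e^{i\psi}}$ using rotational invariance of $\gamma_j$.
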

\begin{proof}
Without loss of generality, we can restrict ourselves to $\theta \in[0, 2\pi)$. By Markov's inequality, it is enough to prove that there is a $\rho \in (1,2)$ such that:
$$ \left\| \psi_{j}(\theta) \right\|_{L^\rho} \ll_{\rho, \beta}(1+ j) \theta.$$

\paragraph{Step 1: Controlling the first phases.}

We have, for all $\psi \geq 0$,
$j \geq 0$, 
\begin{align*}
\left|\Im \log \left( \frac{1 - \gamma_j} { 1 - \gamma_j
e^{i \psi}} \right) \right|
&  = \left| \Im \left(\int_0^{\psi} 
\frac{d}{d \theta} \log( 1 -\gamma_j e^{i \theta})d \theta \right)  \right|
\leq \int_0^{\psi} \left|
\frac{d}{d \theta} \log( 1 - \gamma_j e^{i \theta}) \right| d \theta
\\ & \leq 
\left(\int_0^{\psi} \left|
\frac{d}{d \theta} \log( 1 - \gamma_j e^{i \theta}) \right|^{\rho} d \theta
\right)^{1/\rho} \psi^{1 - (1/\rho)}
\\ & \leq \left(\int_0^{\psi} \left(
\frac{|\gamma_j|}{|1 -
\gamma_j e^{i \theta}|}  \right)^{\rho} d \theta
\right)^{1/\rho} \psi^{1 - (1/\rho)}.
\end{align*}
Hence, by rotational invariance of 
the law of $\gamma_j$, 
\begin{align*}\E 
\left[ \left|\Im \log \left( \frac{1 - \gamma_j} { 1 - \gamma_j
e^{i \psi}} \right) \right|^{\rho} 
\,  \big| \, |\gamma_j|\right]
& \leq \psi^{\rho - 1}  \int_{0}^{2 \pi} \frac{d \tau}{2 \pi}
 \int_{0}^{\psi} \left( \frac{|\gamma_j|}{|1 -
|\gamma_j| e^{i (\tau +\theta)}|}
\right)^{\rho} 
d \theta 
\\ & = \psi^{\rho} \int_{-\pi}^{\pi} \frac{d \tau}{2 \pi}\left( \frac{|\gamma_j|}{|1 -
|\gamma_j| e^{i \tau}|} \right)^{\rho}.
\end{align*}
Now, if $r \in [0,1)$, $\tau \in [-\pi, \pi]$, 
$$\frac{r}{|1 - r e^{i\tau}|} 
\leq \frac{r}{1 - r} \leq \frac{1}{1 - r},$$
for $\tau \in [-\pi/2, \pi/2]$,
$$\frac{r}{|1 - r e^{i\tau}|}
\leq \frac{r}{|\Im (r e^{i \tau})|}
\leq \frac{1}{|\sin(\tau)|}
\leq \frac{\pi}{2 |\tau|},$$
and for $\tau \in [- \pi, \pi] \backslash
[- \pi/2, \pi/2]$, 
$$\frac{r}{|1 - r e^{i\tau}|}
\leq \frac{r}{1 - r \cos (\tau)} \leq r \leq 1 \leq \frac{\pi}{|\tau|}.$$
Hence, in any case, 
$$\frac{r}{|1 - r e^{i \tau}|}
\leq \frac{\pi}{\max(1-r, |\tau|)}
\leq \frac{2 \pi}{1 - r + |\tau|}.$$
We then get: 
\begin{align*} \E 
\left[ \left|\Im \log \left( \frac{1 - \gamma_j} { 1 -\gamma_j
e^{i \psi}} \right) \right|^{\rho} 
\,  \big| \, |\gamma_j|\right]
& \leq \psi^{\rho} 
\int_{0}^{\pi} \frac{d \tau}{\pi}
\left( \frac{2 \pi}{1 - |\gamma_j|
+ \tau} \right)^{\rho}
\\ & \leq \psi^{\rho} 2^{\rho} \pi^{\rho - 1} \int_{1 - |\gamma_j|}^{\infty}
\frac{d u}{u^\rho}
  = \Oc_{\rho} \left( \psi^{\rho}
  (1 - |\gamma_j|)^{1 - \rho} 
  \right).
\end{align*}
Now, 
\begin{align*}
\E[ (1 - |\gamma_j|)^{1 - \rho} ] & = \E[ (1 - |\gamma_j|^2)^{1 - \rho}
(1 + |\gamma_j|)^{\rho - 1}]
\\ & \leq 2^{\rho-1} 
 \frac{\beta (j+1)}{2}
 \int_{0}^{1} (1 - x)^{(\beta/2)(j+1) -\rho} dx.
\end{align*}
For $\rho \in (1,\min(2,1+(\beta/2))$), we have $(\beta/2)(j+1) - \rho \geq (\beta/2) - \rho > -1$.
Hence, the last expectation is finite, and depends only on $\rho, \beta, j$:
$$\E [( 1 - |\gamma_j|)^{1 - \rho}] = \Oc_{\rho, \beta,j}(1),$$
 which implies 
$$\E\left[ \left|\Im \log \left( \frac{1 - \gamma_j} { 1 -\gamma_j
e^{i \psi}} \right) \right|^{\rho} \right]
= \Oc_{\rho, \beta,j} (\psi^{\rho})$$
and therefore by an immediate recurrence using Equation \eqref{eq:prufer_rec}, we have for every $j \leq j_0$, $j_0$ fixed:
\begin{align}
\label{eq:prufer_modulus_1}
\|  \psi_{j_0}(\theta) \|_{L^\rho} & \ll_{j_0, \rho, \beta}  |\theta| \ .
\end{align}

\paragraph{Step 2: Conditional second moment estimate.}
Recall that $\Fc_j$ is the $\sigma$-algebra generated by $\alpha_0, \alpha_1, \dots, \alpha_{j-1}$, or equivalently, by $\gamma_0, \dots, \gamma_{j-1}$. In this paragraph, we prove that there exists a constant $C_\beta$ such that for $j \geq j_0$, $j_0$ depending only on $\beta$, conditional second moments do exist and:
\begin{align}
\label{eq:prufer_modulus_2}
\E\left( \psi_{j+1}(\theta)^2 | \Fc_j \right) \leq & \ \left(\theta + \psi_j(\theta) \right)^2 + \frac{C_\beta}{j+1} \psi_{j}(\theta)^2 \ .
\end{align}

Consider the recurrence \eqref{eq:prufer_rec}. From the series expansion \eqref{eq:prufer_im_log}, it is clear that $2 \Im \log\frac{1-\gamma e^{i\psi}}{1-\gamma}$ is centered. Thus the first term in the bound \eqref{eq:prufer_modulus_2} is nothing but $\E\left( \psi_{j+1}(\theta) | \Fc_j \right)^2$. As such, we only have to control the conditional variance:
\begin{align*}
    \Var\left( \psi_{j+1}(\theta)^2 | \Fc_j \right) 
= \quad & 4 \E\left( \left( \Im \log\frac{1-\gamma e^{i\psi_j(\theta)}}{1-\gamma} \right)^2 | \Fc_j \right)\\
\stackrel{Eq. \eqref{eq:prufer_im_log}}{=}
        & 4 \sum_{k=1}^\infty \frac{ \E\left( \left| \Im \gamma_j^k (e^{i k \psi_j(\theta)} - 1 ) \right|^{2} | \Fc_j \right) }{k^2}\\
= \quad & 8 \sum_{k=1}^\infty \E\left( \left| \gamma_j \right|^{2k} \right)
                                                  \frac{\sin\left( \frac{k \psi_j(\theta)}{2} \right)^2}{ k^2 } \\
\leq \quad &  2 \psi_j(\theta)^2 \E\left( \frac{|\gamma_j|^2}{1-|\gamma_j|^2} \right) \ .
\end{align*}
If $j_0$ is the smallest value of $j$ such that $j \geq 2$ and $\beta_j > 1$, we get for $j\geq j_0$, 
$$ \E\left( \frac{|\gamma_j|^2}{1-|\gamma_j|^2} \right) = \beta_j \int_0^1 x (1-x)^{\beta_j-2} = \frac{1}{\beta_j-1}
\ll_{\beta} \frac{1}{1+j}, \ .$$
which gives \eqref{eq:prufer_modulus_2}.
\paragraph{Step 3: Interpolating between first and second moment.}
For $1 < \rho < 2$, H\"older's inequality is written for any random variable $X$:
$$ \left\| X \right\|_{L^\rho} \leq \left\| X \right\|_{L^1}^{\frac{2}{\rho}-1} \left\| X \right\|_{L^2}^{2-\frac{2}{\rho}}, $$
hence in the conditional version, for $j \geq j_0$:
\begin{align*}
          \E\left( \psi_{j+1}(\theta)^\rho | \Fc_j \right)
= \quad & \E\left( \psi_{j+1}(\theta) | \Fc_j \right)^{2-\rho}
          \E\left( \psi_{j+1}(\theta)^2 | \Fc_j \right)^{\rho-1}\\
\stackrel{Eq. \eqref{eq:prufer_modulus_2}}{\leq} & 
    \left( \theta + \psi_{j}(\theta) \right)^{2-\rho}
    \left( \left( \theta + \psi_{j}(\theta) \right)^2 + \frac{C_\beta \psi_j(\theta)^2 }{j+1} \right)^{\rho-1}\\
\leq    & \left( \theta + \psi_{j}(\theta) \right)^{\rho}
          \left( 1 + \frac{C_\beta \psi_j(\theta)^2 }{(j+1) \left( \theta + \psi_{j}(\theta) \right)^2 } \right)^{\rho-1}\\
\leq    & \left( \theta + \psi_{j}(\theta) \right)^{\rho}
          e^{ \frac{C_\beta (\rho-1) }{j+1} } \ .
\end{align*}
Hence, upon taking expectation and using the triangle inequality, for $j \geq j_0$:
\begin{align}
\label{eq:prufer_modulus_3}
\left\| \psi_{j+1}(\theta) \right\|_{L^\rho}
\leq & \, e^{ \frac{C_\beta (\rho-1) }{\rho(j+1)} }
       \left( |\theta| + \left\| \psi_{j}(\theta) \right\|_{L^\rho} \right) \ .
\end{align}

\paragraph{Conclusion:} Applying Gronwall's lemma to Equation \eqref{eq:prufer_modulus_3}, we have, for $j \geq j_0$, 
\begin{align*}
   \left\| \psi_{j}(\theta) \right\|_{L^\rho} 
& \leq e^{\frac{C_\beta (\rho-1) }{\rho} \sum_{k=j_0+1}^j \frac{1}{k}}
       \left( \left\| \psi_{j_0}(\theta) \right\|_{L^\rho} 
            + |\theta| \sum_{l=j_0+1}^{j} e^{-\frac{C_\beta (\rho-1) }{\rho} \sum_{k=j_0+1}^{l-1} \frac{1}{k}}
       \right)\\
& \ll_{\beta, \rho}  (1+j) |\theta| \sum_{l=j_0+1}^j \frac{1}{1+j} e^{\frac{C_\beta (\rho-1) }{\rho} \sum_{k=l}^j \frac{1}{k}}
  \ll (1+j) |\theta| \sum_{l=j_0+1}^j \frac{1}{1+j} \left( \frac{l-1}{j} \right)^{-C_\beta \frac{(\rho-1) }{\rho}}\ .
\end{align*}
We are done upon noticing that for $\rho-1$ small enough, the Riemann sum $\sum_{l=j_0}^j \frac{1}{1+j} \left( \frac{l-1}{j} \right)^{-C_\beta \frac{(\rho-1) }{\rho}}$ is convergent.
\end{proof}
A useful corollary we will often invoke is:
\begin{corollary}
\label{corollary:djo}
With the constant $\rho = \rho(\beta)$ given in the previous proposition, and $D>0$, we have almost surely:
$$ \sup_{0 \leq j \leq k-1} \, \sup_{ \substack{\theta \in [0, 2\pi), \,  \theta'
 \in [0, 2\pi)_{k^{D}}, \\ \theta - \theta' \in [0, 2\pi k^{-D})} }
  \left| \psi_j(\theta) - \psi_j(\theta') \right| = 
  \Oc\left( k^{3-D\left( 1- \frac{1}{\rho} \right)} \right),$$
 where 
 $$[0, 2\pi)_{m} := \left\{ \frac{s}{2 \pi m}, 
 s \in \{0, 1, \dots, m-1\} \right\}.$$
\end{corollary}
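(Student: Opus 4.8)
The plan is to combine the a.s.\ monotonicity of $\psi_j$, the polynomial tail bound of Proposition~\ref{proposition:prufer_modulus}, and a Borel--Cantelli argument. Write $\theta'_0<\theta'_1<\dots$ for the ($\asymp k^D$ many) equally spaced points of $[0,2\pi)_{k^D}$ in increasing order, with the convention that the point following the last one is $2\pi$. Any admissible pair $(\theta,\theta')$ in the supremum satisfies $\theta'=\theta'_s$ and $\theta\in[\theta'_s,\theta'_{s+1})$ for some $s$. First I would use that $\psi_j$ is a.s.\ increasing (established in the lemma above) to bound the continuous oscillation by a discrete increment,
\[
\bigl|\psi_j(\theta)-\psi_j(\theta'_s)\bigr|\ \le\ \psi_j(\theta'_{s+1})-\psi_j(\theta'_s)\ ,
\]
so that it suffices to control $\max_{0\le j\le k-1}\max_s\bigl(\psi_j(\theta'_{s+1})-\psi_j(\theta'_s)\bigr)$.

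Next, by the distributional identity $\psi_j(\theta'_{s+1})-\psi_j(\theta'_s)\eqlaw\psi_j(2\pi k^{-D})$ from the same lemma (and the quasi-periodicity $\psi_j(\theta+2\pi)=\psi_j(\theta)+2(j+1)\pi$ to handle the boundary point $\theta'_{s+1}=2\pi$), Proposition~\ref{proposition:prufer_modulus} gives, for every $t>0$ and every $j\le k-1$,
\[
\P\bigl(\psi_j(\theta'_{s+1})-\psi_j(\theta'_s)\ge t\bigr)\ \ll_\beta\ \frac{\bigl((j+1)k^{-D}\bigr)^\rho}{t^\rho}\ \le\ \frac{k^{\rho(1-D)}}{t^\rho}\ ,
\]
using $j+1\le k$. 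I would then take $t=C\,k^{3-D(1-1/\rho)}$ for a large constant $C$ and perform a union bound over the $\Oc(k^{1+D})$ pairs $(j,s)$: the total probability is $\ll_\beta k^{1+D}\cdot k^{\rho(1-D)}\cdot k^{-\rho(3-D(1-1/\rho))}=k^{1-2\rho}$, which is summable in $k$ precisely because $\rho>1$. Borel--Cantelli then yields that, almost surely, the left-hand side of the claimed estimate is $\le C\,k^{3-D(1-1/\rho)}$ for all sufficiently large $k$; since for each fixed $k$ this left-hand side is a finite maximum of suprema of continuous functions over a compact interval, the finitely many remaining values of $k$ are absorbed into a random constant, giving the desired almost sure $\Oc$.

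The argument is essentially routine once Proposition~\ref{proposition:prufer_modulus} is in hand; the two points needing care are (i) the sandwiching step, where one must invoke monotonicity and, at the wrap-around $\theta'_{s+1}=2\pi$, the quasi-periodicity of $\psi_j$; and (ii) the bookkeeping of exponents. Notably, the precise exponent $3-D(1-1/\rho)$ is exactly what makes the union bound over the $\Oc(k^{1+D})$ grid cells summable after the $k^{-D\rho}$ gain coming from the grid spacing, and the resulting condition simplifies, after the $D$-dependent terms cancel, to the bare requirement $\rho>1$ supplied by the proposition.
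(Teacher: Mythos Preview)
Your proof is correct and follows essentially the same route as the paper's own argument: monotonicity of $\psi_j$ to pass from the continuous supremum to discrete grid increments, the distributional identity $\psi_j(\theta'_{s+1})-\psi_j(\theta'_s)\eqlaw\psi_j(2\pi k^{-D})$, the tail bound from Proposition~\ref{proposition:prufer_modulus}, a union bound over the $\Oc(k^{1+D})$ pairs yielding $k^{1-2\rho}$, and Borel--Cantelli. Your explicit handling of the wrap-around at $2\pi$ via quasi-periodicity is a small improvement in rigor over the paper, which leaves this implicit.
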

\begin{proof}
Because $\psi_j$ is increasing, we have
\begin{align*}
  & \sup_{0 \leq j \leq k-1} \, \sup_{\theta \in [0, 2 \pi)} |\psi_{j}(\theta) - 
\psi_{j} ( 2 \pi k^{-D} \lfloor k^D \theta/ 2 \pi \rfloor ) |\\
= & \sup_{0 \leq j \leq k-1} \, \sup_{0 \leq s \leq k^D - 1}
\psi_j (2 (s+1)\pi k^{-D}) - \psi_j (2 s \pi k^{-D}).
\end{align*}
Now, using the fact that $\psi_j(\theta - \theta')$ and $\psi_j( \theta) - \psi_j(\theta')$ have the same law, we get: 
\begin{align*}
  & \sum_{ \substack{0 \leq j \leq k-1 \\ 0 \leq s \leq k^D - 1} }
    \P\left( \psi_j (2 (s+1)\pi k^{-D}) - \psi_j (2 s \pi k^{-D}) \geq k^{3-D\left( 1- \frac{1}{\rho} \right)} \right)\\
= & k^D \sum_{ 0 \leq j \leq k-1 }
        \P\left( \psi_j (2 \pi k^{-D}) \geq k^{3-D\left( 1- \frac{1}{\rho} \right)} \right)\\
\ll_{\beta} \, & k^D \sum_{ 0 \leq j \leq k-1 } \left( \frac{1+j}{k^{3+\frac{D}{\rho}}} \right)^{\rho}
\ll k^{1-2 \rho} \ .
\end{align*}
As $\rho>1$, these probabilities are summable and almost surely, the events occur finitely many times by the Borel-Cantelli Lemma.
\end{proof}

\section{An auxiliary log-correlated field}
\label{section:auxiliary_field}

Recall that $\beta_j = \frac{\beta}{2}(j+1)$ for all $j\geq 0$. Consider $(E_j)_{j\geq 0}, (\Theta_j)_{j\geq 0}, (\Gamma_j)_{j\geq 0}$ independent with $E_j$ being exponentially distributed, $\Theta_j$ being uniform on $[0,2\pi)$ and $\Gamma_j$ being Gamma distributed with parameter $\beta_j$. Following the setting presented in Subsection \ref{subsection:setting}, the deformed Verblunsky coefficients can be taken as follows:
\begin{align}
\label{eq:verblunsky_gamma}
 \forall  j\geq 0, \ \gamma_j = & \sqrt{\frac{E_j}{E_j+\Gamma_j}} e^{i\Theta_j} \,
\end{align}
by standard equalities in law involving Beta and Gamma distributions.  

Now, notice that 
\begin{align}
\label{eq:complex_normal}
\Nc_j^\C = & - \sqrt{E_j} e^{i \Theta_j} 
\end{align}
is a complex Gaussian of total variance $1$ and that for large $j$, $\Gamma_j$ is close to $\beta (1+j)/2$ with high probability. As such, we feel like approximating $\gamma_j \approx  - \sqrt{\frac{2}{\beta}} \frac{\Nc_j^\C}{\sqrt{j+1}}$. To that endeavor, we define for $\theta \in \mathbb{R}$ the auxiliary field:
\begin{align}
\label{eq:def_Z}
	Z_k(\theta) := & \sum_{j=0}^{k-1} \frac{ \Nc_j^\C e^{i \psi_j(\theta)}}{\sqrt{j+1}} \ .
\end{align}
This is a much more convenient process to study, as $\left( Z_k(\theta) \right)_{\theta \in [0,2\pi)}$ is a field
with Gaussian one-point marginals. It will be particularly convenient in order to establish 
the upper bounds in Section \ref{section:upper_bound}. Moreover, despite the fact that the $Z$ is not globally 
a Gaussian field because of the Pr\"ufer phases, its law is invariant when we shift $\theta$ by a constant. 

Indeed, for $\theta_0 \in \mathbb{R}$, if we change $\Theta_j$ by $\Theta_j + \psi_j( \theta_0)$ modulo $2 \pi$ for all 
$j \geq 0$,
this does not change the distribution of $(E_j)_{j\geq 0}, (\Theta_j)_{j\geq 0}, (\Gamma_j)_{j\geq 0}$
since $\psi_j(\theta_0)$ is measurable with respect to the variables $E_k, \Theta_k, \Gamma_k$ for $k \leq j-1$. 
Hence, the law of the field $Z$ is not changed. On the other hand, 
$ \Nc_j^\C$ and $\gamma_j$ are multiplied by $e^{i \psi_j(\theta_0)}$, and then one 
checks that the new values of $\gamma_j$ correspond to the modified Verblunsky coefficients associated to the 
polynomials $z \mapsto \Phi_k^*(z e^{i \theta})$, whose relative Pr\"ufer phases are given by 
$\theta \mapsto \psi_j(\theta_0 + \theta) - \psi_j(\theta_0)$. We then deduce that the new field $Z$, which 
has the same law as the initial one, is 
given by $(Z_k(\theta + \theta_0))_{k \geq 0, \theta  \in \mathbb{R}}$. 

The main result of this section is:
\begin{proposition}
\label{proposition:comparison_to_Z}
We have almost surely:
$$\sup_{k \geq 0} \sup_{\theta \in [0,2\pi)}
  \left| \log \Phi_k^*(e^{i\theta}) - \sqrt{\frac{2}{\beta}} Z_k(\theta) \right|
  < \infty
$$
In particular, regardless of the coupling obtained by having picked a consistent family of Verblunsky coefficients, the family of random variables
\begin{align*}
	\left( \sup_{\theta \in [0,2\pi)} \left| \log \Phi_k^*(e^{i\theta}) - \sqrt{\frac{2}{\beta}} Z_k(\theta) \right| \right)_{k \geq 0}
\end{align*}
is tight. As such, Theorem \ref{thm:main} is equivalent to proving that for $\sigma \in \{1, i , -i\}$:
$$ \sup_{\theta \in [0; 2\pi)} \Re\left[ \sigma Z_n(\theta) \right]
 = \log n - \frac{3}{4} \log_2 n + \Oc(1) \ ,$$
 $\Oc(1)$ denoting a tight family of random variables. 
\end{proposition}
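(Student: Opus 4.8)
The goal is to prove Proposition \ref{proposition:comparison_to_Z}: almost surely, $\sup_{k\geq 0}\sup_{\theta}\left|\log\Phi_k^*(e^{i\theta}) - \sqrt{2/\beta}\,Z_k(\theta)\right| < \infty$.

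The plan is to expand both quantities along the Szeg\H{o}/Pr\"ufer recursion and control the difference term by term. From Lemma \ref{lemma:expr} (or rather \eqref{eq:Phi_relativeproduct}) we have
$$
\log \Phi_k^*(e^{i\theta}) = \sum_{j=0}^{k-1} \log\left(1 - \gamma_j e^{i\psi_j(\theta)}\right),
$$
while by definition $Z_k(\theta) = \sum_{j=0}^{k-1} \frac{\Nc_j^\C e^{i\psi_j(\theta)}}{\sqrt{j+1}}$. Using \eqref{eq:verblunsky_gamma} and \eqref{eq:complex_normal} I would write $\gamma_j = -\sqrt{2/\beta}\cdot \frac{\Nc_j^\C}{\sqrt{\Gamma_j\cdot 2/\beta}}\cdot\sqrt{\frac{E_j}{E_j+\Gamma_j}}\big/\sqrt{E_j}$ — more cleanly, $\gamma_j e^{-i\Theta_j} = |\gamma_j| = \sqrt{E_j/(E_j+\Gamma_j)}$ and $\sqrt{2/\beta}\,\Nc_j^\C e^{-i\Theta_j}/\sqrt{j+1} = -\sqrt{2/\beta}\sqrt{E_j/(j+1)}$, so the per-term difference is
$$
\log(1-\gamma_j e^{i\psi_j(\theta)}) - \sqrt{\tfrac{2}{\beta}}\,\frac{\Nc_j^\C e^{i\psi_j(\theta)}}{\sqrt{j+1}} = \Big[\log(1-\gamma_j e^{i\psi_j}) + \gamma_j e^{i\psi_j}\Big] + \Big[\sqrt{\tfrac{2}{\beta}}\sqrt{\tfrac{E_j}{j+1}} - \sqrt{\tfrac{E_j}{E_j+\Gamma_j}}\Big]e^{i(\psi_j+\Theta_j)}.
$$
The first bracket is $O(|\gamma_j|^2)$ uniformly in $\theta$, and $\E|\gamma_j|^2 = \E[E_j/(E_j+\Gamma_j)] \asymp 1/(1+\beta j)$, so $\sum_j |\gamma_j|^2$ has finite expectation; combined with a Borel--Cantelli / tail argument (Proposition \ref{proposition:subgaussianity} gives the subgaussian control on $\Re\gamma_j^2, \Im\gamma_j^2$) this sum converges a.s. and uniformly in $\theta$. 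The second bracket is the "Gaussian approximation error" of the coefficient modulus: I would bound $\left|\sqrt{2E_j/(\beta(j+1))} - \sqrt{E_j/(E_j+\Gamma_j)}\right| \leq \sqrt{E_j}\left|\sqrt{2/(\beta(j+1))} - 1/\sqrt{E_j+\Gamma_j}\right|$ and use that $\Gamma_j$ concentrates around $\beta(j+1)/2$ with fluctuations of order $\sqrt{j}$, so this is $O(\sqrt{E_j}\cdot |\Gamma_j - \beta(j+1)/2|/(j+1)^{3/2})$, which in expectation is $O(j^{1/2}/(j+1)^{3/2}) = O(1/(j+1))$ — not summable by itself, but the key point is that $\Gamma_j - \beta(j+1)/2$ are \emph{independent centered} across $j$ (and the angular phases $e^{i(\psi_j+\Theta_j)}$ supply extra cancellation), so I would instead control $\sum_j \big(\sqrt{2E_j/(\beta(j+1))} - \sqrt{E_j/(E_j+\Gamma_j)}\big)e^{i(\psi_j+\Theta_j)}$ as a (conditionally, given $\Fc_j$) martingale-type sum whose increments have variance $O(1/(j+1)^2)$ after accounting for the concentration of $\Gamma_j$, hence converges a.s. and uniformly; a maximal inequality (Doob, as used in Proposition \ref{proposition:prufer_deviation}) upgrades pointwise control to a uniform-in-$\theta$ supremum by discretizing $\theta$ on a fine grid and using the equicontinuity of $\psi_j$ from Corollary \ref{corollary:djo}.

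The main obstacle, and the place requiring the most care, is converting the \emph{pointwise in $\theta$} almost-sure convergence into a \emph{uniform in $\theta$ and in $k$} bound. For each fixed $\theta$ the telescoping series converges a.s.\ by the above, but we need a single almost-surely-finite bound over all $\theta\in[0,2\pi)$ and all truncation levels $k$. The standard device — which this paper has already set up — is: (i) take a grid $[0,2\pi)_{m}$ with $m = m(k)$ polynomial in $k$ (say $m = k^D$ for $D$ large), (ii) bound the difference at grid points via a union bound over the (polynomially many) grid points combined with the subgaussian tail estimates from Proposition \ref{proposition:subgaussianity}, summed against Borel--Cantelli over $k$, and (iii) control the oscillation between grid points using Corollary \ref{corollary:djo}, which says $\sup_{j\leq k-1}\sup_{|\theta-\theta'|\leq 2\pi k^{-D}}|\psi_j(\theta)-\psi_j(\theta')| = O(k^{3-D(1-1/\rho)})$, so that $|e^{i\psi_j(\theta)} - e^{i\psi_j(\theta')}|$ is controlled and, weighted by $1/\sqrt{j+1}$ and summed, contributes a vanishing (or at worst $O(1)$) amount provided $D$ is taken large enough. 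One has to be slightly careful that the bound on $\log(1-\gamma_j e^{i\psi_j(\theta)})$ between grid points also uses $|\gamma_j|<1$ staying bounded away from $1$ except on rare events (again controlled by the Beta tail / Borel--Cantelli), since the logarithm blows up near $\gamma_j e^{i\psi_j} = 1$; but Corollary \ref{corollary:phistar}'s framework already guarantees $\Phi_k^*$ has no zeros on $\overline{\D}$, so $\log\Phi_k^*$ is genuinely continuous and the near-$1$ issue only affects individual terms, handled by noting $\P(|\gamma_j| > 1 - j^{-2})$ is summable.

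Finally, once the a.s.\ uniform finiteness is established, tightness of $\left(\sup_\theta|\log\Phi_k^*(e^{i\theta}) - \sqrt{2/\beta}\,Z_k(\theta)|\right)_{k\geq 0}$ is immediate (an a.s.\ bounded family is tight), and the reduction of Theorem \ref{thm:main} to the statement $\sup_\theta \Re[\sigma Z_n(\theta)] = \log n - \frac{3}{4}\log_2 n + \Oc(1)$ follows by combining this with Corollary \ref{corollary:phistar}: replacing $\log\Phi_n^*$ by $\sqrt{2/\beta}\,Z_n$ changes the supremum by a tight amount, and $\Re[\sigma\sqrt{2/\beta}\,Z_n] = \sqrt{2/\beta}\,\Re[\sigma Z_n]$, so the factor $\sqrt{\beta/2}$ in Theorem \ref{thm:main} cancels the $\sqrt{2/\beta}$ exactly, leaving the claimed normalization with $\Oc(1)$ error. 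I would organize the write-up as: (1) the algebraic decomposition of the per-term difference into the two brackets above; (2) an a.s.\ summability lemma for each bracket at a fixed $\theta$ (moment bounds + Borel--Cantelli, using Proposition \ref{proposition:subgaussianity} for the first and concentration of $\Gamma_j$ plus a martingale argument for the second); (3) the discretization argument promoting this to uniformity in $(\theta,k)$ via Corollary \ref{corollary:djo}; (4) the short deduction of tightness and of the equivalent reformulation of Theorem \ref{thm:main}.
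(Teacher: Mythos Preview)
There is a genuine gap in your treatment of the first bracket. You write that $\log(1-\gamma_j e^{i\psi_j}) + \gamma_j e^{i\psi_j} = O(|\gamma_j|^2)$ and then claim ``$\sum_j |\gamma_j|^2$ has finite expectation''. This is false: $\E|\gamma_j|^2 = (1+\beta_j)^{-1} \asymp 1/j$, so the sum is harmonic and diverges, both in expectation and almost surely. The appeal to the subgaussian bounds on $\Re\gamma_j^2, \Im\gamma_j^2$ from Proposition \ref{proposition:subgaussianity} cannot rescue an absolute-value argument here, since absolute summability simply fails.

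The paper repairs this by expanding one order further:
\[
\log(1-\gamma_j e^{i\psi_j}) = -\gamma_j e^{i\psi_j} - \tfrac{1}{2}\gamma_j^2 e^{2i\psi_j} + O\!\left(\frac{|\gamma_j|^3}{1-|\gamma_j|}\right).
\]
Now the cubic remainder \emph{is} absolutely summable (its expectation is $O(j^{-3/2})$), but the quadratic piece $\sum_{j<k}\gamma_j^2 e^{2i\psi_j(\theta)}$ is not, and must be handled as a genuinely oscillatory sum. The paper does this in Subsection~3.1: set $S_k(\theta) = \sum_{j<k}(j+1)\gamma_j^2 e^{2i\psi_j(\theta)}$, use Abel summation so that it suffices to show $\sup_\theta |S_k(\theta)| = O(k^\alpha)$ for some $\alpha<1$, then prove a one-point Chernoff estimate exploiting the rotational invariance and the third inequality of Proposition \ref{proposition:subgaussianity}, and finally pass to all $\theta$ via a union bound on a $k^{-D}$-grid combined with Corollary \ref{corollary:djo}. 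Your discretization step (iii) is exactly the right mechanism, but it has to be applied to the rescaled sum $S_k$, not to an absolutely convergent series.

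For the second bracket your outline is essentially correct and matches the paper's Subsection~3.2: the paper writes the per-term error as $\gamma_j e^{i\psi_j}\bigl(1-\sqrt{(E_j+\Gamma_j)/\beta_j}\bigr)$, forms the Abel-rescaled sum $T_k(\theta)$, and again proves $\sup_\theta |T_k(\theta)| = O(k^\alpha)$ via a one-point Chernoff bound (using the independence of $\gamma_j$ and $E_j+\Gamma_j$ from the Beta--Gamma algebra) plus the same grid argument. Your variance heuristic ``$O(1/(j+1)^2)$'' is morally right, but note that a pure martingale/$L^2$ argument is not quite enough for the uniform-in-$\theta$ bound; the Abel-summation-plus-Chernoff route is what makes the union bound over $k^D$ grid points affordable.
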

As a corollary of Proposition \ref{proposition:comparison_to_Z}, we have:
\begin{corollary}
\label{corollary:coupling_bm}
Fix $z \in \U$. One can couple the sequence $\left(\Phi^*_k(z) \right)_{k \geq 0}$ and a complex Brownian motion $\left( W_t^\C = \sqrt{\half}W_t^1 + i \sqrt{\half} W_t^2 \right)_{t \geq 0}$ in such a way that almost surely: 
$$\sup_{k \geq 0} |\log \Phi^*_k(z) -  \sqrt{\frac{2}{\beta}} W_{\tau(k)}^\C | < \infty, $$
where $\tau$ is the time change:
\begin{align}
\label{eq:def_tau}
\tau(p) := & \Var\left( Z_k(\theta=0) \right) = \sum_{k=1}^p \frac{1}{k} \ .
\end{align}
The complex Brownian motion is normalized so that $\E\left( | W_t^\C |^2 \right) = t$, and $\left( W^1, W^2 \right)$ is a pair of independent standard real Brownian motions.
\end{corollary}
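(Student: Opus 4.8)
The plan is to deduce Corollary~\ref{corollary:coupling_bm} from Proposition~\ref{proposition:comparison_to_Z} together with the elementary fact that, for a \emph{fixed} angle $\theta$, the process $\left(Z_k(\theta)\right)_{k\ge 0}$ has \emph{independent} complex Gaussian increments and therefore embeds exactly along a deterministic time grid of a complex Brownian motion. Write $z=e^{i\theta}$ and fix it once and for all. Proposition~\ref{proposition:comparison_to_Z} already gives, almost surely,
\begin{align}
\label{eq:cor_bm_red}
\sup_{k\ge 0}\left|\log\Phi_k^*(z)-\sqrt{\tfrac{2}{\beta}}\,Z_k(\theta)\right|<\infty ,
\end{align}
so it suffices to construct, on a possibly enlarged probability space, a complex Brownian motion $W^\C=\sqrt{\half}W^1+i\sqrt{\half}W^2$ --- with $W^1,W^2$ independent standard real Brownian motions, so that $\E\left(|W_t^\C|^2\right)=t$ --- satisfying $W^\C_{\tau(k)}=Z_k(\theta)$ for every $k\ge 0$. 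The claimed estimate then follows from \eqref{eq:cor_bm_red} with the very same bounding constant.

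First I would show that the increments $Z_{k+1}(\theta)-Z_k(\theta)=\Nc_k^\C e^{i\psi_k(\theta)}/\sqrt{k+1}$ form an independent sequence of complex Gaussians. Let $\Hc_k$ denote the $\sigma$-algebra generated by $E_i,\Theta_i,\Gamma_i$ for $i\le k-1$. By the recursion~\eqref{eq:relativepsi} the phase $\psi_k(\theta)$ is a deterministic function of $\gamma_0,\dots,\gamma_{k-1}$, hence is $\Hc_k$-measurable, while $\Nc_k^\C=-\sqrt{E_k}e^{i\Theta_k}$ is independent of $\Hc_k$. Rotational invariance of the standard complex Gaussian law then shows that, conditionally on $\Hc_k$, the rotated variable $\Nc_k^\C e^{i\psi_k(\theta)}$ is standard complex Gaussian, with a conditional law not depending on the conditioning. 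Since each $\Nc_i^\C e^{i\psi_i(\theta)}$ with $i<k$ is also $\Hc_k$-measurable, iterating this identity against products of bounded test functions shows that $\left(\Nc_j^\C e^{i\psi_j(\theta)}\right)_{j\ge 0}$ is i.i.d.\ standard complex Gaussian. Consequently, as a process, $\left(Z_k(\theta)\right)_{k\ge 0}$ has the same joint law as $\left(\sum_{j=0}^{k-1}\xi_j/\sqrt{j+1}\right)_{k\ge 0}$ for i.i.d.\ standard complex Gaussians $\xi_j$; in particular its increments are independent, $\Var\left(Z_{k+1}(\theta)-Z_k(\theta)\right)=\frac{1}{k+1}=\tau(k+1)-\tau(k)$, and the real and imaginary parts of each increment are independent, each carrying half of this variance.

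It then remains to embed this Gaussian walk into a complex Brownian motion. The increments of $\Re Z_\cdot(\theta)$, resp.\ of $\Im Z_\cdot(\theta)$, over the deterministic grid $\left(\tau(k)\right)_{k\ge 0}$ are precisely the increments of $\sqrt{\half}$ times a standard real Brownian motion sampled along that grid. Hence, on an enlarged probability space, I would fill in each time interval $[\tau(k),\tau(k+1)]$ by an independent real Brownian bridge for each of the two coordinates, the two coordinate processes being taken independent since $\Re Z_\cdot(\theta)$ and $\Im Z_\cdot(\theta)$ are independent walks. This produces independent standard real Brownian motions $W^1,W^2$ with $\Re Z_k(\theta)=\sqrt{\half}\,W^1_{\tau(k)}$ and $\Im Z_k(\theta)=\sqrt{\half}\,W^2_{\tau(k)}$ for all $k$, so that $W^\C:=\sqrt{\half}W^1+i\sqrt{\half}W^2$ is a complex Brownian motion with $\E\left(|W_t^\C|^2\right)=t$ and $W^\C_{\tau(k)}=Z_k(\theta)$. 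Substituting into \eqref{eq:cor_bm_red} gives $\sup_{k\ge 0}\left|\log\Phi_k^*(z)-\sqrt{\tfrac{2}{\beta}}\,W^\C_{\tau(k)}\right|<\infty$ almost surely, which is the assertion.

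The only delicate point is the independence of the increments: a priori one only knows that $\left(Z_k(\theta)\right)_{k\ge 0}$ is a Gaussian martingale, and ruling out hidden dependence requires some care because $\Nc_j^\C$ and $\gamma_j$ are \emph{not} independent --- they share the common phase $\Theta_j$ --- so that the past increments $\Nc_i^\C e^{i\psi_i(\theta)}$ are not $\Fc_j$-measurable. Conditioning instead on the richer $\sigma$-algebra $\Hc_j$, which also carries the magnitudes $E_i$, and invoking rotational invariance is exactly what upgrades the martingale-difference property to genuine independence. Everything else is classical, the substantive content of the statement being already contained in Proposition~\ref{proposition:comparison_to_Z}.
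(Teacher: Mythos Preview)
Your proof is correct and is exactly the argument the paper has in mind: the corollary is stated without proof, immediately after Proposition~\ref{proposition:comparison_to_Z}, as a direct consequence of it together with the observation that for fixed $\theta$ the increments of $Z_\cdot(\theta)$ are i.i.d.\ complex Gaussians with variances $\tau(k+1)-\tau(k)$. You have supplied precisely the details the paper omits, including the one genuinely subtle point---that one must condition on the richer filtration $\Hc_k=\sigma(E_i,\Theta_i,\Gamma_i:i\le k-1)$ rather than $\Fc_k$ to extract independence, since $\Nc_j^\C$ and $\gamma_j$ share the phase $\Theta_j$.
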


\begin{proof}[Strategy of proof for Proposition \ref{proposition:comparison_to_Z}]
	We have the following:
		\begin{align*}
		\log \Phi_k^*(e^{i\theta}) &= \sum_{j=0}^{k-1} \log \left(1 - \gamma_j e^{i\psi_j(\theta)} \right)
		\\
		&=- \sum_{j=0}^{k-1} \gamma_j e^{i\psi_j(\theta)} - \sum_{j=0}^{k-1}  \frac{\gamma_j^2}{2}  e^{2 i\psi_j(\theta)}  + \Oc \left( \sum_{j=1}^{k-1} \frac{|\gamma_j|^3}{1-|\gamma_j|}\right)
	\end{align*}
	from the Taylor expansion of the logarithm. Now, for all $j \geq 0$, by using the expression of the Beta distribution corresponding to $|\gamma_j|^2$,  we get: 
	\begin{align*}\E \left[ \frac{|\gamma_j|^3}{1 - |\gamma_j|} \right] 
		& = \frac{\beta (j+1)}{2} \int_{0}^1 (1 - x)^{\frac{ \beta (j+1)}{2} - 1} \frac{x^{\frac{3}{2}}}{1 - \sqrt{x}} dx
		\\ & = \frac{\beta (j+1)}{2}
		\int_{0}^1 (1 - x)^{\frac{ \beta (j+1)}{2} - 2} x^{\frac{3}{2}}(1+\sqrt{x}) dx
		\\ &  \leq \beta(j+1) 
		\int_{0}^1 (1 - x)^{\frac{ \beta (j+1)}{2} - 2} x^{\frac{3}{2}} dx \ .
	\end{align*}
	If $j$ is larger than $j_0$ (depending only on $\beta$), $\frac{ \beta (j+1)}{2} - 1 > 0$, and then, from the Beta integral: 
	$$\E \left[ \frac{|\gamma_j|^3}{1 - |\gamma_j|} \right] 
	\leq \beta(j+1)  \frac{\Gamma(\frac{ \beta (j+1)}{2} - 1)  \Gamma(\frac{5}{2})}{\Gamma(\frac{ \beta (j+1)}{2} + \frac{3}{2})}
	\underset{j \rightarrow \infty}{\sim}
	(\beta j ) \Gamma(\frac{5}{2}) \left( \frac{\beta j}{2} \right)^{-\frac{5}{2}}
	= \Oc((\beta j)^{-\frac{3}{2}}), 
	$$
	which implies $\E \left[ \sum_{j=j_0}^{\infty}\frac{|\gamma_j|^3}{1 - |\gamma_j|} \right] < \infty$. Hence, almost surely, 
	$$\sum_{j=0}^{\infty}
	\frac{|\gamma_j|^3}{1 - |\gamma_j|} < \infty,$$ 
	and then
	$$\sup_{k \geq 0} \sup_{\theta \in 
	[0, 2\pi)} \left|
	\log \Phi_k^*(e^{i \theta})
	+ \sum_{j=0}^{k-1} \gamma_j e^{i\psi_j(\theta)}
	+ \sum_{j=0}^{k-1} \frac{\gamma_j^2}{2}  e^{2i \psi_j(\theta)}
	\right| < \infty \ .$$
	
	It is now sufficient to prove that almost surely
	\begin{align}
	\label{eq:tightness1}
		\sup_{k \geq 0} \sup_{\theta \in [0,2\pi)}
		\left| \sum_{j=0}^{k-1}  \gamma_j^2  e^{2 i \psi_j(\theta)} \right| < \infty
	\end{align}
	and 
	\begin{align}
	\label{eq:tightness2}
	        \sup_{k \geq 0} \sup_{\theta \in [0,2\pi)}
	        \left| \sum_{j=0}^{k-1} \gamma_j e^{i\psi_j(\theta)}  + \sqrt{\frac{2}{\beta}} Z_k(\theta) \right| < \infty \ .
	\end{align}
	We shall use the same strategy for both processes in the next two subsections.
\end{proof}

\subsection{Tightness of the random functions in Eq. \texorpdfstring{\eqref{eq:tightness1}}{Eq1} }
\label{subsection:tightness1}

Let us define
\begin{align}
\label{eq:def_S}
S_k(\theta) := & \sum_{j=0}^{k-1} (j+1) \gamma_j^2  e^{2 i \psi_j(\theta)} \ .
\end{align}
By Abel summation, for $k \geq 1$, 
$$  \sum_{j=0}^{k-1} \gamma_j^2  e^{2 i \psi_j(\theta)}
  = \sum_{j=0}^{k-1} \frac{S_{j+1}(\theta) -  S_j(\theta)}{j+1}
  = \frac{S_k(\theta)}{k} +  \sum_{j=1}^{k-1} \frac{S_j(\theta)}{j(j+1)}.
$$
We deduce that 
$$
\sup_{k \geq 0} \sup_{\theta \in [0, 2\pi)} \left| \sum_{j=0}^{k-1} \gamma_j^2  e^{2 i \psi_j(\theta)} \right| 
\leq \sup_{k \geq 1}
\sup_{\theta \in [0, 2\pi)} \frac{|S_k(\theta)|}{k}
+ \sum_{j=1}^{\infty} \frac{1}{j(j+1)}
\sup_{\theta \in [0, 2\pi)} |S_j(\theta)|.
$$
It is then sufficient to show that for some $\alpha \in (0,1)$, almost surely:
$$\sup_{\theta \in [0, 2\pi)} |S_k(\theta)|
= \Oc ( k^{\alpha}) \ .$$

\paragraph{One point estimate:} In this paragraph, we prove that for all $\theta \in [0, 2\pi)$, $C>0$, $\alpha \in (1/2,1)$,
\begin{align}
\label{eq:one_point_S}
 \P\left(  |S_k(\theta)| \geq k^{\alpha} \right)
\ll_{\alpha, \beta, C} k^{-C} \ , 
\end{align}
using a Chernoff bound. For a given $\theta \in [0, 2\pi)$, $S_k(\theta)$ has the same law as $S_k(\theta = 0)$ by rotational invariance. Moreover, since the distribution of $S_k(\theta)$ is invariant under multiplication by a complex number of modulus one, we only have to prove
$$\P\left( \Re S_k(\theta=0) \geq k^{\alpha}/2 \right) \ll_{\alpha, \beta, C} k^{-C}.$$

Now, from the third inequality in Proposition \ref{proposition:subgaussianity}, we deduce that for $\lambda$ in a neighborhood of zero depending only of $\beta$:
$$
     \E\left[ e^{(j+1) \lambda \Re(\gamma_j^2)} \right]
\leq \exp\left( \frac{8\lambda^2}{\beta^2} \right) \ .
$$
Hence, using the independence of $(\gamma_j)_{0 \leq j \leq k-1}$, we have $\E [e^{\lambda \Re S_k(\theta = 0)}] \leq  \exp\left(  \frac{8 k \lambda^2}{\beta^2} \right)$. A classical Chernoff bound yields for all $\alpha \in (\half, 1)$,
$$   \P\left(  \Re S_k(\theta=0) \geq k^{\alpha}/2 \right)
\leq \exp\left( - \frac{\lambda k^{\alpha}}{2}
                + \frac{8 \lambda^2 k}{\beta^2} \right),$$
and for $k$ large enough depending on $\alpha$ and $\beta$, we can take $\lambda = \frac{\beta^2 k^{\alpha-1}}{32}$, which gives
$$
\P\left[ \Re S_k(\theta = 0) \geq k^{\alpha}/2 \right]
\leq \exp \left(  - \frac{\beta^2 k^{2 \alpha-1}}{128} \right) \ll_{\alpha, \beta, C} k^{-C},$$
for all $C > 0$. This domination, proven for $k$ large enough, can of course be extended to small values of $k$, since there are finitely many of them. 

\paragraph{Multiple points estimate:}
Using the estimate \eqref{eq:one_point_S}, the union bound and the Borel-Cantelli lemma, we deduce that almost surely, for all integers $D > 0$, 
$$\sup_{\theta \in  [0, 2\pi)_{k^{-D}} }
|S_k(\theta)| < k^{\alpha}$$
occurs for all but finitely many values of $k$, and then 
$$\sup_{\theta \in [0, 2\pi)_{k^{-D}}}  |S_k(\theta)| = \Oc(k^{\alpha}) \ \ a.s \ .$$

Now, it remains to fill the gaps between the points in $[0, 2\pi)_{k^{-D}}$. From the formula \eqref{eq:def_S} giving $S_k(\theta)$, it is enough to have almost surely:
$$\ \sup_{0 \leq j \leq k-1} \, \sup_{ \substack{\theta \in [0, 2\pi), \,  \theta'
 \in [0, 2\pi)_{k^{-D}}, \\ \theta - \theta' \in [0, 2\pi k^{-D})} }
  \left| \gamma_j^2 \left( e^{2 i  \psi_j(\theta)} - e^{2 i  \psi_j(\theta')} \right) \right| = 
  \Oc(k^{-2+\alpha}) ,$$
which is obtained from Corollary \ref{corollary:djo} for $D>0$ large enough.

\subsection{Tightness of the random functions in Eq. \texorpdfstring{\eqref{eq:tightness2}}{Eq2}}
\label{subsection:tightness2}

By putting Equations \eqref{eq:verblunsky_gamma} and \eqref{eq:complex_normal}, we can write 
$$\gamma_j e^{i \psi_j(\theta)} + \sqrt{\frac{2}{\beta}} \frac{\Nc_j^\C}{\sqrt{j+1}} e^{i \psi_j(\theta)}
= \gamma_j e^{i \psi_j(\theta)} \left( 1 - \sqrt{\frac{E_j+\Gamma_j}{\beta_j} } \right) \ .$$
In a similar fashion as in the previous subsection, we define:
\begin{align}
\label{eq:def_T} 
T_k(\theta) := \sum_{j=0}^{k-1} (j+1) 
\gamma_j e^{i \psi_j(\theta)} \left( 1 - \sqrt{\frac{E_j+\Gamma_j}{\beta_j} } \right) .
\end{align}
By Abel summation, for $k \geq 1$, 
$$\sum_{j=0}^{k-1} \gamma_j e^{i \psi_j(\theta)} \left( 1 - \sqrt{\frac{E_j+\Gamma_j}{\beta_j} } \right)
= \sum_{j=0}^{k-1} \frac{T_{j+1}(\theta) -  T_j(\theta)}{j+1} = \frac{T_k(\theta)}{k} +  \sum_{j=1}^{k-1}
\frac{T_j(\theta)}{j(j+1)}.
$$
We deduce that 
$$\sup_{k \geq 0} 
\sup_{\theta \in [0,2\pi)}
\left| \sum_{j=0}^{k-1} \gamma_j e^{i \psi_j(\theta)} \left( 1 - \sqrt{\frac{E_j+\Gamma_j}{\beta_j} } \right)  \right| 
\leq \sup_{k \geq 1} \sup_{\theta \in [0,2\pi)} \frac{|T_k(\theta)|}{k}
+ \sum_{j=1}^{\infty} \frac{1}{j(j+1)}
\sup_{\theta \in [0,2\pi)} |T_j(\theta)|.
$$
As in the previous subsection, it is then sufficient to show that for some $\alpha \in (0,1)$,
$$\sup_{\theta\in [0,2\pi)} |T_k(\theta)|
= \Oc ( k^{\alpha})$$
almost surely and the proof goes along the same lines.

\paragraph{One point estimate:} Just like before, we prove that for all $\theta \in [0, 2\pi)$, $C>0$, $\alpha \in (\frac{3}{4},1)$:
\begin{align}
\label{eq:one_point_A}
 \P\left(  |T_k(\theta)| \geq k^{\alpha} \right)
\ll_{\alpha, \beta, C} k^{-C} \ , 
\end{align}
via a Chernoff bound. Again, by rotational invariance we can assume $\theta=0$ and because the distribution of $T_k(\theta=0)$ is invariant under multiplication by a complex number of modulus one, we only have to prove:
$$\P\left( \Re T_k(\theta=0) \geq k^{\alpha}/2 \right) \ll_{\alpha, \beta, C} k^{-C}.$$
Now, recall that $\gamma_j$ and $E_j+\Gamma_j$ are independent by a classical identity in law due to Lukacs \cite{bib:Luk55}. This fact characterises the Gamma distribution and such identities are refered as ``Beta-Gamma algebra identities''. In any case, by conditioning on $E_j+\Gamma_j$ and applying the second inequality of Proposition \ref{proposition:subgaussianity}, we have, for $\lambda > 0$:
\begin{align*}
       \ \E\left[ e^{(j+1) \lambda \Re \gamma_j \left( 1 - \sqrt{\frac{E_j+\Gamma_j}{\beta_j} } \right) } \right]
\leq & \ \E\left[ e^{\frac{\lambda^2(j+1)}{2\beta} \left( 1 - \sqrt{\frac{E_j+\Gamma_j}{\beta_j} } \right)^2} \right]
= I_1 + I_2, 
\end{align*}
where 
$$I_1 := \E\left[ e^{\frac{\lambda^2(j+1)}{2\beta} \left( 1 - \sqrt{\frac{E_j+\Gamma_j}{\beta_j} } \right)^2} 
\mathds{1}_{|1 - (E_j + \Gamma_j)/\beta_j |
\leq (1+j)^{-1/4}} \right]$$
and 
$$I_2 :=  \E\left[ e^{\frac{\lambda^2(j+1)}{2\beta} \left( 1 - \sqrt{\frac{E_j+\Gamma_j}{\beta_j} } \right)^2} 
\mathds{1}_{|1 - (E_j + \Gamma_j)/\beta_j |
> (1+j)^{-1/4}} \right].$$
For $|1-x| \leq (1+j)^{-1/4}$, we have
$$(1 - \sqrt{x})^2 = (1-x)^2 (1+\sqrt{x})^{-2} \leq (1-x)^2 \leq (1+j)^{-1/2}$$
and then 
$$ I_1 \leq e^{\frac{\lambda^2(j+1)^{1/2}}{2\beta} }.$$
On the other hand, by Cauchy-Schwarz inequality, 
\begin{align*} I_2 & \leq \E\left[ e^{\frac{\lambda^2(j+1)}{\beta} \left( 1 - \sqrt{\frac{E_j+\Gamma_j}{\beta_j} } \right)^2 }\right]^{1/2} \P \left( |1 - (E_j + \Gamma_j)/\beta_j |
> (1+j)^{-1/4} \right)^{1/2}
\\ & \leq \E\left[ e^{\frac{\lambda^2(j+1)}{\beta} \left( 1 +  \frac{E_j+\Gamma_j}{\beta_j}  \right) } \right]^{1/2}
\P \left( |1 - (E_j + \Gamma_j)/\beta_j |
> (1+j)^{-1/4} \right)^{1/2}
\\ &  \leq 
e^{\frac{\lambda^2 (j+1)}{2 \beta}}
\E \left[ e^{\frac{2 \lambda^2}{\beta^2} (E_j + \Gamma_j)} \right]^{1/2}
\P \left( |1 - (E_j + \Gamma_j)/\beta_j |
> (1+j)^{-1/4} \right)^{1/2}
\\ & = e^{\frac{\lambda^2 (j+1)}{2 \beta}}
\left( 1 - \frac{2 \lambda^2}{\beta^2} 
\right)^{-(1 + \beta_j)/2}
\P \left( |1 - (E_j + \Gamma_j)/\beta_j |
> (1+j)^{-1/4} \right)^{1/2}
\\ & \leq e^{\frac{2 \lambda^2}{\beta^2} \left(
1 + \beta(j+1)\right)}\P \left( |1 - (E_j + \Gamma_j)/\beta_j |
> (1+j)^{-1/4} \right)^{1/2},
 \end{align*}
 for $\lambda > 0$ small enough depending on $\beta$. 
 Now, for $\mu \in (0,1/2]$, 
 $\log ( 1 \pm \mu) \geq \pm \mu  - \mu^2$, and then 
 \begin{align*}\P \left( |1 - (E_j + \Gamma_j)/\beta_j |
> (1+j)^{-1/4} \right)
& \leq e^{- \mu \beta_j (1+j)^{-1/4}} 
 \E 
\left[e^{\mu(\beta_j - (E_j + \Gamma_j))} 
+ e^{-\mu(\beta_j - (E_j + \Gamma_j))} \right]
\\ & = e^{- \mu \beta_j (1+j)^{-1/4}}
\left( e^{\mu \beta_j} ( 1 + \mu)^{-1 - \beta_j} + e^{-\mu \beta_j} (1 - \mu)^{-1- \beta_j} \right)
\\ & \leq e^{- \mu \beta_j (1+j)^{-1/4}}
\left(e^{-\mu} + e^{\mu} \right) e^{\mu^2 } e^{\mu^2 \beta_j} \\ & \leq 4 \, e^{\beta_j(\mu^2 - (1+j)^{-1/4} \mu)}
\end{align*}
Taking $\mu = (1/2)(1+j)^{-1/4} \in (0,1/2]$, we deduce 
$$  \P \left( |1 - (E_j + \Gamma_j)/\beta_j |
> (1+j)^{-1/4} \right) \leq 4 \, e^{-(\beta_j/4) (1+j)^{-1/2} }$$
and then 
$$I_2 \leq  2 \,  e^{\frac{2 \lambda^2}{\beta^2} \left(
1 + \beta(j+1)\right) - \frac{\beta(1+j)^{1/2}}{16} }.$$
If $\lambda \leq (1+j)^{-1/3}$, the first term in the exponential is negligible with respect to the second one when $j$ goes to infinity, and then: 
$$ I_2 \ll_{\beta} e^{- \frac{\beta(1+j)^{1/2}}{17} } \ll_{\beta} (1+j)^{-2}.$$
We then get, for $\lambda \leq k^{-1/3}$ and $0 \leq j \leq k-1$, 
$$ \E\left[ e^{(j+1) \lambda \Re \gamma_j \left( 1 - \sqrt{\frac{E_j+\Gamma_j}{\beta_j} } \right) } \right]
\leq e^{\frac{\lambda^2(j+1)^{1/2}}{2 \beta}} + \Oc_{\beta} ((1+j)^{-2})
\leq e^{\frac{\lambda^2 k^{1/2}}{2 \beta}
+ \Oc_{\beta} ((1+j)^{-2})}
$$
Multiplying these inequalities gives: 
$$\E \left[e^{\lambda \Re T_k(0)} \right] \ll_{\beta} e^{\frac{\lambda^2 k^{3/2}} {2 \beta}},$$
$$\P \left[ \Re T_k(0) \geq k^{\alpha}/2 \right] \ll_{\beta} 
\exp  \left( - \frac{\lambda k^{\alpha}}{2}
 + \frac{\lambda^2 k^{3/2}}{2 \beta} \right).$$
 Taking $\lambda = \beta k^{\alpha - (3/2)}/2$, we get, for $k$ large enough (depending on $\beta$) in order to insure that $\lambda$ is sufficiently small and at most $k^{-1/3}$ (recall that $\alpha < 1$), we get 
 $$ \P \left[ \Re T_k(0) \geq k^{\alpha}/2 \right] \ll_{\beta} 
 \exp \left( - \frac{\beta k^{2 \alpha - (3/2)}}{8} \right) \ll_{\alpha,\beta,C} 
 k^{-C}$$
since $\alpha > \frac{3}{4}$. 

\paragraph{Multiple points estimate:} Using the union bound, and Borel-Cantelli lemma, we deduce that almost surely, for all integers $D>0$,

\begin{align*}
	\sup_{\theta \in [0,2\pi)_{k^D}} |T_k(\theta)|<k^\alpha
\end{align*}
occurs for all but finitely many values of $k$, and then
\begin{align*}
	\sup_{\theta \in [0,2\pi)_{k^D}} |T_k(\theta)|= \Oc(k^\alpha) \ .
\end{align*}
Now, it remains to fill the gaps. From the formula \eqref{eq:def_T} giving $T_k(\theta)$, it is enough to have almost surely:
\begin{align*}
\sup_{0\leq j\leq k-1} \max_{ \substack{\theta_1 \in [0,2\pi) , \theta_2 \in [0,2\pi)_{k^D} \\
                                        0 \leq  \theta_1-\theta_2 \leq 2\pi k^{-D}} }
\left| \gamma_j \left( 1 - \sqrt{\frac{E_j+\Gamma_j}{\beta_j} } \right) (e^{i\psi_j(\theta_1)} -e^{i\psi_j(\theta_2)}) 
\right|
= \Oc\left( k^{-2 + \alpha} \right) \ .
\end{align*}
Now, we have that $\sup_{0  \leq j \leq k-1}\left| \gamma_j \left( 1 - \sqrt{\frac{E_j+\Gamma_j}{\beta_j} } \right) \right| = \Oc( k )$ almost surely by Borel-Cantelli lemma:
\begin{align*}
       \P\left(\sup_{0 \leq j \leq k-1}\left| \gamma_j \left( 1 - \sqrt{\frac{E_j+\Gamma_j}{\beta_j} } \right) \right| \geq k \right)
\leq & \sum_{0 \leq j\leq k-1} \P\left(\left| \gamma_j \left( 1 - \sqrt{\frac{E_j+\Gamma_j}{\beta_j} } \right) \right| \geq k \right)\\
\leq & \frac{1}{k^2}\sum_{0 \leq j \leq k-1} \E\left( |\gamma_j|^2 \left( 1 - \sqrt{\frac{E_j+\Gamma_j}{\beta_j} } \right)^2 \right)\\
\ll_{\beta}  & \frac{\log (1+k)}{k^2} \ ,
\end{align*}
which is summable. Therefore, it is enough to prove that for some $D>0$, almost surely, 
$$\sup_{0\leq j\leq k-1} \sup_{ \substack{\theta_1 \in [0,2\pi) , \theta_2 \in [0,2\pi)_{k^D} \\
                                        0 \leq  \theta_1-\theta_2 \leq 2\pi k^{-D}} }
\left| \psi_j(\theta_1) - \psi_j(\theta_2) \right| = \Oc(k^{-3+\alpha}) \ .$$
Again, we can invoke Corollary \ref{corollary:djo}.

\section{The upper bound}
\label{section:upper_bound}

Thanks to Proposition \ref{proposition:comparison_to_Z}, the ``upper bound part'' of Theorem \ref{thm:main} (i.e.  the tightness of the positive part of the variables which are involved)  is a consequence of the following result:  
\begin{proposition}
\label{proposition:upper_bound}
For all $\sigma \in \{1, i, -i\}$, we have 
$$\underset{C \rightarrow \infty}{\lim} \underset{n \rightarrow \infty}{\lim \sup}  \, 
\P   \left[ \sup_{\theta \in [0, 2\pi)} \Re (\sigma Z_n(\theta) )
\geq \log 
n - \frac{3}{4} 
\log_2 n  + C  \right] = 0.
$$
\end{proposition}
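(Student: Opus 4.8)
The plan is to run the standard first-moment-with-barrier argument (the ``multiscale refinement of the second moment method'' surveyed by Kistler \cite{bib:Kistler}), exploiting that for \emph{fixed} $\theta$ the process $(Z_k(\theta))_{k\ge0}$ is an honest time-changed complex Brownian motion. Indeed $\psi_j(\theta)$ is a function of $\Nc_0^\C,\dots,\Nc_{j-1}^\C$ and of the Gamma variables, which are independent of $\Nc_j^\C$; by the rotational invariance of the complex Gaussian law, $(\Nc_j^\C e^{i\psi_j(\theta)})_{j\ge0}$ is then a sequence of i.i.d.\ standard complex Gaussians, so $(Z_k(\theta))_k$ is, exactly, a complex Brownian motion run along the clock $\tau(k)=\sum_{i=1}^k\frac1i$ of \eqref{eq:def_tau} (this is the mechanism behind Corollary \ref{corollary:coupling_bm}). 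In particular $\Re Z_n(\theta)$ is centered Gaussian with variance $\tau(n)/2$, and every one-point estimate below is an exact Gaussian computation. We treat $\sigma=1$; the cases $\sigma=\pm i$ amount to replacing $\Re Z_n$ by $\mp\Im Z_n$, which for fixed $\theta$ is again a time-changed real Brownian motion with the same clock, so they are completely analogous. Write $T:=\tau(n)=\log n+\Oc(1)$ and $m:=\log n-\tfrac34\log_2 n+C$. As a first step one replaces $\sup_{\theta\in[0,2\pi)}$ by $\max_{\theta\in\mathcal R_n}$ over an equally spaced net $\mathcal R_n$ of cardinality of order $n$: using the a.s.\ modulus of continuity of the Pr\"ufer phases (Corollary \ref{corollary:djo} and Proposition \ref{proposition:prufer_modulus}) together with Gaussian concentration of the increments $Z_n(\theta)-Z_n(\theta')$, whose variance is $\Oc(1)$ when $|\theta-\theta'|=\Oc(1/n)$, one gets $\sup_\theta\Re Z_n(\theta)\le\max_{\theta\in\mathcal R_n}\Re Z_n(\theta)+\Oc(1)$, the $\Oc(1)$ being a tight correction that is harmless once $C$ is sent to infinity at the end.

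Next, fix a suitable barrier $\ell_n$: a discrete curve $j\mapsto\ell_n(j)$, $1\le j\le n$, agreeing with the straight line $j\mapsto\frac{\tau(j)}{T}(m+v)$ ($v$ a fixed buffer) up to a correction that is $o(\log n)$ in the bulk and bends mildly upward near the two endpoints, as in Bramson's treatment of branching Brownian motion. The decisive step is to show, by a multiscale union bound, that the probability that some $\theta\in\mathcal R_n$ has $\Re Z_n(\theta)\ge m$ while its trajectory $j\mapsto\Re Z_j(\theta)$ exceeds $\ell_n$ somewhere tends to $0$ as $n\to\infty$. One partitions $[1,n]$ into scales, and sums over $\theta$, over the first scale at which $\ell_n$ is crossed, and over the overshoot at that scale: crossing the barrier at an intermediate scale costs a Gaussian factor (controlled through Proposition \ref{proposition:subgaussianity}) and, conditionally, climbing from such a high position up to level $m$ at scale $n$ costs the complementary factor; the bending of $\ell_n$ near its endpoints is precisely what makes the product of these probabilities, summed over all choices and over the continuum of angles at each scale (again via Corollary \ref{corollary:djo}), negligible.

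It then suffices to bound the first moment of barrier-respecting high points. Writing $\mathcal B(\theta)$ for the event $\{\Re Z_n(\theta)\ge m\ \text{and}\ \Re Z_j(\theta)\le\ell_n(j)\ \text{for all}\ 1\le j\le n\}$, one has $\P[\exists\,\theta\in\mathcal R_n:\mathcal B(\theta)]\le\#\mathcal R_n\cdot\P[\mathcal B(\theta)]$. Since $j\mapsto\Re Z_j(\theta)$ is a Brownian motion in the clock $\tau$, a Girsanov tilt straightening the slope $\frac{m+v}{T}$ rewrites $\P[\mathcal B(\theta)]$ as $e^{-(m+v)^2/T}$ times a ballot-type probability — that a centered clock-$\tau$ Brownian motion stays below (the tilted) $\ell_n$ and lands in a bounded window near its endpoint — which, $\ell_n$ being within $o(\log n)$ of straight, is of order $\tau(n)^{-3/2}$ by the Gaussian random walk estimates of Section \ref{section:appendix}, so that
$$ \#\mathcal R_n\cdot\P[\mathcal B(\theta)]\ \ll_\beta\ n\,T^{-3/2}\,e^{-(m+v)^2/T}\,. $$
Since $(m+v)^2/T=\log n-\tfrac32\log_2 n+2C+o(1)$, we have $e^{-(m+v)^2/T}\ll n^{-1}(\log n)^{3/2}e^{-2C}$, whence the right-hand side is $\ll_\beta e^{-2C}$ uniformly in $n$; summing over the overshoot $\Re Z_n(\theta)\in[m+k,m+k+1]$, $k\ge0$, with $k$-dependent barriers (an extra factor decaying in $k$) preserves this. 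Combined with the previous step, $\limsup_{n\to\infty}\P[\sup_\theta\Re Z_n(\theta)\ge m]\ll_\beta e^{-2C}$, which tends to $0$ as $C\to\infty$; this is the proposition.

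I expect the main obstacle to be the barrier-localization step (the second one): because $\ell_n$ is essentially straight in the bulk, exceeding it there and still returning to level $m$ is far from improbable, so the argument must exploit the bending near the endpoints and a carefully calibrated multiscale decomposition, all while keeping the supremum over the continuum of angles under control at every scale — this balancing act is the technical heart. A secondary point is that the constant $\tfrac34$ is pinned down by the exact cancellation in the third step between the exponent $\tfrac32$ in $e^{-(m+v)^2/T}$ and the factor $T^{-3/2}$ from the ballot estimate, so that any genuine bending of the barrier in the bulk would perturb this balance and change the constant.
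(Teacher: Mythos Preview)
Your overall strategy --- barrier, Girsanov tilt, ballot estimate --- is the right one and matches the paper's, and your endpoint first-moment computation is correct. The genuine gap is the discretization, both at the final scale and (more critically) at every intermediate scale inside the barrier step. You propose $\sup_\theta \Re Z_n(\theta) \le \max_{\theta \in \mathcal R_n} \Re Z_n(\theta) + O_P(1)$ with $\#\mathcal R_n = O(n)$ via ``Gaussian concentration of the increments'' together with Corollary~\ref{corollary:djo}. But Corollary~\ref{corollary:djo} only controls Pr\"ufer phases on a net of cardinality $k^D$ for a \emph{large} $D$ (depending on $\beta$ through $\rho$), and even granting that each increment $Z_n(\theta)-Z_n(\theta')$ over an arc of length $1/n$ has conditional variance $O(1)$, the supremum of $O(n)$ such (correlated, not jointly Gaussian) local oscillations is a priori of order $\sqrt{\log n}$, not $O(1)$; a $\sqrt{\log n}$ loss destroys the constant $-\tfrac34$. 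The same difficulty recurs, and is worse, in your barrier-localization step: to union-bound over angles at scale $j$ you need a net of $O(j)$ points \emph{and} an $O(1)$ discretization error there; with $j^D$ points, $D>1$, the sum over scales diverges.

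The paper's way around this is to go back to the polynomial. By Proposition~\ref{proposition:comparison_to_Z}, $\sup_{k,\theta}\bigl|Z_k(\theta)-\sqrt{\beta/2}\,\log\Phi_k^*(e^{i\theta})\bigr|<\infty$ a.s.; then the \emph{deterministic} interpolation facts of Lemmas~\ref{lemma:poly1}--\ref{lemma:poly2} (a degree-$k$ polynomial has $\sup_{\mathbb U}|Q|\le 14\max_{\mathbb U_{2k}}|Q|$, and the analogous statement for $\Im\log Q$) give, after transferring back, $\sup_\theta \Re(\sigma Z_k(\theta))\le \max_{\mathbb U_{2k}}\Re(\sigma Z_k)+\mathcal D$ with $\mathcal D$ a \emph{single} a.s.\ finite random variable valid for all $k$ simultaneously (Proposition~\ref{proposition:holes}). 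This is precisely the $O(1)$-error, $O(k)$-point discretization at every scale that your scheme requires and that Gaussian concentration alone does not supply. With it in hand the paper does not split into ``high points that cross'' versus ``high points that do not'': it directly shows that with probability $\to1$ as $C\to\infty$, \emph{no} $\theta$ in the scale-$b^j$ net exceeds the barrier at \emph{any} $j\le J$ (first-crossing decomposition, union bound over $2b^j$ angles, Girsanov, then Corollary~\ref{corollary:shifted_barrier_estimate}); on the complement, the barrier value at the terminal scale already reads $\log n-\tfrac34\log_2 n+C+O(1)$. Incidentally, your description of the ``decisive step'' (``climbing from such a high position up to level $m$ costs the complementary factor'') is off: being above the barrier at an intermediate scale makes reaching $m$ \emph{easier}, so the endpoint constraint contributes nothing there; the saving comes entirely from the ballot probability for staying \emph{below} the barrier up to the first-crossing scale.
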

The proof is only given after two subsections of preparatory work.

\subsection{Reduction to geometric progressions}
The following Lemma, in combination with Proposition \ref{proposition:comparison_to_Z}, shows that it suffices to handle the case where $n=b^J$ for a certain integer $b\geq 2$ (which can be fixed arbitrarily). 

\begin{lemma}
\label{lemma:geometric_progressions} 
If $b^J \leq n < b^{J+1}$, then:
$$
     \sup_{\theta \in [0, 2\pi)} \Re (\sigma Z_{b^{J  }}(\theta) )+ \Oc(1)
\leq \sup_{\theta \in [0, 2\pi)} \Re (\sigma Z_n(\theta) )
\leq \sup_{\theta \in [0, 2\pi)} \Re (\sigma Z_{b^{J+1}}(\theta) )+ \Oc(1) \ ,
$$
where $\Oc(1)$ corresponds to a tight family of random variables indexed by $n$.
\end{lemma}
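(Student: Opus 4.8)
The plan is to compare the two maxima by evaluating the field of larger index at the maximiser of the field of smaller index, and conversely --- rather than trying to bound the supremum of the \emph{difference} of the two fields, which is a red herring. Indeed, for $b^J\le m\le n\le b^{J+1}$ the random function $\theta\mapsto Z_n(\theta)-Z_m(\theta)$ still oscillates on scale $1/b^J$ while having only $\Oc(1)$ variance at each point, so heuristically $\sup_\theta|Z_n(\theta)-Z_m(\theta)|$ behaves like the maximum of $\sim b^J$ roughly independent $\Oc(1)$-variance Gaussians, i.e.\ it is of order $\sqrt{\log n}$ and is \emph{not} tight. What \emph{is} $\Oc(1)$ is the increment evaluated at a single, suitably measurable random angle.

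Concretely, I would set $\Gc_k:=\sigma(E_i,\Theta_i,\Gamma_i:\ 0\le i\le k-1)$ and record, from the construction of Section~\ref{section:auxiliary_field}: $\psi_j$ is a $\Gc_j$-measurable random function (it depends only on $\gamma_0,\dots,\gamma_{j-1}$); $Z_k$ is a $\Gc_k$-measurable random function; while $\Nc_j^\C=-\sqrt{E_j}e^{i\Theta_j}$ is independent of $\Gc_j$, with $\E[\Nc_j^\C]=0$, $\E[|\Nc_j^\C|^2]=1$ and $|\Nc_j^\C|^2=E_j$. Since $\theta\mapsto\Re(\sigma Z_k(\theta))$ is a.s.\ continuous and $2\pi$-periodic, its supremum over $[0,2\pi)$ is attained; let $\theta_k^\star$ be the maximiser of smallest argument, which is then $\Gc_k$-measurable. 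The key estimate I would prove is that, for every $\Gc_m$-measurable random angle $\vartheta$ and every $p\ge m$,
$$\E\!\left[\bigl|Z_p(\vartheta)-Z_m(\vartheta)\bigr|^2\ \Big|\ \Gc_m\right]=\sum_{j=m}^{p-1}\frac1{j+1}\ \le\ \log\!\Bigl(\tfrac pm\Bigr)\ .$$
This is immediate: writing $Z_p(\vartheta)-Z_m(\vartheta)=\sum_{j=m}^{p-1}V_j$ with $V_j:=(j+1)^{-1/2}\Nc_j^\C e^{i\psi_j(\vartheta)}$, for $m\le i<j$ both $V_i$ and $\psi_j(\vartheta)$ are $\Gc_j$-measurable whereas $\E[\overline{\Nc_j^\C}\mid\Gc_j]=0$, so the cross terms $\E[V_i\overline{V_j}\mid\Gc_j]$ vanish, and $\E[|V_j|^2\mid\Gc_m]=(j+1)^{-1}$ because $|\Nc_j^\C|^2=E_j$ is independent of $\Gc_m$ (here $j\ge m$); the final bound uses $\sum_{j=m}^{p-1}(j+1)^{-1}\le\int_m^p t^{-1}\,dt$. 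In particular the right-hand side is $\le\log b$ whenever $b^J\le m\le p\le b^{J+1}$, uniformly in $J$.

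Granting this, set $J:=\lfloor\log_b n\rfloor$. For the upper inequality, take $\vartheta=\theta_n^\star$: then $\sup_\theta\Re(\sigma Z_n(\theta))=\Re(\sigma Z_{b^{J+1}}(\theta_n^\star))+R_n$ with $R_n:=\Re\bigl(\sigma(Z_n(\theta_n^\star)-Z_{b^{J+1}}(\theta_n^\star))\bigr)$; since $\Re(\sigma Z_{b^{J+1}}(\theta_n^\star))\le\sup_\theta\Re(\sigma Z_{b^{J+1}}(\theta))$ and the key estimate (with $m=n$, $p=b^{J+1}$) gives $\E[R_n^2]\le\log b$ uniformly in $n$, Chebyshev shows $(R_n)_n$ is tight. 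For the lower inequality, take $\vartheta=\theta_{b^J}^\star$: then $\sup_\theta\Re(\sigma Z_n(\theta))\ge\Re(\sigma Z_n(\theta_{b^J}^\star))=\sup_\theta\Re(\sigma Z_{b^J}(\theta))+R_n'$ with $R_n':=\Re\bigl(\sigma(Z_n(\theta_{b^J}^\star)-Z_{b^J}(\theta_{b^J}^\star))\bigr)$, and the key estimate (with $m=b^J$, $p=n$) shows $(R_n')_n$ is tight. I expect the only genuinely delicate point to be conceptual rather than computational: recognising that the supremum of the difference is the wrong object, and instead evaluating at the random maximiser --- which is measurable with respect to the right $\sigma$-algebra --- and exploiting the conditional orthogonality of the martingale-type increments $\Nc_j^\C e^{i\psi_j(\vartheta)}$. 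After that, everything reduces to bookkeeping about which variable is $\Gc_k$-measurable and to the trivial bound $\log(b^{J+1}/b^J)=\log b$.
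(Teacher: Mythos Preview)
Your proof is correct and follows essentially the same approach as the paper: evaluate the field of larger index at the maximiser of the field of smaller index (and vice versa), then use the $\Gc_m$-measurability of that maximiser together with the martingale-type orthogonality of the increments $\Nc_j^\C e^{i\psi_j(\vartheta)}$ to bound the second moment of the remainder by $\log b$. Your presentation is slightly more formal in its handling of the filtration and conditional expectations, and your opening remark on why $\sup_\theta|Z_n(\theta)-Z_m(\theta)|$ is the wrong quantity is a useful aside, but the core argument is identical to the paper's.
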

\begin{proof}
Let $\theta_0 \in [0, 2\pi )$ be a point where the supremum $\sup_{\theta} \Re\left( \sigma Z_n(\theta) \right)$ is reached. We have from Eq. \eqref{eq:def_Z}:
\begin{align*}
     \sup_{\theta \in [0, 2\pi)} \Re (\sigma Z_n(\theta) )
=  & \Re (\sigma Z_{b^{J+1}}(\theta_0) ) - \Re\left( \sigma \sum_{j=n}^{b^{J+1}-1} \frac{\Nc_j^\C e^{i \psi_j(\theta_0)}}{\sqrt{j+1}}\right)\\
\leq & \sup_{\theta \in [0, 2\pi)} \Re (\sigma Z_{b^{J+1}}(\theta) ) - \Re\left( \sigma \sum_{j=n}^{b^{J+1}-1} \frac{\Nc_j^\C e^{i \psi_j(\theta_0)}}{\sqrt{j+1}}\right) \ .
\end{align*}
Similarly, if $\theta_0'$ is a point where the supremum $\sup_{\theta} \Re\left( \sigma Z_{b^J}(\theta) \right)$ is reached:
\begin{align*}
     \sup_{\theta \in [0, 2\pi)} \Re (\sigma Z_{b^J}(\theta) )
=  & \Re (\sigma Z_{n}(\theta_0') ) - \Re\left( \sigma \sum_{j=b^J}^{n-1} \frac{\Nc_j^\C e^{i \psi_j(\theta_0')}}{\sqrt{j+1}}\right)\\
\leq & \sup_{\theta \in [0, 2\pi)} \Re (\sigma Z_{n}(\theta) ) - \Re\left( \sigma \sum_{j=b^J}^{n-1} \frac{\Nc_j^\C e^{i \psi_j(\theta_0')}}{\sqrt{j+1}}\right) \ .
\end{align*}

Computing the variance, which is made possible by the fact that $\Nc_j^\C$ for $j\geq n$ is independent from $\theta_0$, we find:
$$ \E\left( \left| \sum_{j=n}^{b^{J+1}-1} \frac{\Nc_j^\C e^{i \psi_j(\theta_0)}}{\sqrt{j+1}} \right|^2 \right)
 = \sum_{j=n+1}^{b^{J+1}} \frac{1}{j} \leq \int_n^{b^{J+1}} \frac{dt}{t} \leq \log b \ ,  $$
which implies the second inequality in our Lemma. The first inequality is obtained exactly the same way.
\end{proof}

\subsection{Filling the gaps}
We will use a union bound in order to control the maximum of $Z_n$ on finitely many points of the unit circle, and then interpolate between these points. The latter can be easily made, thanks to the following remarkable general results on polynomials. 

\begin{lemma}
\label{lemma:poly1}
For any polynomial $Q$ of degree at most $k \geq 1$, one has 
$$\sup_{\mathbb{U}} |Q| \leq 14 \sup_{\mathbb{U}_{2k}} |Q|,$$
$\mathbb{U}_m$ denoting the set of $m$-th roots of unity. 
\end{lemma}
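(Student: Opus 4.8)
\textbf{Proof proposal for Lemma \ref{lemma:poly1}.}

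The plan is to reduce the problem to a Bernstein-type inequality controlling the growth of a polynomial between consecutive roots of unity, combined with the elementary observation that $\sup_{\mathbb{U}_{2k}}|Q|$ already captures the "coarse" size of $Q$. Concretely, write $M := \sup_{\mathbb{U}_{2k}}|Q|$ and let $z^* \in \mathbb{U}$ be a point where $|Q|$ attains its maximum on the circle. There is a $(2k)$-th root of unity $\zeta$ with $|z^* - \zeta| \leq \frac{\pi}{2k}$ (arc length at most $\frac{2\pi}{2k}$, chord shorter). The idea is to estimate $|Q(z^*)| - |Q(\zeta)|$ by integrating $Q'$ along the short arc from $\zeta$ to $z^*$, and to bound $Q'$ on $\mathbb{U}$ by Bernstein's inequality $\sup_{\mathbb{U}}|Q'| \leq k \sup_{\mathbb{U}}|Q|$. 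This gives $\sup_{\mathbb{U}}|Q| \leq M + \frac{\pi}{2k}\cdot k \sup_{\mathbb{U}}|Q| = M + \frac{\pi}{2}\sup_{\mathbb{U}}|Q|$, which is useless because $\frac{\pi}{2} > 1$ and cannot be absorbed. So the naive one-step interpolation fails, and this is exactly where the main work lies.

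The fix is a standard dyadic/bootstrap trick: instead of comparing $z^*$ directly to the nearest point of $\mathbb{U}_{2k}$, interpolate through a refined net. For $m \geq 1$ let $S := \sup_{\mathbb{U}}|Q|$; the above argument applied with $\mathbb{U}_{2^m \cdot 2k}$ in place of $\mathbb{U}_{2k}$ would give $S \leq \sup_{\mathbb{U}_{2^{m+1}k}}|Q| + \frac{\pi}{2^{m+1}}S$, but this still only relates $S$ to a finer net, not to $\mathbb{U}_{2k}$. The cleaner route is a telescoping comparison of the nets $\mathbb{U}_{2^{j+1}k}$ for $j = 0, 1, \dots$: set $M_j := \sup_{\mathbb{U}_{2^{j+1}k}}|Q|$, so $M_0 = M$ and $M_j \uparrow S$ (density of $\bigcup_j \mathbb{U}_{2^{j+1}k}$ in $\mathbb{U}$ plus continuity). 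For each $j$, every point of $\mathbb{U}_{2^{j+2}k}$ is within arc-distance $\frac{2\pi}{2^{j+2}k}$ of a point of $\mathbb{U}_{2^{j+1}k}$, so by Bernstein on $Q'$ and integration,
\begin{align*}
M_{j+1} \leq M_j + \frac{2\pi}{2^{j+2}k}\cdot k\, S = M_j + \frac{\pi}{2^{j+1}}S \ .
\end{align*}
Summing over $j \geq 0$ and letting the left side tend to $S$ yields $S \leq M + S\sum_{j\geq 0}\frac{\pi}{2^{j+1}} = M + \pi S$, which again over-counts. The genuine refinement needed is to note that in the $j$-th step the \emph{error} should be measured against $M_{j+1}$ rather than the global $S$, or better, to run the Bernstein estimate with the sharper inequality on each sub-arc so that the geometric factor beats $1$; alternatively, one invokes directly a known lemma (e.g. of the type in Schaeffer or in the literature on Chebyshev-like bounds on roots of unity) stating $\sup_{\mathbb{U}}|Q| \leq C \sup_{\mathbb{U}_{2k}}|Q|$ with an explicit constant, and verifies that $C = 14$ works.

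The main obstacle, as the failed attempts above show, is getting the \emph{constant} to close: one needs the interpolation error at each scale to be a definite fraction strictly less than $1$ of the quantity being bounded, uniformly in $k$. I expect the honest argument to proceed as follows: fix $z^* \in \mathbb{U}$ with $|Q(z^*)| = S$ and its nearest neighbour $\zeta \in \mathbb{U}_{2k}$; apply Bernstein not on the full circle but on the arc, or use the mean value inequality together with the bound $|Q'(z)| \leq k\,|Q|_{\text{on a slightly larger arc}}$, to get $S - |Q(\zeta)| \leq \frac{\pi}{2k}\cdot k \cdot S'$ where $S'$ is a supremum over a short arc around $z^*$; then bootstrap by covering that short arc with $O(1)$ further points and iterating a bounded number of times, so that only finitely many geometric terms accumulate and the total constant stays below $14$. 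Equivalently — and this is probably the intended proof — one writes $S \leq |Q(\zeta)| + \int_{\text{arc}} |Q'| \leq M + \frac{\pi}{2k}\sup_{\mathbb{U}}|Q'|$, then applies Bernstein, and finally observes that if $S \leq 2M$ we are trivially done, while if $S > 2M$ one derives a contradiction with the refined estimate; a short case analysis pins down the numerical constant $14$. I would present the clean version: establish $\sup_{\mathbb{U}}|Q'| \leq k \sup_{\mathbb{U}}|Q|$ (Bernstein), establish the arc-interpolation $\sup_{\mathbb{U}}|Q| \leq \sup_{\mathbb{U}_{2k}}|Q| + \frac{\pi}{2}\sup_{\mathbb{U}}|Q|$ is too weak, so instead use $\mathbb{U}_{4k}$ with a self-improving inequality, iterate a finite number of scales until the accumulated factor is controlled, and read off $14$ as a safe explicit bound. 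The delicate accounting of the constant is the only real difficulty; everything else is Bernstein's inequality plus the triangle inequality.
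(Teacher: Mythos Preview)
Your proposal does not contain a proof. You correctly observe that the naive Bernstein-plus-arc-integration bound gives $S \le M + \tfrac{\pi}{2}S$, which closes nothing since $\tfrac{\pi}{2}>1$; you then try a dyadic telescoping and again obtain $S \le M + \pi S$, which you yourself note is useless. After that point the text consists of hopes (``I expect the honest argument to proceed as follows'', ``a short case analysis pins down the numerical constant'', ``one invokes directly a known lemma''), none of which is carried out. The obstruction is real, not cosmetic: with $2k$ equispaced nodes and a polynomial of degree $k$, the half-gap is $\pi/(2k)$ and Bernstein gives $\sup_{\mathbb U}|Q'|\le k\sup_{\mathbb U}|Q|$, so the product is exactly $\pi/2$ and cannot be pushed below $1$ by this route without additional input. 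No amount of iterating over finer nets helps, because each refinement step is still controlled by the \emph{global} supremum $S$, and the geometric series you assemble has ratio $\ge 1$.

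The paper's argument is of a completely different nature and bypasses Bernstein entirely. It writes down an exact reproducing identity
\[
Q(z)=\sum_{\omega\in\mathbb U_{2k}} Q(\omega)\,R(z\bar\omega),
\]
valid for all polynomials of degree $\le k$, where the kernel $R$ is built from a \emph{trapezoidal} (de la Vall\'ee Poussin--type) sequence of coefficients $(\lambda_s)$ rather than the flat Dirichlet choice. The smoothing buys quadratic decay, $|R(z)|\le \min\!\big(\tfrac34,\ \tfrac{8}{k^2|z-1|^2}\big)$, and then one simply sums $\sum_{\omega}|R(z\bar\omega)|$: the nearest node contributes at most $3/4$, and the remaining nodes contribute at most $\sum_{j\ge1}8/j^2=4\pi^2/3$, giving the explicit bound $3/4+4\pi^2/3<14$. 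The key idea you are missing is precisely this kernel construction; the Bernstein path, as you have discovered, does not close with only $2k$ sample points.
\qed
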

\begin{proof}
For $k =1$, we get 
$$Q(z) = \frac{1+z}{2} Q(1) +  \frac{1-z}{2} Q(-1),$$
and for $k = 2$, 
$$Q(z) = \frac{(z+1)(z-i)}{2(1-i)} Q(1) +  \frac{(z-1)(z-i)}{-2(-1-i)}Q(-1) + \frac{(z+1)(z-1)}{(i+1)(i-1)} Q(i),$$
by Lagrange interpolation. We can then assume $k  \geq 3$. 
 Let $m \geq 1$ be the strict integer part of $k/2$. For all integers $r$, $-m \leq r, s \leq k+m$, we have 
 $$\frac{1}{2k} \sum_{\omega \in \mathbb{U}_{2k}} \omega^r \bar{\omega}^s 
 = \mathds{1}_{r = s},$$
 since $|r-s| \leq k + 2m < 2k$. Hence, if $(\lambda_s)_{-m \leq s \leq k+m}$ is a sequence such that 
 $\lambda_s = 1$ for $0 \leq s \leq k$, we get, for all $r \in \{0, 1, \dots, k\},$ 
 $$ z^r = \sum_{\omega \in \mathbb{U}_{2k}} \omega^r \sum_{s = -m}^{k+m} \lambda_s \frac{(z \bar{\omega})^s}{2k}$$
 We deduce, by linearity,
 \begin{equation}  Q(z) =  \sum_{\omega \in \mathbb{U}_{2k}} Q (\omega) R(z \bar{\omega}),
  \label{formulaQ}
 \end{equation}
 where 
 $$R (z) = \frac{1}{2k} \sum_{s = -m}^{k+m} \lambda_s z^s.$$
 Now, let us choose $\lambda_s = (k+m-s)/m$ for $k \leq s \leq k+m$, $\lambda_s = 1$ for $0 \leq s \leq k$, 
 $\lambda_s = (m+s)/m$ for $-m \leq s \leq 0$. For all $z \in \mathbb{U}$, 
 $$|R(z)| \leq \frac{1}{2 k} \sum_{s = -m}^{k+m}  |\lambda_s| = \frac{1}{2k} \left( k+1 + \frac{2}{m} \left( \frac{m(m-1)}{2} \right) \right)
 = \frac{k+m}{2k} \leq \frac{3}{4}.$$
 On the other hand, we can write, for $z \neq 1$,  
\begin{align*} 
 R (z) & = \frac{1}{2km}\sum_{p = 0}^{m-1} \sum_{s = -p}^{k+p} z^s
 = \frac{1}{2km} \sum_{p = 0}^{m-1} z^{-p} \frac{z^{k+2p+1} - 1}{z - 1}
  \\ & =  \frac{1}{2km(z-1)}  \sum_{p = 0}^{m-1} (z^{k+p+1} - z^{-p}) 
  = \frac{1}{2km(z-1)} \left( \frac{(z^{k+1}- z^{-m+1})( z^m - 1)}{z-1} \right),
\end{align*}
and since $m \geq k/4$,
$$|R(z)| \leq \frac{4}{2 k m |z-1|^2} \leq \frac{8}{k^2|z-1|^2}.$$
Hence, from \eqref{formulaQ},
$$ |Q(z)| \leq \left(\sup_{\mathbb{U}_{2k}} |Q| \right)  \sum_{\omega \in \mathbb{U}_{2k}}
 \left( \frac{3}{4} \wedge \frac{8}{k^2 |z - \omega|^2} \right).$$
 Now, the distances, for the circular metric, between $z$ and each of the elements of $\mathbb{U}_{2k}$, are, taken in 
 increasing order, in the intervals $[0, \pi/2k], [\pi/2k, 2\pi/2k], \dots, [(2k-1) \pi/2k, \pi]$. 
 Hence, the successive values of $|z-\omega|$ are at least $0, 1/k, 2/k,  \dots, (2k-1)/k$, and then 
 $$  |Q(z)| \leq \left(\sup_{\mathbb{U}_{2k}} |Q| \right)  
 \left( \frac{3}{4} + \sum_{j=1}^{2k-1} \frac{8}{j^2} \right)
 \leq  \left(\sup_{\mathbb{U}_{2k}} |Q| \right)  \left( \frac{3}{4} + \frac{8 \pi^2}{6} \right),$$
 which gives the desired result. 
\end{proof}

\begin{lemma}
\label{lemma:poly2}
For any polynomial $Q$ of degree at most $k \geq 1$, equal to $1$ at zero and who does not vanish on $\overline{\mathbb{D}}$, 
$$\sup_{\mathbb{U}} \Im \log Q
\leq 2 \pi +  \sup_{\mathbb{U}_k} \Im \log Q, 
 $$
 where we take the continous version of the logarithm which vanishes at zero. 
\end{lemma}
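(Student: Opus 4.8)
The plan is to reduce the inequality to the monotonicity of a single angular function attached to $Q$, and then to a pigeonhole over the $k$ arcs cut out by $\mathbb{U}_k$. Assume $Q$ is non-constant (otherwise everything vanishes) and write $Q(z)=\prod_{j}(1-z/z_j)$ with all $|z_j|>1$. Introduce the reciprocal polynomial $Q^{*}(z):=z^{k}\,\overline{Q(1/\bar z)}$. Since $Q(0)=1$, the coefficient of $z^{k}$ in $Q^{*}$ is $\overline{Q(0)}=1$, so $\deg Q^{*}=k$; its zeros are the points $1/\bar z_j\in\D$ together with a zero of order $k-\deg Q$ at the origin, so all $k$ of them lie in $\D$. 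On $\mathbb{U}$ one has $1/\bar z=z$, hence $Q^{*}(z)=z^{k}\overline{Q(z)}$ there, so $B:=Q^{*}/Q$ is analytic on a neighbourhood of $\overline{\D}$, has modulus $1$ on $\mathbb{U}$, and has all $k$ of its zeros inside $\D$ (those of $Q^{*}$, since $Q$ does not vanish on $\overline{\D}$); thus $B$ is a finite Blaschke product of degree $k$. Writing $Q(e^{i\theta})=|Q(e^{i\theta})|\,e^{i\Im\log Q(e^{i\theta})}$ with the continuous branch of $\log Q$ vanishing at $0$, we obtain $B(e^{i\theta})=e^{i\Theta(\theta)}$ with
$$\Theta(\theta):=k\theta-2\,\Im\log Q(e^{i\theta}),$$
so $\Theta$ is a continuous branch of $\arg B(e^{i\theta})$. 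The argument of a finite Blaschke product is non-decreasing along the unit circle — precisely the fact already invoked for $\Phi^{*}_k/\Phi_k$ earlier in the paper — so $\Theta$ is non-decreasing, and plainly $\Theta(\theta+2\pi)=\Theta(\theta)+2\pi k$.

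I would then conclude by sampling. By continuity and $2\pi$-periodicity of $\theta\mapsto\Im\log Q(e^{i\theta})$, choose $\theta^{*}\in[0,2\pi)$ realizing $\sup_{\mathbb{U}}\Im\log Q$, and pick $s\in\{0,\dots,k-1\}$ with $\tfrac{2\pi s}{k}\le\theta^{*}<\tfrac{2\pi(s+1)}{k}$; set $\omega:=e^{2\pi i s/k}\in\mathbb{U}_k$. From $2\,\Im\log Q(e^{i\theta})=k\theta-\Theta(\theta)$,
$$2\Bigl(\Im\log Q(e^{i\theta^{*}})-\Im\log Q(\omega)\Bigr)=k\Bigl(\theta^{*}-\tfrac{2\pi s}{k}\Bigr)-\Bigl(\Theta(\theta^{*})-\Theta\bigl(\tfrac{2\pi s}{k}\bigr)\Bigr).$$
Here the first term lies in $[0,2\pi)$ and the second is $\ge 0$ because $\Theta$ is non-decreasing and $\theta^{*}\ge\tfrac{2\pi s}{k}$; hence the left-hand side is $<2\pi$, which gives $\sup_{\mathbb{U}}\Im\log Q=\Im\log Q(e^{i\theta^{*}})<\pi+\Im\log Q(\omega)\le\pi+\sup_{\mathbb{U}_k}\Im\log Q\le 2\pi+\sup_{\mathbb{U}_k}\Im\log Q$.

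The only non-formal ingredient is the monotonicity of $\Theta$, i.e. that $\arg B$ increases along $\mathbb{U}$; this is classical and already used in the paper, but if one prefers to avoid citing it, it can be checked directly: differentiating $\Theta$ and using $\tfrac{zQ'(z)}{Q(z)}=\deg Q-\sum_j\tfrac{1}{1-z/z_j}$ together with $\Re\tfrac{1}{1-w}>\tfrac12$ for $|w|<1$ yields $\Theta'(\theta)>k-\deg Q\ge 0$. Everything else is bookkeeping: the branch-of-logarithm conventions, and making sure the case $\deg Q<k$ is covered — which it is, via the extra zero of $Q^{*}$ at the origin. I do not foresee a genuine obstacle beyond correctly setting up $Q^{*}$ and the Blaschke structure; note that the argument in fact gives the sharper constant $\pi$, so the stated $2\pi$ has ample room.
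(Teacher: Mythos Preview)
Your proof is correct. Both your approach and the paper's hinge on bounding the derivative of $\theta \mapsto \Im\log Q(e^{i\theta})$ and then controlling the growth over an arc of length $2\pi/k$. The paper works factor by factor: writing $Q(e^{i\theta})=\prod_j(1-\omega_j e^{i\theta})$ with $|\omega_j|<1$, it shows $\frac{d}{d\theta}\Im\log(1-\omega_j e^{i\theta})\le 1$ using $\Re\frac{1}{u-1}<0$ for $|u|<1$, then sums over the factors to obtain an increase of at most $2\pi$ on each arc. You instead package the same derivative as $\Theta'=k-2\frac{d}{d\theta}\Im\log Q$ and argue $\Theta'>0$ via the Blaschke structure of $Q^{*}/Q$ (or, in your direct verification, the sharper inequality $\Re\frac{1}{1-w}>\tfrac12$ for $|w|<1$); this gives $\frac{d}{d\theta}\Im\log Q<k/2$ and hence the improved constant $\pi$. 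So the two arguments are close cousins: yours is a bit more structural, connects cleanly to the Blaschke/Pr\"ufer-phase monotonicity already used elsewhere in the paper, and buys a factor of two in the constant; the paper's version is marginally more elementary in that it never names the Blaschke product.
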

\begin{proof}
We can write, for $\theta \in \mathbb{R}$,  
$$ \Im \log  Q(e^{i \theta}) = 
\sum_{j=1}^k \Im \log (1 - \omega_j e^{i \theta}),$$ 
for $|\omega_j| \leq 1$, the arguments in the sum being in $[-\pi/2, \pi/2]$.  
Since $|\omega_j| < 1$, 
$$\frac{d}{d \theta}
\left(\Im \log(1 - \omega_j e^{i \theta}) \right)
= \Im \left( \frac{d}{d \theta}
\log (1 - \omega_j e^{i \theta}) \right)
= \Im \left( \frac{ - i \omega_j  e^{i \theta}}
{1 - \omega_j e^{i \theta}} \right)
=  \Re \left( \frac{u}{u-1} \right)
$$
for $|u| = |\omega_j e^{i \theta}| < 1$. 
Hence, 
$$\frac{d}{d \theta}
\left(\Im \log(1 - \omega_j e^{i \theta}) \right) = 1 + \Re \left(\frac{1}{u-1} \right)  \leq 1,$$
since $u-1$, and then its inverse, has  negative real part.  
We deduce, for $\theta \in [2 \pi m /k,, 
2 \pi (m+1)/k]$, 
$$\Im \log(1 - \omega_j e^{i \theta})
\leq \frac{2 \pi}{k} +  \Im \log(1 - \omega_j e^{2 i \pi m /k}).$$
Summing with respect to $j$, we get 
$$\Im \log Q(e^{i \theta}) 
\leq 2 \pi + \Im \log Q(e^{2 i \pi m /k})
.$$
\end{proof}

Using the two lemmas above, we deduce the following: 
\begin{proposition}
\label{proposition:holes}
Almost surely, the following random variable is finite: 
$$\mathcal{D} := \sup_{n \geq 1} \left(  \underset{\theta \in [0, 2\pi)}{\sup} \Re (\sigma Z_n (\theta) ) 
- \underset{k \in \{0, 1, \dots, 2n-1\}}{\sup} \Re (\sigma Z_n (\pi k / n) )  \right).$$
\end{proposition}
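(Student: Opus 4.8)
The plan is to express $\Re(\sigma Z_n)$ (and, for $\sigma \in \{i,-i\}$, $\Im$ of a suitable quantity) in terms of a polynomial to which Lemmas \ref{lemma:poly1} and \ref{lemma:poly2} apply, so that the gap between the supremum over all of $\mathbb{U}$ and the supremum over the $2n$-th (resp. $n$-th) roots of unity is controlled by an absolute constant \emph{once we have replaced $Z_n$ by $\log \Phi^*_n$}. Concretely, recall from Eq. \eqref{eq:Phi_relativeproduct} that $\Phi^*_n(e^{i\theta}) = \prod_{j=0}^{n-1}(1 - \gamma_j e^{i\psi_j(\theta)})$; the map $z \mapsto \Phi^*_n(z)$ is a polynomial of degree at most $n$, equal to $1$ at $0$, and non-vanishing on $\overline{\mathbb{D}}$ (the zeros of $\Phi^*_n$ lie strictly outside $\overline{\mathbb{D}}$, as recalled in Subsection \ref{subsection:setting}). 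Therefore $\Re \log \Phi^*_n = \log|\Phi^*_n|$ is (up to the factor $\sqrt{2/\beta}$ and a controlled error, via Proposition \ref{proposition:comparison_to_Z}) the quantity $\Re Z_n$, and the case $\sigma = 1$ follows: applying Lemma \ref{lemma:poly1} to the polynomial $Q = \Phi^*_n$ of degree $\leq n$ gives $\sup_{\mathbb{U}} |\Phi^*_n| \leq 14 \sup_{\mathbb{U}_{2n}} |\Phi^*_n|$, i.e. $\sup_{\mathbb{U}} \log|\Phi^*_n| \leq \log 14 + \sup_{\mathbb{U}_{2n}} \log|\Phi^*_n|$, and since $\mathbb{U}_{2n} = \{e^{i\pi k/n} : k = 0, \dots, 2n-1\}$ this is exactly the desired uniform (in $n$) bound on the gap, after transferring back to $Z_n$ via Proposition \ref{proposition:comparison_to_Z}.

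For $\sigma \in \{i, -i\}$, I would instead use Lemma \ref{lemma:poly2}: with the continuous branch of the logarithm vanishing at $0$, Lemma \ref{lemma:poly2} applied to $Q = \Phi^*_n$ yields $\sup_{\mathbb{U}} \Im \log \Phi^*_n \leq 2\pi + \sup_{\mathbb{U}_n} \Im \log \Phi^*_n$; applying the same lemma to $z \mapsto \overline{\Phi^*_n(\bar z)}$ (which is again a degree-$\leq n$ polynomial, $1$ at $0$, non-vanishing on $\overline{\mathbb{D}}$, since the Verblunsky coefficients $\overline{\gamma_j}$ have the same joint law) controls $\sup_{\mathbb{U}} (-\Im \log \Phi^*_n)$ the same way. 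Since $\mathbb{U}_n \subset \mathbb{U}_{2n}$, both bounds give a gap controlled by $2\pi + \log 14$ between the supremum over $\mathbb{U}$ and the supremum over $\{e^{i\pi k/n}\}_{0 \le k \le 2n-1}$. Transferring to $\Re(\sigma Z_n) = \pm \Im Z_n$ again costs only the a.s. finite error from Proposition \ref{proposition:comparison_to_Z}.

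Finally, to pass from ``the gap is $\Oc(1)$ for each fixed $n$'' to ``$\mathcal{D} := \sup_{n\geq 1}(\cdots) < \infty$ almost surely'', note that the polynomial bounds above hold \emph{deterministically} with an $n$-independent constant; the only $n$-dependent, random ingredient is $\sup_\theta |\log\Phi^*_n(e^{i\theta}) - \sqrt{2/\beta}\,Z_n(\theta)|$, and Proposition \ref{proposition:comparison_to_Z} asserts precisely that the supremum \emph{over all $k$} of this quantity is a.s. finite. Hence $\mathcal{D} \leq \log 14 + 2\pi + 2\sqrt{2/\beta}\sup_{k\geq 0}\sup_\theta |\log\Phi^*_k(e^{i\theta}) - \sqrt{2/\beta}\,Z_k(\theta)| < \infty$ a.s. The main obstacle is purely bookkeeping: making sure the branch of the logarithm used in Lemma \ref{lemma:poly2} matches the one implicit in writing $\Im Z_n$, and that the comparison in Proposition \ref{proposition:comparison_to_Z} is applied on the same set of points on both sides — there is no new analytic difficulty, all the work having been front-loaded into Lemmas \ref{lemma:poly1}, \ref{lemma:poly2} and Proposition \ref{proposition:comparison_to_Z}.
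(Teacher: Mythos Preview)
Your proposal is correct and follows essentially the same route as the paper: transfer from $Z_n$ to $\log\Phi_n^*$ via Proposition~\ref{proposition:comparison_to_Z}, apply Lemma~\ref{lemma:poly1} (for $\sigma=1$) or Lemma~\ref{lemma:poly2} (for $\sigma\in\{i,-i\}$) to the polynomial $\Phi_n^*$, and transfer back, picking up twice the a.s.\ finite comparison error. Two cosmetic points: the parenthetical about $\overline{\gamma_j}$ having the same law is irrelevant (you only need that $z\mapsto\overline{\Phi_n^*(\bar z)}$ is \emph{deterministically} a degree-$\le n$ polynomial, equal to $1$ at $0$, non-vanishing on $\overline{\mathbb D}$), and your final displayed bound has the scaling slightly off --- the constants $\log 14$, $2\pi$ and the error $\mathcal E$ should all carry the factor $\sqrt{\beta/2}$, not $\sqrt{2/\beta}$ --- but this does not affect the argument.
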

\begin{proof}
For all $\theta \in [0, 2\pi)$, $n \geq 1$: 
\begin{align*}
     \Re (\sigma Z_n (\theta) )
\leq \quad \quad &  \sqrt{\frac{\beta}{2}} \,   \Re (\sigma \log \Phi_n^* (e^{i \theta}) )  + 
        \left|  \Re (\sigma Z_n (\theta) )  -  \sqrt{\frac{\beta}{2}} \,  \Re (\sigma \log \Phi_n^* (e^{i \theta}) ) \right|\\
\leq \quad \quad & \sqrt{\frac{\beta}{2}}  \;  \underset{z \in \mathbb{U}}{\sup} \, \Re (\sigma \log \Phi_n^* (z) )
       + \underset{n \geq 1}{\sup} \underset{\theta \in [0, 2\pi)}{\sup}\left|  \Re (\sigma Z_n (\theta) )  -  \sqrt{\frac{\beta}{2}} \, \Re (\sigma \log \Phi_n^* (e^{i \theta}) ) \right|\\
\stackrel{Lemmas \ \ref{lemma:poly1}, \ \ref{lemma:poly2}}{\leq} & 
              \sqrt{\frac{\beta}{2}} \sup(\log 14, 2 \pi)
       + \sqrt{\frac{\beta}{2}} \underset{z \in \mathbb{U}_{2n}}{\sup} \Re (\sigma \log \Phi_n^* (z) )
\\ &  + \underset{n \geq 1}{\sup} \underset{\theta \in [0, 2\pi)}{\sup}
  \left|  \Re (\sigma Z_n (\theta) )  -  \sqrt{\frac{\beta}{2}}  \, \Re (\sigma \log \Phi_n^* (e^{i \theta}) ) \right|\\
\leq \quad \quad & \sqrt{2\beta} \pi + \underset{k \in \{0, 1, \dots, 2n-1\}}{\sup} \Re (\sigma Z_n (\pi k / n) )
\\ &   + 2 \,  \underset{n \geq 1}{\sup} \underset{\theta \in [0, 2\pi)}{\sup}
  \left|  \Re (\sigma Z_n (\theta) )  -  \sqrt{\frac{\beta}{2}} \,  \Re (\sigma \log \Phi_n^* (e^{i \theta}) ) \right|.
\end{align*}
The last term is almost surely finite by Proposition \ref{proposition:comparison_to_Z}, and independent of $n$ and $\theta$. We conclude by taking the supremum on $n$ and $\theta$ in the left-hand side. 
\end{proof}

\subsection{Proof of the upper bound via a first moment method}

We are now ready to prove Proposition \ref{proposition:upper_bound}. Thanks to Lemma \ref{lemma:geometric_progressions}, we consider only geometric progressions.

\paragraph{Setting up barriers:} A standard approach in proving the upper bound in log-correlated fields and branching random walks, is to work under the event that ancestors in the hierarchical structure are not too large. In our case, we will follow the field $\Re (\sigma Z_{b^j})$ at successive $j$ up to $J$, and look at the first time when its maximum goes above a certain level, representing a ``barrier'' from above. For our purposes, we first choose the increasing barrier function $g(j) = j^\alpha$ with $\alpha = 1/100$ and then we set: 
\begin{align}
\label{eq:def_barrier_a}
a_j^{J,C} := \left\{
\begin{array}{ll}
C + g(j), \qquad & \text{if  } j \leq \half J ,\\
C + g\left(J-j \right) - \frac{3}{4} \log J ,\qquad & \text{if  } \half J < j \leq J.
\end{array} \right. 
\end{align}
Adding to that the term $\tau(b^j) = \sum_{k=1}^{b^j} \frac{1}{k} = j \log b + \Oc(1)$ (see \eqref{eq:def_tau}), we obtain the tilted barrier:
\begin{align}
\label{eq:def_barrier_A}
A_j^{J,C} := \tau(b^j) + a_j^{J,C} \ .
\end{align}

Now consider the ``barrier crossing'' events
$$ \Bc_{J,C} := \left\{ \exists 1 \leq j \leq  J, \underset{\theta
\in [0, 2\pi)}{\sup} \Re (\sigma Z_{b^j} (\theta))
\geq A_j^{J,C} \right\} \ . $$
In the next paragraph, we will prove:
\begin{align}
\label{eq:setting_barriers}
\lim_{C \rightarrow \infty} \sup_{J \geq 0} \P\left( \Bc_{J, C} \right) = 0 \ . 
\end{align}
It shows that, with high probability as $C \rightarrow \infty$, the maxima of the fields $\Re (\sigma Z_{b^j})$ remain below the barrier. We easily check that \eqref{eq:setting_barriers} implies Proposition \ref{proposition:upper_bound} for the geometric progression $(b^J)_{J \geq 0}$, which completes the proof of the upper bound part of Theorem \ref{thm:main}. 

\paragraph{Proof of Eq. \eqref{eq:setting_barriers}:} Since $\lim_{C \rightarrow \infty} \P\left( \Dc \geq C \right) = 0$ by Proposition \ref{proposition:holes}, it suffices to prove that
$$ \lim_{C \rightarrow \infty} \sup_{J \geq 0} \P\left( \Bc_{J,2C} \cap \{ \Dc < C\} \right) = 0 \ .$$ 
We decompose the event depending on the first instance $j$ where $\underset{\theta\in [0, 2\pi)}{\sup} \Re (\sigma Z_{b^j} (\theta)) \geq A_j^{J,C}$:
\begin{align*}
     & \P\left( \Bc_{J,2C} \cap \{ \Dc < C\} \right) \\
=    & \P\left( \Dc < C; \exists 1 \leq j \leq  J, 
                \underset{\theta\in [0, 2\pi)}{\sup} \Re (\sigma Z_{b^j} (\theta))
                \geq A_j^{J,2C} \right)\\
=    & \sum_{j=1}^{J}
       \P\left( \Dc < C; \forall 1 \leq k < j, 
                \underset{\theta\in [0, 2\pi)}{\sup} \Re (\sigma Z_{b^k} (\theta))
                < A_k^{J,2C} ; 
                \underset{\theta\in [0, 2\pi)}{\sup} \Re (\sigma Z_{b^j} (\theta))
                \geq A_j^{J,2C} \right)\\
\leq & \sum_{j=1}^{J}
       \P\left( \forall 1 \leq k < j, 
                \underset{\theta\in \frac{2 \pi \Z}{2 b^{j}}}{\sup} \Re (\sigma Z_{b^k} (\theta))
                < A_k^{J,2C} ; 
                \underset{\theta\in \frac{2 \pi \Z}{2 b^{j}}}{\sup} \Re (\sigma Z_{b^j} (\theta))
                \geq A_j^{J,C} \right) \ ,
\end{align*}
where for the last inequality we used the definition of $\Dc$, which indicates we only need to control the fields $\Re (\sigma Z_{b^j})$ on $2b^j$ points at the cost of an error of $C$. Then, thanks to a union bound and the rotational invariance of the fields $\Re (\sigma Z_{b^j} (\theta))$:
\begin{align*}
     & \P\left( \Bc_{J,2C} \cap \{ \Dc < C\} \right)\\
\leq & \sum_{j=1}^{J}
       2 b^{j}
       \P\left( \forall 1 \leq k < j, 
                \Re (\sigma Z_{2^k} (\theta=0))
                < A_k^{J,2C} ; 
                \Re (\sigma Z_{2^j} (\theta=0))
                \geq A_j^{J,C} \right)\ .
\end{align*}
Now, we embed the Gaussian random variables $\Re (\sigma Z_{2^k} (\theta=0))$ into a standard real Brownian motion $W$ as in Corollary \ref{corollary:coupling_bm} by writing:
$$ \forall k \in \N, \ W_{\tau(k)} = \sqrt{2} \Re (\sigma Z_{k} (\theta=0)) \ .$$
By the Girsanov transform \cite[Chapter VIII, Theorem 1.12]{bib:RY}, we define a new probability measure $\Q$ given by:
$$ \frac{d\Q}{d\P}_{| \sigma\left( W_s ; s \leq t \right) } = e^{ \sqrt{2} W_{t} - t}$$
under which $\widetilde{W}_t = W_t - \sqrt{2} t$ is a $\Q$-Brownian motion. In that setting, we continue from the above inequalities and obtain:
\begin{align*}
     & \P\left( \Bc_{J,2C} \cap \{ \Dc < C\} \right)\\
\leq & \sum_{j=1}^{J}
       2 b^{j}
       \P\left( \forall 1 \leq k < j, 
                W_{\tau(b^k)}
                \leq \sqrt{2} A_k^{J,2C} ; 
                W_{\tau(b^j)}
                > \sqrt{2} A_j^{J,C} \right)\\
=    & \sum_{j=1}^{J}
       2 b^{j}
       \P\left( \forall 1 \leq k < j, 
                \widetilde{W}_{\tau(b^k)}
                \leq \sqrt{2} a_k^{J,2C} ; 
                \widetilde{W}_{\tau(b^j)}
                > \sqrt{2} a_j^{J,C} \right)\\
=   & \sum_{j=1}^{J}
       2 b^{j}
       \E^\Q\left( e^{- \sqrt{2} \widetilde{W}_{\tau(b^j)} - \tau(b^j)}
                \mathds{1}_{ \left\{ \forall 1 \leq k < j, 
                \widetilde{W}_{\tau(b^k)}
                \leq \sqrt{2} a_k^{J,2C} ; 
                \widetilde{W}_{\tau(b^j)}
                > \sqrt{2} a_j^{J,C} \right\} } \right)\\
\ll_b & \sum_{j=1}^{J}
       e^{-2 a_j^{J,C} }
       \Q\left( \forall 1 \leq k < j, 
                \widetilde{W}_{\tau(b^k)}
                \leq \sqrt{2} a_k^{J,2C} ; 
                \widetilde{W}_{\tau(b^j)}
                > \sqrt{2} a_j^{J,C} \right) \ .
\end{align*}
Out of convenience, we replace in the above sum $\widetilde{W}$ under $\Q$ by $W$ under $\P$ while performing a time change:
$$ \widetilde{W}_{\tau(b^k)} = \sqrt{\log b} W_{k + E_k} \ ,$$
where $E_k$ is the time shift arising from the approximation $\tau(b^k) = k \log b + \Oc(1)$. More precisely:
$$ E_k = \frac{\tau(b^k)}{\log b} - k =  \sum_{l=1}^k e_l \ ,$$
with $e_k = \frac{\mathds{1}_{k = 1}}{\log b} -1 + \frac{ \sum_{p=b^{k-1}+1}^{b^k} \frac{1}{p}}{\log b} \stackrel{k \rightarrow \infty}{\longrightarrow}0$. By comparing the harmonic series to the integral of $t \mapsto \frac{1}{t}$, recall the standard inequality:
\begin{align}
\label{eq:tau_log_comparison}
  \forall p<q, \ -\log\left( 1 + \frac{1}{p} \right)
             \leq \tau(q)-\tau(p) - \log \frac{q}{p} 
             \leq 0 \ .
\end{align}
It yields that 
$$||E||_1 := \sum_{j=1}^{\infty} |e_k| 
\leq \frac{1}{\log b}  + \frac{1}{\log b} \sum_{k=1}^{\infty} \log\left( 1 + b^{1-k} \right) $$
is finite and depends only on $b$. At this point, notice that picking $b$ large allows $|| E ||_1$ to be as small as desired. In any case, we have:
\begin{align*}
     & \P\left( \Bc_{J,2C} \cap \{ \Dc < C\} \right)\\
\ll_b & \sum_{j=1}^{J}
       e^{-2 a_j^{J,C} }
       \P\left( \forall 1 \leq k < j, 
                W_{k+E_k}
                \leq \sqrt{\frac{2}{\log b}} a_k^{J,2C} ; 
                W_{j+E_j}
                > \sqrt{\frac{2}{\log b}} a_j^{J,C} \right) \ .
\end{align*}

Because of the barrier's definition, as $J \rightarrow \infty$, we have 
$ \sum_{1 \leq j \leq \frac{2}{3} J}
       e^{-2 a_j^{J,C} } = \Oc(e^{-2C}) . $
And thus:
\begin{align*}
    & P\left( \Bc_{J,2C} \cap \{ \Dc < C\} \right)\\
\ll_b & e^{-2C} + \sum_{ \frac{2}{3} J < j \leq J}
       e^{-2 a_j^{J,C} }
      \P\left( \forall 1 \leq k < j, 
                W_{k+E_k}
                \leq \sqrt{\frac{2}{\log b}} a_k^{J,2C} ; 
                W_{j+E_j}
                > \sqrt{\frac{2}{\log b}} a_j^{J,C} \right)\\
\leq   & e^{-2C} + e^{-2C} J^{\frac{3}{2}} \sum_{ \frac{2}{3} J < j \leq J}
       e^{-2g(J - j) }
       \P\left( \forall 1 \leq k < j, 
                W_{k+E_k}
                \leq \sqrt{\frac{2}{\log b}} a_k^{J,2C} ; 
                W_{j+E_j}
                > \sqrt{\frac{2}{\log b}} a_j^{J,C} \right) \ .
\end{align*}

In order to conclude, we need the following, which is proven in the next paragraph:
\begin{align}
\label{eq:upper_bound_estimate}
\P\left( \forall 1 \leq k <j, 
                W_{k+E_k} \leq \sqrt{\frac{2}{\log b}} a_k^{J,2C} ; 
                W_{j+E_{j}} > \sqrt{\frac{2}{\log b}} a_{j}^{J,C} \right)
 & \ll_b  (1+C)^3 J^{-\frac{3}{2}} \ ,
\end{align}
for all $ \frac{2}{3}J < j \leq J$, the implicit constant being independent of $C>0$ and $j$. Assuming that, we have:
\begin{align*}
      P\left( \Bc_{J,2C} \cap \{ \Dc < C\} \right)
\ll_b & e^{-2C} + e^{-2C} (1+C)^3 \sum_{ \frac{2}{3} J < j \leq J}
       e^{-2g(J - j) }
\ll e^{-2C} (1+C)^3 \ .
\end{align*}
As the implicit constants do not depend on $C$, taking $C \rightarrow \infty$ concludes the proof of Eq. \eqref{eq:setting_barriers}.

\paragraph{Proof of eq. \eqref{eq:upper_bound_estimate}:} This equation is trivial for $J \leq 13$, hence we can assume 
$J \geq 14$. The proof requires the use of classical estimates on random walks which are given in the appendix. In order to obtain a more amenable expression, let us handle the overshoot of the random walk $W$ over the barrier at time $j > \frac{2J}{3} > 9$:
\begin{align*}
  & \P\left( \forall 1 \leq k < j, 
             W_{k+E_k} \leq \sqrt{\frac{2}{\log b}} a_k^{J,2C} ; 
             W_{j+E_{j}} > \sqrt{\frac{2}{\log b}} a_{j}^{J,C} \right)\\
= & \sum_{l=0}^\infty \P\Big( \forall 1 \leq k < j, 
             W_{k+E_k} \leq \sqrt{\frac{2}{\log b}} a_k^{J,2C} ;
             W_{j-1+E_{j-1}} \in \sqrt{\frac{2}{\log b}} a_{j-1}^{J,2C} - [l;l+1) ; \\
  &          \quad \quad
             W_{j+E_{j}} > \sqrt{\frac{2}{\log b}} a_{j}^{J,C} \Big)\\
\leq & \sum_{l=0}^\infty \P\Big( \forall 1 \leq k < j, 
             W_{k+E_k} \leq \sqrt{\frac{2}{\log b}} a_k^{J,2C} ; 
             W_{j-1+E_{j-1}} \in \sqrt{\frac{2}{\log b}} a_{j}^{J,2C} - [l;l+1) ; \\
  &          \quad \quad
             W_{j+E_{j}} - W_{j-1+E_{j-1}} > \sqrt{\frac{2}{\log b}} (a_{j}^{J,C} -  a_{j-1}^{J,2C}) + l \Big)\\
\ll  & \sum_{l=0}^\infty \P\left( \forall 1 \leq k < j, 
             W_{k+E_k} \leq \sqrt{\frac{2}{\log b}} a_k^{J,2C} ; 
             W_{j-1+E_{j-1}} \in \sqrt{\frac{2}{\log b}} a_{j}^{J,2C} - [l;l+1) \right) \times \\
  &          \quad \quad
             \P\left( W_{j+E_{j}} - W_{j-1+E_{j-1}} > -\sqrt{\frac{2}{\log b}}C + l + \Oc(1) \right)
\end{align*}
We now invoke the first point of Corollary \ref{corollary:shifted_barrier_estimate} with
$f = \sqrt{\frac{2}{\log b}} g $, $N = j-1$, $\lambda N = \half J + \frac{1}{4}$, $x=2C \sqrt{\frac{2}{\log b}}$, $ y = \sqrt{\frac{2}{\log b}} \left(g(J-j+1) - \frac{3}{4} \log J\right)$. Note that we can apply this corollary for $J$ large enough depending only on $||E||_1$, and then only on $b$, since 
$$\frac{1}{10} \leq \frac{1}{2} \leq \frac{\half J}{J-1} 
\leq \lambda \leq  \frac{\half (J+1)}{j-1}
\leq  \frac{\half (J+1)}{ \frac{2J}{3} - 1} \leq 
\frac{9}{10},$$
the last equality coming from the fact that we assume $J \geq 14$. Moreover, 
if $W_{k+ E_k} \leq \sqrt{\frac{2}{\log b}} 
a_k^{J, 2C}$, we have for $k \leq J/2 < \lambda N$, 
$$W_{k + E_k} - f(k) \leq 2C \sqrt{\frac{2}{\log b}} = x,
$$
and for $k > J/2$, and then $k \geq (J/2) + (1/2)  > \lambda N$, by subadditivity of $f$ (which is proportional to $k \mapsto k^{1/100}$), 
$$W_{k + E_k} - f(j-1 - k)
\leq W_{k + E_k} - f(J-k) + f(J-j+1)
\leq \sqrt{\frac{2}{\log b}}( 2C - \frac{3}{4}\log J  + g(J-j+1) ) = x + y.$$
Moreover, if 
$$W_{j-1 + E_{j-1}} \in \sqrt{\frac{2}{\log b}}
a_j^{J, 2C}  - [\ell, \ell+1),$$
then 

$$W_{j-1 + E_{j-1}} 
\in x+y - [\ell, \ell + 1).$$

We then obtain, for $J$ large enough depending on $b$:  
\begin{align*}
     & \P\left( \forall 1 \leq k < j, 
             W_{k+E_k} \leq \sqrt{\frac{2}{\log b}} a_k^{J,2C} ; 
             W_{j+E_{j}} > \sqrt{\frac{2}{\log b}} a_{j}^{J,C} \right)\\
\ll_b  & \, J^{-\frac{3}{2}} (1+C) \sum_{l=0}^\infty (1+l)
       \P\left( W_{j+E_{j}} - W_{j-1+E_{j-1}} > -\sqrt{\frac{2}{\log b}}C + l + \Oc(1) \right)\\
\ll  & J^{-\frac{3}{2}} (1+C) \sum_{l=0}^\infty (1+l)
       \P\left( \Nc > \frac{-\sqrt{\frac{2}{\log b}}C + l + \Oc(1)}{\sqrt{1+e_{j}}} \right),
\end{align*}
where $\Nc$ denotes a standard Gaussian random variable. Note that since $j \geq 10$ (say), 
$$1 + e_j = \frac{\tau(b^j) - \tau(b^{j-1})}{\log b}
          \stackrel{\textrm{Eq. \ref{eq:tau_log_comparison}}}{\geq}
           1 - \frac{\log \left( 1 + b^{1-j} \right)}{\log b}
          \geq 1 - \frac{b^{-9}}{\log b}
\geq \half \ .$$
Hence, for some integer 
$$l_0 = \left\lceil \sqrt{\frac{2}{\log b}} \, C + \mathcal{O}(1) \right\rceil 
      \ll 1 + C,$$
\begin{align*}
     & \P\left( \forall 1 \leq k < j, 
             W_{k+E_k} \leq \sqrt{\frac{2}{\log b}} a_k^{J,2C} ; 
             W_{j+E_{j}} > \sqrt{\frac{2}{\log b}} a_{j}^{J,C} \right)\\
\ll_b  & J^{-\frac{3}{2}} (1+C) 
\left( \sum_{l=0}^{l_0} (1+l)
      + \sum_{l = l_0 + 1}^{\infty}
       (1+ l) \exp \left( - (l-l_0)^2/4 \right)      
       \right)\\
       \ll_b & J^{-\frac{3}{2}} (1+C)^3,
\end{align*}
which proves \eqref{eq:upper_bound_estimate} first for $J$ large enough depending on $b$, and then for all $J$ by changing the implicit constant. 

\section{The lower bound}
\label{section:lower_bound}

Thanks to Proposition \ref{proposition:comparison_to_Z} and Corollary \ref{corollary:phistar}, the ``lower bound part" of
Theorem \ref{thm:main} is a consequence of the following result:  
\begin{proposition}
\label{proposition:lower_bound}
For all $\sigma \in \{1, i, -i\}$, we have 
$$\underset{C \rightarrow \infty}{\lim} \underset{n \rightarrow \infty}{\lim \sup}  \, 
\P   \left[ \sup_{\theta \in [0, 2\pi)} \Re (\sigma Z_n(\theta) )
\leq \log 
n - \frac{3}{4} 
\log_2 n  - C  \right] = 0.
$$
\end{proposition}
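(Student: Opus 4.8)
# Proof Proposal for Proposition \ref{proposition:lower_bound}

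The plan is to establish the lower bound via a truncated second moment method applied to an auxiliary ``decorated'' family of points on the circle, exactly mirroring the multiscale structure already used in the upper bound. First I would reduce to the geometric case $n = b^J$, since an argument symmetric to Lemma \ref{lemma:geometric_progressions} shows that the supremum over $[0,2\pi)$ changes only by $\Oc(1)$ when $n$ moves within $[b^J, b^{J+1})$; it therefore suffices to find, with probability bounded away from zero as $C\to\infty$, some $\theta$ with $\Re(\sigma Z_{b^J}(\theta)) \geq \log(b^J) - \frac34 \log_2(b^J) - C$. Using the coupling of Corollary \ref{corollary:coupling_bm}, for a fixed $\theta$ the process $j \mapsto \Re(\sigma Z_{b^j}(\theta))$ behaves like a Brownian motion run at time $\tau(b^j) = j\log b + \Oc(1)$, so the target level corresponds to asking the associated random walk to reach height $\approx \sqrt{2}\,(\log b^J - \tfrac34 \log_2 b^J)$ at time $J$, i.e. a walk conditioned to end at a ballot-type endpoint that is $\tfrac34 \log J$ below the ``straight line to the maximum.''

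The core is to count, for a suitable mesh of roughly $b^J$ points $\theta \in \frac{2\pi\mathbb{Z}}{b^J}$, the number $N$ of them for which the full trajectory $\big(\Re(\sigma Z_{b^j}(\theta))\big)_{1\le j\le J}$ stays below a barrier of the form $A_j^{J,C}$ (as in \eqref{eq:def_barrier_A}, with the $-\frac34\log J$ correction in the upper half) and lands in a window of width $\Oc(1)$ around $\sqrt{2/\log b}\,A_J^{J,C}$ at the final scale. A first-moment (ballot-theorem) estimate, via Corollary \ref{corollary:shifted_barrier_estimate} in the appendix and a Girsanov tilt as in the upper bound, gives $\E[N] \gg_b 1$ uniformly in $J$ and $C$. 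The second moment requires controlling $\E[N^2] = \sum_{\theta,\theta'} \P(\text{both trajectories stay below the barrier and land in the window})$. Here I split according to the branching scale $j(\theta,\theta')$, i.e. the largest $j$ at which $\Re(\sigma Z_{b^j}(\theta))$ and $\Re(\sigma Z_{b^j}(\theta'))$ are still strongly correlated; by the correlation structure of $Z$ — governed by $\psi_j(\theta)-\psi_j(\theta')$ being of order $b^j|\theta-\theta'|$ with high probability (Proposition \ref{proposition:prufer_modulus} and Corollary \ref{corollary:djo}) — two points at angular distance $\sim b^{-m}$ decorrelate at scale $\approx m$. Summing the contributions over $m$ and using that after the branching scale the two trajectories evolve independently, one obtains $\E[N^2] \ll_b \E[N]^2$ up to the usual logarithmic corrections that the $-\frac34 \log J$ tuning is designed to absorb, whence by Paley–Zygmund $\P(N \geq 1) \gg_b 1$ uniformly, giving the claim.

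The main obstacle, and where the bulk of the technical work will lie, is the second moment bound: unlike in a genuinely Gaussian log-correlated field, the field $Z$ is only \emph{conditionally} Gaussian, with the correlations mediated through the random Prüfer phases $\psi_j$ rather than a deterministic covariance. Consequently one cannot simply invoke Gaussian comparison; instead one must condition on the $\sigma$-algebra $\Fc_{j(\theta,\theta')}$, use that given this the increments $\Nc_j^\C e^{i\psi_j(\theta)}$ and $\Nc_j^\C e^{i\psi_j(\theta')}$ for $j$ past the branching scale are independent complex Gaussians with controlled phases, and then carefully estimate the resulting two-point probability through the random-walk estimates of the appendix applied on the event (of probability $\to 1$) that the phase separation behaves as expected, handling the complementary bad event by the crude tail bounds of Section \ref{section:beta_and_opuc}. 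A secondary technical point is passing from the finite mesh back to $\sup_{\theta\in[0,2\pi)}$, which is harmless for a lower bound since the supremum dominates the maximum over the mesh. I would also need to verify that choosing $b$ large (as in the upper bound, to make $\|E\|_1$ small) does not interfere with the lower-bound constants, which it does not since all estimates are uniform in $J$ once $b$ is fixed.
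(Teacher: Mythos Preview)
Your global plan---reduce to a geometric subsequence, run Paley--Zygmund on a count of good mesh points whose trajectory satisfies a barrier condition, and bound the second moment by splitting over the branching scale---is exactly the paper's, and your first-moment sketch is fine. But there is a real gap in the second-moment step.

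You assert that, after conditioning on $\Fc_{j(\theta,\theta')}$, ``the increments $\Nc_j^\C e^{i\psi_j(\theta)}$ and $\Nc_j^\C e^{i\psi_j(\theta')}$ for $j$ past the branching scale are independent complex Gaussians''. This is false: for each fixed $j$ these two variables share the \emph{same} Gaussian $\Nc_j^\C$ and differ only by a phase, so conditionally on $\Gc_j$ they are perfectly correlated. Summing over a block does not help either, because the phases $\psi_j(\cdot)$ are random and nonlinear in $j$, so no orthogonality of exponentials is available. The paper's key additional idea (Section~\ref{section:new_coupling}, Proposition~\ref{proposition:new_coupling}) is to replace $Z$ by a nearby field $Z^{(\e_r,\Delta)}$ in which the Pr\"ufer phase is \emph{frozen} over dyadic sub-blocks, leaving only the deterministic linear drift $j\theta$ inside each block. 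The resulting block increments $I_{l,p}(\theta)$ and $I_{l,p}(\theta')$ are then jointly Gaussian with covariance proportional to $\sum_j e^{ij(\theta-\theta')}$, which vanishes (or is exponentially small) once $l-\Delta^{(l)}$ exceeds the branching scale $\k$; this is what makes ``independence after branching'' rigorous. Your proposal does not contain this step, and without it the two-point probability $\P_N(\theta,\theta')$ cannot be factored.

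A secondary point: you work with an upper barrier plus a terminal window, whereas the paper uses a two-sided envelope $[L_k^{(N)},U_k^{(N)}]$. The lower barrier is not cosmetic: it is needed precisely in the late-branching regime $\k>N/2$ (Lemma~\ref{lemma:kMil} and the remark following it). There one must bound the contribution of the event that the freezing approximation fails near time $\k_+$, and the Girsanov change of measure introduces a factor $e^{-\sqrt{2}W_{\tau_{\k_+}}}$ that can only be controlled if the walk is bounded \emph{below} at time $\k_+$; the resulting $e^{2(N-\k_+)^{\alpha_+}}$ is then beaten by the tiny probability of failure. With only an upper barrier this term is uncontrolled and the late-branching pairs would not be summable.
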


Before diving into the proof, we will introduce a new process in order to gain more independence. Then 
in subsection \ref{subsection:second_moment}, we will implement the classical second moment method to our problem. The second moment method, originally introduced for the branching processes, like branching Brownian motion or Branching random walk, is now a very classical method pioneered by Bramson \cite{Bra78}. Since that time, it has been widely used in the general 
context of log-correlated field (\cite{BZe10}, \cite{BDZ13}, \cite{bib:Mad13}, \cite{RDZ15}). 

\paragraph{Further notation:}
For any $n\geq p$, $\theta\in [0,2\pi)$, we define
\begin{align*}
Z_n^{(p)}(\theta):= Z_n(\theta) - Z_p(\theta) =  \sum_{j=p}^{n-1} \frac{ \Nc_j^\C }{\sqrt{j+1}} e^{i((j+1)\theta + A_j(\theta))} .
\end{align*}
We stress that $Z_n^{(p)}$ depends implicitely of $\beta$ because of the Pr\"ufer phase. Similarly, 
we define
\begin{align*}
A_n^{(p)}(\theta):= \sum_{j=p}^{n-1} a_j(\theta) = A_n(\theta) - A_p(\theta),\ .
\end{align*} 
where 
$$a_j(\theta) := A_{j+1}(\theta) - A_j(\theta).$$
 More generally, for any family of quantities depending on an index $k$, we will denote the difference 
 of the quantities indexed by $k$ and $p$ by the same notation, with $k$ as an index and $p$ as 
 an upperscript.
 
In the following, it will be convenient to study the field at times which are powers of $2$. In the sequel, 
we denote $\e_k := 2^k$. 

\subsection{A new coupling}
\label{section:new_coupling}

\paragraph{A more independent field:}
For each fixed $\theta$,  $(Z_{\e_k}(\theta))_{k\geq 0}$ is a complex Gaussian random walk. Moreover we
could compute the correlations of $Z_{\e_k}(\theta)$ and $Z_{\e_k}(\theta')$ and observe that 
they behave logarithmically with respect to the distance between $\theta$ and $\theta'$ modulo $2  \pi$.
However, $Z$ is not globally Gaussian, so we cannot directly apply known results on the maximum of Gaussian fields, 
but we will still provide its approximative branching structure. To achieve this aim we will  gain
some independence by making small changes on $Z$. 

Let us fix some integer $r$, which will be  assumed to be larger than some suitable universal constant. 
For $l \geq r$, we denote $\Delta=\Delta^{(l)}:= \lfloor \frac{r}{100} \rfloor  + 100\lfloor  \log^2 l \rfloor $. Observe that for any $N \geq r$, we can rewrite formula \eqref{eq:def_Z} as:
\begin{align}
Z_{\e_{N}}^{(\e_r)}(\theta):= & \sum_{l=r}^{N -1 } \sum_{p=0}^{2^{\Delta}-1}  \left(\sum_{j=0}^{2^{l-\Delta}-1}
\Nc^\C_{2^{l}+p 2^{l-\Delta} + j}
\frac{e^{i\psi_{ 2^{l}+p 2^{l-\Delta}+j}(\theta )}}
     {\sqrt{ 2^{l}+p 2^{l-\Delta} + j +1 }}
\right).
\end{align}
Note that $\Delta^{(l)}$ and $l - \Delta^{(l)} \geq ( 99l/100) - 100 \log^2 l $ are strictly positive if $l \geq r$ and $r$ is large enough. 
Now, let $Z^{(\e_r,\Delta)}$ be the process defined by
\begin{align}
Z_{\e_{N}}^{(\e_r,\Delta)}(\theta) := & \sum_{l=r}^{N -1 } \sum_{p=0}^{2^{\Delta}-1}  \left(\sum_{j=0}^{2^{l-\Delta}-1}    \Nc^\C_{2^{l}+p 2^{l-\Delta} + j} e^{i j\theta } \right)
\frac{e^{i\psi_{ 2^{l}+p 2^{l-\Delta}}(\theta )}}
     {\sqrt{ 2^{l}+p 2^{l-\Delta}}}
.
\end{align}
Observe that $Z_{\e_{N}}^{(\e_r)}(\theta)$ and $Z_{\e_{N}}^{(\e_r,\Delta)}(\theta)$ only differ
by the change in the square root of the denominator, and by the replacement of
some increments of the  Pr\"ufer phases by their mean. We claim that 
\begin{proposition}
For $r$ large enough, 
	\label{proposition:new_coupling}
	\begin{align*}
\sum_{l\geq r} \P\left(	\sup_{\theta \in [0,2\pi)} | Z_{\e_{l+1}}^{(\e_{l},\Delta)}(\theta) - Z_{\e_{l+1}}^{(\e_{l})}(\theta)|\geq 2 l^{-2} \right) <+\infty
	\end{align*}
In particular, as $\sum_{l\geq r } 2 l^{-2}<+\infty$, we have that almost surely,
\begin{align}
\sum_{l \geq r} \sup_{\theta \in [0,2\pi)}
|  Z_{\e_{l+1}}^{(\e_{l},\Delta)}(\theta) - Z_{\e_{l+1}}^{(\e_{l})}(\theta) | < \infty \ .
\end{align}
\end{proposition}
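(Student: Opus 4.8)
The plan is to estimate the difference $Z_{\e_{l+1}}^{(\e_l,\Delta)}(\theta) - Z_{\e_{l+1}}^{(\e_l)}(\theta)$ termwise, splitting it into two contributions which account for the two differences between the processes: the replacement of $\sqrt{2^l + p2^{l-\Delta} + j + 1}$ by $\sqrt{2^l + p2^{l-\Delta}}$ in the denominator, and the replacement of the Pr\"ufer phase $\psi_{2^l + p2^{l-\Delta} + j}(\theta)$ by $\psi_{2^l + p2^{l-\Delta}}(\theta)$ in the exponent. Concretely, I would write
\begin{align*}
Z_{\e_{l+1}}^{(\e_l,\Delta)}(\theta) - Z_{\e_{l+1}}^{(\e_l)}(\theta)
&= \sum_{p=0}^{2^\Delta-1}\sum_{j=0}^{2^{l-\Delta}-1} \Nc^\C_{2^l+p2^{l-\Delta}+j}
\left( \frac{e^{i j\theta} e^{i\psi_{2^l+p2^{l-\Delta}}(\theta)}}{\sqrt{2^l+p2^{l-\Delta}}}
     - \frac{e^{i\psi_{2^l+p2^{l-\Delta}+j}(\theta)}}{\sqrt{2^l+p2^{l-\Delta}+j+1}} \right).
\end{align*}
Since $j < 2^{l-\Delta}$ and $\Delta = \Delta^{(l)} = \lfloor r/100\rfloor + 100\lfloor \log^2 l\rfloor$, the relative change in the denominator is $\Oc(2^{-\Delta}) = \Oc(l^{-100\log l})$ per term, which is more than enough. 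The phase difference $\psi_{2^l+p2^{l-\Delta}+j}(\theta) - \psi_{2^l+p2^{l-\Delta}}(\theta) - j\theta = A_{2^l+p2^{l-\Delta}+j}(\theta) - A_{2^l+p2^{l-\Delta}}(\theta)$ is controlled by Proposition \ref{proposition:prufer_deviation}: over a block of length $2^{l-\Delta}$ starting at index $m \sim 2^l$, the ratio $\beta k/(1+\beta m)$ is of order $2^{-\Delta}$, so $\log(1+\beta k/(1+\beta m)) = \Oc(2^{-\Delta})$ and the increment is subgaussian with variance $\Oc(2^{-\Delta})$, hence of size $\Oc(l^{-C})$ for any $C$ with overwhelming probability.

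The steps in order would be: (1) bound the deterministic denominator discrepancy uniformly in $\theta$, $p$, $j$ by $\Oc(2^{-\Delta} (2^l + p2^{l-\Delta})^{-1/2})$; (2) use Proposition \ref{proposition:prufer_deviation} with a union bound over the $2^\Delta$ values of $p$ (and, if needed, a discretization of $\theta$ via Corollary \ref{corollary:djo} followed by filling the gaps using that $\psi_j$ is increasing) to show that, off an event of probability summable in $l$, all phase increments $|A_{2^l+p2^{l-\Delta}+j}(\theta) - A_{2^l+p2^{l-\Delta}}(\theta)|$ for $0\le j < 2^{l-\Delta}$ are $\Oc(l^{-10})$ say, so that $|e^{i\psi_{\cdots+j}} - e^{ij\theta}e^{i\psi_{\cdots}}| = \Oc(l^{-10})$; (3) on the good event, bound the modulus of the whole sum by a sum over $l^2$-many Gaussian coefficients each weighted by $\Oc((l^{-100\log l} + l^{-10})(2^l)^{-1/2})$, i.e. roughly $\Oc(l^{-10} 2^{-l/2} \sum_{j,p} |\Nc^\C_{\cdots}|)$; (4) control $\sum_{p,j}|\Nc^\C_{2^l+p2^{l-\Delta}+j}|$ by a further Borel--Cantelli / Chernoff argument, noting there are $2^l$ standard complex Gaussians so this sum is $\Oc(2^l)$ with overwhelming probability (or simply bound the $L^2$ norm of the sum, which by independence of the $\Nc^\C$'s is $\Oc(l^{-20} 2^{-l} \cdot 2^l) = \Oc(l^{-20})$, then apply Markov). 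Combining these, $\sup_\theta |Z_{\e_{l+1}}^{(\e_l,\Delta)}(\theta) - Z_{\e_{l+1}}^{(\e_l)}(\theta)| \le 2l^{-2}$ off an event whose probability is summable in $l$, which is the claim; the almost sure finiteness of the series then follows from Borel--Cantelli together with $\sum_l 2l^{-2} < \infty$.

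The main obstacle I anticipate is the supremum over $\theta \in [0,2\pi)$: the phase estimate in Proposition \ref{proposition:prufer_deviation} is pointwise in $\theta$, so one has to pass from a discrete net to the whole circle. This is where Corollary \ref{corollary:djo} enters — it gives, almost surely, a uniform modulus-of-continuity bound $\sup_{|\theta-\theta'| \le 2\pi k^{-D}} |\psi_j(\theta) - \psi_j(\theta')| = \Oc(k^{3-D(1-1/\rho)})$, so taking $D$ large enough relative to the required precision makes the discretization error negligible; alternatively, since each $\psi_j$ is monotone increasing, the supremum over $\theta$ in an interval of a difference $\psi_j(\theta)-(j+1)\theta - (\psi_j(\theta')-(j+1)\theta')$ type quantity can be controlled directly at the endpoints of a fine mesh. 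The denominator part is entirely deterministic and poses no difficulty; the genuinely probabilistic content is packaged in Propositions \ref{proposition:subgaussianity} and \ref{proposition:prufer_deviation}, both already available. One should take care that the number of blocks $2^\Delta = 2^{\lfloor r/100\rfloor + 100\lfloor\log^2 l\rfloor}$ grows subexponentially in $l$, so that the union bound over $p$ and over the mesh in $\theta$ only costs a factor $e^{\Oc(\log^2 l)}$, which is absorbed by the superpolynomially small tail coming from the subgaussian estimates.
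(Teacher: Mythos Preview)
Your decomposition and the use of Proposition~\ref{proposition:prufer_deviation} to control the phase increments $|A_{2^l+p2^{l-\Delta}+j}(\theta)-A_{2^l+p2^{l-\Delta}}(\theta)|$ on a good event are exactly right, and they match the paper. The gap is in steps (3)--(4), where the one-point estimate you obtain is too weak to survive the discretisation in $\theta$.

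First, the triangle-inequality route fails outright: there are $2^l$ (not $l^2$) Gaussian coefficients in the sum, so on the good event one only gets
\[
|\blacksquare_l(\theta)|\;\le\; C\,l^{-10}\,2^{-l/2}\sum_{p,j}|\Nc^\C_{2^l+p2^{l-\Delta}+j}|\;\ll\; l^{-10}\,2^{l/2},
\]
which diverges. Second, your $L^2$/Markov alternative does give a legitimate one-point bound $\P(|\blacksquare_l(\theta)|\ge l^{-2})\ll l^4\,2^{-\Delta}\ll l^{-c\log l}$, but this is only super-polynomially small in $l$. The gap-filling (via Corollary~\ref{corollary:djo} or otherwise) forces a mesh with at least $2^{Dl}$ points, since $\psi_j(\theta)$ for $j\sim 2^l$ varies on scale $2^{-l}$; union-bounding a tail of size $l^{-c\log l}$ over $2^{Dl}$ points gives something that grows like $2^{Dl}$ and is not summable.

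The paper closes this gap by computing the conditional Laplace transform of $\blacksquare_l(\theta)$ \emph{restricted to the good event} via backward induction along the filtration $(\mathcal G_{2^l+p2^{l-\Delta}+j})$. The key device is to write the good event as a decreasing intersection $G_l(\theta)=\bigcap_k G_{l,k}(\theta)$ of adapted events, so that at each step one may drop to the larger event $G_{l,k-1}$ and still integrate out $\Nc^\C_{2^l+p2^{l-\Delta}+j}$ as a conditionally centred Gaussian with coefficient bounded by $2^{-\Delta/4}+2^{-\Delta}$. This yields a genuine subgaussian tail $\P(|\blacksquare_l(\theta)|\ge l^{-2})\ll e^{-2^{\Delta/4}}$, which, since $\Delta^{(l)}\gg\log^2 l$, decays faster than any $2^{-Dl}$ and absorbs the union bound.
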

\begin{proof} For any $l\geq 0$, $\theta\in [0,2\pi)$, we  introduce
\begin{align}
\label{tighti2}
\blacksquare_l(\theta) & :=  Z_{\e_{l+1}}^{(\e_{l},\Delta)}(\theta) - Z_{\e_{l+1}}^{(\e_{l})}(\theta)
\\
\nonumber &  = \sum_{p=0}^{2^{\Delta}-1} 
               \frac{ e^{i\psi_{ 2^{l}+p 2^{l-\Delta} }(\theta )} }
                    { \sqrt{{ 2^{l}+p 2^{l-\Delta}}} }
               \left(\sum_{j=0}^{2^{l-\Delta}-1} 
               \Nc^\C_{2^{l}+p 2^{l-\Delta} + j} e^{i j \theta }   \square_j^{ (2^{l}+p 2^{l-\Delta}) }(\theta)
               \right) \ ,
\end{align}
with 
\begin{align}
\label{eq:ineq_square}
       \left|\square_j^{(2^{l}+p 2^{l-\Delta} )}(\theta)\right|
\leq &  \left| A_{j+ 2^{l}+p 2^{l-\Delta} }^{(2^{l}+p 2^{l-\Delta})}(\theta) \right| + 2^{-\Delta}.
\end{align}
Indeed:
\begin{align*}
     \square_j^{(2^{l}+p 2^{l-\Delta} )}(\theta)
 = & 1- \sqrt{\frac{2^l+p2^{l-\Delta}}{2^l+p2^{l-\Delta} +j+1}}
        e^{i\psi_{ 2^{l}+p 2^{l-\Delta} + j}(\theta )
          -i\psi_{ 2^{l}+p 2^{l-\Delta} }(\theta )-ij\theta
        }\\
 = & 1- \left( 1 + \Theta \right)
        e^{i A_{j+ 2^{l}+p 2^{l-\Delta} }^{(2^{l}+p 2^{l-\Delta})}(\theta)},
\end{align*}
where $|\Theta| \leq 2^{-\Delta}$, which implies \eqref{eq:ineq_square}.
As we shall see, with an overwhelming probability, the random variables $\square_j^{( 2^l+p2^{l-\Delta}  )}$ are small enough to control $Z_{\e_N}^{(\square)}$. Our strategy is once more to use a one point estimate and a union bound on a fine mesh of $[0,2\pi]$. Then we will apply Corollary \ref{corollary:djo}.

\paragraph{One point estimate:} In this paragraph, we prove that for all $\theta \in [0,2\pi]$, $l\geq 1$:
\begin{align}
\label{eq:one_point_square}
\P\left( | \blacksquare_l(\theta) | \geq l^{-2} \right)
\ll_{\beta} e^{- 2^{ \quart \Delta} }.
\end{align}
In order to do so, we define first the following "good events", on which
we shall be able to ensure that $\square_j^{( 2^l+p2^{l-\Delta}  )}$ is
small. For any $\theta\in [0,2\pi]$, $l\geq r$, let

\begin{align}
\label{eq:def_G_l}
G_l(\theta) := & \ \bigcap_{p=0}^{2^{\Delta}-1}
\left\{ \sup_{0\leq j\leq 2^{l-{\Delta}}-1} \left| A_{j+ 2^{l}+p 2^{l-\Delta} }^{(2^{l}+p 2^{l-\Delta})}(\theta) \right|
\leq 2^{-\quart \Delta}
\right\} \ .
\end{align}

For $j \geq 0$, let $\mathcal{G}_j$ be the $\sigma$-algebra generated by 
$(\alpha_0, \Nc^{\mathbb{C}}_0), \dots, (\alpha_{j-1},  \Nc^{\mathbb{C}}_{j-1})$. 
By a union bound and Proposition \ref{proposition:prufer_deviation}, with $\mathcal{F}_j$ replaced by $\mathcal{G}_j$, 
which does not change the proof:
\begin{align*}
 1 - \P\left( G_l(\theta)  \Big| \mathcal{G}_{2^l} \right) 
\leq & \sum_{p=0}^{2^{\Delta}-1} 
\P\left( \sup_{0\leq j\leq 2^{l-{\Delta}}-1} \left| A_{j+ 2^{l}+p 2^{l-\Delta} }^{(2^{l}+p 2^{l-\Delta})}(\theta) \right|
\geq 2^{-\quart \Delta}    \Big| \mathcal{G}_{2^l}\right)\\
\leq & 4 \sum_{p=0}^{2^{\Delta}-1} \exp\left( -\frac{\beta 2^{-2 \quart \Delta}}
                                                    {32 \log\left( 1 + \frac{\beta 2^{l-\Delta}}{1+\beta(2^{l}+p 2^{l-\Delta})}\right)} \right)\\
\leq & 4 \sum_{p=0}^{2^{\Delta}-1} 
       \exp\left( -\frac{2^{-l+\half \Delta}}{32}
                  \left( 1+\beta(2^{l}+p 2^{l-\Delta}) \right)
           \right)\\
\leq & 4 \sum_{p=0}^{2^{\Delta}-1} 
       \exp\left( -\frac{\beta}{32} 2^{\half \Delta} \left( 1+p 2^{-\Delta} \right)
           \right)
 \ll_{\beta}  \exp\left( -\frac{\beta}{32} 2^{\half \Delta} \right).
\end{align*}
 As such, these good events happen with overwhelming probability. Now, we will estimate the Laplace transform of $\blacksquare_l(\theta)$ on the event $G_l(\theta)$. We will prove that for $\lambda \in \R$:
\begin{align}
\label{eq:laplace_blacksquare}
\sup\left( 
\E\left( e^{ \lambda \Re  \blacksquare_l(\theta)} \mathds{1}_{G_l(\theta)}  \Big| \mathcal{G}_{2^l}\right),
\E\left( e^{ \lambda \Im  \blacksquare_l(\theta)} \mathds{1}_{G_l(\theta)}  \Big| \mathcal{G}_{2^l}\right)
\right)
\leq &
e^{\lambda^2 \left( 2^{-\quart \Delta} + 2^{-\Delta} \right)^2 } \ .
\end{align}
We now handle the real part: the proof for the imaginary part is the same by replacing $\Re$ by $\Im$ everywhere. We have:
\begin{align*}
     & \E\left( e^{ \lambda \Re  \blacksquare_l(\theta)} \mathds{1}_{G_l(\theta)}  \Big| \mathcal{G}_{2^l} \right)\\
=    & \E\left(  
       \exp\left\{ \lambda \Re  \sum_{p=0}^{2^{{\Delta}}-1}
       \frac{ e^{i\psi_{ 2^{l}+p 2^{l-{\Delta}} }(\theta ) }}
            {\sqrt{{ 2^{l}+p 2^{l-{\Delta}}  }}  }
       \left( \sum_{j=0}^{2^{l-{\Delta}}-1} \Nc^\C_{2^{l}+p 2^{l-{\Delta}} + j}   e^{i j \theta}  \square_j^{(  2^{l}+p 2^{l-{\Delta}})}   \right)
           \right\}
       \mathds{1}_{G_l(\theta)} \Big| \mathcal{G}_{2^l} \right)
\end{align*}

To compute the expectation, we shall proceed by backward induction.
 We will condition on $\Gc_{2^{l+1}-1}$ then $\Gc_{2^{l+1}-2}$, $\dots$, all the way down to $\Gc_{2^{l}}$. To that
 endeavor, observe from \eqref{eq:def_G_l} that $G_l(\theta)$ is a decreasing intersection
$$G_l(\theta) = \bigcap_{ \substack{ 0 \leq p \leq 2^{\Delta}-1 \\
                                 0 \leq j \leq 2^{l-\Delta} -1} }
               \downarrow G_{l, p 2^{l-\Delta} + j}(\theta)$$
of events where $G_{l,0} = \Omega$ (the event of probability $1$), for $0 \leq p \leq 2^{\Delta}-1$ and $1 \leq j \leq 2^{l-\Delta} - 1$, 
$G_{l,p 2^{l-\Delta} + j }(\theta)$ is the $\Gc_{2^l + p 2^{l-\Delta} + j}$-measurable event given by:
$$ G_{l, p 2^{l-\Delta} + j}(\theta) = G_{l, p 2^{l-\Delta} + j-1}(\theta) \cap \left\{ \left| A_{j+ 2^{l}+p 2^{l-\Delta} }^{(2^{l}+p 2^{l-\Delta})}(\theta) \right|
\leq 2^{-\quart \Delta} \right\} \ ,
$$
and for $0 \leq p \leq 2^{\Delta} - 2$,
$$  G_{l, (p+1) 2^{l-\Delta} }(\theta) =  G_{l, p 2^{l-\Delta} + 2^{\Delta} - 1}(\theta)  \cap 
\left\{ \left| A_{2^{l}+(p+1) 2^{l-\Delta} }^{(2^{l}+(p+1) 2^{l-\Delta})}(\theta) \right|
\leq 2^{-\quart \Delta} \right\} = G_{l, p 2^{l-\Delta} + 2^{\Delta} - 1}(\theta)  .$$

Because of the inequality \eqref{eq:ineq_square}, we have, for 
$0 \leq p \leq 2^{\Delta}-1$ and $0 \leq j \leq 2^{l-\Delta} - 1$, 
\begin{align*}
     & \E\left(  
       \exp\left\{ \lambda \Re
       \frac{ e^{i\psi_{ 2^{l}+p 2^{l-{\Delta}} }(\theta ) }}
            {\sqrt{{ 2^{l}+p 2^{l-{\Delta}}  }}  }
       \Nc^\C_{2^{l}+p 2^{l-{\Delta}} + j}   e^{i j \theta}  \square_j^{(  2^{l}+p 2^{l-{\Delta}})}
           \right\}
       \mathds{1}_{G_{l, p 2^{l-\Delta} + j}(\theta)} | \Gc_{2^l + p 2^{l-\Delta} + j}\right)\\
=    & \E\left(  
       \exp\left\{ \frac{\lambda^2}{4}
              \frac{ \left| \square_j^{(  2^{l}+p 2^{l-{\Delta}})} \right|^2 }
                   { 2^{l}+p 2^{l-{\Delta}} }
           \right\}
       | \Gc_{2^l + p 2^{l-\Delta} + j}\right)
       \mathds{1}_{G_{l, p 2^{l-\Delta} + j}(\theta)}\\
\leq & \exp\left\{ \lambda^2
              \frac{ \left( 2^{-\quart \Delta} + 2^{-\Delta} \right)^2 }
                   { 2^{l}+p 2^{l-{\Delta}} }
           \right\}
       \mathds{1}_{G_{l, p 2^{l-\Delta} + j-1}(\theta)},
\end{align*}
with the convention $G_{l, -1} = G_{l,0} = \Omega$ for $p=j=0$. As a consequence:
$$
\E\left( e^{ \lambda \Re  \blacksquare_l(\theta)} \mathds{1}_{G_l(\theta)}  \Big| \mathcal{G}_{2^l} \right)\\
\leq e^{ \lambda^2 \sum_{p=0}^{2^{{\Delta}}-1} \sum_{j=0}^{2^{l-{\Delta}}-1} 
         \frac{ \left( 2^{-\quart \Delta} + 2^{-\Delta} \right)^2 }
              { 2^{l}+p 2^{l-{\Delta}} }
       }
\leq e^{ \lambda^2 \left( 2^{-\quart \Delta} + 2^{-\Delta} \right)^2 }, 
$$
which concludes the proof of \eqref{eq:laplace_blacksquare}. From that, one deduces thanks to a Chernoff bound
 ($\lambda = \pm 2^{\Delta/2}/(4 l^2)$):
$$ \sup\left( 
   \P\left( \pm \Re \blacksquare_l(\theta) \geq  l^{-2}/2, G_l(\theta) \right),
   \P\left( \pm \Im \blacksquare_l(\theta) \geq l^{-2}/2, G_l(\theta) \right) 
   \right)
   \leq e^{- \frac{2^{\Delta/2}}{16 l^4} + \mathcal{O}(1)} \ll e^{- 2^{\Delta/4}},$$
   since 
   $$ \frac{2^{\Delta/2}}{16 l^4} - 2^{\Delta/4} \geq 2^{\Delta/4} \left(  \frac{2^{(1/4)\lfloor 100 \log^2 l \rfloor}} 
   {16l^4} - 1 \right) \geq 0 $$
   if $r$ is large enough and $l \geq r$. 
   
   This implies the one-point estimate \eqref{eq:one_point_square}, since:
\begin{align*}
     & \P\left( |  \blacksquare_l(\theta) |\geq l^{-2} \right)\\
\leq & 1-\P\left( G_l(\theta) \right)
       + \P\left( |  \blacksquare_l(\theta) |\geq l^{-2} ,  G_l(\theta) \right)\\
\leq & 1-\P\left( G_l(\theta) \right)
       + 4 \sup \left( \P\left( \pm \Re \blacksquare_l(\theta) \geq l^{-2}/2 ,  G_l(\theta) \right),
                \P\left( \pm \Im \blacksquare_l(\theta) \geq l^{-2}/2,  G_l(\theta) \right) \right)\\
\ll_{\beta}  & e^{-2^{\quart \Delta}} + e^{- \frac{\beta}{33} 2^{\half \Delta}}
\ll_{\beta}    e^{-2^{\quart \Delta}} \ .
\end{align*}

\paragraph{Multiple point estimate:} Let $D>2$. By combining the one-point estimate \eqref{eq:one_point_square} with
a union bound over $2^{D(l+1)}$ points, as $2^{D(l+1)} e^{ - 2^{\quart \Delta^{(l)}} }$ is summable (since 
$\Delta^{(l)} \gg \log^2 l$), we have
$$
\sum_{l\geq r} 
\P\left( 
  \sup_{ \theta \in [0,2\pi)_{2^{D(l+1)}} } \left| \blacksquare_l(\theta) \right| \geq l^{-2}
\right) 
< \infty \ .
$$
Also, if $N_l = \left\{ 2^{-l}\sum_{j=2^{l}}^{2^{l+1}-1} |\Nc^\C_j| \leq 2 \right\}$, $\sum_{l\geq r} (1-\P\left( N_l \right)) < \infty$ by a simple large deviation estimate. Therefore, it is sufficient to prove that
\begin{align*}
\sum_{l\geq r} 
\P\left( 
\sup_{ \substack{ \theta_1 \in [0, 2\pi)_{2^{-D(l+1)}} \\ \theta_2-\theta_1 \in [0, 2\pi 2^{-D(l+1)} ] } } 
\left| \blacksquare_l(\theta_2) - \blacksquare_l(\theta_1) \right| \geq l^{-2} , N_l \right)
& < +\infty \ .
\end{align*}
Moreover, on $N_l$ and for a fixed $\theta_1$, we have the following crude bound:
\begin{align}
\label{eq:crude_bound}
     & \sup_{\theta_2-\theta_1\in [0,2\pi 2^{-D(l+1)}] }|\blacksquare_l(\theta_1) -\blacksquare_l(\theta_2)|\\
\nonumber
\ll &  2^{-(D-(3/2)) l} + 
       2^{l/2}\sup_{2^l\leq j\leq 2^{l+1}-1}
      \left(   \psi_j(\theta_1 + 2\pi 2^{-D(l+1)})-\psi_j(\theta_1) \right). 
\end{align}
Indeed, in order to establish the above inequality, we start by:
\begin{align*}
      & \sup_{\theta_2-\theta_1\in [0,2\pi 2^{-D(l+1)}] }|\blacksquare_l(\theta_1) -\blacksquare_l(\theta_2)|\\
 \leq & \sup_{\theta_2-\theta_1\in [0,2\pi 2^{-D(l+1)}] }
        |Z_{\e_{l+1}}^{(\e_{l})}(\theta_1) -Z_{\e_{l+1}}^{(\e_{l})}(\theta_2)|
      + \sup_{\theta_2-\theta_1\in [0,2\pi 2^{-D(l+1)}] }
        |Z_{\e_{l+1}}^{(\e_{l},\Delta)}(\theta_1) -Z_{\e_{l+1}}^{(\e_{l}, \Delta)}(\theta_2)|
\end{align*}
and then bound each term separately. For the first term, we have:
\begin{align*}
      & | Z_{\e_{l+1}}^{(\e_{l})}(\theta_1) -Z_{\e_{l+1}}^{(\e_{l})}(\theta_2) |\\
 \leq & \left[ 
        \sum_{p=0}^{2^\Delta-1} \sum_{j=0}^{2^{l-\Delta}-1} 
        \frac{\left| \Nc_{2^l+p 2^{l-\Delta}+j}^\C \right|}
             {\sqrt{ 2^{l}+p 2^{l-\Delta} + j +1 }}
        \right]
        \sup_{2^l\leq j\leq 2^{l+1}-1}
        |\psi_j(\theta_1)-\psi_j(\theta_2)|\\
 \leq & 2^{-l/2} \left[ \sum_{ 2^l \leq j \leq 2^{l+1}-1 } \left| \Nc_{j}^\C \right| \right]
        \sup_{2^l\leq j\leq 2^{l+1}-1}
        |\psi_j(\theta_1)-\psi_j(\theta_2)|\\
 \leq & 2 \cdot 2^{l/2}
        \sup_{2^l\leq j\leq 2^{l+1}-1}
        |\psi_j(\theta_1)-\psi_j(\theta_2)| \ .
\end{align*}
The second term is treated in a similar manner:
\begin{align*}
      & |Z_{\e_{l+1}}^{(\e_{l}, \Delta)}(\theta_1) -Z_{\e_{l+1}}^{(\e_{l}, \Delta)}(\theta_2) |\\
 \leq & \sum_{p=0}^{2^\Delta-1} \sum_{j=0}^{2^{l-\Delta}-1} 
        \frac{\left| \Nc_{2^l+p 2^{l-\Delta}+j}^\C \right|}
             {\sqrt{ 2^{l}+p 2^{l-\Delta}}}
             |\psi_{2^l + p 2^{l-\Delta}}(\theta_1)-\psi_{2^l + p 2^{l-\Delta}}(\theta_2)+j(\theta_1-\theta_2)| 
        \\
 \leq & 
        2 \cdot 2^{l/2} \left(
       2^{l-\Delta} |\theta_1-\theta_2|  + \sup_{2^l\leq j\leq 2^{l+1}-1}
        |\psi_j(\theta_1)-\psi_j(\theta_2)| \right),
\end{align*}
and maximizing over $\theta_2-\theta_1\in [0,2\pi 2^{-D(l+1)}]$ yields the right bound. As such, from \eqref{eq:crude_bound}, the problem is further reduced to proving, 
for all $c >0$:
$$
\sum_{l \geq r} \P\left( 
       c 2^{l/2} \sup_{2^l\leq j\leq 2^{l+1}-1} \sup_{\theta \in [ 0,2\pi)_{2^{-D(l+1)}} }
       \left( \psi_j(\theta + 2\pi 2^{-D(l+1)})-\psi_j(\theta) \right)
       \geq l^{-2}/2 \right) < \infty.
$$
An application of Proposition \eqref{proposition:prufer_modulus} yields, for $\rho > 1$ depending only on $\beta$:
 \begin{align*}
 &\sum_{l \geq r} \P\left( 
       c 2^{l/2} \sup_{2^l\leq j\leq 2^{l+1}-1} \sup_{\theta \in [ 0,2\pi)_{2^{-D(l+1)}} }
       \left( \psi_j(\theta_1 + 2\pi 2^{-D(l+1)})-\psi_j(\theta) \right)
       \geq l^{-2}/2 \right)\\
 &\leq \sum_{l\geq r}  \sum_{2^l\leq j\leq 2^{l+1}-1} \P\left( \sup_{\theta \in [ 0,2\pi)_{2^{-D(l+1)}} }
    \left(   \psi_j(\theta + 2\pi 2^{-D(l+1)})-\psi_j(\theta) \right) \geq \frac{1}{2c} l^{-2} 2^{-l/2}\right)\\
 &\leq \sum_{l\geq r}  \sum_{2^l\leq j\leq 2^{l+1}-1} 2^{D(l+1)} \P\left(
       \psi_j(2\pi 2^{-D(l+1)}) \geq \frac{1}{2c} l^{-2} 2^{-l/2}\right)\\
 & \ll_{\beta,c} \sum_{l\geq r}  \sum_{2^l\leq j\leq 2^{l+1}-1} 2^{D(l+1)}
         2^{-\rho D(l+1)} l^{2 \rho} 2^{\rho l /2} j^\rho
\ <+\infty,
\end{align*}
for $D$ large enough.
\end{proof}

\subsection{The second moment method} 
\label{subsection:second_moment}

In the sequel, for all fields denoted by $Z$ with some indices and superscripts, we write $R$ with the same indices
and superscripts for the real part of $\sigma$ times the initial field (recall that $\sigma \in \{1,i,-i\}$).
\paragraph{An envelope for the paths of $R_{\e_{N}}^{(\e_r,\Delta)}(\theta) $:} For $j \geq r$, let 
\begin{align}
\label{eq:def_tau_r}   \tau^{(r)}_j&:=  \sum_{l=r}^{j-1} \sum_{p=0}^{2^{\Delta^{(l)} }-1} 
\frac{2^{l-\Delta^{(l)}}}{2^l +p2^{l-\Delta^{(l)}}}=  \sum_{l=r}^{j-1} \sum_{p=0}^{2^{\Delta^{(l)} }-1}
\frac{1}{2^{\Delta^{(l)}} + p} =: (j-r) \log 2 + \lambda_{j}^{(r)},
\end{align}
where $(\lambda_j^{(r)})_{j\geq r}$ is a nonnegative and
increasing sequence, tending, when $j \rightarrow \infty$, to a limit $\lambda_{\infty}^{(r)}$ such that 
$$\lambda_\infty^{(r)} < \sum_{l\geq r} 2^{-\Delta^{(l)}} \leq   2^{-\lfloor \frac{r}{100} \rfloor}
\sum_{l=1}^{\infty} 2^{- 100 \lfloor \log^2 l \rfloor }   \ll  2^{-\lfloor \frac{r}{100} \rfloor} \to 0  $$
when $r$ goes to $\infty$. Fix $\upsilon >0$. Let $\alpha_+:= 1 - \frac{1}{10}$ and $\alpha_-:= \frac{1}{10}$, 
and for $N \geq 2r$, $k \in [|r, N|]$, let us define
\begin{align*}
u_k^{(N)}:=\left\{ \begin{array}{ll} \upsilon- (k-r)^{\alpha_-},\qquad &\text{if  } k\leq \lfloor N/2\rfloor,
\\
\upsilon - (N-k)^{\alpha_-} -   \frac{3}{4}  \log N,\qquad &\text{if  } \lfloor N/2\rfloor <k \leq N,
\end{array} \right. 
\end{align*}
and
\begin{align*}
l_k^{(N)}:=\left\{ \begin{array}{ll} -\upsilon- (k-r)^{\alpha_+},\qquad &\text{if  } k\leq \lfloor N/2\rfloor,
\\
-\upsilon - (N-k)^{\alpha_+} -    \frac{3}{4} \log  N,\qquad &\text{if  } \lfloor N/2\rfloor <k \leq N.
\end{array} \right.
\end{align*}
Note that $l_k^{(N)}$ and $u_k^{(N)}$ implicitly depend on $\upsilon$ and $r$. 
We then define an envelope by its lower bound and its upper bound at each $k\in [|r,N|]$:
\begin{align*}
U_k^{(N)}:=     \tau^{(r)}_k+  u_k^{(N)} \quad \text{and    }\quad L_k^{(N)}:=       \tau^{(r)}_k+ l_k^{(N)}  .
\end{align*}
Now, we will apply the second moment method to the following random variable:
\begin{align}
\I_N:= \sum_{\theta \in [0,2\pi)_{2^N}} \mathds{1}_{\{ \forall k\in [|r,N|],\,  L_k^{(N)}  \leq R_{\e_k}^{(\e_r,\Delta)}(\theta) \leq U_k^{(N)} \}}.
\end{align}
The random walk  $(R_{\e_k}^{(\e_r,\Delta)}(\theta))_{r \leq k \leq N}$ is a Gaussian random walk whose distribution is the same as $\sqrt{\frac{1}{2}} (W_{\tau_j^{(r)}})_{r \leq k \leq N}$. 
In the case where an  event involved in $\I_N$ occurs for some $\theta \in [0, 2\pi)_{2^N}$, it means that  $(R_{\e_k}^{(\e_r,\Delta)}(\theta)$ is around $\tau_k^{(r)}$ for $r \leq k \leq N$, i.e. the Brownian motion $W$ is roughly growing linearly with rate $\sqrt{2}$. For this reason, in the sequel of this part of the paper, we will often estimate the probability of an event $Ev$ concerning the random walk $(R_{\e_k}^{(\e_r,\Delta)}(\theta))_{r \leq k \leq N}$ to a the probability of a similar event $GEv$, where a linear function $t \mapsto t \sqrt{2}$ has been subtracted from the possible trajectories of the underlying Brownian motion $W$ for which the event $Ev$ is satisfied. If $Ev$ depends only on the trajectory of $W$ up to a certain time $T$, we get, by using the Girsanov transfomation, an equality of the form 
$$\P [Ev((W_t)_{0 \leq t \leq T})] = 
  \P [ GEv((W_t - t \sqrt{2})_{0 \leq t \leq T})] 
= \E[ e^{-\sqrt{2}W_T - T} \mathds{1}_{GEv((W_t)_{0 \leq t \leq T})}],$$  
and then the inequality
$$\P [Ev] \leq e^{-T - \sqrt{2} \mu}  
\P [GEv]$$
where $\mu$ denotes the smallest possible value of $W_T$ for which the event $GEv$ can occur. 

The following proposition gives a lower bound for the first moment of $\I_N$. 
\begin{proposition}[First moment of $\I_N$]
\label{Prmomen1Lower} For any $\upsilon\in (0,1)$, $r\in \N$ large enough
and  $N$ large enough depending on $r$: 
\begin{align}
 \label{momen1Lower}
\E(\I_N) &\geq   e^{-2   \upsilon -2\lambda_\infty^{(r)} } 2^r N^{\frac{3}{2} }  \P(Event_{r,N}) \gg_{\upsilon} 2^{r} ,
\end{align}
with $Event_{r,N} := \left\{  \forall j\in [|r,N|],\, l_j^{(N)}  \leq \sqrt{\frac{1}{2}} W_{\tau^{(r)}_j}
\leq u_j^{(N)} \right\}$, $W$ being a standard Brownian motion.
\end{proposition}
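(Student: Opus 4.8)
The plan is to reduce $\E(\I_N)$ to a one–dimensional Brownian barrier probability and then feed it into the classical ballot estimate of Section \ref{section:appendix}. First, for each fixed $\theta$ the random variable $Z_{\e_k}^{(\e_r,\Delta)}(\theta)$ is an (unconditional) complex Gaussian with total variance $\tau_k^{(r)}$: conditioning successively on the $\sigma$–algebras $\mathcal{G}_{2^l+p2^{l-\Delta}}$, each inner block $\sum_{j=0}^{2^{l-\Delta}-1}\Nc^\C_{2^l+p2^{l-\Delta}+j}e^{ij\theta}$ is an independent complex Gaussian of variance $2^{l-\Delta}$, whose law is unchanged by multiplication by the unimodular, $\mathcal{G}_{2^l+p2^{l-\Delta}}$–measurable factor $e^{i\psi_{2^l+p2^{l-\Delta}}(\theta)}$. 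Hence, in accordance with the remark following \eqref{eq:def_tau_r}, the path $\bigl(R_{\e_k}^{(\e_r,\Delta)}(\theta)\bigr)_{r\le k\le N}$ has the same law as $\bigl(\sqrt{\tfrac12}\,W_{\tau_k^{(r)}}\bigr)_{r\le k\le N}$ for a standard Brownian motion $W$, independently of $\theta$; in particular all $2^N$ indicators in $\I_N$ have the same expectation, so
\[
\E(\I_N)=2^N\,\P\!\left(\forall k\in[|r,N|]\colon\ l_k^{(N)}\le \sqrt{\tfrac12}\bigl(W_{\tau_k^{(r)}}-\sqrt2\,\tau_k^{(r)}\bigr)\le u_k^{(N)}\right),
\]
using $L_k^{(N)}=\tau_k^{(r)}+l_k^{(N)}$, $U_k^{(N)}=\tau_k^{(r)}+u_k^{(N)}$.

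Second, I would remove the drift by a Girsanov change of measure on $[0,\tau_N^{(r)}]$. With $\frac{d\Q}{d\P}=\exp(\sqrt2\,W_{\tau_N^{(r)}}-\tau_N^{(r)})$, the process $\widehat W_t:=W_t-\sqrt2\,t$ is a $\Q$–Brownian motion, and the probability above equals
\[
e^{-\tau_N^{(r)}}\,\E^{\Q}\!\left[e^{-\sqrt2\,\widehat W_{\tau_N^{(r)}}}\,\mathds{1}_{\{\forall k\colon\, l_k^{(N)}\le\sqrt{\frac12}\widehat W_{\tau_k^{(r)}}\le u_k^{(N)}\}}\right].
\]
On that event $\sqrt{\tfrac12}\widehat W_{\tau_N^{(r)}}\le u_N^{(N)}=\upsilon-\tfrac34\log N$, so $e^{-\sqrt2\,\widehat W_{\tau_N^{(r)}}}\ge e^{-2\upsilon}N^{3/2}$; and $\tau_N^{(r)}=(N-r)\log2+\lambda_N^{(r)}$ with $0\le\lambda_N^{(r)}\le\lambda_\infty^{(r)}$ gives $e^{-\tau_N^{(r)}}\ge 2^{\,r-N}e^{-\lambda_\infty^{(r)}}$. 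Renaming $\widehat W$ a standard Brownian motion, this yields
\[
\E(\I_N)\ \ge\ 2^{N}\cdot 2^{\,r-N}e^{-\lambda_\infty^{(r)}}\cdot e^{-2\upsilon}N^{3/2}\,\P(Event_{r,N})\ \ge\ e^{-2\upsilon-2\lambda_\infty^{(r)}}\,2^r N^{3/2}\,\P(Event_{r,N}),
\]
which is the first inequality of \eqref{momen1Lower} (the last step just uses $\lambda_\infty^{(r)}\ge 0$).

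Finally, it remains to prove $\P(Event_{r,N})\gg_{\upsilon}N^{-3/2}$, uniformly in $r$, for $N$ large depending on $r$ (in particular $\lfloor N/2\rfloor\ge r$); since $\lambda_\infty^{(r)}$ is decreasing in $r$ and $\lambda_\infty^{(r)}\ll 2^{-\lfloor r/100\rfloor}=O(1)$, this gives $\E(\I_N)\gg_\upsilon 2^r$. This is a classical lower ballot/barrier estimate for the Brownian motion $\sqrt{\tfrac12}W$ confined to the tube $[l_k^{(N)},u_k^{(N)}]$ in the essentially linear clock $\tau_k^{(r)}=(k-r)\log2+\lambda_k^{(r)}$: the tube has $O(1)$ width near $k=r$, width $\asymp(k-r)^{\alpha_+}$ in the bulk, and on the second half is lowered by $\tfrac34\log N$ with the usual $(N-k)^{\alpha_\pm}$ relaxation near the right end, which forces $\sqrt{\tfrac12}W_{\tau_N^{(r)}}$ near level $-\tfrac34\log N$ — precisely the regime producing the exponent $3/2$. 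It follows from the estimates collected in the appendix (of the type of Corollary \ref{corollary:shifted_barrier_estimate}, in its lower–bound form), applied after subtracting the deterministic clock and absorbing the bounded distortion $\lambda_k^{(r)}$. I expect this last step to be the only real obstacle: everything preceding it is the Girsanov computation above, whereas here one must match the hypotheses of the appendix barrier lemma to the exact shape of $(l_k^{(N)},u_k^{(N)})$ and check that the $O(1)$ time–change $\lambda_k^{(r)}$, and the uniformity in $r$, do not spoil the $N^{-3/2}$ order.
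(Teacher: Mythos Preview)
Your proof is correct and follows essentially the same approach as the paper: identical reduction to the law $\sqrt{\tfrac12}(W_{\tau_k^{(r)}})$, identical Girsanov computation, and the same delegation of the barrier lower bound $\P(Event_{r,N})\gg_\upsilon N^{-3/2}$ to the appendix. The only adjustment is the citation: Corollary~\ref{corollary:shifted_barrier_estimate} is an upper bound, so the paper instead invokes Proposition~\ref{lemma:2sided_barrier_estimate} (the two-sided envelope lower bound, which already removes the $r$-dependence for $N$ large) together with Proposition~\ref{PropProclas1} (to absorb the bounded time shift $E_{j-r}=\lambda_{j}^{(r)}/\log 2$, whose $\|E\|_1\to 0$ as $r\to\infty$), applied with $d=\sqrt{(\log 2)/2}$.
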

\begin{proof}
 Since  $(R_{\e_j}^{(\e_r, \Delta)}(\theta))_{r \leq j \leq N}$ is a Gaussian random walk whose distribution does not depend on $\theta$, we have
\begin{align*}
\E\left( \I_N \right) = 2^N \P\left( \forall j\in [|r,N|],\,  L_j^{(N)}  \leq R_{\e_j}^{(\e_r,\Delta)}(0) \leq U_j^{(N)} \right)
\end{align*}
More precisely, we know that $(R_{\e_j}^{(\e_r,\Delta)}(0))_{j\geq r}$ is distributed like $  \sqrt{\frac{1}{2}}   (W_{    \tau^{(r)}_j})_{j\geq r}  $. By Girsanov's transform, with density $ e^{\sqrt{2}W_{\tau^{(r)}_N }-   \tau^{(r)}_N }  $, we have
\begin{align*}
\E(\I_N)&= 2^N \E\left( e^{ - \sqrt{2} \left( W_{\tau^{(r)}_N} + \sqrt{2} \tau^{(r)}_N \right) + \tau_N^{(r)} } \mathds{1}_{\{ \forall j\in [|r,N|],\, l_j^{(N)} 
\leq \sqrt{\frac{1}{2}}  W_{\tau^{(r)}_j} \leq u_j^{(N)}    \}}      \right)
\\
&= 2^{N} e^{ -\tau^{(r)}_N} \E\left( e^{- \sqrt{2}  W_{\tau^{(r)}_N} }   \mathds{1}_{\{ \forall j\in [|r,N|],\, l_j^{(N)}  \leq  \sqrt{\frac{1}{2}}  W_{\tau^{(r)}_j} \leq u_j^{(N)}    \}}   \right)
\\
&\geq 2^{r} e^{ - 2\upsilon -\lambda_\infty^{(r)} } N^{\frac{3}{2} }  \P\left(  Event_{r,N} \right).
\end{align*}
Now, Propositions \ref{lemma:2sided_barrier_estimate} and \ref{PropProclas1}, applied 
for $d = \sqrt{(\log 2)/2}$, $E_j = (\lambda_{j+r}^{(r)})/\log 2$, and then $||E||$ tending to zero when $r \rightarrow
\infty$, we deduce that $\P \left( Event_{r,N} \right) \gg_{\upsilon} N^{-3/2}$.
Since $\lambda_{\infty}^{(r)}$ is bounded, we get that 
$$2^{r} e^{ - 2\upsilon -\lambda_\infty^{(r)} } N^{\frac{3}{2} }  \P\left(  Event_{r,N} \right)
\gg_{\upsilon} 2^r.$$
\end{proof}

The following proposition gives an upper bound for the second moment of $\I_N$. 
\begin{proposition}[ Second moment of $\I_n$ ]
\label{PrSecondmo}
For $r$ large enough, $\upsilon > 0$ small enough, 
there exists  $\epsilon_{\upsilon,r} \geq 0$
such that 
$$\limsup_{\upsilon \to 0} \limsup_{r\to\infty}
\epsilon_{\upsilon,r} = 0$$
and for $N$ large enough depending on $r$, 
\begin{align}
\label{Secondmo}
\E\left(\I_N^2\right) &\leq (1+\epsilon_{\upsilon,r})\E(\I_N)^2,
\end{align}
when $N$ goes to infinity.
\end{proposition}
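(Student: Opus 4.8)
The plan is to prove the second moment bound \eqref{Secondmo} via the standard decomposition of $\E(\I_N^2)$ according to the \emph{branching time} of two points $\theta, \theta' \in [0, 2\pi)_{2^N}$. Writing $\E(\I_N^2) = \sum_{\theta, \theta'} \P(A(\theta) \cap A(\theta'))$ where $A(\theta)$ is the envelope event $\{\forall k, L_k^{(N)} \le R_{\e_k}^{(\e_r, \Delta)}(\theta) \le U_k^{(N)}\}$, I would first isolate the diagonal-like contribution. The key structural input is that the modified field $Z^{(\e_r, \Delta)}$ was \emph{designed} (Section \ref{section:new_coupling}) so that the increments of $R_{\e_k}^{(\e_r,\Delta)}(\theta)$ over the block $[\e_l, \e_{l+1})$ depend on $\theta$ only through $\psi_{2^l + p 2^{l-\Delta}}(\theta)$ and the phases $e^{ij\theta}$; for two angles $\theta, \theta'$ that are ``far apart'' at scale $l$ — meaning roughly $|\theta - \theta'| \gg 2^{-l}$ — the relevant Gaussian increments become independent. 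So I would define, for each pair, the branching index $l(\theta, \theta')$ as the largest $l$ such that $\theta$ and $\theta'$ still share (essentially) the same history up to generation $l$, and split the double sum over dyadic ranges of $l(\theta,\theta')$.

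The core estimate is then: for a pair with branching index $l$, the two envelope events are, up to the coupling errors controlled by Proposition \ref{proposition:new_coupling} and the Pr\"ufer modulus-of-continuity bounds (Corollary \ref{corollary:djo}, Proposition \ref{proposition:prufer_modulus}), conditionally independent given $\Gc_{\e_l}$. Using the first-moment-type Girsanov computation already carried out in Proposition \ref{Prmomen1Lower}, together with the ballot-type barrier estimates from the appendix (Propositions \ref{lemma:2sided_barrier_estimate}, \ref{PropProclas1}), one bounds
$$\P(A(\theta) \cap A(\theta')) \ll_{\upsilon, r} \P(A(\theta)) \cdot \frac{e^{-2(\tau_N^{(r)} - \tau_l^{(r)})}}{\text{(barrier factor at scales } l \text{ to } N)} \ll \P(A(\theta)) \cdot C_{\upsilon, r} \, 2^{-(N-l)} \cdot \frac{N^{3/2}}{(N-l+1)^{3/2} (l-r+1)^{3/2}} \cdot (\text{something}).$$
Counting: there are $\le 2^N \cdot 2^{N - l} \cdot (\text{const})$ pairs with branching index $l$ (choose $\theta$ freely, then $\theta'$ within distance $\sim 2^{-l}$), and $\P(A(\theta))^2 \asymp (\E(\I_N)/2^N)^2$. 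Multiplying through, the contribution from branching index $l$ should come out as $\asymp \E(\I_N)^2 \cdot (\text{summable-in-}(l-r) \text{ tail})$ for $l$ near $r$ (early branching — this is the ``good'' part giving the $(1+\epsilon)$), while for $l$ in the bulk and near $N$ one uses that the barrier pinches the walk near its endpoint so these contributions are negligible. Summing over $l$ and using that the $l = r$ term is $(1 + \epsilon_{\upsilon, r})\E(\I_N)^2$ with $\epsilon_{\upsilon,r} \to 0$ as $r \to \infty$ then $\upsilon \to 0$ yields \eqref{Secondmo}.

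Concretely the steps are: (i) pass from $R_{\e_k}^{(\e_r,\Delta)}$ to the genuinely Gaussian walk $\sqrt{1/2}\, W_{\tau_k^{(r)}}$ and set up the pair decomposition over branching indices $l$; (ii) establish the conditional quasi-independence of $A(\theta), A(\theta')$ given $\Gc_{\e_l}$ — here the modified field's block structure is essential, and one must handle the $e^{ij\theta}$ versus $e^{ij\theta'}$ discrepancy inside a block, which is where the $2^{l-\Delta}|\theta-\theta'|$ terms from \eqref{eq:crude_bound} reappear; (iii) apply Girsanov plus the appendix barrier estimates to get the two-point probability bound with the correct powers of $N$, $(N-l)$, $(l-r)$; (iv) count pairs and sum the resulting series in $l$, checking the $l \approx r$ term dominates and contributes $(1+o_r(1))\E(\I_N)^2$ while the rest is $o(\E(\I_N)^2)$.

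The main obstacle I anticipate is step (ii): unlike in a genuine branching random walk or a Gaussian log-correlated field, here two points never become \emph{exactly} independent after branching — the Pr\"ufer phases $\psi_j(\theta)$ and $\psi_j(\theta')$ remain coupled through the shared Verblunsky coefficients, and the increment denominators $\sqrt{2^l + p 2^{l-\Delta}}$ couple blocks. Making ``quasi-independence'' quantitative enough that the error does not destroy the $(1+\epsilon_{\upsilon,r})$ constant — in particular showing the off-diagonal coupling contributes only a multiplicative $1 + O(2^{-cr})$ — is the delicate point, and it is precisely what the new coupling of Subsection \ref{section:new_coupling} together with $\Delta^{(l)} \gg \log^2 l$ is engineered to make possible. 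A secondary technical point is ensuring the barrier functions $u_k^{(N)}, l_k^{(N)}$ with their $\pm(k-r)^{\alpha_\pm}$ corrections are compatible with the appendix's ballot estimates uniformly in the pair's branching time, which should follow from Propositions \ref{lemma:2sided_barrier_estimate} and \ref{PropProclas1} applied with the perturbation $E_j = \lambda_{j+r}^{(r)}/\log 2$ of small $\ell^1$ norm.
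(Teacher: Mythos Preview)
Your outline matches the paper's approach at the coarse level (branching-time decomposition, Girsanov tilt, barrier estimates from the appendix), and you correctly identify the residual coupling after branching as the crux. But two concrete ingredients in the paper's proof are not anticipated in your sketch, and the first of them is a genuine gap.

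\textbf{A second field modification is needed for late branching.} You assume the coupling of Subsection \ref{section:new_coupling} with window $\Delta^{(l)} = \lfloor r/100\rfloor + 100\lfloor \log^2 l\rfloor$ suffices throughout. It does not. When the branching index $\k$ is in $(N/2,\, N-r/2]$, the decorrelation gap $\k_+ - \k = 3\Delta^{(\k)} \asymp \log^2 N$ can exceed or be comparable to $N-\k$, and in the final summation the price $2^{3\Delta^{(\k)}} \asymp e^{c\log^2 N}$ cannot be compensated by the endpoint barrier decay $e^{-(N-\k_+)^{\alpha_-}}$ once $N-\k$ is sub-polynomial. The paper therefore introduces, \emph{for each such} $\k$, a second modification $Z^{(\e_{\k_+},\kappa^{(\k)})}$ with the smaller window $\kappa^{(\k)} := \lfloor r/100\rfloor + 100\lfloor \log(N-\k)\rfloor^2$, and reproves a Proposition~\ref{proposition:new_coupling}-type estimate for the error $\blacklozenge^{(\k)}_l$ with this new window. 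This is the content of Lemma \ref{lemma:kMil} and is the only place the \emph{lower} boundary $l_k^{(N)}$ of the envelope is used. Your sketch treats all branching indices uniformly and would break down here.

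\textbf{The decoupling is by Gaussian regression, not conditional independence.} Your step (ii) proposes ``conditional quasi-independence of $A(\theta),A(\theta')$ given $\Gc_{\e_l}$''. That is not what works: the Pr\"ufer phases $\psi_j(\theta),\psi_j(\theta')$ remain coupled through the shared Verblunsky coefficients at all later times, so conditioning on $\Gc_{\e_l}$ does not decouple the events. What the paper does instead is decompose, at the level of the block Gaussians, $I_{l,p}(\theta) = \tfrac{C_{l,p}(\theta,\theta')}{C_{l,p}}\,I_{l,p}(\theta') + I_{l,p}^{ind}(\theta,\theta')$, where $I^{ind}$ is \emph{unconditionally} independent of $I_{l,p}(\theta')$ and $|C_{l,p}(\theta,\theta')| \ll 2^{\k-l}$. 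Summing the small regression pieces gives an error process $E$ whose large-deviation tails (``Fact 4'') produce summable corrections; the independent remainder is handled exactly as in the dyadic case. This is the right mechanism, and the ``$1 + O(2^{-cr})$'' you hope for comes from $\sup_l |\mathtt{C}_l| \ll 2^{-\Delta^{(\k)}}$ combined with $\P(\mathtt{E}_m^{(l)}) \ll e^{-m^2 2^{l-\k_+}}$.

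Two smaller points. Your displayed bound with the factor $(N-l+1)^{-3/2}(l-r+1)^{-3/2}$ is not what is actually used: after Girsanov the paper extracts $e^{-(\k_+-r)^{\alpha_-}}$ (for $\k_+\le N/4$) or $e^{-(N-\k_+)^{\alpha_-}}$ (for late $\k$) directly from the upper barrier $u_k^{(N)}$, and it is this stretched-exponential decay that makes $\sum_\k$ converge. Finally, to turn the dominant range $\k \le r/2$ into $(1+\eta_{r,\upsilon})\E(\I_N)^2$ rather than merely $O_\upsilon(\E(\I_N)^2)$ requires the asymptotic equivalence of barriers in Proposition~\ref{PropProclas1} (showing $\P(GEv(r,2^{1-r/400},\mathtt C^{(r)},0))\sim \P(Event_{r,N})$ as $r\to\infty$), which is more delicate than the upper bounds you cite and is where $\lambda_\infty^{(r)}\to 0$ enters.
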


Before going into the details, let us show that Proposition \ref{PrSecondmo} implies the lower bound part of our main theorem. 
\begin{proof}[Proof of the lower bound i.e Proposition \ref{proposition:lower_bound} (Assuming Proposition \ref{PrSecondmo})]
By Lemma \ref{lemma:geometric_progressions}, we can assume $n = 2^N$ without loss of generality. Let us fix $r$ and $\upsilon> 0$. If $r$ is large enough and $\upsilon$ small enough, we get, by Paley-Zygmund inequality, for $N$ large enough depending on $r$: 
\begin{align*}
\P\left( \sup_{\theta \in [0,2\pi)_{2^N}} R_{\e_N}^{(\e_r, \Delta )}(\theta) \geq  (N-r)(\log 2) -  \frac{3}{4}  \log N  - \upsilon \right) \geq (1+\epsilon_{\upsilon,r})^{-1} \ .
\end{align*}

From Proposition \ref{proposition:new_coupling}, we have, for $N \geq r$, 
\begin{align*}
  \sup_{\theta \in [0,2\pi)} \Re (\sigma Z_{2^N}(\theta)) & = 
 \sup_{\theta \in [0,2\pi)} R_{\e_N}(\theta)
 \geq \sup_{\theta \in [0,2\pi)} R^{(\e_r)}_{\e_N}(\theta) + \inf_{\theta \in [0,2\pi)} R_{\e_r}(\theta)
\\ &  \geq \sup_{\theta \in [0,2\pi)} R_{\e_N}^{(\e_r,\Delta)}(\theta)+
\inf_{\theta \in [0,2\pi)} R_{\e_r}(\theta)
- X\ ,
\end{align*}
where 
$$X =\sum_{l \geq r} \sup_{\theta \in [0,2\pi)}
|  Z_{\e_{l+1}}^{(\e_{l},\Delta)}(\theta) - Z_{\e_{l+1}}^{(\e_{l})}(\theta) | < \infty.$$
 Moreover:
\begin{align*}
     & \P\left( \sup_{\theta \in [0,2\pi]} R_{\e_N}(\theta) \geq  \log \e_N -  \frac{3}{4}  \log_2 \e_N - C  \right)\\
\geq & \P\left( \sup_{\theta \in [0,2\pi]} R_{\e_N}^{(\e_r,\Delta )}(\theta) \geq  \log \e_N -  \frac{3}{4}  \log_2 \e_N - C + X -  \inf_{\theta \in [0,2\pi]} R_{\e_r }(\theta)  \right)\\
\geq & \P\left( \sup_{\theta \in [0,2\pi)_{2^N}} R_{\e_N}^{(\e_r, \Delta )}(\theta) \geq  \log \e_N -  \frac{3}{4}  \log_2 \e_N - C + X -  \inf_{\theta \in [0,2\pi]} R_{\e_r }(\theta)  \right)\\
\geq & \P\left( \sup_{\theta \in [0,2\pi)_{2^N}} R_{\e_N}^{(\e_r, \Delta )}(\theta) \geq  \log \e_N -  \frac{3}{4}  \log N  - r \log 2 - \upsilon \right)\\ 
 -  & \P\left( r \log 2 + \upsilon - \frac{3}{4} \log_2 2 \geq  C - X + \inf_{\theta \in [0,2\pi]} R_{\e_r }(\theta)  \right)\\
 =   & (1+\epsilon_{\upsilon,r})^{-1} - \P\left( r \log 2 + \upsilon - \frac{3}{4} \log_2 2 \geq  C  - X+ \inf_{\theta \in [0,2\pi]} R_{\e_r }(\theta)  \right).
\end{align*}
By first letting $N \rightarrow \infty$ and then $C \rightarrow \infty$, we have
$$ \liminf_{C \rightarrow \infty} \liminf_{N \rightarrow \infty}
   \P\left( \sup_{\theta \in [0,2\pi]} R_{\e_N}(\theta) \geq  \log \e_N -  \frac{3}{4}  \log_2 \e_N - C  \right)
   \geq (1 +  \epsilon_{\upsilon,r})^{-1} \ .
$$
Letting $r \rightarrow \infty$ and then $\upsilon \rightarrow 0$ concludes the proof of Proposition \ref{proposition:lower_bound}.
\end{proof}

\subsection{Proof of Proposition \ref{PrSecondmo}}

We have:
\begin{align*}
\E\left( \I_N^2\right) &=\E\left(  \sum_{\theta,\, \theta' \in [0,2\pi)_{2^N}}  \mathds{1}_{\{ \forall j\in [|r,N|],\,  L_j^{(N)}  \leq R_{\e_j}^{(\e_r,\Delta )}(\theta), R_{\e_j}^{(\e_r,\Delta)}(\theta') \leq U_j^{(N)} \}}  \right).
\end{align*}
Hence, the proof of Proposition \ref{PrSecondmo} is intimately related to studying for $\theta, \theta' \in [0,2\pi)_{2^N}$ the function:
\begin{align}
\label{eq:def_P_N}
\P_N(\theta, \theta'):= \P\left( \forall j\in [|r,N|],\,  L_j^{(N)}  \leq R_{\e_j}^{(\e_r,\Delta)}(\theta), R_{\e_j}^{(\e_r,\Delta)}(\theta') \leq U_j^{(N)} \right) \ .
\end{align}
This study, which is technical, is based on the fact that $(R_{\e_j}^{(\e_r,\Delta)}(\theta))_{r \leq j \leq N}$ and $(R_{\e_j}^{(\e_r,\Delta)}(\theta'))_{r \leq j \leq N}$ are two Gaussian random walks whose increments are approximately independent after some branching time which is roughly the logarithm in base $2$ of the distance modulo $2 \pi$ between $\theta$ and $\theta'$.

The general idea is as follows. For given $\theta, \theta'$, we will consider the integer $\k$ such that $2^{-\k} \leq \frac{||\theta- \theta'||}{2\pi} < 2^{-(\k-1)}$, $|| \cdot ||$ denoting the distance on the set $\R / 2 \pi \Z$. One can understand $\k$ as the time of (approximate) branching between the field at $\theta $ and at $\theta' $. We will show that after some time $\k_+ = \k_+(\k) $ ``slightly larger'' than $\k$, we are able to bring out independence between the increments of  $(R_{\e_k}^{(\e_r,\Delta)}(\theta))_{k\geq r} $ and $ (R_{\e_k}^{(\e_r,\Delta)}(\theta'))_{k\geq r}$. By analogy with the Gaussian field, we will see this time as a time of decorrelation. It is defined as follows: 
 \begin{itemize} 
 \item For $\k  \leq  N/2$, the time of decorrelation is $\k_+ := \k + 3 \Delta^{(\k)}$. Recall that:
 $$ \Delta^{(\k)}:= \lfloor \frac{r}{100} \rfloor  + 100\lfloor  \log^2 \k \rfloor \ .$$
 In particular, for $\k \leq r/2$ and $r$ large enough, the fields $(R_{\e_k}^{(\e_r,\Delta)}(\theta))_{k\geq r} $ and $ (R_{\e_k}^{(\e_r,\Delta)}(\theta'))_{k\geq r}$ will have ``almost independent increments'' from the starting time $r$, since $r \leq \k_+$. 
 \item For $N/2 < \k \leq N - (r/2)$, we will require a faster decorrelation . We take $\k_+
 := \k + 3 \kappa^{(\k)}$, where 
 $$\kappa^{(\k)} :=  \lfloor r/100 \rfloor + 100 \lfloor \log (N-\k) \rfloor^2.$$
 However, the price to pay is that we will have to modify our field $(R_{\e_j}^{(\e_r,\Delta)}(\theta))_{r \leq j \leq N}$ in the spirit of subsection \ref{section:new_coupling}.
 \item For $\k > N - (r/2)$, the branching time is close to the end and then we do not need to use any time of decorrelation. 
 \end{itemize}
 The main part of the proof of Proposition
 \ref{PrSecondmo} consists in four lemmas, numbered \ref{lemma:kDebut}, \ref{lemma:ktoutDebut}, \ref{lemma:kMil} and \ref{lemma:kFin}. The statement of each of these lemmas gives a suitable majorization of $\P_N(\theta, \theta')$, for a given range of values of $\k$.

\begin{lemma}[Time of branching $\k \leq N/2$]
  \label{lemma:kDebut}
  For any $\upsilon >0$, $r$ large enough, $N$ large enough depending on $r$,   $\k \leq \frac{N}{2}$ and $2^{-\k}\leq \frac{||\theta-\theta'||}{2\pi} < 2^{-(\k-1)}$,  we have
  \begin{align}
  \label{eqkDebut}
  \P_N(\theta, \theta')&\ll_{\upsilon} \left\{ 
  \begin{array}{ll}
    2^{2r-2N}                                    ,\qquad &\text{when } \k_+ \leq r,\\
    2^{r-N} 2^{\k_+-N} e^{- (\k_+-r)^{\alpha_-}} ,\qquad &\text{when } \k_+ \geq r.
  \end{array}
  \right. 
  \end{align} 
\end{lemma}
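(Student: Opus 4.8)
The plan is to estimate $\P_N(\theta,\theta')$ by decomposing the two correlated Gaussian random walks $(R_{\e_j}^{(\e_r,\Delta)}(\theta))_{r\le j\le N}$ and $(R_{\e_j}^{(\e_r,\Delta)}(\theta'))_{r\le j\le N}$ into a shared part up to the decorrelation time $\k_+$ and (approximately) independent parts afterwards. First I would argue that for $j\le\k$ the two fields are essentially identical (their increments before the branching time differ only by the $\Delta$-perturbation, which is uniformly controlled by Proposition~\ref{proposition:new_coupling}), and for $j$ between $\k$ and $\k_+ = \k + 3\Delta^{(\k)}$ one simply throws away the constraint from one of the two copies, keeping only the constraint on, say, $R^{(\e_r,\Delta)}_{\e_j}(\theta)$; this costs nothing since we only want an upper bound. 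The point of taking $\k_+ - \k \asymp \Delta^{(\k)} \gg \log^2\k$ is that the blocks of noise $\Nc^\C_{2^l+p2^{l-\Delta}+\cdot}$ used to build $R^{(\e_r,\Delta)}$ at the two angles become genuinely independent once $l\ge\k_+$, because the relevant dyadic sub-blocks at scale $l-\Delta^{(l)}$ separate the two frequencies $e^{ij\theta}$ and $e^{ij\theta'}$.

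Next I would convert the surviving barrier events into Brownian-motion barrier events via the time change $\tau^{(r)}_j = (j-r)\log 2 + \lambda_j^{(r)}$ of \eqref{eq:def_tau_r} and the Girsanov/Cameron--Martin tilt that subtracts the linear drift $t\mapsto\sqrt2\,t$, exactly as done in the first-moment computation (Proposition~\ref{Prmomen1Lower}) and in the upper-bound section. Concretely, I split the trajectory into three stretches: $[r,\k_+]$, where both (or, after $\k$, one) walk must stay in the envelope $[l^{(N)}_\cdot, u^{(N)}_\cdot]$; and $[\k_+,N]$, where the two walks run with independent increments, each confined to its envelope. After the Girsanov tilt the exponential factor becomes $e^{-\tau^{(r)}_N - \sqrt2\,\mu}$ where $\mu$ is the smallest admissible value of the terminal height; this produces the overall $e^{-2N}$-type factor (since $\log\e_N = N\log 2$ and the tilt contributes $e^{-2\tau^{(r)}_N}\asymp 2^{-2N}$ up to the bounded correction $\lambda^{(r)}_\infty$), and the two independent tails on $[\k_+,N]$ each give a ballot-type probability $\ll N^{-3/2}\cdot(\text{something})$, which is where the $2^{\k_+-N}$ and the $e^{-(\k_+-r)^{\alpha_-}}$ come from — the latter being the cost of keeping the shared walk below $u^{(N)}_k = \upsilon - (k-r)^{\alpha_-}$ on the initial segment, via the appendix's barrier estimates (Corollary~\ref{corollary:shifted_barrier_estimate}, Proposition~\ref{lemma:2sided_barrier_estimate}, Proposition~\ref{PropProclas1}). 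The two sub-cases $\k_+\le r$ and $\k_+\ge r$ differ only in whether this initial deviation penalty is present: when $\k_+\le r$ the walks decorrelate before the process even starts, so one gets the square $2^{2r-2N}$ of two independent full-length ballot events, while for $\k_+\ge r$ one picks up the extra factor $2^{\k_+-N}e^{-(\k_+-r)^{\alpha_-}}$ measuring the constrained shared segment.

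The main obstacle I anticipate is the bookkeeping around \emph{approximate} rather than exact independence after time $\k_+$: the Pr\"ufer phases $\psi_j(\theta)$ and $\psi_j(\theta')$ are not independent, and even in the $\Delta$-coupled field $Z^{(\e_r,\Delta)}$ one has replaced increments of $\psi$ by their means only within each dyadic block, so across blocks there remains a weak dependence through $\psi_{2^l+p2^{l-\Delta}}(\theta)$ versus $\psi_{2^l+p2^{l-\Delta}}(\theta')$. Handling this requires conditioning on the $\sigma$-algebra $\Gc_{2^{\k_+}}$ (so that all the phase factors entering the post-$\k_+$ increments are frozen), observing that conditionally the two walks $(R^{(\e_r,\Delta)}_{\e_j}(\theta))_{j>\k_+}$ and $(R^{(\e_r,\Delta)}_{\e_j}(\theta'))_{j>\k_+}$ use disjoint families of Gaussians $\Nc^\C$ and hence are conditionally independent with the same (deterministic) increment variances as in the $\theta=0$ case, and then integrating out. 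The good-event machinery of Section~\ref{section:new_coupling} (the events $G_l$ controlling $|A_{j+\cdot}^{(\cdot)}|$) is what guarantees the frozen phases do not distort the variances by more than a negligible amount, so the conditional ballot probabilities are comparable to the unconditional Brownian ones; verifying the constants line up (and that the error is absorbed into the implicit $\ll_\upsilon$) is the delicate part. Everything else is an application of the appendix estimates on Gaussian random walks with a slowly-varying barrier, combined with the already-established subadditivity of $k\mapsto(k-r)^{\alpha_-}$ and boundedness of $\lambda^{(r)}_\infty$.
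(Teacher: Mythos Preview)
Your overall strategy matches the paper's: drop the constraint on one walk before $\k_+$, exploit independence of the two walks after $\k_+$, then apply the Girsanov tilt and the appendix barrier estimates to each factor. This is precisely how the two displayed bounds arise. But there is a genuine gap in how you propose to obtain the independence.

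The two walks do \emph{not} use disjoint families of Gaussians. Both $I_{l,p}(\theta)$ and $I_{l,p}(\theta')$ are linear combinations of the \emph{same} block $(\Nc^\C_{2^l+p2^{l-\Delta}+j})_{0\le j<2^{l-\Delta}}$, only with different Fourier weights $e^{ij\theta}$ versus $e^{ij\theta'}$. Independence, when it holds, comes from \emph{orthogonality} of these weight vectors, i.e.\ from $\sum_{j} e^{ij(\theta-\theta')}=0$; this is exact only in the dyadic case $||\theta-\theta'||/(2\pi)=2^{-\k}$ with $l-\Delta^{(l)}\ge\k$. For generic $\theta,\theta'$ the correlation $|C_{l,p}(\theta,\theta')|\ll 2^{\k-l}$ is nonzero, and conditioning on $\Gc_{2^{\k_+}}$ removes neither this correlation nor the later phase factors $\psi_{2^l+p2^{l-\Delta}}$ (which depend on Verblunsky coefficients with index beyond $2^{\k_+}$). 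The events $G_l$ of Section~\ref{section:new_coupling} are also the wrong tool: they bound Pr\"ufer-phase deviations $|A^{(\cdot)}_{\cdot}|$, which is an entirely separate issue from the Gaussian covariance between $I_{l,p}(\theta)$ and $I_{l,p}(\theta')$.

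The paper's actual device for the general case is an orthogonal splitting
$I_{l,p}(\theta)=\tfrac{C_{l,p}(\theta,\theta')}{C_{l,p}}\,I_{l,p}(\theta')+I_{l,p}^{ind}(\theta,\theta')$,
giving $R^{(\e_{\k_+},\Delta)}_{\e_l}(\theta)=R^{(\e_{\k_+},ind)}_{\e_l}+E^{(\e_{\k_+})}_{\e_l}$ with $R^{ind}$ genuinely independent of the $\theta'$-walk and of $|E|$. The error is then stratified via events $\mathtt{E}_m^{(l)}=\{|E|_{\e_{l+1}}^{(\e_l)}\ge m\,2^{-\Delta^{(l)}/4}\}$ with $\P(\mathtt{E}_m^{(l)})\ll e^{-m^2 2^{l-\k_+}}$; one sums over $m\ge 0$, verifies that the $\mathtt{E}_m^{(l)}$'s are essentially stable under the Girsanov tilt (the drift shifts $|E|$ by at most $\tfrac12\,2^{-\Delta^{(l)}/4}$), and absorbs the resulting convergent series into the implicit constant. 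This decomposition-and-error-summation step is the missing ingredient in your plan; without it the non-dyadic case does not go through.
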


The main contribution of $\E(\I_n^2)$ comes from the terms whose the time of branching happen before $r/2$. The following Lemma studies this case. It refines the estimate obtained in the previous Lemma, which is not sufficient for our purposes.  
\begin{lemma}[Time of branching $\k \leq r/2$]
  \label{lemma:ktoutDebut}
  For any $\upsilon \in (0,1)$, $r\in \N$ large enough, $ \k \leq \frac{r}{2}$, $2^{-\k} \leq \frac{||\theta-\theta'||}{2\pi} \leq 2^{-(\k-1)}$ and  $N$ large enough depending on $r$, we have
  \begin{align*}
  \P_N(\theta, \theta')&\leq   (1+\eta_{r,\upsilon  })    2^{2r -2N} N^{3}   (\P(Event_{r,N}))^2,
  \end{align*}
  where 
  $$\underset{\upsilon \rightarrow 0}{\limsup} \, \underset{r \rightarrow \infty} {\limsup} \, \eta_{r,\upsilon} = 0.$$
\end{lemma}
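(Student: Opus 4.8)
```latex
\paragraph{Plan of proof.}
The plan is to follow the standard second--moment (branching random walk) philosophy: when the branching time $\k$ between $\theta$ and $\theta'$ is very small (here $\k\le r/2$), the two Gaussian random walks $(R^{(\e_r,\Delta)}_{\e_j}(\theta))_{r\le j\le N}$ and $(R^{(\e_r,\Delta)}_{\e_j}(\theta'))_{r\le j\le N}$ should decorrelate essentially from the very beginning, so that $\P_N(\theta,\theta')$ factorizes, up to a multiplicative error tending to $1$, into the product of the two single--point probabilities, each of which is exactly $\E(\I_N)/2^N$ as computed in Proposition~\ref{Prmomen1Lower}. Concretely, for $\k\le r/2$ we have $\k_+=\k+3\Delta^{(\k)}\le r$ (for $r$ large), so by the coupling of Subsection~\ref{section:new_coupling} the increments of the two walks after time $\k_+$ (hence after time $r$) can be taken independent. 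I would first record this independence statement: conditionally on $\Gc_{\e_{\k_+}}$ (or, since $\k_+\le r$, unconditionally up to a negligible event), the future increments of the $\theta$--walk and of the $\theta'$--walk are independent, because the relevant blocks of complex Gaussians $\Nc^\C_\bullet$ are disjoint and the Pr\"ufer phases have been frozen to their means inside each block in the definition of $Z^{(\e_r,\Delta)}$.

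Second, I would pass to Brownian motion via Corollary~\ref{corollary:coupling_bm}: $(R^{(\e_j)}_{\e_k}(\theta))_{j}$ is distributed as $\sqrt{1/2}\,(W_{\tau^{(r)}_j})_j$ for a standard BM $W$, and likewise for $\theta'$ with an independent BM $W'$ once we are past the decorrelation time. Then the event $\{\forall j\in[|r,N|]:\ L_j^{(N)}\le R^{(\e_r,\Delta)}_{\e_j}(\theta),R^{(\e_r,\Delta)}_{\e_j}(\theta')\le U_j^{(N)}\}$ becomes, after a Girsanov tilt removing the drift $\sqrt 2$ from each walk (exactly as in the proof of Proposition~\ref{Prmomen1Lower}), a product of two copies of $\P(Event_{r,N})$ weighted by $e^{-\tau^{(r)}_N}e^{-\sqrt2 W_{\tau^{(r)}_N}}$ and the analogous factor for $W'$. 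The prefactor bookkeeping gives $2^{2r-2N}N^{3}e^{-4\upsilon-2\lambda^{(r)}_\infty}(\P(Event_{r,N}))^2$; since $\lambda^{(r)}_\infty\to 0$ and the $\upsilon$--dependence is a bounded exponential, this is $(1+\eta_{r,\upsilon})2^{2r-2N}N^{3}(\P(Event_{r,N}))^2$ with $\limsup_{\upsilon\to0}\limsup_{r\to\infty}\eta_{r,\upsilon}=0$.

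Third, the error terms must be controlled. The sources of $\eta_{r,\upsilon}$ are: (i) the replacement of the true field $Z^{(\e_r)}$ by the decoupled field $Z^{(\e_r,\Delta)}$ and the approximation $\tau^{(r)}_N=(N-r)\log2+\lambda^{(r)}_j$ with $\lambda^{(r)}_j$ increasing to $\lambda^{(r)}_\infty\ll 2^{-\lfloor r/100\rfloor}$ (Eq.~\eqref{eq:def_tau_r}); (ii) the small overshoot/undershoot coming from the fact that the walks are indexed at times $\e_k=2^k$ rather than continuously, which costs only a constant depending on $r$ absorbed into $\upsilon$ via the envelope width $2\upsilon$; and (iii) the correlation that survives before the (very early) decorrelation time $\k_+\le r$, which is bounded because it concerns $O(r)$ steps and can be swallowed by enlarging the barrier constants, again at the cost of an $r$--dependent factor that is $1+o_r(1)$ after the $\upsilon$--limit. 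I expect the main obstacle to be item (iii): making rigorous the claim that the residual correlation between $R^{(\e_r,\Delta)}_{\e_k}(\theta)$ and $R^{(\e_r,\Delta)}_{\e_k}(\theta')$ for $k$ between $r$ and $\k_+$ can genuinely be absorbed into a $(1+\eta_{r,\upsilon})$ factor --- this requires a careful conditioning argument (condition on $\Gc_{\e_{\k_+}}$, use that the two walks agree up to time $\k$ and then the blocks are disjoint, and estimate the conditional probability of $Event_{r,N}$ restricted to $[|\,\k_+,N|]$ against the unconditional one using the two--sided barrier estimates of Propositions~\ref{lemma:2sided_barrier_estimate} and~\ref{PropProclas1}), together with the quantitative bound $\lambda^{(r)}_\infty\ll 2^{-\lfloor r/100\rfloor}$ to guarantee the time--change error $E$ has norm $\to 0$ as $r\to\infty$. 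The remaining computations are the same Girsanov--plus--barrier estimates already used for the first moment, so no new technique is needed beyond careful error tracking.
```
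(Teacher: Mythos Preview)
Your high-level plan matches the paper's approach: for $\k\le r/2$ one has $\k_+\le r$, the two walks are ``approximately independent'' from the start, Girsanov removes the drift, and Proposition~\ref{PropProclas1} absorbs the small perturbations into a $(1+\eta_{r,\upsilon})$ factor. However, two steps in your plan are not right as written.

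\textbf{The independence is not because the Gaussians are in disjoint blocks.} You write that ``the relevant blocks of complex Gaussians $\Nc^\C_\bullet$ are disjoint.'' They are not: $I_{l,p}(\theta)$ and $I_{l,p}(\theta')$ are built from the \emph{same} variables $\Nc^\C_{2^l+p2^{l-\Delta}+j}$, only with different phases $e^{ij\theta}$ versus $e^{ij\theta'}$. Exact independence (i.e.\ zero covariance of jointly Gaussian variables) holds only in the dyadic case $\|\theta-\theta'\|/2\pi=2^{-\k}$; in general the correlation is $\ll 2^{\k-l}$, small but nonzero for every $l\ge \k_+$. The paper handles this by the orthogonal decomposition $R^{(\e_{\k_+},\Delta)}_{\e_l}(\theta)=R^{(\e_{\k_+},ind)}_{\e_l}+E^{(\e_{\k_+})}_{\e_l}$ of equation~\eqref{decomposi}: the first piece is genuinely independent of the $\theta'$-walk but runs on the shifted clock $\tau^{(\k_+)}_l-\mathtt{C}_l$ with $\sup_l|\mathtt{C}_l|\ll 2^{-\Delta^{(\k)}}$ (Fact~3), and the second piece is controlled via the events $\mathtt{E}^{(l)}_m$ with $\P(\mathtt{E}^{(l)}_m)\ll e^{-m^2 2^{l-\k_+}}$ (Fact~4). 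The resulting bound is equation~\eqref{REM}, whose remainder sum was already shown in~\eqref{Gillet} to be $\mathcal{O}_\upsilon\big(2^{2(r-N)}e^{-(2^{r-\k_+}-1)/8}\big)$; for $\k\le r/2$ this is $\mathcal{O}_\upsilon\big(2^{2(r-N)}e^{-2^{r/3}/8}\big)$, which is negligible against $2^{2(r-N)}N^3(\P(Event_{r,N}))^2\gg_\upsilon 2^{2(r-N)}$.

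\textbf{Your item (iii) is misplaced.} You worry about correlation ``before the decorrelation time $\k_+\le r$'' over ``$O(r)$ steps,'' but the process $R^{(\e_r,\Delta)}$ \emph{starts} at time $r\ge\k_+$, so there are no such steps. The residual correlation lives \emph{after} $\k_+$ (for all $l\ge r$) and decays exponentially in $l-\k_+$; it is exactly this that produces the time shift $\mathtt{C}^{(r)}$ and spatial shift $\sim 2^{-r/400}$ in the main term of~\eqref{REM}. The paper then uses~\eqref{subiel} and Proposition~\ref{PropProclas1} to replace $\P(GEv(r,2^{1-r/400},\mathtt{C}^{(r)},0))$ by $(1+\eta_r)\P(Event_{r,N})$ with $\eta_r\to0$, which is precisely the place where Proposition~\ref{PropProclas1} enters --- not for pre-$\k_+$ steps, but for post-$\k_+$ residual correlation.
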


\begin{lemma}[Time of branching $N/2 < \k \leq N - (r/2)$]
  \label{lemma:kMil}
  For $\upsilon \in (0,1)$, $r\in \N$ large enough, $N$ large enough depending on $r$,  $N/2 < \k \leq N-\frac{r}{2} $ and $2^{-\k}\leq \frac{||\theta-  \theta'||}{2\pi}  < 2^{-(\k-1)}$,  we have $\k_+ < N$ and 
  \begin{align}
  \P_N(\theta,\theta') \ll_{\upsilon}  2^{r-N} 2^{\k_+-N} e^{- \half (N-\k_+ )^{\alpha_-}} , 
  \end{align}
\end{lemma}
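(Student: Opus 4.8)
The plan is to estimate $\P_N(\theta,\theta')$ in the regime $N/2 < \k \leq N - r/2$ by the usual decomposition into a ``before branching'' contribution and an ``after branching'' contribution, following the same Girsanov-plus-barrier strategy already used in Lemma~\ref{lemma:kDebut}. First I would observe that, as in subsection~\ref{section:new_coupling}, the increments of $R_{\e_k}^{(\e_r,\Delta)}(\theta)$ and $R_{\e_k}^{(\e_r,\Delta)}(\theta')$ are genuinely independent once $k \geq \k_+$, where here $\k_+ = \k + 3\kappa^{(\k)}$ with $\kappa^{(\k)} = \lfloor r/100\rfloor + 100\lfloor \log(N-\k)\rfloor^2$; the faster rate of decorrelation is needed precisely because the ``time left'' $N-\k$ is small, so $\log^2 \k$ would be too coarse. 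One first checks $\k_+ < N$: since $\k \leq N - r/2$ and $N-\k \geq r/2$, we have $3\kappa^{(\k)} = \Oc(r) + \Oc(\log^2(N-\k))$, which is $o(N-\k)$ for $r$ large, hence $\k_+ < N$.

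\textbf{Key steps.} The event defining $\P_N(\theta,\theta')$ forces both walks to stay in the envelope $[L^{(N)}_j, U^{(N)}_j]$ for all $j \in [|r,N|]$. I would bound this above by: (i) the probability that the \emph{common} part of the walk (up to time $\k_+$, where the two walks are essentially equal up to a negligible modification error controlled exactly as in Proposition~\ref{proposition:new_coupling}) stays in the envelope up to time $\k_+$, times (ii) the probability, for each walk separately and independently after $\k_+$, of staying below $U^{(N)}_j$ through time $N$ and landing in the envelope at time $N$. For step (i) I apply the Girsanov transform (subtracting the drift $t\sqrt{2}$) together with the ballot-type / barrier estimates from the appendix (Corollary~\ref{corollary:shifted_barrier_estimate} or the two-sided estimate), giving a factor $\ll_\upsilon 2^{r-\k_+}(\k_+ - r)^{?}\cdots$; but since the crucial gain is the endpoint tilt $-\tfrac34\log N$ on the upper barrier, the relevant bound for the common segment up to $\k_+$ (which sits in the ``second half'' $j > N/2$ where the barrier carries the $-\tfrac34\log N$ shift) is of order $2^{r-\k_+}$ up to the polynomial corrections. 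For step (ii), each of the two independent tails from $\k_+$ to $N$ contributes, again by Girsanov and a one-sided barrier estimate on an interval of length $N - \k_+ = (N-\k) - 3\kappa^{(\k)}$, a factor $\ll 2^{\k_+ - N} e^{-\frac12(N-\k_+)^{\alpha_-}}$, where the stretched-exponential term comes from the gap $\upsilon - (N-k)^{\alpha_-}$ in the definition of $u_k^{(N)}$: forcing the walk to remain below a barrier that dips by $(N-k)^{\alpha_-}$ costs $e^{-c(N-\k_+)^{\alpha_-}}$ by the standard estimate for a Brownian bridge staying under a curved barrier. Because $N - \k_+ \geq \frac12(N-\k)$ for $r$ large (as $3\kappa^{(\k)} \leq \frac12(N-\k)$), one of the two tail factors can be absorbed to produce $e^{-\frac12(N-\k_+)^{\alpha_-}}$, and the accounting of powers of $2$ gives $2^{r-N}2^{\k_+-N}$. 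Multiplying (i) and (ii) and collecting the constants yields exactly the claimed bound.

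\textbf{Main obstacle.} The delicate point is \textbf{handling the modification of the field and the matching of barriers at the splice time $\k_+$}: one must replace $R^{(\e_r,\Delta)}$ by a further-modified field (as flagged in the bullet preceding the lemma, ``in the spirit of subsection~\ref{section:new_coupling}'') so that the post-$\k_+$ increments for $\theta$ and $\theta'$ are exactly independent, and then show the modification error is summable and does not degrade the barrier estimates. Concretely, the error terms $\square_j$ must be controlled on a good event $G$ whose complement has probability $\ll e^{-2^{\kappa^{(\k)}/4}}$ or similar, which is small enough to survive the union bound over the $\Oc(2^{2N})$ pairs $(\theta,\theta')$ only because $\kappa^{(\k)} \gg \log^2(N-\k)$ — this is the reason for the seemingly arbitrary $100\lfloor\log(N-\k)\rfloor^2$. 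A secondary technical nuisance is that the envelope's ``kink'' at $\lfloor N/2\rfloor$ lies below the branching time $\k$, so one only ever works in the second regime of $u_k^{(N)}, l_k^{(N)}$ when handling times $\geq \k$, which simplifies (i) but requires care in bookkeeping the constant $\upsilon$ and the additive $\frac34\log N$ through both factors.
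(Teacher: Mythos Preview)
Your decomposition into ``common part on $[r,\k_+]$ plus two independent tails on $[\k_+,N]$'' contains a genuine gap, and it is not the one you flag as the main obstacle.

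First, the two walks $R_{\e_k}^{(\e_r,\Delta)}(\theta)$ and $R_{\e_k}^{(\e_r,\Delta)}(\theta')$ are \emph{not} ``essentially equal'' before $\k_+$. Proposition~\ref{proposition:new_coupling} compares two approximations of the \emph{same} field at the \emph{same} angle; it says nothing about the distance between the fields at $\theta$ and at $\theta'$. Even for $l<\k$ the phases $\psi_{2^l+p2^{l-\Delta}}(\theta)$ and $\psi_{2^l+p2^{l-\Delta}}(\theta')$ differ, and between $\k$ and $\k_+$ the Gaussian blocks $I_{l,p}(\theta),I_{l,p}(\theta')$ decorrelate. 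There is no ``common walk'' here.

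Second --- and this is what actually breaks the argument --- even if one reads your step (i) as ``drop one constraint and bound by $\P(\text{one walk in the envelope on }[r,\k_+])$'', the Girsanov factor you obtain is \emph{not} $2^{r-\k_+}$ up to polynomial corrections. On the post-Girsanov event, at time $\k_+$ one only knows $\sqrt{1/2}\,W_{\tau_{\k_+}^{(r)}}\ge l_{\k_+}^{(N)}=-\upsilon-(N-\k_+)^{\alpha_+}-\tfrac34\log N$, hence
\[
e^{-\sqrt{2}\,W_{\tau_{\k_+}^{(r)}}}\ \le\ N^{3/2}\,e^{2\upsilon}\,e^{2(N-\k_+)^{\alpha_+}}.
\]
The factor $e^{2(N-\k_+)^{\alpha_+}}$ with $\alpha_+=9/10$ is \emph{not} polynomial; for $\k$ near $N/2$ it is of size $e^{c N^{0.9}}$ and swamps both the $e^{-(N-\k_+)^{\alpha_-}}$ gain from the tails and the summation over $\k$ in Proposition~\ref{PrSecondmo}. (This is precisely the r\^ole of the lower barrier noted in the Remark following the lemma: it appears \emph{only} in the error terms, where it is paid for by the super-exponentially small probability $e^{-2^{\kappa^{(\k)}/8}}$; it cannot be afforded in the main term.)

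The paper avoids this by a different splitting: it keeps \emph{one} walk whole on $[r,N]$ and uses only the tail $[\k_+,N]$ for the other. Concretely,
\[
\P_N^{(\Delta,\kappa)}(\theta,\theta')\ \le\ \P\big(Ev(r,1,E^{(\k_+)},0)\big)\ \cdot\ \max_{l_{\k_+}^{(N)}-1\le z\le u_{\k_+}^{(N)}+1}\P\big(Ev(\k_+,1,E^{(\k_+)},z)\big).
\]
Doing Girsanov on the \emph{full} interval for the first factor, the endpoint constraint at $j=N$ gives $e^{-\sqrt{2}W}\le N^{3/2}e^{2\upsilon}$ (since $l_N^{(N)}=-\upsilon-\tfrac34\log N$, no $(N-\k_+)^{\alpha_+}$ term), and the barrier estimate yields $\ll_\upsilon 2^{r-N}$. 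For the second factor, the \emph{upper} barrier constraint $z\le u_{\k_+}^{(N)}+1$ produces the good factor $e^{2z}\le N^{-3/2}e^{\Oc(1)}e^{-2(N-\k_+)^{\alpha_-}}$, giving $\ll_\upsilon 2^{\k_+-N}e^{-(N-\k_+)^{\alpha_-}}$. The product is the desired bound. Your identification of the field modification (replacing $\Delta^{(l)}$ by the fixed $\kappa^{(\k)}$) as the delicate step is correct, but it controls only the error terms; the main-term decomposition must be the asymmetric one above.
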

\begin{rmk}
The proof of the Lemma \ref{lemma:kMil} is the unique place where we use $(l_j^{(N)})_{r \leq j\leq N}$, the lower part of the envelope.  
\end{rmk}

\begin{lemma}[Time of branching $\k > N - (r/2)$]
    \label{lemma:kFin}
    For $\upsilon \in (0,1)$, $r\in \N$ large enough, $N$ large enough depending on $r$,  $N-(r/2) <  \k $ and  $2^{-k}\leq  \frac{||\theta-\theta'||}{2\pi}  \leq 2^{-(\k-1)}$, we have
    \begin{align}
	    \P_N(\theta,\theta') \ll_{\upsilon} 2^{r-N}.
    \end{align}
\end{lemma}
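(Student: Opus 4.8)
The regime $\k > N - (r/2)$ is the easiest of the four: the branching time is so close to the horizon $N$ that essentially no decorrelation between the two trajectories is needed, and one can afford to throw away all information about $\theta'$ and use only the constraints on $\theta$. The plan is therefore to bound $\P_N(\theta,\theta')$ by the probability that the single Gaussian random walk $(R_{\e_j}^{(\e_r,\Delta)}(\theta))_{r \leq j \leq N}$ stays inside the envelope $[L_j^{(N)}, U_j^{(N)}]$ for all $j \in [|r,N|]$, discarding the analogous constraints for $\theta'$. Concretely, first write
\begin{align*}
\P_N(\theta,\theta') \leq \P\left( \forall j \in [|r,N|], \ L_j^{(N)} \leq R_{\e_j}^{(\e_r,\Delta)}(\theta) \leq U_j^{(N)} \right),
\end{align*}
and then recall from Subsection \ref{subsection:second_moment} that $(R_{\e_j}^{(\e_r,\Delta)}(\theta))_{r \leq j \leq N}$ is distributed as $\sqrt{1/2}\,(W_{\tau^{(r)}_j})_{r \leq j \leq N}$ for a standard Brownian motion $W$.

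The second step is to apply the Girsanov change of measure exactly as in Proposition \ref{Prmomen1Lower}: tilting by $e^{\sqrt{2} W_{\tau^{(r)}_N} - \tau^{(r)}_N}$ turns the event ``$R^{(\e_r,\Delta)}_{\e_j}(\theta)$ stays within $\tau^{(r)}_j + [l_j^{(N)}, u_j^{(N)}]$'' into ``$\sqrt{1/2}\, W_{\tau^{(r)}_j} \in [l_j^{(N)}, u_j^{(N)}]$'', i.e. the event $Event_{r,N}$ (up to the trivial rescaling $\tau^{(r)}_j = (j-r)\log 2 + \lambda^{(r)}_j$). Since the upper envelope forces $W_{\tau^{(r)}_N}$ to be at most $\sqrt{2}\,\tau^{(r)}_N + O(\sqrt{2}(\upsilon + \log N))$ at the terminal time $N$, one gets
\begin{align*}
\P_N(\theta,\theta') \ll e^{-\tau^{(r)}_N + \sqrt{2}\cdot\sqrt{2}\left(\frac{3}{4}\log N + \upsilon + O(1)\right)} \, \P(Event_{r,N})
\ll_\upsilon 2^{-N}\, N^{3/2}\, \P(Event_{r,N}),
\end{align*}
using $e^{-\tau^{(r)}_N} = 2^{-(N-r)}e^{-\lambda^{(r)}_N} \asymp_r 2^{-N}$. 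Now invoke Propositions \ref{lemma:2sided_barrier_estimate} and \ref{PropProclas1} from the appendix (with $d = \sqrt{(\log 2)/2}$ and $E_j = \lambda^{(r)}_{j+r}/\log 2$, $\|E\|$ small) to get the matching upper bound $\P(Event_{r,N}) \ll_\upsilon N^{-3/2}$. The two powers of $N^{3/2}$ cancel, leaving $\P_N(\theta,\theta') \ll_\upsilon 2^{-N}$; absorbing the harmless factor $2^r$ (which is $O_r(1)$, and in fact is what is needed to make the count over pairs $\theta,\theta'$ at branching distance $\k$ work in the second-moment estimate) gives the stated bound $\P_N(\theta,\theta') \ll_\upsilon 2^{r-N}$.

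The only genuine subtlety — and the place I would be most careful — is making sure the Girsanov/overshoot bookkeeping is uniform over the whole range $\k > N-(r/2)$ and does not hide a factor growing with $N$: one must check that the terminal Brownian value is controlled by the \emph{upper} envelope $U_N^{(N)} = \tau^{(r)}_N + \upsilon - \frac{3}{4}\log N$ (note $N - N = 0 \leq N/2$ is impossible, so at $k=N$ we are in the second branch of $u_k^{(N)}$, giving precisely the $-\frac{3}{4}\log N$ term that produces the $N^{3/2}$), and that the appendix estimate $\P(Event_{r,N}) \asymp_\upsilon N^{-3/2}$ applies with constants independent of $N$. Everything else is a routine repetition of the first-moment computation in Proposition \ref{Prmomen1Lower}, restricted to a single angle. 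I expect no real obstacle here; the hard cases are Lemmas \ref{lemma:kMil} and \ref{lemma:ktoutDebut}, not this one.
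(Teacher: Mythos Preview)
Your approach is exactly the paper's: drop the $\theta'$ constraints, identify the remaining one-point event with $Ev(r,0)$, apply the Girsanov inequality \eqref{eq:girsanov1}, and cancel the $N^{3/2}$ against the barrier estimate. Two small corrections are needed. First, after Girsanov the exponential factor $e^{-\sqrt{2}W_{\tau^{(r)}_N}}$ is bounded via the \emph{lower} envelope, not the upper one: on $GEv(r,0)$ you have $\sqrt{1/2}\,W_{\tau^{(r)}_N}\geq l_N^{(N)}=-\upsilon-\frac{3}{4}\log N$, whence $e^{-\sqrt{2}W_{\tau^{(r)}_N}}\leq e^{2\upsilon}N^{3/2}$; your formula is right but the justification as written has the inequality backwards. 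Second, the upper bound $\P(Event_{r,N})=\P(GEv(r,0))\ll_\upsilon N^{-3/2}$ follows from Corollary~\ref{corollary:shifted_barrier_estimate} (ballot-type upper bound with terminal localization in an interval of width $2\upsilon$), not from Propositions~\ref{lemma:2sided_barrier_estimate} and~\ref{PropProclas1}, which only furnish the matching \emph{lower} bound. Finally, there is no need to pass through $2^{-N}$ and then ``absorb'' $2^r$: since $e^{-\tau^{(r)}_N}=2^{r-N}e^{-\lambda_N^{(r)}}$, the factor $2^{r-N}$ appears directly in \eqref{eq:girsanov1}.
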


\subsubsection{Dyadic case}
We start by assuming $\frac{||\theta-\theta'||}{2\pi} = 2^{-\k}$ and prove Lemmas \ref{lemma:kDebut} and \ref{lemma:kMil} in this dyadic case. This part is mainly for pedagogical purposes while laying the ground for the general case. It illustrates perfectly the machinery of the proof in a simpler setting.

It will be convenient to denote,  for any $l\in [|r,N-1|]$, $p\in [|0,2^{\Delta^{(l)}}-1|]$,
\begin{align}
 \label{defI_n}
 I_{l,p}(\theta) := \frac{ \sum_{j=0}^{2^{l-\Delta^{(l)}}-1} \Nc^\C_{2^{l}+p 2^{l-\Delta^{(l)}} + j} e^{i \theta j} }
                         { \sqrt{{ 2^{l}+p 2^{l-\Delta^{(l)}}  }}  }
                 \eqlaw \Nc^{\C}\left(0, (2^{\Delta^{(l)}}+ p)^{-1} \right).
\end{align} 
Recall that $R_{\e_N}^{(\e_r,\Delta)}(\theta)$ and $R_{\e_N}^{(\e_r, \Delta)}(\theta')$ can be written as
$$
R_{\e_N}^{(\e_r,\Delta)}(\theta)= \Re \left( \sigma \sum_{l=r}^{N -1 } \sum_{p=0}^{2^{\Delta^{(l)}}-1}  I_{l,p}(\theta) e^{ i\psi_{2^{l}+p2^{l-\Delta^{(l)}}} (\theta) } \right)
$$
$$
R_{\e_N}^{(\e_r,\Delta)}(\theta')= \Re \left( \sigma \sum_{l=r}^{N -1 } \sum_{p=0}^{2^{\Delta^{(l)}}-1}  I_{l,p}(\theta') e^{ i\psi_{2^{l}+p2^{l-\Delta^{(l)}}} (\theta') } \right).
$$
{\it The crucial observation} is that for any $  l \in [|\k_+,N-1|] $  (we easily check that $\k_+ < N-1$ if $r$ is large enough, $N \geq r$ and $\k \leq N/2$) and any $p\leq 2^{\Delta^{(l)}}-1$, the random variables $ I_{l,p}(\theta)$ and $ I_{l,p}(\theta')$  are independent and identically distributed. Indeed, they form a complex Gaussian vector, and they are uncorrelated, since for $l \geq \k_+$, one has $l - \Delta^{(l)} \geq \k$ if $l \geq r $ and $r$ is large enough,   
and then 
$$ \sum_{j=0}^{2^{l - \Delta^{(l)}} } e^{i( \theta - \theta') j}
 = \sum_{j=0}^{2^{l - \Delta^{(l)}}} e^{\pm 2  i j \pi / 2^{\k}} = 0.$$

We deduce that the increments of $R_{\e_{j}}^{(\e_r,\Delta)}(\theta) $ and  $R_{\e_{j}}^{(\e_r,\Delta)}(\theta') $ after the time $\k_+$ are independent and identically distributed. Recalling the definition of $\tau^{(r)}$ in (\ref{eq:def_tau_r}) and \eqref{defI_n}, it follows that
\begin{align*}
(R_{\e_{j}}^{(\e_r,\Delta)}(\theta))_{j\geq r} \eqlaw (R_{\e_{j}}^{(\e_r,\Delta)}(\theta'))_{j\geq r} \eqlaw \sqrt{\frac{1}{2}} ( W_{\tau_j^{(r)}})_{j\geq r}.
\end{align*}
For any $k \in [|r, N|]$, $z\in \R$, we introduce the events
\begin{align}
\nonumber
Ev(k,z)&:= \left\{   \forall j\in [|k, N |] ,\, l_j^{(N)} \leq \sqrt{\frac{1}{2}}W_{\tau_j^{(k)}}-  \tau^{(k)}_j  +z\leq u_j^{(N)}   \right\},
\\
\label{eq:def_Ev_GEv}
GEv(k,z)&:= \left\{   \forall j\in [|k, N |] ,\, l_j^{(N)} \leq \sqrt{\frac{1}{2}}W_{\tau_j^{(k)}} +z \leq u_j^{(N)}    \right\}.
\end{align}
Notice that $GEv(r,0)=Event_{r,N}$ from Proposition \ref{Prmomen1Lower}. Furthermore notice that $GEv(k,z)$ is equal to the event obtained from $Ev(k,z)$ after the Girsanov transform with density $ \exp\left( \sqrt{2}W_{\tau^{(k)}_N }- \tau^{(k)}_N \right) $. Performing the transform yields:
\begin{align}
\label{eq:girsanov1}
\P\left(Ev(k,z)\right) & = e^{-\tau_N^{(k)}} \E\left( e^{-\sqrt{2} W_{\tau^{(k)}_N }  } \mathds{1}_{ GEv(k,z) } \right)
                         \leq 2^{k-N} e^{2(z+\upsilon)} N^{\frac{3}{2}} \P\left( GEv(k,z) \right),
\end{align}
where in the last inequality we used the definition \eqref{eq:def_tau_r} of $\tau_N^{(k)}$ and the fact that $e^{-\sqrt{2} W_{\tau^{(k)}_N }  }\leq N^{\frac{3}{2}} e^{2(z+\upsilon)} $ on $GEv(k,z)$.  

In order to allow for more flexibility and for later use, let us record the following analogous events. For $k \in [|r,N|]$, $a\geq 0$, $E=(E_j)_{j\geq k}$ a sequence of reals such that $(\tau_{j}^{(k)} - E_j)_{j\geq k}$ is positive and nondecreasing, $z\in \R$, define
\begin{align}
\nonumber
Ev(k,a,E,z)  &:= \left\{   \forall j\in [|k, N |] ,\, l_j^{(N)}-a \leq \sqrt{\frac{1}{2}}W_{\tau_j^{(k)}-E_j}+z- \tau^{(k)}_{j}  \leq u_j^{(N)}+a    \right\},\\
\label{eq:def_Ev_GEv_ext}
GEv(k,a,E,z) &:= \left\{   \forall j\in [|k, N |] ,\, l_j^{(N)}-a \leq \sqrt{\frac{1}{2}}W_{\tau_j^{(k)}-E_j} +z\leq u_j^{(N)}+a    \right\}.
\end{align}
Again, the event $GEv(k,a,E,z) $ is, up to an error due to the time shift $E$, "quasi equal" to what we obtain when we apply the Girsanov' transform with density $\exp\left( \sqrt{2}W_{\tau_N^{(k)}-E_N } - (\tau_N^{(k)}- E_N) \right) $ to the event $ Ev(k,a,E,z)$. This time, the inequality takes the form:
\begin{align}
\label{eq:girsanov2}
\P\left(Ev(k,a,E,z)\right) & \leq 2^{k-N} e^{-E_N+2(z+a+\upsilon)} N^{\frac{3}{2}} \P\left( GEv(k,a+\sup_{k \leq j \leq N} |E_j|,E,z) \right).
\end{align}
Indeed, by the Girsanov transform and then using the barrier at time $N$:
\begin{align*}
  & \P\left(Ev(k,a,E,z)\right) \\
= & e^{-\tau_N^{(k)}+E_N} \E\left( e^{-\sqrt{2} W_{\tau^{(k)}_N - E_N}  } \mathds{1}_{ \left\{   \forall j\in [|k, N |] ,\, l_j^{(N)}-a \leq \sqrt{\frac{1}{2}}W_{\tau_j^{(k)}-E_j}+z- E_j  \leq u_j^{(N)}+a  \right\} } \right)\\
\leq & 2^{k-N} e^{E_N+2(z-E_N+a+\upsilon)} N^{\frac{3}{2}} \P\left( \forall j\in [|k, N |] ,\, l_j^{(N)}-a \leq \sqrt{\frac{1}{2}}W_{\tau_j^{(k)}-E_j}+z- E_j  \leq u_j^{(N)}+a \right) \\
\leq & 2^{k-N} e^{-E_N+2(z+a+\upsilon)} N^{\frac{3}{2}} \P\left( GEv(k,a+\sup_{k \leq j \leq N} |E_j|,E,z) \right).
\end{align*}

\begin{proof}[Proof of Lemma \ref{lemma:kDebut} in dyadic case]{ \quad }

{ \bf When $\k_+ \leq r$:}
The increments of $R_{\e_{j}}^{(\e_r,\Delta)}(\theta) $ and  $R_{\e_{j}}^{(\e_r,\Delta)}(\theta') $ after the time $\k_+$ are independent and identically distributed, thus we have
\begin{align*}
\P_N(\theta,\theta') = \quad & \P\left(Ev(r,0)\right)^2\\
                     \stackrel{Eq. \eqref{eq:girsanov1}}{\ll_v} & 2^{2r-2N} \left( N^{\frac{3}{2}} \P\left( GEv(r,0) \right) \right)^2 \ .
\end{align*}
Finally by applying \eqref{class7} (with $E_{j-r}=\lambda_j^{(r)}/\log 2$, which implies $||E|| \leq 1$ for $r$ large enough), we get for $N$ large enough depending  on $r$: 
\begin{align*}
\P_N(\theta, \theta')\ll_{\upsilon} 2^{2r-2N} .
\end{align*}
It concludes the study when $\k_+\leq r$. 

{ \bf When $\k_+ \geq r$:}
The increments of $R_{\e_{j}}^{(\e_r,\Delta)}(\theta) $ and  $R_{\e_{j}}^{(\e_r,\Delta)}(\theta') $ after time $\k_+$ are independent and identically distributed. Moreover, 
all these increments are independent of 
those of $R_{\e_{j}}^{(\e_r,\Delta)}(\theta) $
for $j$ between $r$ and $\k_+$ (we see this fact  by first conditioning with respect to the $\sigma$-algebra $\mathcal{G}_{2^{\k_+}}$). We then have:
\begin{align*}
\P_N(\theta, \theta') \leq  \quad & \P\left(  Ev(r,0) \right) \sup_{ l_{\k_+}^{(N)}  \leq z\leq u_{\k_+}^{(N)} }\P( Ev(\k_+,z ) )\\
\stackrel{Eq. \eqref{eq:girsanov1}}{\ll_\upsilon}
                                  & 2^{r-N} \left( N^{\frac{3}{2}} \P\left( GEv(r,0) \right) \right)
                                    2^{\k_+-N} \sup_{ l_{\k_+}^{(N)}  \leq z\leq u_{\k_+}^{(N)} } \left( N^{\frac{3}{2}} e^{2z} \P\left( GEv(\k_+,z) \right) \right) \\
                      \ll   \quad & 2^{r-N}
                                    2^{\k_+-N} \sup_{ l_{\k_+}^{(N)}  \leq z\leq u_{\k_+}^{(N)} } \left( N^{\frac{3}{2}} e^{2z} \P\left( GEv(\k_+,z) \right) \right).
\end{align*}
If $\k_+ \leq N/4$, according to \eqref{class7}, for any $z\in \R$, we have
\begin{align*}
\P\left( GEv(\k_+,z ) \right) \ll_{\upsilon}  ( 1+ ( -z)_+) (N-\k_+)^{-\frac{3}{2}} \ .
\end{align*}
Recalling that $ z\leq u_{\k_+}^{(N)}= \upsilon -(\k_+-r)^{\alpha_-} $, we have
\begin{align}
\label{tocliam}
\sup_{ l_{\k_+}^{(N)}  \leq z\leq u_{\k_+}^{(N)} } \left( N^{\frac{3}{2}} e^{2z} \P\left( GEv(\k_+,z) \right) \right)
\ll_\upsilon e^{- (\k_+-r)^{\alpha_-}}.
\end{align}
One the other hand, for $\k_+ > N/4$, we can crudely bound $\P\left( GEv(\k_+,z ) \right)$ by $1$, and using the fact that $z  \leq \upsilon - 0.9(\k_+-r)^{\alpha_-}$, the factor $0.9$ being used in order to handle the case where $\k \leq N/2 \leq \k_+$, which implies, for $r$ large enough and $N$ large enough depending on $r$,  
$$\k_+ \leq (N/2) + 3(r/100) + 300 \log^2 (N/2) \leq  0.51 N,$$
and then 
$$N - \k_+ \geq 0.49 N \geq 49 \k_+ / 51 \geq (0.9)^{1/\alpha_-} (\k_+ - r).$$
Hence, we get
\begin{align}
\label{tocliam1}
\sup_{ l_{\k_+}^{(N)}  \leq z\leq u_{\k_+}^{(N)} } \left( N^{\frac{3}{2}} e^{2z} \P\left( GEv(\k_+,z) \right) \right)
\leq e^{2 z} N^{\frac{3}{2}} 
\ll_\upsilon e^{- 1.8 (\k_+-r)^{\alpha_-}} N^{3/2}
\ll e^{- (\k_+-r)^{\alpha_-}},
\end{align}
since $\k_+-r \geq N/5$ for $N$ large enough depending on $r$. 

In all cases, by combining Eq. \eqref{tocliam} and \eqref{tocliam1}, we deduce:
\begin{align*}
\P_N(\theta,\theta') \ll_{\upsilon}   2^{r-N} 2^{\k_+-N} e^{- (\k_+-r)^{\alpha_-}} .
\end{align*}
It concludes the proof of Lemma \ref{lemma:kDebut} when $\frac{||\theta- \theta'||}{2\pi}$ is a negative power of $2$.
\end{proof}

\begin{proof}[Proof of Lemma \ref{lemma:kMil} in the dyadic case]
 Now we shall study $\P_N(\theta,\theta')$, when the branching between the field in $\theta$ and the field in $\theta'$  appears after the time $N/2$. This time we shall prove that when one restricts to the paths which are in the envelope, the increments of the path of the field at $\theta$ and at $\theta'$ are approximately independent after the time of decorrelation $k_+$.  We recall that for this range
 $\k_+ = \k+ 3\kappa^{(\k)}$ where $\kappa^{(\k)}:= \lfloor \frac{r }{100}\rfloor + 100 \lfloor \log (N-\k)\rfloor^2$, if 
$\k \leq N-1$. 

We need to exhibit the independence between the increments of $(R_{\e_j}^{(\e_{\k_+},\Delta)}(\theta))_{j\geq \k_+}$ and $(R_{\e_j}^{(\e_{\k_+},\Delta)}(\theta'))_{j\geq \k_+}$. The {\it crucial observation} we used in case of $\k \leq N/2$ does not work anymore for such a short decorrelation time. We first need to modify our field using similar arguments to those used for the proof of Proposition \ref{proposition:new_coupling}. 

In the following we shall use the quantity
\begin{align*}
	J_{l,p}^{(\k)}(\theta) := \frac{ \sum_{j=0}^{2^{l-\kappa^{(\k)}}-1}    \Nc^\C_{2^{l}+p 2^{l-\kappa^{(\k)}} + j} e^{i \theta j}     }{\sqrt{{ 2^{l}+p 2^{l-\kappa^{(\k)}}  }}  } \eqlaw \Nc^{\C}\left((0, 2^{\kappa^{(\k)}} +p)^{-1} \right)
\end{align*} 

Let $\k \in [N/2,N-r/2]$ and $2^{-\k}\leq \frac{||\theta- \theta'||}{2\pi} < 2^{-(\k-1)}$. We have, for $r$ large enough, since $N - \k \geq r/2$ is large,  
$$\k_+ \leq  \k + (r/33) + 300 \log^2 (N-\k) \leq \k + (N- \k)/16 < N. $$
Recall that for $ \tilde{\theta}\in \{\theta,\theta'  \}$, $r\leq k\leq N$, $ R_{\e_k}^{(\e_{r},\Delta )}( \tilde{\theta})   =  \Re(\sigma Z_{\e_k}^{(\e_{r},\Delta )}( \tilde{\theta}))   $.  Since for $l \geq \k_+$, $\kappa^{(\k)} \leq \Delta^{(l)}$, we can write for all $\k_+ \leq k \leq N$, $ \tilde{\theta}\in \{  \theta,\theta'\}$: 
\begin{align*}
	Z_{\e_k}^{(\e_{\k_+},\Delta )}( \tilde{\theta}) &=    \sum_{l=\k_+}^{k-1 } \sum_{p=0}^{2^{\Delta^{(l)}}-1}  I_{l,p}( \tilde{\theta}) e^{ i\psi_{ 2^{l}+p 2^{l-\Delta^{(l)}}} ( \tilde{\theta} )  }\\
	&=    \sum_{l=\k_+}^{k -1 } \sum_{p=0}^{2^{\Delta^{(l)}}-1}  \frac{ \sum_{j=0}^{2^{l-\Delta^{(l)}}-1}    \Nc^\C_{2^{l}+p 2^{l-\Delta^{(l)}} + j} e^{i   \tilde{\theta} j}     }{\sqrt{{ 2^{l}+p 2^{l-\Delta^{(l)}}  }}  }e^{ i\psi_{ 2^{l}+p 2^{l-\Delta^{(l)}}}(   \tilde{\theta}) }
	\\
	=  &  \sum_{l=\k_+}^{k-1 } \sum_{p=0}^{2^{\kappa^{(\k)}}-1}     \left(\sum_{j=0}^{2^{l-\kappa^{(\k)}}-1}        \frac{   \Nc^\C_{2^{l}+p 2^{l-\kappa^{(\k)}} + j} e^{i  \tilde{\theta} \left\{  j-      \mathfrak{j}       \right\}} }{\sqrt{  2^{l}+p 2^{l-\kappa^{(\k)}} + \mathfrak{j}  }  } e^{ i  \psi_{ 2^{l}+p 2^{l-\kappa^{(\k)} } +   \mathfrak{j}   }(  \tilde{\theta})}  \right),\, \text{(with $\mathfrak{j}=   \lfloor \frac{j}{2^{l-\Delta^{(l)}}}\rfloor    2^{l-\Delta^{(l)}}  $)}
	\\
	&=  Z_{\e_k}^{(\e_{\k_+},\kappa^{(\k)})}(  \tilde{\theta})  + \mathfrak{E}_{\e_k}( \tilde{\theta})
\end{align*}
where
\begin{align*}
	Z_{\e_k}^{(\e_{\k_+},\kappa^{(\k)})}(   \tilde{\theta}) &:= \sum_{l=\k_+}^{k-1} \sum_{p=0}^{2^{\kappa^{(\k)}}-1}  J_{l,p}^{(\k)}(  \tilde{\theta}) e^{ i  \psi_{ 2^{l}+p 2^{l-\kappa^{(\k)}}  }(  \tilde{\theta})}  ,
	\\
	\mathfrak{E}_{\e_k}(  \tilde{\theta})&:=   \sum_{l=\k_+}^{k-1} \sum_{p=0}^{2^{\kappa^{(\k)}}-1} \frac{  e^{ i  \psi_{ 2^{l}+p 2^{l-\kappa^{(\k)}}  }( \tilde{\theta} )}  }{\sqrt{{ 2^{l}+p 2^{l-\kappa^{(\k)}}  }}  } \left(\sum_{j=0}^{2^{l-\kappa^{(\k)}}-1}    \Nc^\C_{2^{l}+p 2^{l-\kappa^{(\k)}} + j} e^{i  \tilde{\theta} j}   \lozenge_j^{ (2^{l}+p 2^{l-\kappa^{(\k)}})  }(  \tilde{\theta}) \right),
\end{align*}
with 
\begin{align}
	\label{eq:ineq_squarebis}
	\left|\lozenge_j^{(2^{l}+p 2^{l-\kappa^{(\k)}} )}(  \tilde{\theta})\right|
	\leq &  \left|  A_{ 2^{l}+p 2^{l-\kappa^{(\k)}} +  \mathfrak{j} }^{(2^{l}+p 2^{l-\kappa^{(\k)}})    }(  \tilde{\theta})   ) \right| + 2^{-\kappa^{(\k)}}.
\end{align}
Indeed:
\begin{align*}
	\lozenge_j^{  (2^{l}+p 2^{l-\kappa^{(\k)}} )  }(  \tilde{\theta})
	= & -1+ \sqrt{\frac{2^l+p2^{l-\kappa^{(\k)}}}{2^l+p2^{l-\kappa^{(\k)}} +  \mathfrak{j}   }  }e^{i A_{ 2^{l}+p 2^{l-\kappa^{(\k)}} +\mathfrak{j}  }^{(2^{l}+p 2^{l-\kappa^{(\k)}})}(  \tilde{\theta})  }
	\\
	= & - 1+ \left( 1 + \Theta \right)
	e^{i A_{2^{l}+p 2^{l-\kappa^{(\k)}} +  \mathfrak{j}}^{(2^{l}+p 2^{l-\kappa^{(\k)}})    }(  \tilde{\theta})  },
\end{align*}
with $|\Theta| \leq 2^{-\kappa^{(\k)}}$
which implies inequality \eqref{eq:ineq_squarebis}. In the following, we shall denote:
\begin{align*}
	\blacklozenge^{(\k)}_l(  \tilde{\theta}):=\sum_{p=0}^{2^{\kappa^{(\k)}}-1} \frac{  e^{ i  \psi_{ 2^{l}+p 2^{l-\kappa^{(\k)}}  }(  \tilde{\theta} )}  }{\sqrt{{ 2^{l}+p 2^{l-\kappa^{(\k)}}  }}  } \left(\sum_{j=0}^{2^{l-\kappa^{(\k)}}-1}    \Nc^\C_{2^{l}+p 2^{l-\kappa^{(\k)}} + j} e^{i   \tilde{\theta} j}   \lozenge_j^{ (2^{l}+p 2^{l-\kappa^{(\k)}})  }(   \tilde{\theta}) \right).
\end{align*}

Notice that, on the contrary of the proof of Proposition \ref{proposition:new_coupling}, where $\Delta^{(l)}$ varies with $l$, here we fix $\kappa^{(\k)}$ as soon as we know that  $2^{-\k} \leq \frac{||\theta- \theta'||}{2\pi}  < 2^{-(\k-1)} $.  By using the same arguments used to prove  \eqref{eq:one_point_square} and (\ref{eq:laplace_blacksquare}), one can show similarly that for any $l\geq \k_+$, $ \tilde{\theta}\in \{\theta, \theta'\}$,
\begin{align}
\label{nonind}
	& \P\left( |  \blacklozenge^{(\k)}_l( \tilde{\theta}) |\geq 2^{-\frac{\kappa^{(\k)}}{8}},  \cap_{l \geq \k_+} \widetilde{G}_{l}( \tilde{\theta})  \Big|  \Gc_{2^{\k_+}}  \right)  \ll e^{-2^{\frac{\kappa^{(\k)}}{8}   }}, 
	\\ & \qquad  \P\left(( \cap_{l\geq \k_+} \widetilde{G}_{l}(  \tilde{\theta} ))^c  \Big|    \Gc_{2^{\k_+}} \right)  \ll_{\beta}      \exp\left( - \frac{\beta}{33} 2^{\half \kappa^{(\k)}}    \right).
\end{align}
with
\begin{align*}
	\widetilde{G}_{l}(  \tilde{\theta} ) := & \ \bigcap_{p=0}^{2^{\kappa^{(\k)}}-1}
	\left\{ \sup_{0\leq j\leq 2^{l-{\kappa^{(\k)}}}-1} \left| A_{j+ 2^{l}+p 2^{l- \kappa^{(\k)}} }^{(2^{l}+p 2^{l-  \kappa^{(\k)}})}(  \tilde{\theta} ) \right|
	\leq 2^{-\quart \kappa^{(\k)} }
	\right\} \ .
\end{align*}
Moreover it is plain to observe that for any $r$ large enough, $N$ large enough depending on $r$, and $\k\in [|N/2, N-r/2|]$, and under the complement of the two events just above, 
\begin{align}
	\sum_{l=\k_+}^{N-1}  |  \blacklozenge^{(\k)}_l(  \tilde{\theta}) |  \leq   \sum_{l=\k_+}^{N-1} 2^{- \frac{\kappa^{(\k)}}{8}} \leq (N-\k_+) 2^{- \frac{1}{8} \lfloor \frac{r}{100} \rfloor -  \frac{25}{2}\lfloor\log (N-\k)\rfloor^2  } \leq 1.
\end{align}
So for any $\tilde{\theta}\in \{\theta,\theta'\}$,  we can replace $(R_{\e_k}^{(\e_{\k_+},\Delta )}(  \tilde{\theta}))_{\k_+ \leq k \leq N} $ by $(R_{\e_k}^{(\e_{\k_+},\kappa^{(\k)})}(  \tilde{\theta}))_{ \k_+ \leq k \leq N}  $ with an error at most $1$.
 Thus we have 
\begin{align*}
 &  \P_N(\theta,\theta') \leq   \P\left( \forall j\in [|r,N|],\,  L_j^{(N)} -1 \leq   R_{\e_{ j\wedge \k_+}}^{(\e_{r},\Delta )}(\theta') +  R_{\e_j}^{(\e_{\k_+},\kappa^{(\k)})}(\theta')\mathds{1}_{{\{ j >k_0  \}}} \leq U_j^{(N)} +1,\, \right.
 \\
 & \left. \qquad\qquad\qquad\qquad\qquad\qquad \forall j\in [|\k_+,N|],\,  L_j^{(N)} -1 \leq    R_{\e_{ \k_+}}^{(\e_{r},\Delta )}(\theta) +  R_{\e_j}^{(\e_{\k_+},\kappa^{(\k)})}(\theta) \leq U_j^{(N)} +1 \right)\\
 + & \sum_{\tilde{\theta}\in \{\theta,\theta'\}} \E\Bigg(  \mathds{1}_{\{  \forall j\in [|r,\k_+|],\,  L_j^{(N)}  \leq R_{\e_j}^{(\e_r,\Delta)}( \tilde{\theta}) \leq U_j^{(N)}\}}  \Big( \sum_{l=\k_+}^{N-1} \P\left( |  \blacklozenge^{(\k)}_l(  \tilde{\theta}  ) |\geq 2^{-\frac{\kappa^{(\k)}}{8}},  \cap_{l\geq \k_+} \tilde{G}_{l}( \tilde{\theta} ) \Big| \Gc_{2^{\k_+}}  \right)\\
 & \qquad \qquad\qquad\qquad\qquad \qquad\qquad\qquad \qquad\qquad\qquad \qquad\qquad\qquad+   \P\left( ( \cap_{l\geq \k_+} \tilde{G}_{l}( \tilde{\theta} ))^c \Big|  \Gc_{2^{\k_+}}  \right) \Big) \Bigg)&
\end{align*}
We first deal with the sum in $\tilde{\theta}\in \{  \theta,\theta'\}$. By using \eqref{nonind}, then
the Girsanov transfom with density $e^{\sqrt{2 }W_{\tau^{(r)}_{\k_+} }-    \tau_{\k_+}^{(r)} }$, and Corollary  
\ref{corollary:shifted_barrier_estimate} (when $\k_+\geq \frac{2N}{3}$), the sum is
\begin{align*}
 \ll_{\upsilon} 2^{r-\k_+} N^{\frac{3}{2}} e^{ 2 (N-\k_+)^{\alpha_+}}  \left( (N-\k_+)e^{-2^{\frac{\kappa^{(\k)}}{8}}}   +    \exp\left( - \frac{\beta}{33} 2^{\half \kappa^{(\k)}} \right)  \right),\;\text{when }\, \k_+ \leq \frac{2N}{3},
\\
 \ll_{\upsilon} 2^{r-\k_+} \frac{N^{\frac{3}{2}}( N-\k_+)^{2\alpha_+}   }{(\k_+-r)^{\frac{3}{2}}} e^{ 2 (N-\k_+)^{\alpha_+}}  \left( (N-\k_+)e^{-2^{\frac{\kappa^{(\k)}}{8}}}   +    \exp\left( - \frac{\beta}{33} 2^{\half \kappa^{(\k)}} \right)  \right),\; \text{when }\, \k_+ \geq \frac{2N}{3}
\end{align*}
which are both dominated by $2^{-2N+ \k_+ +r}      e^{- \half (N-\k_+ )^{\alpha_-}} $. It remains to bound 
\begin{align*}
\P_N^{(\Delta,\kappa)}(\theta,\theta'):=\P\left( \forall j\in [|r,N|],\,  L_j^{(N)} -1 \leq   R_{\e_{ j\wedge \k_+}}^{(\e_{r},\Delta )}(\theta') +  R_{\e_j}^{(\e_{\k_+},\kappa^{(\k)})}(\theta')\mathds{1}_{{\{ j >k_0  \}}} \leq U_j^{(N)} +1,\, \right.
\\
 \left.  \forall j\in [|\k_+,N|],\,  L_j^{(N)} -1 \leq    R_{\e_{ \k_+}}^{(\e_{r},\Delta )}(\theta) +  R_{\e_j}^{(\e_{\k_+},\kappa^{(\k)})}(\theta) \leq U_j^{(N)} +1 \right)
\end{align*}
Let $\tau_j^{(r,\k_+)}:= \tau_j^{(r)}$ if $j\leq \k_+$ and $\tau_j^{(r,\k_+)}:= \tau_{\k_+}^{(r)}+ \sum_{l=\k_+}^{j-1} \sum_{p=0}^{2^{\kappa^{(\k)}}-1} (2^{\kappa^{(\k)}} +p)^{-1}$ if $j\geq \k_+$ and $E^{(\k_+)}_j:= \tau_j^{(r)}-\tau_j^{(r,\k_+)}$, for any $r\leq j\leq N$ . It is plain to check  that $\sum_{j=r}^{N-1}|E^{(\k_+)}_{j+1} -E^{(\k_+)}_{j} | \ll 2^{- \frac{r}{100}}$ and
\begin{align*}
(R_{\e_{ j\wedge \k_+}}^{(\e_{r},\Delta )}(\theta') +  R_{\e_j}^{(\e_{\k_+},\kappa^{(\k)})}(\theta')\mathds{1}_{{\{ j >k_0  \}}})_{r\leq j\leq N} \eqlaw \sqrt{\frac{1}{2}} ( W_{\tau_j^{(r,\k_+)}})_{j\geq r} \ .
\end{align*}
Now it suffices to reproduce the proof of Lemma \ref{lemma:kDebut}. In this first part, we assume $\frac{||\theta - \theta'||}{2 \pi} = 2^{-\k}$. In this case, we check the independence of $J_{l,p}^{(\k)}( \theta)$ and $J_{l,p}^{(\k)}( \theta')$ for $l \geq \k_+$, since $\k_+ - \kappa^{(\k)} \geq \k$. We then show, by doing the suitable conditionings, that the increments of $(R_{\e_k}^{(\e_{\k_+},\kappa^{(\k)})}(\theta))_{\k_+ \leq k \leq N}$, $(R_{\e_k}^{(\e_{\k_+},\kappa^{(\k)})}(\theta'))_{\k_+ \leq k \leq N}$, 
$(R_{\e_k}^{(\e_{r},\Delta)}(\theta'))_{r \leq k \leq \k_+}$ are independent. Thus we have
\begin{align}
\nonumber 
  &  \P_N^{(\Delta,\kappa)}(\theta,\theta')\leq  \P\left(  Ev(r,1,E^{(\k_+)},0) \right) \max_{    l_{\k_+}^{(N)} -1 \leq z\leq u_{\k_+}^{(N)}+1   }\P( Ev(\k_+,1,E^{(\k_+)},z)   ).
\end{align}

By the same arguments as in the proof of Lemma \ref{lemma:kDebut} in the dyadic case we have 
$$	\max_{    l_{\k_+}^{(N)}-1  \leq z\leq u_{\k_+}^{(N)} +1  }\P( Ev(\k_+,1,E^{(\k_+)},z)   )\ll_{\upsilon} 
	2^{-(N-\k_+)}   e^{- (N-\k_+)^{\alpha_-}}$$ 
	and 
	$$
	\P\left(  Ev(r,1,E^{(\k_+)},0) \right) \ll_{\upsilon} 2^{-N+r}.$$
 Finally one gets
\begin{align*}
	\P_N(\theta,\theta') \ll_{\upsilon}  2^{r-N}  2^{\k_+-N} e^{-  (N-\k_+)^{\alpha_-}} \ .
\end{align*}
It concludes the proof of Lemma \ref{lemma:kMil}, when $\frac{||\theta-\theta'||}{2\pi}$ is a negative power of $2$. 
\end{proof}

\subsubsection{General case}
 
\begin{proof}[Proof of Lemma \ref{lemma:kDebut} in general case]
Fix $\upsilon >0$, $r$ large enough, $N$ large enough depending on $r$, and $\k$ such that  $ \k \leq \frac{N}{2} $. Unlike the previous dyadic case, for $ \k_+ \leq l \leq N-1$ and $p\leq 2^{\Delta^{(l)}}-1$, the random variables $ I_{l,p}(\theta)$ and $ I_{l,p}(\theta')$ are not rigorously independent. However observe that for any $\k_+ \leq l \leq N-1$, the absolute value of their correlations decreases exponentially with $l$. Indeed, 
if $C_{l,p}(\theta,\theta') := \E \left( I_{l,p}(\theta)\overline{I_{l,p}(\theta')} \right)$, then
\begin{align*}
\left| C_{l,p}(\theta,\theta')\right| &=\frac{1}{{2^l+p2^{l-\Delta^{(l)}}}}\left|  \sum_{j=0}^{2^{l-\Delta^{(l)}}-1}  e^{i(\theta-\theta') j} \right|  \leq \frac{4}{2^l ||\theta-\theta'|| } \ll 2^{\k-l}.
\end{align*}
Since $ \E \left( I_{l,p}(\theta)I_{l,p}(\theta') \right) = 0$, and $(I_{l,p}(\theta), I_{l,p} (\theta'))$ is a centered complex Gaussian vector, one checks, by computing  covariances, that it is possible to write 
 $$I_{l,p} (\theta) = \frac{C_{l,p}(\theta,\theta')}{C_{l,p} (\theta', \theta')}
 I_{l,p} (\theta') 
 + I^{ind}_{l,p} (\theta, \theta'),$$
where the two terms of the sums are independent, with an expectation of the square equal to zero. Note that 
 $$C_{l,p} := C_{l,p}(\theta', \theta')
 = \frac{2^{l-\Delta^{(l)}}}{2^l + p2^{l-\Delta^{(l)}}} = (2^{\Delta^{(l)}} + p)^{-1}$$
does not depend on $\theta'$. Moreover, we have by Pythagoras' theorem: 
 $$\E [|I_{l,p}^{ind}(\theta, \theta')|^2] = \E [|I_{l,p}(\theta)|^2]
 - \left|\frac{C_{l,p}(\theta, \theta')}{C_{l,p}} \right|^2 \E [|I_{l,p}(\theta')|^2]
 = C_{l,p} - \frac{|C_{l,p}(\theta, \theta')|^2}{C_{l,p}}$$
 Using this decomposition of $I_{l,p}(\theta)$ and the measurability of the different quantities with respect to the $\sigma$-algebras of the form $\mathcal{G}_j$, we deduce that one can write: 
\begin{align}
\label{decomposi}
 (R_{\e_l}^{(\e_{\k_+},\Delta)}(\theta))_{l\geq \k_+}= (R_{\e_l}^{(\e_{\k_+},ind)}+ E_{\e_l}^{(\e_{\k_+})} )_{l\geq\k_+} \ .
\end{align}
Here $(R_{\e_l}^{(\e_{\k_+},ind)})_{l\geq \k_+}$ is a Gaussian process, independent of $(R_{\e_l}^{(\e_{\k_+},\Delta)}(\theta'))_{l\geq \k_+}$, and distributed as $  ( \sqrt{\frac{1}{2}}W_{ \tau^{(\k_+)}_l- \mathtt{C}_l}  )_{l\geq \k_+}$ with 
$$\mathtt{C}_l:= \sum_{t=\k_+}^{l-1}  \sum_{p=0}^{2^{\Delta^{(t)}} -1 } \frac{ |C_{t,p}(\theta, \theta')|^2}{C_{t,p}} \ .$$
Notice that $\mathtt{C}=\mathtt{C}^{(\k_+)}$ implicitly depends of $\k_+$. $(E_{\e_l}^{(\e_r)} )_{l\geq\k_+}$ on the other hand is defined by
\begin{align}
E_{\e_{l+1}}^{(\e_{l})}  = \Re \left(\sigma \sum_{p=0}^{2^{\Delta^{(l)}} -1 }e^{ i\psi_{ 2^{l}+p 2^{l-\Delta^{(l)}}} (\theta )   }
\frac{C_{l,p} (\theta, \theta')}{C_{l,p}}
 I_{l,p}(\theta') \right).  \label{DefE}
\end{align}

Furthermore notice that
\paragraph{Fact 1:} For any $l\geq \k_+$, 
\begin{align*}
|E_{\e_{l+1}}^{(\e_{l})}| \leq |E|_{\e_{l+1}}^{(\e_{l})}:= \sum_{p=0}^{2^{\Delta^{(l)}} -1 } |C_{l,p}(\theta, \theta')| (2^{\Delta^{(l)}}+p ) |I_{l,p}(\theta') |
\end{align*}
is measurable with respect to the sigma field $\sigma\left(  \Nc_t^\C,\, t\in  [|2^{l},2^{l+1}-1|]\right)$.

\paragraph{Fact 2:} The process $(R_{\e_l}^{(\e_{\k_+},ind)})_{l\geq\k_+} $ is independent of the couple $( R_{\e_l}^{(\e_{\k_+},\Delta)}(\theta') , |E|_{\e_l}^{(\e_r)})_{l\geq \k_+}$ 

\paragraph{Fact 3:}  $\sup_{l\geq \k_+}|\mathtt{C}_l| \ll 2^{-\Delta^{(\k)}}$ if $r$ is large enough. Indeed, in this case, 
since $\k_+ \geq 3 \Delta^{(\k)} \geq r/40$ is also large, we have
\begin{align*}
\sup_{l\geq \k_+}|\mathtt{C}_l| 
& \ll \sum_{l = \k_+}^{\infty} 
\sum_{p=0}^{2^{\Delta^{(l)}} - 1}  (2^{\k - l})^2
(2^{\Delta^{(l)}}  + p)
\ll  \sum_{l = \k_+}^{\infty} 2^{2\k - 2l + 2 \Delta^{(l)}}
\ll \sum_{l = \k_+}^{\infty} 2^{2\k - 2l + 
(r/50) + 200 \log^2 l}
\\ &  \ll \sum_{\k_+ \leq l \leq 100 \k} 
2^{2\k - 2 l + (r/50) + 300 \log^2 \k}
+ \sum_{l \geq \max (100\k, \k_+)} 2^{2\k - 2 l + 0.8 \k_+ + 200 \log^2 l}
\\ & \ll \sum_{\k_+ \leq l \leq 100 \k}
2^{2 \k - 2 l + 3 \Delta^{(\k)}}
+ \sum_{l\geq \max (100\k, \k_+)} 2^{2 \k + 0.8 \k_+ - 1.99 l} 
\\ & \ll 2^{2 \k - 2 \k_+ +  3 \Delta^{(\k)}}
+ 2^{2 \k + 0.8 \k_+  - 1.99[(0.05)(100 \k) + 
0.95 (\k_+)]}
\ll  2^{- 3 \Delta^{(\k)}} + 2^{- \k_+}
\ll 2^{- 3 \Delta^{(\k)}}.
\end{align*}
It means (see Lemma \ref{lemma:shifted_barrier_estimate}) that the process $(R_{\e_l}^{(\e_{\k_+},ind)})_{l\geq \k_+}$ is very "similar" to $ \sqrt{\frac{1}{2}} (W_{ \tau_{l}^{(\k_+)}})_{l\geq \k_+ }$. 
 Moreover, if for $\k_+ \leq r \leq l$, 
  $\mathtt{C}_l^{(r)} := \mathtt{C}_l - 
  \mathtt{C}_r$, then we have for $r$ large enough: 
  $$\sup_{l \geq r} |\mathtt{C}_l^{(r)}|
  \leq \sup_{l \geq \k_+}|\mathtt{C}_l|
  \leq 2^{-  \Delta^{(\k)}}
  \leq 2^{-(r/100) + 1} \leq 1/2.$$

\paragraph{Fact 4:} $|E|$ is small. For any $m \geq 0$, $l\geq \k_+$, we introduce the event
$$\mathtt{E}_m^{(l)}:=\{ |E|_{\e_{l+1}}^{(\e_l)}  \geq 2^{-\frac{\Delta^{(l)}}{4}}m\} \ .$$
For some universal constants $c, c' > 0$, and $m \geq 1/2$, the probability of $\mathtt{E}_m^{(l)} $ is smaller than
\begin{align}
 \nonumber  \P\left( |E|_{\e_{l+1}}^{(\e_l)}
 \geq 2^{-\frac{\Delta^{(l)}}{4}}m \right) \leq 2^{\Delta^{(l)}} \P\left( c 2^{\k-l} |I_{l,0}(\theta')|\geq m  2^{-\frac{9}{4}\Delta^{(l)}} \right) &\ll 2^{\Delta^{(l)}} e^{- c'm^2 2^{2(l-\k - \frac{7}{4}\Delta^{(l)}) }    }
\\
\label{fact4}&\ll  e^{- c'm^2 2^{2(l-\k - 2\Delta^{(l)}) }    }.
\end{align} 
For $r$ (and then $\k_+$ and $l$) large enough and $l \leq 100 \k$, $2\Delta^{(l)} \leq 3 \Delta^{(\k)}
+(\log (c')/\log 4)$, and 
then 
$$ \P\left( |E|_{\e_{l+1}}^{(\e_l)} \geq 2^{-\frac{\Delta^{(l)}}{4}}m \right)  \ll e^{-m^2 2^{2(l-\k_+)} }.$$
If $r$ is large enough and $l \geq \sup(100 \k, \k_+)$, we use that
$\k_+ \geq 3 \Delta^{(\k)} \geq r/40$,
$$l - 2 \Delta^{(l)} \geq 
l - \frac{r}{50} - 200 \log^2 l 
\geq 0.99l - 0.8 \k_+,$$ 
and then 
$$\P\left( |E|_{\e_{l+1}}^{(\e_l)}  \geq  
2^{-\frac{\Delta^{(l)}}{4}}m \right)  
\ll e^{-c' m^2 2^{2(0.99 l - \k - 0.8 \k_+)}}
\leq e^{-c' m^2 2^{l + 0.98 [0.05(100 \k) + 0.95 \k_+]
- 2\k - 1.6 \k_+}}. 
$$
Hence in any case, for $r$ large and $l \geq \k_+$, 
$$\P\left( |E|_{\e_{l+1}}^{(\e_l)}  \geq  
2^{-\frac{\Delta^{(l)}}{4}}m \right) 
\ll e^{-m^2 2^{l -\k_+}}.$$

{\bf When $\k_+\leq r$:} Using the decomposition (\ref{decomposi}) and the Fact 2, and noticing that $\sum_{l=r}^{+\infty} m 2^{-\frac{\Delta^{(l)}}{4}} \leq  m2^{-\frac{r}{400}}$, for $r$ large enough, we can affirm that
\begin{align}
\label{REM}
\nonumber &\P_N(\theta,\theta') \leq \P\left(  Ev(r,0,0,0)  \right) \P\left( Ev(r, 2^{-\frac{r}{400}} ,\mathtt{C}^{(r)},0)\right)
\\
&\qquad \qquad +\sum_{m\geq 1} \P\left(  Ev(r,0,0,0) ,\, \cup_{j\in [|r,N-1|]}\mathtt{E}_m^{(j)}  \right) \P\left( Ev(r,(m+1)2^{-\frac{r}{400}},\mathtt{C}^{(r)},0)\right),
\end{align}
where the Brownian motion involved in the event $Ev(r,0,0,0)$ is suitably coupled with the complex Gaussian random walk whose increments are of the form
$ I_{l,p}(\theta') e^{i \psi_{2^l + p2^{l-\Delta^{(l)}}} (\theta')}$ 
for $r \leq l \leq N-1$ and $0 \leq p \leq 2^{\Delta^{(l)}} - 1$. 

By using Eq. \eqref{eq:girsanov2} and then the fact that $\sup_{r \leq j \leq N}\mathtt{C}^{(r)}_j \leq 2^{1-(r/100)} \leq 2^{-r/400}$
if $r$ is large enough, we have for any $m \geq 0$: 
\begin{align}
\nonumber &\P\left( Ev(r,(m+1)2^{-\frac{r}{400}} ,\mathtt{C}^{(r)},0)\right) 
\\
\nonumber &\leq 2^{r -N} e^{2\upsilon + 2(m+1) 2^{-\frac{r}{400}} - \mathtt{C}^{(r)}_N}N^{\frac{3}{2}}
                \P\left( GEv(r,(m+1)2^{-\frac{r}{400}} + \sup_{r \leq j \leq N}\mathtt{C}^{(r)}_j,\mathtt{C}^{(r)},0) \right)
\\
\label{subiel}&\leq  2^{r -N} e^{2\upsilon + 2(m+1) 2^{-\frac{r}{400}} - \mathtt{C}^{(r)}_N}N^{\frac{3}{2}} \P\left( GEv(r,2(m+1)2^{-\frac{r}{400}}  ,\mathtt{C}^{(r)},0) \right) \ .
\end{align}
Now, we invoke Corollary \ref{corollary:shifted_barrier_estimate} as, by the Fact 3, with the notation of the corollary, $||E||_1 \leq 1$ if $r$ is large enough. Thus, we deduce, for $N$ large enough depending on $r$, that 
\begin{align}
\label{kpluspetit0}
 \P\left( Ev(r,(m+1)2^{-\frac{r}{400}} ,\mathtt{C}^{(r)},0)\right)  \ll_{\upsilon}  2^{r -N} e^{ 2m 2^{-\frac{r}{400}} } (1 + m 2^{-\frac{r}{400}} )^3 \ .
\end{align}
Similarly, to compute $\P\left(Ev(r,0,0,0) ,\,  \cup_{j\in [|r,N-1|]}\mathtt{E}_m^{(j)}\right)$ we will apply the Girsanov transform with the density $e^{\sqrt{2 }W_{\tau_N^{(r)}}- \tau_N^{(r)} }$. It requires to study what is the effect of this density on the event $ \cup_{j\in [|r,N-1|]}\mathtt{E}_m^{(j)}$.
The increments of the complex random walk which
were $I_{j,p}(\theta') e^{i \psi_{2^j + p 2^{j - \Delta^{(j)}}}(\theta')}$ before the Girsanov transform, increase
by $\sigma^{-1} C_{j,p}$ afterwards. Hence, between the two situations, before and after the Girsanov 
transform, $|E|_{\e_{j+1}}^{(\e_{j})}$, defined as the sum, for $0 \leq p \leq 2^{\Delta^{(j)}} - 1$, of 
the absolute value of the increments of the random walk multiplied by $|C_{j,p} (\theta, \theta')|/C_{j,p}$, vary, for $r$ large enough, at most by $  2^{2\Delta^{(j)} +\k -j}$, since 
$$\sum_{p = 0}^{2^{\Delta^{(j)}}-1} 
|C_{j,p} (\theta, \theta')|
\ll 2^{\Delta^{(j)}+ \k - j}.$$
Now, for $j\geq r\geq \k_+$ and $r$ large enough, we  have:
$$  2^{2\Delta^{(j)} +\k -j}
< \frac{1}{2} 2^{-\frac{1}{4}\Delta^{(j)}}.$$
Indeed, for $j \leq 100 \k$, 
$$ 2^{2\Delta^{(j)} +\k -j}
\leq 2^{2\Delta^{(j)} +\k -\k_+}
\leq 2^{2\Delta^{(j)} - 3\Delta^{(\k)}}
\leq 2^{-0.9\Delta^{(j)} },
$$
and for $j \geq \max(\k_+, 100 \k)$, 
\begin{align*} 2^{2\Delta^{(j)} +\k -j}
& \leq 2^{2\Delta^{(j)} -0.99j}
\leq 2^{(r/50) + 200 \log^2 j  -0.99j}
\leq 2^{(j/50) + 200 \log^2 j  -0.99j}
\\ & \leq 2^{-0.96j} \leq 2^{-(r/50)- 0.94j}
\leq 2^{-\Delta^{(j)}}. 
\end{align*}

Hence, if for $m \geq 1$, before (respectively after) the Girsanov transform, $ \cup_{j\in [|r,N-1|]}\mathtt{E}_m^{(j)} $ occurs, then  $  \cup_{j\in [|r,N-1|]}\mathtt{E}_{\half m}^{(j)}  $ still occurs after (respectively before) the transform. Finally we get, for any $m \geq 1$, 
\begin{align*}
\P\left(  Ev(r,0,0,0) ,\, \cup_{j\in [|r,N-1|]}\mathtt{E}_m^{(j)}  \right)  &\leq  2^{r -N}  \E\left( e^{-\sqrt{2}W_{\tau_N^{(r)}}  }  \mathds{1}_{\{   \forall j\in [|r, N |] ,\, l_j^{(N)}\leq \sqrt{\frac{1}{2}}  W_{\tau_j^{(r)}}  \leq u_j^{(N)} \}} \mathds{1}_{ \cup_{j\in [|r,N-1|]}\mathtt{E}_{\half m}^{(j)}   }    \right)
\\
&\ll_{\upsilon} 2^{r-N} N^{\frac{3}{2}} \P\left( GEv(r,0,0,0)\cap\left\{ \cup_{j\in [|r,N-1|]}\mathtt{E}_{\half m}^{(j)} \right\} \right)
\end{align*}
As $\mathtt{E}_{\frac{m}{2}}^{(l)}$ is measurable with respect to $\sigma( \Nc_t^\C,\, t\in  [|2^{l},2^{l+1}-1|])$, by applying  Corollary \ref{corollary:shifted_barrier_estimate} and using the {\bf Fact 4} we get, for $r$ large enough and $N$ large enough depending on $r$: 
\begin{align*}
\P\left( GEv(r,0,0,0)\cap \left\{ \cup_{j\in [|r,N-1|]}\mathtt{E}_{\half m}^{(j)} \right\}  \right)   & \ll_{\upsilon} N^{-\frac{3}{2}} \sum_{j\geq r} \sqrt{\P(\mathtt{E}_{\frac{1}{2}m}^{(j)})  } \ll N^{-\frac{3}{2}} 
\sum_{j \geq r} e^{-(m^2/8) 2^{j - \k_+}}
\\ &  \ll N^{-\frac{3}{2}} e^{-2^{r-\k_+}m^2/8},
\end{align*}
since $r \geq \k_+$. 

By combining this inequality with  \eqref{kpluspetit0} and \eqref{REM}, we get:
\begin{align}
\nonumber \P_N(\theta, \theta') &\ll_{\upsilon} 2^{2 (r-N)}   +   \sum_{m\geq 1} 
2^{r-N} 2^{2m 2^{-r/400}} (1+ m 2^{-r/400})^3
2^{r-N} e^{-(m^2/8) 2^{r-\k_+}}
\\ & \leq 2^{2(r-N)} \left[ 1+ \left(\sum_{m \geq 1}
2^{2m} (1+m)^3 e^{-m^2/8} \right) e^{-(1/8)(2^{r-\k_+} - 1)} \right]
\ll 2^{2(r-N)} \label{Gillet},
\end{align}
which concludes the case $\k_+\leq r$.

{\bf When $\k_+\geq r$:} Using to the decomposition (\ref{decomposi}) and the Fact 2 and noticing that $\sum_{l=\k_+}^{+\infty} m 2^{-\frac{\Delta^{(l)}}{4}}\leq m2^{-\frac{r}{400}}$ for $r$ large enough,  we can affirm that
\begin{align}
\label{REM2}  \P_N(\theta,\theta') &\leq \sum_{m\geq 0} \P\left(  Ev(r,0,0,0) ,\, \cup_{j\in [|\k_+,N-1|]}\mathtt{E}_m^{(j)}  \right) \sup_{    l_{\k_+}^{(N)}  \leq z\leq u_{\k_+}^{(N)}  } \P\left( Ev(\k_+,(m+1) 2^{-\frac{r}{400}},\mathtt{C},z)\right).
\end{align}
 By a similar computation as what we have done in the case $\k_+ \leq r$, we get: 
\begin{align}
\label{oto}
\P\left(  Ev(r,0,0,0) ,\, \cup_{j\in [|\k_+,N-1|]}\mathtt{E}_m^{(j)}  \right) 
\ll_{\upsilon}  2^{r-N-(m^2/8)}.
\end{align}
On the other hand, by using Eq. \eqref{eq:girsanov2}, for any $z\in [ l_{\k_+}^{(N)} , u_{\k_+}^{(N)} ]$,  we obtain: 
\begin{align}
\nonumber &\P\left( Ev(\k_+, (m+1) 2^{-\frac{r}{400}},\mathtt{C},z)\right) 
\\
\nonumber &\ll_{\upsilon}
2^{\k_+  -N}  e^{   2(m+1) 2^{-\frac{r}{400}}} N^{\frac{3}{2}} e^{2z  } \P\left( GEv(\k_+, (m+1) 2^{-\frac{r}{400}} + \sup_{\k_+ \leq j \leq N} |\mathtt{C}_j|,\mathtt{C},z) \right)
\\
\label{subielbi}&  \leq
2^{\k_+  -N}  e^{   2(m+1) 2^{-\frac{r}{400}}} N^{\frac{3}{2}} e^{2z  } \P\left( GEv(\k_+,   2(m+1) 2^{-\frac{r}{400}},\mathtt{C},z) \right),
\end{align}
where we used that $\sup_{j\geq \k_+} |\mathtt{C}_j|\leq 2^{-\Delta^{(\k)}} \leq \frac{1}{2}(m+1) 2^{-\frac{r}{400}} $. 

For $\k_+ \leq N/4$, we can use Corollary \ref{corollary:shifted_barrier_estimate} to deduce, for $z\leq u_{\k^+}^{(N)}=\upsilon- (\k_+-r)^{\alpha_-}$,  
\begin{align}
 \P\left( Ev(\k_+,   (m+1) 2^{-\frac{r}{400}} ,\mathtt{C},z)\right) &
\ll_{\upsilon} 2^{\k_+ -N} e^{   2(m+1) 2^{-\frac{r}{400}}}  (1+ 2(m+1) 2^{-\frac{r}{400}} )^3 (1+ (-z)_+)  e^{2z  } 
\nonumber \\ & \ll_{\upsilon}  2^{\k_+ -N} e^{3(m +1)- (\k_+-r)^{\alpha_-}}\label{kpluspetit0bi} \ .
\end{align}

For $\k_+ \geq N/4$, we bound the probability of the $GEv$ event by $1$ and use the fact that $z \leq \upsilon - 0.9(\k_+ - r)^{\alpha_-}$ 
(the factor $0.9$ coming from the case $\k \leq N/2 \leq \k_+$). We then get  
\begin{align*}
 P\left( Ev(\k_+,   (m+1) 2^{-\frac{r}{400}} ,\mathtt{C},z)\right) & \ll_{\upsilon}
2^{\k_+  -N}  e^{   2(m+1) 2^{-\frac{r}{400}}} N^{\frac{3}{2}} e^{2z  }
\\ & \ll_{\upsilon} 2^{\k_+  -N}  e^{2m+2} e^{-(\k_+ - r)^{\alpha_-}} (N^{3/2} e^{-0.8(\k_+ - r)^{\alpha_-}})
\\ & \ll 2^{\k_+  -N}  e^{2m+2} e^{-(\k_+ - r)^{\alpha_-}},
\end{align*}
the last line coming from the fact that  $\k_+ - r \geq (N/4) - r \gg N$ if $N \geq 5r$. This again implies \eqref{kpluspetit0bi}. 
Finally, by combining  this equation with (\ref{oto}) and (\ref{REM2}), we get
\begin{align*}
\P_{N}(\theta, \theta')& \ll_{\upsilon}  2^{r-N} 2^{\k_+ -N}  e^{- (\k_+-r)^{\alpha_-}}   \sum_{m\geq 0}  e^{3m+3-(m^2/8)}
\ll 2^{r-N} 2^{\k_+ -N}  e^{- (\k_+-r)^{\alpha_-}}  
\end{align*}
which concludes the proof of Lemma \ref{lemma:kDebut}.
\end{proof}

\begin{proof}[Proof of Lemma \ref{lemma:ktoutDebut}]
We can use \eqref{subiel} in order to get (for $r$ large enough and $N$ large enough depending on $r$): 
\begin{align*}
&\P(Ev(r,0,0,0)) 
\P(Ev(r, 2^{-\frac{r}{400}}, \mathtt{C}^{(r)}, 0))
\\ & \leq e^{4 \upsilon + 2^{1-(r/400)}} 2^{2(r-N)} N^{3} 
\P(GEv(r,0,0,0))
\P(GEv(r,2^{1-\frac{r}{400}},\mathtt{C}^{(r)},0)),
\end{align*}
and then, by the majorization of the second term of \eqref{REM} which is involved in \eqref{Gillet}: 
$$\P_N(\theta, \theta') 
\leq e^{4 \upsilon + 2^{1-(r/400)}} 2^{2(r-N)} N^{3} 
\P(GEv(r,0,0,0))
\P(GEv(r,2^{1-\frac{r}{400}},\mathtt{C}^{(r)},0)) + \mathcal{O}_{\upsilon} 
\left( 2^{2(r-N)} e^{-\frac{2^{r-\k_+} - 1}{8}} \right).$$
Hence, we have: 
\begin{align*} \P_N(\theta, \theta') 
& \leq e^{4 \upsilon + 2^{1 - (r/400)}} 
2^{2(r-N)} N^{3} 
\P(GEv(r,0,0,0))
\P(GEv(r,2^{1-\frac{r}{400}},\mathtt{C}^{(r)},0)) 
\\ & + \mathcal{O}_{\upsilon} 
\left( 2^{2(r-N)}
 e^{-\frac{2^{r - (r/2)- (r/100) - 100 \log^2 (r/2)} - 1}{8}} \right).
 \end{align*}
By applying \eqref{ProClas1} and using the fact that, with the notation of this equation, 
$||E||_1$ and $\delta$ go to zero when $r$ goes to infinity, we get 
$$\P_N(\theta, \theta') 
\leq e^{4 \upsilon} 2^{2(r -N)} N^{3}   (\P(Event_{r,N}))^2 (1+ \eta_r)
+  \mathcal{O}_{\upsilon} 
\left( 2^{2(r-N)} e^{-\frac{2^{r/3} - 1}{8}} \right),
 $$
 where $\eta_r$ goes to zero when $r$ goes to infinity. 
 Now, by computing the lower bound of the first moment of $\I_N$, we have proven 
 that $\P(Event_{r,N}) \gg_{\upsilon}
 N^{-3/2}$. 
 Hence, 
 $$\P_N(\theta, \theta') 
\leq 2^{2(r -N)} N^{3}   (\P(Event_{r,N}))^2 (1+ \eta_{r, \upsilon}),$$
where 
$$\eta_{r, \upsilon} 
=  e^{4 \upsilon} - 1 + \eta_r  e^{4 \upsilon} + \mathcal{O}_{\upsilon} \left(e^{-\frac{2^{r/3} - 1}{8}} \right).$$
 When we let $r \rightarrow \infty$, we 
 get $e^{4 \upsilon} - 1$, which tends to zero 
 with $\upsilon$. 
 \end{proof}

\begin{proof}[Proof of Lemma \ref{lemma:kMil} in the general case]
The general case needs to uses exactly the same arguments used in the general case of the proof of  Lemma \ref{lemma:kDebut}.  This time, for $ \k_+ \leq l \leq N-1$ and $p\leq 2^{\kappa^{(\k)}}-1$, the random variables $ J_{l,p}^{(\k)}(\theta)$ and $ J_{l,p}^{(\k)}(\theta')$ are not rigorously independent. However, we observe that for $\k_+ \leq l \leq N-1$, the absolute value of their correlations,  decreases exponentially with $l$. Indeed, if $C_{l,p}^{(\k)}(\theta,\theta') := \E \left( J_{l,p}^{(\k)}(\theta)\overline{J_{l,p}^{(\k)}(\theta')} \right)$, then

\begin{align*}
\left| C_{l,p}^{(\k)}(\theta,\theta')\right| &=\frac{1}{{2^l+p2^{l-\kappa^{(\k)}}}}\left|  \sum_{j=0}^{2^{l-\kappa^{(\k)}}-1}  e^{i(\theta-\theta') j} \right|  \leq \frac{4}{2^l ||\theta-\theta'|| } \ll 2^{\k-l} \ .
\end{align*}

Since $ \E \left( J_{l,p}^{(\k)}(\theta)J_{l,p}^{(\k)}(\theta') \right) = 0$, and the vector 
$(J_{l,p}^{(\k)}(\theta), J_{l,p}^{(\k)} (\theta'))$ is centered complex Gaussian, one checks, by computing  covariances, that it is possible to write 
$$J_{l,p}^{(\k)} (\theta) = \frac{C_{l,p}^{(\k)}(\theta,\theta')}{C_{l,p}^{(\k)} (\theta', \theta')}
J_{l,p}^{(\k)} (\theta') 
+ J^{(\k),ind}_{l,p} (\theta, \theta'),$$
where the two terms of the sums are independent, with an expectation of the square 
equal to zero. Note that 
$$C^{(\k)}_{l,p} := C^{(\k)}_{l,p}(\theta', \theta')
= \frac{2^{l-\kappa^{(\k)}}}{2^l + p2^{l-\kappa^{(\k)}}} = (2^{\kappa^{(\k)}} + p)^{-1},$$
does not depend on 
$\theta'$. Moreover, we have by Pythagoras' theorem: 
$$\E [|J_{l,p}^{(\k),ind}(\theta, \theta')|^2] = \E [|J_{l,p}^{(\k)}(\theta)|^2]
- \left|\frac{C_{l,p}^{(\k)}(\theta, \theta')}{C_{l,p}^{(\k)}} \right|^2 \E [|J_{l,p}^{(\k)}(\theta')|^2]
= C_{l,p}^{(\k)} - \frac{|C_{l,p}^{(\k)}(\theta, \theta')|^2}{C_{l,p}^{(\k)}}.$$
Using this decomposition of $J_{l,p}^{(\k)}(\theta)$ and the measurability 
of the different quantities with respect to 
the $\sigma$-algebras of the form $\mathcal{G}_j$, we deduce that one can write: 
\begin{align}
\label{decomposiBIS}
(R_{\e_l}^{(\e_{\k_+},\kappa^{(\k)})}(\theta))_{l\geq \k_+}= (R_{\e_l}^{(\e_{\k_+},ind)}+ E_{\e_l}^{(\e_{\k_+})} )_{l\geq\k_+},
\end{align}
with $(R_{\e_l}^{(\e_{\k_+},ind)})_{l\geq \k_+}$ is a Gaussian process, independent of
$(R_{\e_l}^{(\e_{\k_+},\kappa^{(\k)})}(\theta'))_{l\geq \k_+}$, and 
distributed as $  ( \sqrt{\frac{1}{2}}W_{ \tau^{(\k_+,\k_+)}_l- \mathtt{C}^{(\k)}_l}  )_{l\geq \k_+}$ with $\tau^{(\k_+,\k_+)}_j=  \sum_{l=\k_+}^{j-1} \sum_{p=0}^{2^{\kappa^{(\k)}}-1} (2^{\kappa^{(\k)}} +p)^{-1}$ and
$$\mathtt{C}^{(\k)}_l:= \sum_{t=\k_+}^{l-1}  \sum_{p=0}^{2^{\kappa^{(\k)}} -1 } \frac{ |C_{t,p}^{(\k)}(\theta, \theta')|^2}{C_{t,p}^{(\k)}} $$ and $(E_{\e_l}^{(\e_{\k_+})} )_{l\geq\k_+}$ defined by
\begin{align}
E_{\e_{l+1}}^{(\e_{l})}  = \Re \Bigg(\sigma \sum_{p=0}^{2^{\kappa^{(\k)}} -1 }e^{ i\psi_{ 2^{l}+p 2^{l-\kappa^{(\k)}}} (\theta )   }
\frac{C_{l,p}^{(\k)}(\theta, \theta')}{C_{l,p}^{(\k)}}
J_{l,p}^{(\k)}(\theta') \Bigg).  \label{DefEbis}
\end{align}
Note that $\mathtt{C}^{(\k)}_l$ and  $E_{\e_{l+1}}^{(\e_{l})}$ here represent quantities which are different from those denoted in the same way in the proof of Lemma \ref{lemma:kDebut}. 
Furthermore notice that
\paragraph{Fact 1:} For any $l\geq \k_+$, 
\begin{align*}
|E_{\e_{l+1}}^{(\e_{l})}| \leq |E|_{\e_{l+1}}^{(\e_{l})}:= \sum_{p=0}^{2^{\kappa^{(\k)}} -1 } |C_{l,p}^{(\k)}(\theta, \theta')| (2^{\kappa^{(\k)}}+p ) |J_{l,p}^{(\k)}(\theta') |
\end{align*}
is measurable with respect to the sigma field $\sigma\left(  \Nc_t^\C,\, t\in  [|2^{l},2^{l+1}-1|]\right)$.

\paragraph{Fact 2:} The process $(R_{\e_l}^{(\e_{\k_+},ind)})_{l\geq\k_+} $ is independent of the couple $( R_{\e_l}^{(\e_{\k_+},\kappa^{(\k)})}(\theta') , |E|_{\e_l}^{(\e_{\k_+})})_{l\geq \k_+}$.

\paragraph{Fact 3:}  $\sup_{l\geq \k_+}|\mathtt{C}_l^{(\k)}| \ll 2^{-\kappa^{(\k)}}$ if $r$ is large enough. Indeed we have
\begin{align*}
\sup_{l\geq \k_+}|\mathtt{C}_l^{(\k)}| 
& \ll \sum_{l = \k_+}^{\infty} 
\sum_{p=0}^{2^{\kappa^{(\k)}} - 1}  (2^{\k - l})^2
(2^{\kappa^{(\k)}}  + p)
\ll  \sum_{l = \k_+}^{\infty} 2^{2\k - 2l + 2 \kappa^{(\k)}}
\ll \sum_{l = \k + 3\kappa^{(\k)}}^{\infty} 2^{2 (\k +3\kappa^{(\k)} - l) -4\kappa^{(\k)}}   	\ll 2^{- 2\kappa^{(\k)}}.
\end{align*}
It means that the process $(R_{\e_l}^{(\e_{\k_+},ind)})_{l\geq \k_+}$ is very "similar" to $ \sqrt{\frac{1}{2}} (W_{ \tau_{l}^{(\k_+,\k_+)}})_{l\geq \k_+ }$.

\paragraph{Fact 4:} $|E|$ is small. For any $m \geq 0$, $l\geq \k_+$, we introduce the event 
$$\mathtt{E}_m^{(l)}:=\{ |E|_{\e_{l+1}}^{(\e_l)}  \geq 2^{-\frac{\kappa^{(\k)}}{4}}m\} \ . $$
For some universal constants $c, c' > 0$, and $m  \geq 1/2$, the probability of $\mathtt{E}_m^{(l)} $ is smaller than
\begin{align*}
    \P\left( |E|_{\e_{l+1}}^{\e_l} \geq 2^{-\frac{\kappa^{(\k)}}{4}}m \right) 
\leq 2^{\kappa^{(\k)}} \P\left( c 2^{\k-l} |J_{l,0}^{(\k)}(\theta')|\geq m  2^{-\frac{9}{4}\kappa^{(\k)}} \right) 
& \ll 2^{\kappa^{(\k)}} e^{- c'm^2 2^{2(l-\k - \frac{7}{4}\kappa^{(\k)}) } }\\
& \ll e^{- c'm^2 2^{2(l-\k - 2\kappa^{(\k)}) } }.
\end{align*} 
Then for $r$ large and $l \geq \k_+$, 
$$\P\left( |E|_{\e_{l+1}}^{\e_l}  \geq 2^{-\frac{\kappa^{(\k)}}{4}}m \right) \ll e^{-m^2 2^{l -\k_+}} \ .$$

Using to the decomposition (\ref{decomposiBIS}) and the Fact 2 and noticing that $\sum_{l=\k_+}^{N} m 2^{-\frac{\kappa^{(\k)}}{4}}\leq m2^{-\frac{r}{400}}$ for $r$ large enough,  we can affirm that
\begin{align}
     & \P_N^{(\Delta,\kappa)}(\theta,\theta') \label{REM2BIS} \\
\nonumber
\leq & \sum_{m\geq 0} \P\left(  Ev(r,1,E^{(\k_+)},0) ,\, \cup_{j\in [|\k_+,N-1|]}\mathtt{E}_m^{(j)}  \right) \\
\nonumber
     & \quad \quad \sup_{    l_{\k_+}^{(N)}-1  \leq z\leq u_{\k_+}^{(N)}+1  } \P\left( Ev(\k_+,1 +(m+1) 2^{-\frac{r}{400}},\mathtt{C}^{(\k)}+E^{(\k_+)},z)\right).
\end{align}
Here, by abuse of notation, we refer to the same event $Ev(k, a, E, z)$ as in Eq. \eqref{eq:def_Ev_GEv_ext} but for the new time clock $\tau_.^{(\k_+, \k_+)}$. By the same arguments used to prove \eqref{Gillet} we have: 
\begin{align}
\label{otoBIS}
\P\left(  Ev(r,1,E^{(\k_+)},0) ,\, \cup_{j\in [|\k_+,N-1|]}\mathtt{E}_m^{(j)}  \right) 
\ll_{\upsilon}  2^{r-N-(m^2/8)}.
\end{align}
On the other hand, by using the inequality \eqref{eq:girsanov2} deduced from the Girsanov transform (which still holds for the time clock $\tau^{(\k_+,\k_+)}_j =  \sum_{l=\k_+}^{j-1} \sum_{p=0}^{2^{\kappa^{(\k)}}-1} (2^{\kappa^{(\k)}} +p)^{-1}$), we obtain for any $z\in [ l_{\k_+}^{(N)}-1, u_{\k_+}^{(N)}+1 ]$:
\begin{align}
\nonumber &\P\left( Ev(\k_+,1+ (m+1) 2^{-\frac{r}{400}},\mathtt{C}^{(\k)}+E^{(\k_+)},z)\right) 
\\
\label{subielbiBIS}&  \ll_{\upsilon}
2^{\k_+  -N}  e^{  1+ 2(m+1) 2^{-\frac{r}{400}}} N^{\frac{3}{2}} e^{2z  } \P\left( GEv(\k_+,  1+ 2(m+1) 2^{-\frac{r}{400}},\mathtt{C}^{(\k)}+E^{(\k_+)},z) \right),
\end{align}
where we used that $\sup_{j\geq \k_+} |\mathtt{C}^{(\k)}_j|\leq 2^{-\kappa^{(\k)}} \leq (m+1) 2^{-\frac{r}{400}} $.  We bound the probability 
of the $GEv$ event by $1$ and use the fact that 
$z \leq 1+ u_{k_0}^{(N)}\leq 1+\upsilon -(N-\k_+)^{\alpha_-}-\frac{3}{4}\log N$.
We then get  
\begin{align*}
P\left( Ev(\k_+,  1+ (m+1) 2^{-\frac{r}{400}} ,\mathtt{C}^{(\k)}+E^{(\k_+)},z)\right) & \ll_{\upsilon}
2^{\k_+ -N}  e^{   2(m+1) 2^{-\frac{r}{400}}} N^{\frac{3}{2}} e^{2z  }
\\ & \ll_{\upsilon} 2^{\k_+  -N}  e^{2m+2}   N^{3/2} e^{- 2(N - \k_+)^{\alpha_-} -\frac{3}{2}\log N}
\\ & \ll 2^{\k_+  -N}  e^{2m+2} e^{-(N - \k_+)^{\alpha_-}},
\end{align*}
Finally, by combining  this equation with (\ref{otoBIS}) and (\ref{REM2BIS}), we get
\begin{align*}
\P_{N}^{(\Delta,\kappa)}(\theta, \theta')& \ll_{\upsilon}  2^{r-N} 2^{\k_+ -N}  e^{- (N-\k_+)^{\alpha_-}}   \sum_{m\geq 0}  e^{2m+2-(m^2/8)}
\ll 2^{r-N} 2^{\k_+ -N}  e^{- (N-\k_+)^{\alpha_-}} \ ,
\end{align*}
which concludes the proof of Lemma \ref{lemma:kMil}. 
\end{proof}

\begin{proof}[Proof of Lemma \ref{lemma:kFin}]
In this case, we simply ignore $R_{\e_j}^{(\e_{r},\Delta)}(\theta')$ in the event associated to $\P_N(\theta, \theta')$ (See Eq. \eqref{eq:def_P_N}).  Recalling that $(R_{\e_j}^{(\e_r,\Delta)}(\theta))_{j\geq r}$ is distributed like $  \sqrt{\frac{1}{2}}   (W_{\tau_j^{(r)}})_{j\geq r}  $, with $W$ a standard Brownian motion, we have:
\begin{align*}
              \P_N(\theta, \theta') 
\leq \quad & \P\left( \forall j\in [|r,N|],\,  L_j^{(N)}  \leq R_{\e_j}^{(\e_{r},\Delta)}(\theta) \leq U_j^{(N)} \right)\\
   = \quad & \P\left( Ev(r,0) \right)\\
\stackrel{Eq. \eqref{eq:girsanov1}}{\leq} &
             2^{r-N} e^{2 \upsilon} N^{\frac{3}{2}} \P\left( GEv(r, 0) \right)\\
\ll_\upsilon \quad & 2^{r-N} \ .
\end{align*}
where the last inequality comes from Corollary \ref{corollary:shifted_barrier_estimate}.  
\end{proof}

Now we are in position to prove the Proposition \ref{PrSecondmo}. 
\begin{proof}[Proof of Proposition \ref{PrSecondmo}]
Notice that for any $k\in \{1,...,N\}$,
$$\#\{ (\theta, \theta') \in ([0,2\pi)_{2^N})^2,\, 2^{-k} \leq \frac{||\theta-\theta'||}{2\pi}\leq 2^{1-k}  \}\ll  2^{2N-k}.$$
By applying the Lemmas \ref{lemma:kDebut}, \ref{lemma:ktoutDebut}, \ref{lemma:kMil} and \ref{lemma:kFin}, one obtains that for $r$ large enough and $N$ large enough depending on $r$, 
$$\E\left(\I_N^2\right)\leq \sum_{\theta; \theta' \in [0,2\pi)_{2^N},\, \frac{||\theta-\theta'||}{2\pi} \geq 2^{- \frac{r}{2}}  }   (1+\eta_{r,\upsilon})    2^{2r -2N} N^{3}   (\P(Event_{r,N}))^2  + Y$$
	where 
	\begin{align*}
	& Y \ll_{\upsilon}  \sum_{k\geq    r/2 ; k+3\Delta^{(k)}\leq r }    2^{2N-k}  
	 2^{2r - 2N}     + \sum_{  k+3\Delta^{(k)}\geq r; k\leq N/2 }    2^{2N-k}    2^{-2N+r+ k+3\Delta^{(k)}} e^{-( k+3\Delta^{(k)}-r)^{\alpha_-}} 
	\\
	&+ \sum_{k\geq N/2; k\leq N-r/2 }   2^{2N-k}  2^{-2N+ k + 3 \kappa^{(k)} +r}      e^{-  \frac{1}{2}(N-k - 3 \kappa^{(k)} )^{\alpha_-}}            +\sum_{k\geq N-r/2; k\leq N}  2^{2N-k}  2^{r-N}
\end{align*}
The first term of $\E [\I_{N}^2]$ is smaller than $(1+\eta_{r,\upsilon })    \left( 2^r N^{\frac{3}{2}}   \P(Event_{r,N})  \right)^2 $. The first sum in the estimate of $Y$  is smaller than 
\begin{align*}
	\sum_{k\geq    r/2 ; k+3\Delta^{(k)}\leq r }       2^{2r-k}    \ll 2^{3r/2}.
\end{align*}
If $k'$ denotes the smallest value of 
$k$ such that $ k+3\Delta^{(k)} \geq r$, we have $k' \geq r/2$ if $r$ is large enough, and then the second sum is smaller than
\begin{align*}
	&  \sum_{  k+3\Delta^{(k)}\geq r; k\leq N/2 }       2^{ r+3\Delta^{(k)}} e^{- (k + 3 \Delta^{(k)} -r)^{\alpha_-}}
	\leq  \sum_{k + 3\Delta^{(k)}\geq r}
	 2^{k + 6\Delta^{(k)}} e^{- (k + 3 \Delta^{(k)} -r)^{\alpha_-}}
	 \\ & \qquad \qquad \leq  2^{-k'} \sum_{s = r}^{\infty} 
	 2^{2s} e^{-(s-r)^{1/10}}
	 \leq 2^{3r/2}  \sum_{t = 0}^{\infty}
	 2^{2t} e^{-t^{1/10}} \ll 2^{3r/2}.
\end{align*}
In the third sum, we have
$$3 \kappa^{(k)} + r \leq 
1.03 r + 300 \log^2 (N - k), \; 
N - k - 3 \kappa^{(k)} 
\geq N - k - 0.03 r - 300 \log^2 (N-k),$$
and then for $r$ large enough and $N-k \geq r/2$, 
$$
\; N - k - 3 \kappa^{(k)} \geq 0.9 (N-k),$$
which gives
\begin{align*}
	&  \sum_{k\geq N/2; k\leq N-(r/2) }     2^{ 3\kappa^{(k)} +r}      e^{-  \frac{1}{2}(N- k - 3 \kappa^{(k)} )^{\alpha_-}}   
	\leq e^{1.03r} \sum_{1 \leq k \leq N-1} e^{300 
	\log^2 (N-k) - (1/2)(0.9 (N-k))^{1/10}}
	\ll e^{1.03r}.
\end{align*}
Finally, the last sum is smaller than
\begin{align*}
	\sum_{k\geq N-r/2; k\leq N}   2^{r+N-k}   \ll 2^{3r/2} .
\end{align*}
Hence, we have, by using Proposition \ref{Prmomen1Lower}, 
$$\E\left(\I_N^2\right)\leq  (1+\eta_{r,\upsilon})    2^{2r} N^{3}   (\P(Event_{r,N}))^2  + \mathcal{O}_{\upsilon}
 (2^{3r/2})
 \leq (\E
 (\I_N))^2 (1 + \eta'_{r, \upsilon})$$
 where 
 $$\eta'_{r, \upsilon} = (1+\eta_{r,\upsilon}) e^{4 \upsilon 
 + 4 \lambda_{\infty}^{(r)}}  - 1 +  
\mathcal{O}_{\upsilon}(2^{-r/2}).$$
Letting $r \rightarrow \infty$, $\eta'_{r, \upsilon}$ has an upper limit equal to $(1+ \limsup_{r \rightarrow \infty} \eta_{r, \upsilon} ) e^{4 \upsilon} - 1$, which tends to zero with $\upsilon$. This completes the proof of Proposition \ref{PrSecondmo}.
\end{proof}

\section{Appendix: Classical estimates on Gaussian walks}
\label{section:appendix}

In the following, the process $\left( W_s ; s \in \R_+ \right)$ is a standard Brownian motion. It will be mainly observed on a discrete set of times. Closely related to the Ballot theorem, a well-known result of Kozlov \cite{Koz76} states that there exists $c_0>0$ such that
\begin{align}
\label{Build0}
\lim_{N\to\infty }  \sqrt{N}\P\left( \forall j\in \{1,2, \dots, N \},\, W_j\geq 0 \right) = c_0 .
\end{align}
On the other hand, recall that for some $c>0$ large enough, for any $h\geq 0$, $N \geq 1$ one has,
\begin{align}
\label{build1}
\frac{1}{c} \left(\frac{h}{\sqrt{N}} \wedge 1 \right) \leq \sup_{x\in \R} \P\left( W_N \in [x,x+h]\right) \leq \frac{ch}{\sqrt{N}}.
\end{align}
Both identities are the building blocks of the following classical results. 

\begin{proposition}[Lemma A.1 in \cite{AShi10}]
For any $x\geq 0$, $b\geq a \geq 0$, 
\begin{align}
\label{eqA.1}
\P\left( \min_{j\in \{1,\dots, N\}} W_j + x \geq 0,\, W_N + x \in [a,b]  \right) \ll  (1+x) (1+(b-a)) (1+b) N^{-\frac{3}{2}} \ .
\end{align}
\end{proposition}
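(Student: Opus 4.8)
The plan is to reduce the two-sided window estimate \eqref{eqA.1} to the classical one-sided persistence estimate \eqref{Build0} by a standard decomposition of the Brownian bridge/walk trajectory into an early part and a late part, then to use the local central limit estimate \eqref{build1} to control the landing window. First I would observe that by Brownian scaling and monotonicity it suffices to treat the random walk $(W_j)_{1 \le j \le N}$ sampled at integer times; the continuous-time statement then follows since $\min_{j \in \{1,\dots,N\}} W_j \ge \min_{s \in [0,N]} W_s$ only in one direction, but in fact the event on the discrete skeleton dominates the one we want up to adjusting constants, because conditionally on the endpoints the excursion below the minimum of the skeleton is a Brownian bridge whose infimum has exponential tails — so I would just work with the skeleton throughout, which is what the later applications in Sections \ref{section:upper_bound} and \ref{section:lower_bound} use anyway.

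The key steps, in order: (i) Split the time interval into $\{1,\dots,\lfloor N/2 \rfloor\}$ and $\{\lfloor N/2\rfloor,\dots,N\}$. On the event in \eqref{eqA.1}, decompose according to the value $W_{\lfloor N/2 \rfloor} + x = z \ge 0$. (ii) For the first half, use the reflection-principle bound (a consequence of \eqref{Build0} together with a first-entrance decomposition, cf. Lemma A.1 of \cite{AShi10}) that $\P(\min_{j \le N/2} W_j + x \ge 0,\ W_{\lfloor N/2\rfloor} + x \in dz) \ll (1+x)(1+z) N^{-3/2}\, dz$ for $z$ in a unit interval — this is precisely the one-endpoint-free version of the statement, which is the genuinely classical input. (iii) For the second half, run the walk backwards from time $N$: the increments reversed form again a walk, and we need $\min$ of the reversed walk started from height $W_N + x \in [a,b]$ to stay above $0$ while travelling from $[a,b]$ down to height $z$ over $\sim N/2$ steps; bound this by $\P(\min \ge 0) \le c\,(1+b)(1+z)N^{-1/2} \cdot (\text{local window factor})$, again by the same reflection bound but now only needing $N^{-1/2}$ since one endpoint is integrated freely. (iv) Multiply the two halves and the window probability $\P(W_N + x \in [a,b] \mid \cdots) \ll (1+(b-a))N^{-1/2}$ coming from \eqref{build1}, then integrate over $z \ge 0$: the integral $\int_0^\infty (1+z)^2 e^{-cz^2/N}\,\frac{dz}{\cdots}$ contributes a bounded factor after the scaling $z \mapsto z\sqrt{N}$ is balanced against the Gaussian density, leaving the total power $N^{-3/2}$ and the polynomial prefactor $(1+x)(1+(b-a))(1+b)$.

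Concretely, I would phrase step (ii)–(iii) through the single lemma: for a centred walk with increments of bounded density, $\P(\min_{j\le M}(S_j + u) \ge 0,\ S_M + u \in [v, v+1]) \ll (1+u)(1+v) M^{-3/2}$ uniformly in $u, v \ge 0$ — this is exactly \cite[Lemma A.1]{AShi10} and may be cited. Then \eqref{eqA.1} is assembled as
\begin{align*}
\P\left( \min_{j\le N}W_j + x \ge 0,\ W_N + x \in [a,b] \right)
&\ll \sum_{k \ge 0} \P\left(\min_{j\le \lfloor N/2\rfloor} W_j + x \ge 0,\ W_{\lfloor N/2\rfloor} + x \in [k,k+1]\right)\\
&\qquad \times \sup_{z \in [k,k+1]} \P\left( \min_{\lfloor N/2\rfloor \le j \le N} W_j \ge -z + \text{shift},\ W_N + x \in [a,b] \,\Big|\, W_{\lfloor N/2\rfloor}+x = z\right),
\end{align*}
bounding the first factor by $(1+x)(1+k)N^{-3/2}$ and the second by $c\,(1+b)(1+(b-a)) N^{-1/2} \cdot (1+k)\, e^{-c k^2/N}$ via the backward-walk lemma and \eqref{build1}, and then $\sum_{k\ge 0}(1+k)^2 e^{-ck^2/N} \ll N^{3/2}$, which cancels against the $N^{-3/2}\cdot N^{-1/2}$ to leave the $N^{-3/2}$ claimed (the extra $N^{3/2}$ from the sum times $N^{-2}$ gives $N^{-1/2}$... so I must be slightly more careful: one keeps only $\sum_k (1+k)\cdot(1+k)\,[\![k \lesssim \sqrt N]\!]$ truncated, and the truncation at $k \sim b + \sqrt N$ with the Gaussian tail is what produces the correct $(1+b)$ dependence rather than a spurious power of $N$).

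The main obstacle I anticipate is precisely the bookkeeping in that last summation: making sure the interplay between the free middle height $z$, the Gaussian decay $e^{-cz^2/N}$, and the linear factors $(1+z)$ from the ballot-type estimate on both halves produces exactly the prefactor $(1+x)(1+(b-a))(1+b)$ and the clean power $N^{-3/2}$, with no leftover logarithm or extra power of $N$. Everything else — the reflection bound, the reversal of the walk, the local CLT window bound — is standard and citable from \cite{AShi10}, \cite{Koz76}; the only real work is the careful optimization of the middle-point decomposition, which I would carry out by splitting the $k$-sum at $k_0 := \lceil b \rceil + \lceil \sqrt N \rceil$ and treating $k \le k_0$ and $k > k_0$ separately, exactly as done for the overshoot sums in the proof of Eq. \eqref{eq:upper_bound_estimate} above.
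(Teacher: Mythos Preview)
The paper does not give its own proof of this proposition: it simply cites it as Lemma~A.1 of \cite{AShi10}. So there is nothing to compare against directly. That said, two things about your proposal deserve comment.

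First, there is a circularity. In your ``concrete'' reformulation you write: ``$\P(\min_{j\le M}(S_j + u) \ge 0,\ S_M + u \in [v, v+1]) \ll (1+u)(1+v) M^{-3/2}$ \dots\ this is exactly \cite[Lemma A.1]{AShi10} and may be cited.'' But that \emph{is} the statement you are asked to prove (up to covering $[a,b]$ by $\lceil b-a\rceil+1$ unit windows, which is trivial). If you are allowed to cite it, there is nothing to do; if not, you cannot invoke it as your ``genuinely classical input.''

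Second, your two-part decomposition does not balance, as you yourself notice. Bounding the first half by $(1+x)(1+k)N^{-3/2}$ already uses the full ballot-with-landing estimate, and then the sum over $k$ against the second-half factors produces spurious powers of $N$. The clean way --- and this is exactly how the paper proves the closely related Corollary~\ref{corollary:barrier_estimate} --- is a \emph{three}-part split at $N_1\approx N/3$ and $N_2\approx 2N/3$: condition on the increments in $[0,N_1]$ and $[N_2,N]$, so the landing constraint $W_N+x\in[a,b]$ becomes a window of width $b-a$ for $W_{N_2}-W_{N_1}$, contributing $\ll (b-a)/\sqrt{N}$ by \eqref{build1}; the first block contributes $\ll(1+x)/\sqrt{N}$ by persistence \eqref{Build0}; the last block, time-reversed, contributes $\ll(1+b)/\sqrt{N}$ for the same reason. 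The product gives $N^{-3/2}(1+x)(b-a)(1+b)$ directly, with no sum over intermediate heights and no bookkeeping obstacle.
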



Now let state two extensions of (\ref{Build0}), (\ref{build1}) and (\ref{eqA.1}) which are essentially proved in \cite{bib:Mad13}.

\begin{lemma}
\label{deuxIneq}
For any $N\geq 2$, $z,h\geq 0$, $u\in \{0,\dots,N-1\}$ and for any event $A(u)$
measurable with respect to $\sigma (W_{u+s} - W_u)_{s \in [0,1]}$, 
\begin{align}
\label{eq:class2}
\P\left( \forall j \in \{1, \dots, N\},\, W_j\leq z,\, A(u)\right) &\ll  \frac{1+z}{\sqrt{N}}\sqrt{\P(A(u))},
\\
\label{eq:class3}
\P\left( W_N\in [z,z+h],\, A(u)\right) &\ll \frac{h}{\sqrt{N}}\P(A(u)). 
\end{align}
\end{lemma}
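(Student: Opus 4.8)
\textbf{Proof proposal for Lemma \ref{deuxIneq}.}

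The plan is to reduce both \eqref{eq:class2} and \eqref{eq:class3} to the unconditioned estimates \eqref{Build0}, \eqref{build1} and \eqref{eqA.1} by splitting the trajectory of $W$ at the interval $[u,u+1]$ on which the event $A(u)$ lives, and handling the ``before $u$'' and ``after $u+1$'' portions by the Markov property. First I would observe that we may assume $u \geq 1$ and $u+1 \leq N-1$, since the boundary cases follow from the same argument with one of the two pieces being trivial (and the case $N$ bounded is immediate by adjusting the implicit constant). Write the trajectory as a concatenation of three independent pieces: $(W_j)_{0 \leq j \leq u}$, the bridge-like increment governing $A(u)$ on $[u,u+1]$, and $(W_{u+1+s}-W_{u+1})_{s \geq 0}$. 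The value $W_{u+1}$ is then a sum of the increment up to time $u$, a contribution of size $\Oc(1)$ from the middle interval, and is independent of the last piece.

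For \eqref{eq:class3}, I would first use the Cauchy--Schwarz inequality on the middle interval: condition on $\sigma(W_s; s \leq u+1)$, so that
$$ \P\left( W_N \in [z,z+h],\, A(u) \right) = \E\left[ \mathds{1}_{A(u)} \, \P\left( W_N \in [z,z+h] \mid W_{u+1} \right) \right]. $$
Since $N - (u+1) \geq 1$, the inner conditional probability is $\ll h / \sqrt{N-u-1}$ uniformly in the conditioning, by the second inequality in \eqref{build1} together with the observation that $\sqrt{N-u-1} \gg \sqrt{N}$ is \emph{not} available when $u$ is close to $N$; so instead I would split on whether $N-u-1 \geq N/2$ or not. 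In the first regime the bound $\ll h/\sqrt{N}$ holds directly. In the second regime, $u \geq N/2 - 1$, so the event $A(u)$ concerns times $\geq N/2-1$; here one bounds $\P(W_N \in [z,z+h], A(u)) \leq \P(W_N - W_{u+1} \in [z,z+h] - W_{u+1})$, conditions on $W_{u}$ and on the middle increment, uses \eqref{build1} for the final stretch to get a factor $\ll h$, and then controls nothing further is needed since $1 \ll \sqrt{N}/\sqrt{N}$ — more carefully, one writes the probability as $\E[\mathds{1}_{A(u)} g(W_{u+1})]$ with $\|g\|_\infty \ll h/\sqrt{N-u-1}$, and bounds $\E[\mathds{1}_{A(u)}] \|g\|_\infty$; since we only claim an $h/\sqrt N$ bound and $h/\sqrt{N-u-1}$ can be larger, the clean route is: $N-u-1\ge 1$ gives $\ll h$, and then one still needs a $1/\sqrt N$ factor, which comes from $\P(A(u))\le 1$ being too weak — hence one should in this regime instead keep the constraint $W_j \le z$ is \emph{not} present in \eqref{eq:class3}, so actually the pure statement \eqref{eq:class3} with $A(u)$ near time $N$ forces us to extract $1/\sqrt N$ from $\P(A(u))$ alone, which is false in general. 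I would therefore re-read the hypothesis: $A(u)$ is measurable w.r.t.\ $\sigma(W_{u+s}-W_u)_{s\in[0,1]}$, i.e.\ it depends only on a length-one increment window, and the factor $\P(A(u))$ on the right-hand side of \eqref{eq:class3} (not $\sqrt{\P(A(u))}$) is what saves us: conditioning on $\sigma(W_s;s\le u)$ and on the window, $W_N-W_{u+1}$ is Gaussian of variance $N-u-1$, and since $A(u)\in\sigma(W_{u+s}-W_u)_{s\le1}$ is independent of $(W_s)_{s\le u}$, we get $\P(W_N\in[z,z+h],A(u)) = \P(A(u))\,\P(W_N\in[z,z+h])$ only if $A(u)$ is also independent of $W_N$, which it is not. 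The correct bound uses $\P(W_N\in[z,z+h]\mid \mathcal F_{u+1})\ll h/\sqrt{N-u-1}$ and, when $u$ is close to $N$, the endpoint $z$ is constrained so that the Gaussian density is further small; but for the stated (unconstrained in \eqref{eq:class3}) inequality one simply splits $u\le N/2$ versus $u>N/2$, and in the latter case applies \eqref{eq:class3} at the smaller horizon $u+1$ in place of $N$, i.e.\ proves it by downward induction on $N$. This reduction to smaller $N$ is the mechanism that produces the $1/\sqrt N$; I would set it up as an induction on $N$, with base case $N\le 4$ trivial.

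For \eqref{eq:class2}, the structure is cleaner: on the event $\{\forall j\le N,\ W_j\le z\}$ we have in particular $\{\forall j\le u,\ W_j\le z\}$ and $\{\forall u+1\le j\le N,\ W_j\le z\}$. Condition on $\mathcal F_u=\sigma(W_s;s\le u)$. The first constraint together with $\min_{j\le u}(z-W_j)\ge0$ and \eqref{eqA.1} (reflected, with $x=z$, $a=0$, $b=z$ there reversed) gives $\P(\forall j\le u,\ W_j\le z, \text{ position at }u)$; more simply, Cauchy--Schwarz splitting at the window $[u,u+1]$:
$$ \P\left(\forall j\le N,\ W_j\le z,\ A(u)\right) \le \E\!\left[\mathds{1}_{\{\forall j\le u,\,W_j\le z\}}\,\mathds{1}_{A(u)}\,\mathds{1}_{\{\forall u+1\le j\le N,\,W_j\le z\}}\right]. $$
Apply Cauchy--Schwarz to the middle factor against the product of the two barrier factors — actually the slick move is: $A(u)$ depends only on $(W_{u+s}-W_u)_{s\le1}$, hence is independent of both $\mathcal F_u$ and of the post-$(u+1)$ increments, so
$$ \P\left(\forall j\le N,\ W_j\le z,\ A(u)\right) \le \E\!\left[\mathds{1}_{\{\forall j\le u,\,W_j\le z\}}\,\E\!\left[\mathds{1}_{A(u)}\,\P(\forall u+1\le j\le N,\ W_j\le z\mid \mathcal F_{u+1})\,\middle|\,\mathcal F_u\right]\right], $$
and inside, using independence of $A(u)$ from $W_{u+1}$ given $\mathcal F_u$ is \emph{not} true ($W_{u+1}=W_u+(W_{u+1}-W_u)$ and the window determines $W_{u+1}-W_u$), so one keeps the coupling: bound the post-barrier conditional probability by $\ll (1+(z-W_{u+1})_+)/\sqrt{N-u-1}\ll(1+|W_{u+1}-W_u|)(1+(z-W_u)_+)/\sqrt N$ on the regime $u\le N/2$ (and by induction / the trivial bound on $u>N/2$), pull out $\E[(1+|W_{u+1}-W_u|)\mathds 1_{A(u)}]\le \sqrt{\E[(1+|W_{u+1}-W_u|)^2]}\sqrt{\P(A(u))}\ll\sqrt{\P(A(u))}$ by Cauchy--Schwarz, and finally bound $\E[\mathds{1}_{\{\forall j\le u,W_j\le z\}}(1+(z-W_u)_+)]\ll(1+z)/\sqrt u\cdot$ (a bounded factor) via \eqref{eqA.1}. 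Collecting gives $\ll (1+z)N^{-1/2}\sqrt{\P(A(u))}$. The main obstacle, and the reason an induction on $N$ is needed, is precisely the regime where the window $[u,u+1]$ sits near the right end $N$: there the ``remaining'' Brownian horizon $N-u-1$ is too short to generate the $1/\sqrt N$ directly, and one must instead absorb it into the barrier probability already accumulated on $[0,u]$ — this is the one genuinely delicate point, and it is handled exactly as in \cite[Lemma A.1 etc.]{AShi10} and \cite{bib:Mad13} by the downward induction sketched above.
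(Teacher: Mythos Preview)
Your overall strategy --- split at the window $[u,u+1]$, apply the Markov property, then Cauchy--Schwarz on the window --- is exactly what the paper does, by deferring to Lemma~B.4 in \cite{bib:Mad13}; the paper's own contribution is only to supply the two discrete-time ingredients $\P(W_1,\dots,W_k\le z)\ll(1+z)/\sqrt k$ and $\E[(z-W_k)\mathds{1}_{W_1,\dots,W_k\le z}]\ll 1+z$ needed to transplant Madaule's argument to integer times.

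There is, however, a genuine gap in your handling of the regime $u>N/2$. For \eqref{eq:class3}, the ``downward induction on $N$'' you propose does not work: invoking the statement at horizon $u+1$ would require an event $\{W_{u+1}\in[z',z'+h]\}$, which you do not have, and no induction produces the missing $1/\sqrt N$. The correct move is to condition instead on $\sigma((W_{u+s}-W_u)_{s\ge0})$ --- which contains $A(u)$ --- and observe that $W_u$ is then an independent Gaussian of variance $u>N/2$, so $\{W_N\in[z,z+h]\}$ becomes $\{W_u\in[\cdot,\cdot+h]\}$ with probability $\ll h/\sqrt N$; this gives $(h/\sqrt N)\,\P(A(u))$ directly. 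For \eqref{eq:class2} with $u>N/2$, your ``trivial bound'' should be made explicit: drop the constraints for $j>u$ and use that $A(u)$ is independent of $\mathcal F_u$, so the probability is at most $\P(\forall j\le u,\,W_j\le z)\,\P(A(u))\ll\frac{1+z}{\sqrt u}\,\P(A(u))\le\frac{1+z}{\sqrt N}\sqrt{\P(A(u))}$. Finally, for $u\le N/2$ in \eqref{eq:class2}, your claim $\E[\mathds{1}_{\{\forall j\le u,\,W_j\le z\}}(1+(z-W_u)_+)]\ll(1+z)/\sqrt u$ is wrong: the correct bound is $\ll 1+z$ with no $1/\sqrt u$ --- this is precisely the second of the two ingredients the paper isolates and proves --- and the factor $1/\sqrt N$ then comes entirely from the post-$(u{+}1)$ barrier term $(1+(z-W_{u+1})_+)/\sqrt{N-u-1}$.
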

\begin{proof}
The proof is exactly similar to Lemma B.4 in \cite{bib:Mad13}, for $B = -W$. The only differences are that we take $j \in \{1, \dots, N\}$ instead of $j \in [0,N]$, and that $h$ is not necessarily equal to $1$. From the upper bound in \eqref{build1}, on can immediately deal with general $h \geq 0$ instead of $h = 1$. For discrete time $j \in \{1, \dots, N\}$, the ingredients we need in order to mimic the proof in \cite{bib:Mad13} are the following: 
\begin{equation}\P [W_1, \dots, W_k \leq z] \ll 
\frac{1+z}{\sqrt{k}}, \label{discretemax1}
\end{equation}
\begin{equation}
\E [(z - W_k) \mathds{1}_{W_1, \dots, W_k \leq z} ] \ll 1+z. \label{discretemax2}
\end{equation} 
The equation \eqref{discretemax2} is an immediate consequence of 
Lemma 2.3 in \cite{AShi11}. 
In order to prove \eqref{discretemax1}, we start by the case $z \leq 1$, for which we get, by using \eqref{Build0} and the Markov property at time $1$, 
$$k^{-1/2} \gg \P [W_1, \dots, W_{k+1} \leq 0]
\geq \P [W_1 \leq -1]
\P [W_1, \dots, W_k \leq 1]
\gg \P [W_1, \dots, W_k \leq z].$$
For $z > 1$, $m = \lceil z^2 \rceil$, we use the scaling property in order to deduce
\begin{align*}
\P [W_1, \dots, W_k \leq z]
& \leq \P \left[\forall j 
\in \{1, \dots, \lfloor k/m \rfloor \}, 
W_{mj} \leq \sqrt{m} \right]
\\ & =  \P \left[\forall j 
\in \{1, \dots, \lfloor k/m \rfloor \}, 
W_{j} \leq 1 \right]
 \ll ( \lfloor k/m \rfloor)^{-1/2} \wedge 1
\\ & \ll (  1+ \lfloor k/m \rfloor)^{-1/2}
\leq (k/m)^{-1/2} \leq \frac{(1+z^2)^{1/2}}{\sqrt{k}} \leq \frac{1+z}{\sqrt{k}}.
\end{align*}

\end{proof}
We deduce the following result (strongly related 
to Corollary B.5 in \cite{bib:Mad13}, see also the proof of Lemma A.1 in \cite{AShi10}).
\begin{corollary}
\label{corollary:barrier_estimate}
For any $N \geq 2$, $x\geq 0$, $b\geq a\geq 0,u\in \{0, 1, \dots, N-1\}$,   $y \in \mathbb{R}$  and for any event $A(u)$ measurable with respect to  $\sigma  \left(( W_{u+s} - W_u)_{s\in [0,1]} \right)$,
\begin{align}
\nonumber 
\P\left( \max_{j\leq N/2, j \in \mathbb{N}} W_{j} \leq x ,\,  \max_{N/2 \leq j\leq N, j \in \mathbb{N}} W_{j} \leq x+y ,\, W_N-y-x\in [-b,-a]  , A(u)    \right)\\
\ll N^{-\frac{3}{2}} (1+x)( b-a)(1+b)\sqrt{\P(A(u))} \ .
\end{align}
\end{corollary}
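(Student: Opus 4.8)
The statement is the natural ``two-piece barrier with overshoot window'' analogue of Corollary~\ref{corollary:barrier_estimate}, and I would prove it by reducing to a composition of the estimates in Lemma~\ref{deuxIneq} applied over the two halves $[0,N/2]$ and $[N/2,N]$, following the structure of the proof of Lemma~A.1 in \cite{AShi10} and Corollary~B.5 in \cite{bib:Mad13}. The idea is to condition at the middle time $m:=\lfloor N/2\rfloor$ on the value $W_m=z$, split the event into its restriction to $\{1,\dots,m\}$ and its restriction to $\{m,\dots,N\}$, and integrate over $z$. Since $A(u)$ is measurable with respect to $\sigma((W_{u+s}-W_u)_{s\in[0,1]})$ and $u\in\{0,\dots,N-1\}$, the index $u$ falls into one of the two halves, and $A(u)$ only contaminates that half; the other half contributes a clean barrier estimate.

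\textbf{Key steps.} First I would apply the Markov property at time $m=\lfloor N/2\rfloor$ to write the probability as
$$\int_{\R} \P\Big(\max_{j\le m}W_j\le x,\ W_m\in dz,\ A(u)\mathds{1}_{u<m}\Big)\,
\P\Big(\max_{0\le i\le N-m}(z+\widetilde W_i)\le x+y,\ z+\widetilde W_{N-m}-y-x\in[-b,-a],\ \widetilde A \Big),$$
where $\widetilde W$ is an independent Brownian motion and $\widetilde A$ carries $A(u)$ if $u\ge m$. On the first factor, I bound $A(u)\mathds{1}_{u<m}$ using \eqref{eq:class2}: since $z\le x$ on the event, $\P(\max_{j\le m}W_j\le x,\ W_m\in dz,\ A(u))\ll N^{-1/2}(1+x)\sqrt{\P(A(u))}\,\mathbb{1}_{z\le x}\,dz$ after a standard local-limit refinement (or, more carefully, one keeps the density of $W_m$ and uses that on $\max_{j\le m}W_j\le x$ the variable $x-W_m$ has the size-bias bound \eqref{discretemax2}). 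On the second factor, write $w:=x+y-z\ge 0$ and $w':=w-$(the target), so that the event becomes $\{\max_i (\widetilde W_i)\le w,\ \widetilde W_{N-m}\in w-[a,b],\ \widetilde A\}$; applying the combination of \eqref{eq:class2} (for the barrier) and \eqref{eq:class3} (for the endpoint window), together with the size-bias trick of \cite{AShi10,bib:Mad13} that turns the barrier-plus-endpoint event into a bound $\ll N^{-3/2}(1+w)(b-a)(1+b)\sqrt{\P(\widetilde A)}$. Multiplying the two factors and integrating in $z$ over $[\,\cdot\,,x]$ (the $z$-integral of $(1+x-z)$ against the Gaussian-type weight produces only a bounded constant times $(1+x)$, exactly as in the proof of \eqref{eqA.1}), and noting $\sqrt{\P(A(u))}=\sqrt{\P(\widetilde A)}$ in either allocation, yields $\ll N^{-3/2}(1+x)(b-a)(1+b)\sqrt{\P(A(u))}$, as claimed.

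\textbf{Main obstacle.} The only genuinely delicate point is the bookkeeping of the overshoot window and the $y$-shift simultaneously with the barrier on both halves: one must be careful that the endpoint constraint $W_N-y-x\in[-b,-a]$, after subtracting the middle value $z$, still produces a window of width $b-a$ located at distance $\Oc(1+x)$ (not $\Oc(1+b)$ in the wrong place) from the barrier, so that the $(1+b)$ factor appears exactly once. This is handled exactly as in \cite[Lemma~A.1]{AShi10}: the $(1+b)$ comes from the reversed random walk near time $N$, the $(1+x)$ from the walk near time $0$, and the middle integration glues them with a bounded constant. The second, more routine, nuisance is dealing with discrete time $j\in\mathbb N$ rather than continuous time, but \eqref{discretemax1}, \eqref{discretemax2} and the reasoning already given in the proof of Lemma~\ref{deuxIneq} cover this without change. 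No new idea beyond Lemma~\ref{deuxIneq} and the argument of \cite{AShi10,bib:Mad13} is needed.
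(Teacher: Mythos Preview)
Your two-piece conditioning at $m=\lfloor N/2\rfloor$ is in the right spirit, but it differs from the paper's argument and, as written, leaves a genuine gap.

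The paper does \emph{not} integrate over the midpoint value. It uses a \emph{three}-piece split at $N_1=\lfloor N/3\rfloor$ and $N_2=N-N_1$, together with a time reversal at the right end: since $W_N\ge x+y-b$ on the event, the constraint $\max_{N_2\le j\le N}W_j\le x+y$ is relaxed to $\max_{j\le N_1}(W_{N-j}-W_N)\le b$, a barrier of height $b$ for the reversed Brownian motion $W'_s=W_{N-s}-W_N$, \emph{independent of $y$}. One then conditions on the two outer pieces $(W_s)_{s\le N_1}$ and $(W_{N_2+s}-W_{N_2})_{s\ge0}$; this fixes both barrier events and leaves only the window $W_N\in[x+y-b,x+y-a]$ (plus possibly $A_2$) for the middle increment, handled by the $\sup_w$ in \eqref{eq:class3}. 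The three factors $(1+x)N^{-1/2}$, $(b-a)N^{-1/2}$, $(1+b)N^{-1/2}$ then come out directly from \eqref{eq:class2} and \eqref{eq:class3}, with $A(u)$ assigned to exactly one piece.

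Your outline has two concrete problems. First, the bound you invoke for the second half, $\ll N^{-3/2}(1+w)(b-a)(1+b)\sqrt{\P(\widetilde A)}$, is essentially the $y=0$ case of the very corollary being proved; it does not follow from \eqref{eq:class2}--\eqref{eq:class3} without a further split, so as stated the argument is circular. Second, and more importantly, you write that ``the $z$-integral of $(1+x-z)$ \dots produces $(1+x)$'', but the factor arising from the second half is $(1+w)=(1+x+y-z)$, not $(1+x-z)$. The size-bias identity \eqref{discretemax2} controls $\E[(x-W_m)\mathds{1}_{\max_{j\le m}W_j\le x}]\ll 1+x$, but the leftover contribution $y\cdot\P(\max_{j\le m}W_j\le x)$, of order $y(1+x)m^{-1/2}$, is not bounded uniformly in $y\in\R$. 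This $y$-dependence is exactly what the paper's time-reversal trick eliminates: by turning the second barrier into height $b$ \emph{before} any decomposition, $y$ is absorbed entirely by the $\sup_w$ in the middle step and never appears in the bounds. Your approach can be repaired by inserting that same reversal on the right-hand piece (or by adding a separate Gaussian-tail argument for large $|y|$), but that step is the crux and is missing from the proposal.
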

\begin{proof}
We can assume $N \geq 10$, otherwise the result is an immediate consequence of \eqref{eq:class3}. We define $N_1 = \lfloor N/3 \rfloor$, $N_2 = N - \lfloor N/3 \rfloor = N-N_1$, which implies that 
$N_1, N_2 - N_1, N - N_2$ are larger than $3$ and $N/4$. For $0 \leq u  <N_1$, we denote $A_1 := A(u)$, $A_2$ and $A_3$ being the full event (i.e. of probability $1$), for $N_1 \leq u < N_2$, 
we denote $A_2 := A(u)$, $A_1$ and $A_3$ being the full event, for $N_2 \leq u < N$,
 we denote $A_3 := A(u)$, $A_1$ and $A_2$ being the full event.

Hence, for $r \in \{1,2,3\}$, $A_r$ is measurable with respect to $\sigma\left( ( W_{v+s} - W_v )_{s \in [0,1]} \right)$ for some $v$ integer, 
$0 \leq v < N_1$ for $r = 1$, $N_1 \leq v < N_2$ for $r = 2$, $N_2 \leq v < N$ for $r = 3$.  
 
 The probability we want to estimate is at most: 
$$\P \left(\max_{j \leq N_1, j \in \mathbb{N}} W_j \leq x, A_1, \max_{j \leq  N_1, j \in \mathbb{N}} (W_{N-j} - W_N) \leq b, A_3, W_N \in [x+y - b, x+y - a], A_2 \right).$$
Let us first condition on $(W_s)_{s \leq N_1}$, 
$(W_{N_2 + s}- W_{N_2} )_{s \geq 0}$. This fixes if the first four events occur or not. 
The law of the increments of $W$ between $N_1$ and $N_2$ is not changed by the conditioning, and then the 
conditional probability that the two last events occur is at most: 
$$\sup_{w \in \mathbb{R}} \P [W_{N_2} - W_{N_1} \in [w - b, w-a], A_2] 
\ll \frac{b-a}{\sqrt{N}} \P[A_2]
,$$
the last estimate coming from \eqref{eq:class3},  applied to the Brownian motion 
$(W_{N_1+s} - W_{N_1} )_{s \in [0, N_2-N_1]}$. 

Hence, the probability to be estimated is dominated by 
$$\frac{b-a}{\sqrt{N}} \P[A_2] \, \P \left(\max_{j \leq N_1, j \in \mathbb{N}} W_j \leq x, A_1, \max_{j \leq  N_1, j \in \mathbb{N}} (W_{N-j} - W_N) \leq b, A_3 \right).$$
In the last probability, the two first events depend only on the increments of $W$ in $[0, N_1]$
whereas the two last depend only on the increments on $[N_2,N]$. Hence, be get 
$$\frac{b-a}{\sqrt{N}} \P[A_2] \, \P \left(\max_{j \leq N_1, j \in \mathbb{N}} W_j \leq x,   A_1\right) 
\P \left( \max_{j \leq  N_1, j \in \mathbb{N}} (W_{N-j} - W_N) \leq b, A_3 \right).$$
Using \eqref{eq:class2}, applied both for 
$(W_s)_{s \in [0, N_1]}$ and the Brownian motion 
$(W'_s := W_{N-s} -  W_N)_{s \in [0, N_1]}$ 
(note that $A_3$ is measurable 
with respect to $(W'_{v' + s}- W'_{v'})_{s \in [0,1]}$ for some $v' \in \{0, \dots, N_1 -1\}$), we get the bound
$$\frac{b-a}{\sqrt{N}} \P[A_2]  \,
 \frac{1+x}{\sqrt{N}} \sqrt{\P[A_1]}  \,
  \frac{1+b}{\sqrt{N}} \sqrt{\P[A_3]},$$
  which proves the corollary. 
\end{proof}

In the following we shall state and prove several results which are slight extensions of the previous ones.
\subsection{Estimates with a time error and a positive and curved barrier}
\begin{proposition}
\label{corPEMANt}
Let $f: \R_+ \rightarrow \R_+$ be any increasing function such that $f(z) \ll z^{\frac{1}{10}}$,  in particular $f(0) = 0$. 	Then for  $N\geq 1$, $z>0$
\begin{align}
\label{corPemantl}
\P\left( \forall j\leq N,\,  W_j\leq f(j)+z\right) \ll_f \frac{1+z}{\sqrt{N}}
\end{align}
\end{proposition}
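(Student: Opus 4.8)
The plan is to reduce the curved-barrier estimate to the flat-barrier estimate \eqref{discretemax1} from Lemma~\ref{deuxIneq} by exploiting the sublinear growth $f(j) \ll j^{1/10}$ together with a dyadic decomposition of the time interval $\{1,\dots,N\}$. The key observation is that on each dyadic block $\{2^{k},\dots,2^{k+1}\}$ the function $f$ is essentially constant on the scale $\sqrt{2^{k}}$ of the Brownian fluctuations, since $f(2^{k+1}) - f(2^{k}) \ll 2^{(k+1)/10} = o(\sqrt{2^{k}})$; so replacing the barrier $f(j)+z$ by the constant barrier $f(2^{k})+z$ on the $k$-th block costs only a bounded multiplicative error once we sum up.

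First I would dispose of small $N$ (say $N \le N_0$ for a constant depending on $f$), where the bound is trivial by \eqref{discretemax1} after absorbing $\sup_{j\le N_0} f(j)$ into $z$. For $N$ large, write $N' = \lfloor \log_2 N \rfloor$ and decompose the event $\{\forall j\le N,\ W_j \le f(j)+z\}$ according to the value of the running maximum on successive dyadic scales. Concretely, the event is contained in $\bigcap_{k=0}^{N'} \{ \forall\, 2^{k}\le j < 2^{k+1},\ W_j \le f(2^{k+1}) + z \}$, and by the Markov property applied at the times $2^{k}$ this chains together into a product-type bound. The cleanest route, however, is the one used for Corollary~\ref{corollary:barrier_estimate}: condition on $(W_s)_{s \le N/2}$, use \eqref{discretemax1} on $[0,N/2]$ with the barrier $f(N/2)+z$ (valid since $f$ is increasing) to get a factor $\ll (1 + f(N/2) + z)/\sqrt{N}$, and on the second half $[N/2, N]$ use that $f(j) \le f(N) \ll N^{1/10}$, so the additional barrier constraint contributes at most another factor $\ll (1 + N^{1/10})/\sqrt{N} \cdot (\text{something bounded})$. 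This alone is not quite enough because of the $f(N/2) \ll N^{1/10}$ term polluting the $(1+z)$ factor; so instead I would iterate the halving: apply the dyadic splitting recursively, at step $k$ picking up a factor $\ll (1 + f(2^{N'-k}) + z)/2^{(N'-k)/2} \cdot \sqrt{c_0}$-type constant from \eqref{eq:class2}, and multiply. The product of the $2^{-(N'-k)/2}$ telescopes to $N^{-1/2}$ (up to a constant), while $\prod_k (1 + f(2^{N'-k}) + z) \big/ (\text{normalization})$ must be shown to be $\ll_f (1+z)$.

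The main obstacle — and the only genuinely delicate point — is controlling that infinite-looking product of barrier heights so that it collapses to $O_f(1+z)$ rather than growing. This is where $f(z) \ll z^{1/10}$ is essential: the contribution of the $k$-th block's barrier height, once correctly normalized against the Gaussian scale $\sqrt{2^{k}}$ of that block, is $1 + O(2^{k/10}/2^{k/2}) = 1 + O(2^{-2k/5})$, and $\prod_k (1 + O(2^{-2k/5}))$ converges. The $z$-dependence factors out linearly because $z$ appears additively in every block's barrier and only the first (coarsest) block "sees" it at scale $\sqrt{N}$; the finer blocks contribute the convergent product. I would phrase this carefully as: $\P(\forall j \le N, W_j \le f(j)+z) \le \P(\forall j \le N, W_j \le z + f(N)) \cdot$ (correction), but since that naive bound gives $(1 + z + N^{1/10})/\sqrt{N}$ which is weaker than claimed, the recursive/dyadic argument is genuinely needed — mimicking the proof of Lemma~B.4 and Corollary~B.5 in \cite{bib:Mad13}, which is presumably exactly what the authors do. A clean alternative I would try first: change variables $j \mapsto j$ but compare with a Brownian motion on continuous time, use the fact that $\{W_t \le f(t) + z\ \forall t \le N\}$ for the curved barrier $f$ follows from the flat-barrier probability times $\exp(c\int_0^N (f'(t))^2\,dt)$-type Girsanov/Cameron–Martin cost — but here $\int_0^N (f'(t))^2 dt \ll \int_0^N t^{-9/5} dt = O(1)$, so the Girsanov cost is a bounded constant, immediately reducing to \eqref{discretemax1} with $z$ replaced by $z + f(N) - (\text{drift adjustment})$; care is needed to keep the $(1+z)$ and not $(1 + z + N^{1/10})$, which again forces one to subtract the drift $t \mapsto f(t)$ from $W$ and note $f(N) = o(\sqrt N)$ gets absorbed. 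Either way, the content is: sublinear-with-summable-square-derivative barriers are "invisible" at the level of the Ballot-type estimate, up to constants depending on $f$.
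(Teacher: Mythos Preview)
Your proposal sketches two routes (dyadic iteration and Girsanov), but neither is carried through, and each has a concrete gap.

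For the Girsanov alternative you write ``$\int_0^N (f'(t))^2\,dt \ll \int_0^N t^{-9/5}\,dt = O(1)$''. This is false: $\int_0^1 t^{-9/5}\,dt$ diverges (the exponent $9/5$ exceeds $1$), so the Cameron--Martin cost blows up near $0$. One could try to salvage this by starting the change of measure at $t=1$, but the hypothesis only says $f$ is increasing with $f(z)\ll z^{1/10}$ --- there is no differentiability, so Girsanov is not directly available anyway.

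For the dyadic iteration, the issue is that the Markov property does \emph{not} produce a product of block probabilities: conditioning at time $2^k$ leaves you with the value $W_{2^k}$, and the next block's barrier probability depends on it. What you would actually need to propagate is the quantity $\E\bigl[(f(2^k)+z-W_{2^k})\,\mathds{1}_{\{\forall j\le 2^k,\,W_j\le f(j)+z\}}\bigr]$, via a supermartingale-type argument; the convergent product $\prod_k(1+O(2^{-2k/5}))$ you invoke does not arise from the argument as you have written it. Also, your claim that ``the product of the $2^{-(N'-k)/2}$ telescopes to $N^{-1/2}$'' is off: $\prod_{k=0}^{N'} 2^{-(N'-k)/2} = 2^{-N'(N'+1)/4}$, which is far smaller.

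The paper takes a different and cleaner route: decompose according to the (integer) time $k$ at which the walk $(W_j)_{j\le N}$ attains its maximum. On $\{k=\mathrm{argmax}\}$, the constraint $\forall j\le N,\,W_j\le f(j)+z$ forces $W_k\le f(k)+z$; by time reversal on $[0,k]$ this is $\P(\min_{j\le k}W_j\ge 0,\,W_k\le f(k)+z)$, and the post-$k$ path stays below $W_k$, contributing $\P(\forall j\le N-k,\,W_j\le 0)\ll (N-k+1)^{-1/2}$ by \eqref{Build0}. One then sums over $k$, splitting into $k\le (1+z)^2$ (handled by the renewal function $R(u)\ll 1+u$), $(1+z)^2<k\le N/2$, and $N/2<k\le N$ (both handled by \eqref{eqA.1}). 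The growth condition $f(k)\ll k^{1/10}$ makes the sums $\sum_k (1+f(k)+z)^2 k^{-3/2}$ converge to $O_f(1+z)$. No iteration or change of measure is needed.
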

\begin{proof}
Let us decompose the probability in \eqref{corPemantl}, depending on the time $k$ when the random walk $(W_j)_{j\leq N}$ reaches its maximum, and then use the Markov property at this time:
\begin{align*}
  & \P\left( \forall j\leq N,\,  W_j\leq f(j)+z\right)\\
= & \sum_{k=0}^{N} \P\left( \forall j\leq N,\,  W_j\leq f(j)+z,\, W_k= \max_{j\in [|0,N|]} W_j \right)\\
\leq  & \sum_{k=0}^{N} \P\left( \forall j\leq k,\,  W_j\leq f(j)+z,\, W_k = \max_{j\in [|0,k|]}W_j \right) \P\left(\forall j\leq N-k,\,  W_j \leq 0   \right) \\
\leq & \sum_{k=0}^{N} \P\left( W_k\leq f(k)+z,\, W_k = \max_{j\in [|0,k|]}W_j \right) \P\left(\forall j\leq N-k,\,  W_j \leq 0   \right) \ .
\end{align*}
By applying \eqref{Build0}, and the time reversal property of the random walk $(W_j)_{j\leq k}$, one gets
\begin{align*}
\P\left( \forall j\leq N,\,  W_j\leq f(j)+z\right) &\ll \sum_{k=0}^N (N-k+1)^{-1/2} \P\left(  \min_{j\in [|0,k|]}W_j \geq 0,\, W_k \leq f(k) +z \right)  .
\end{align*}
Given the bound we want to prove, there is no loss of generality in assuming $(1+z)^2 \leq \half N$. Moreover by \eqref{eqA.1}, we deduce that
\begin{align*}
    & \P\left( \forall j\leq N,\,  W_j\leq f(j)+z\right)\\
\ll &  N^{-\half}\sum_{1 \leq k \leq (1+z)^2}  \P\left( \max_{j\in [|0,k|]}W_j= W_k  \leq f(k) + z \right) 
          + \sum_{(1+z)^2< k \leq N/2}\frac{(1+ f(k)+z)^2}{k^{\frac{3}{2}}(N-k +1)^{\frac{1}{2}}  }\\
    & \quad \quad 
          + \sum_{N/2 < k \leq N}       \frac{(1+ f(k)+z)^2}{k^{\frac{3}{2}}(N-k +1)^{\frac{1}{2}}  }\\
\ll & N^{-\half} R\left( f((1+z)^2)+z \right) 
      + N^{-\half} \sum_{(1+z)^2< k \leq N/2} \frac{(1+z)^2 + f(k)^2}{k^{\frac{3}{2}}} \\
    & \quad \quad 
      + N^{-\frac{3}{2}} \sum_{N/2 < k \leq N} \frac{(1+z)^2 + f(k)^2}{(N-k +1)^{\frac{1}{2}} } \ .
\end{align*}
Here on the last line, we used $(a+b)^2 \leq 2(a^2 + b^2)$ and $R(u):= \sum_{k=1}^{+\infty}  \P\left(     \max_{j\in [|0,k|]}W_j= W_k  \leq u\right)$ is the renewal function associated to the random walk $(W_j)_{j \geq 0}$. It is well-known that $R(u) \ll (1+u) $.

 As $f(z) \ll z^{\frac{1}{10}}$, we have that
\begin{align*}
    & \P\left( \forall j\leq N,\,  W_j\leq f(j)+z\right)\\
\ll_f & N^{-\half} (1+z)
      + N^{-\half} \left( (1+z)^2 \sum_{k > (1+z)^2} k^{-\frac{3}{2}}
                        + \sum_{k > (1+z)^2} k^{\frac{2}{10}-\frac{3}{2}} \right)      
      + N^{-1}\left( (1+z)^2 + N^\frac{2}{10}\right)\\
\ll & \frac{(1+z)}{N^{\frac{1}{2}}} \ ,
\end{align*}
which concludes the proof of  \eqref{corPemantl}.
\end{proof}

Now, consider a real sequence $(e_j)_{j\geq 1}$ and $E_j:= \sum_{k=1}^j e_k$ the associated time error. In all the following, we assume that  $j+E_j$ is positive for all $j$ and increasing with respect to $j$ (this condition is always satisfied in the paper). Basically we will extend the previous lemma to the process $(W_{j+E_j})_{j \geq 0}$ when $||E||_1:= \sum_{j\geq 1}|e_j|$ is finite. 

\begin{lemma}
\label{lemma:shifted_barrier_estimate}
Assume that $||E||_1:=  \sum_{k=1}^{+\infty} |e_j| < \infty$. Let $f(j)$ be any increasing positive sequence such that $f(j) \ll j^{\frac{1}{10}}$. For any  $N\geq 1, a\geq 0$, $u \in \{0, 1, \dots, N-1\}$  and for any event $A(u)$ measurable with respect to 
$\sigma \left( (W_{u+E_u+ s} - W_{u+E_u})_{s \in [0,1+ e_{u+1}]} \right)$, 
\begin{align}
\label{class6bis}
\P\left( \min_{1 \leq j\leq N  }( W_{j+E_j} +f(j))\geq -a  \right)&  \ll_{f, ||E||_1}   \frac{1+a}{\sqrt{N}}  ,
 \\
\label{class6}
\P\left( \min_{1 \leq j\leq N  } W_{j+E_j} \geq -a  , \, A(u)\right)&  \ll_{ ||E||_1}   \frac{1+a}{\sqrt{N}}  \sqrt{\P(A(u))}  .
\end{align}
Moreover, it is possible to take implicit constants which are nondecreasing with respect to $||E||_1$ (for 
fixed $f$ in the first estimate): in other 
words, for $c > 0$, if we assume $||E||_1 \leq c$, the we can replace the dependence in $||E||_1$ by a dependence in $c$. 
\end{lemma}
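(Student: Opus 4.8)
The statement to prove is Lemma~\ref{lemma:shifted_barrier_estimate}, which extends Proposition~\ref{corPEMANt} and Lemma~\ref{deuxIneq} from the clock $(j)_{j\geq 0}$ to the perturbed clock $(j+E_j)_{j\geq 0}$ with $\|E\|_1<\infty$. The governing principle is that a bounded $\ell^1$ time-perturbation only distorts the relevant barrier/ballot probabilities by a multiplicative constant, so one should \emph{reduce to the unperturbed case} rather than redo the Ballot-theorem analysis.

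\textbf{Step 1: sandwich the perturbed clock between two honest clocks.} Since $\|E\|_1 = \sum_k |e_k| =: c < \infty$, we have $|E_j| \leq c$ for all $j$, hence $j - c \leq j + E_j \leq j + c$. Introduce $m := \lceil c \rceil$ and consider the integer times $j \pm m$ (clipped to be nonnegative). The idea is to bound $W_{j+E_j}$ above and below using the maximum oscillation of $W$ over an interval of length $2m+1$. More precisely, write $W_{j+E_j} = W_j + (W_{j+E_j} - W_j)$, and control $\sup_{0\leq j\leq N}|W_{j+E_j}-W_j|$ by a random variable $\Xi$ which is a sum (or supremum) of increments of $W$ over finitely many intervals of bounded length; crucially $\Xi$ has Gaussian tails with a constant depending only on $c$, and is \emph{independent of the large-scale behaviour of $W$} in the sense that $\E[e^{\lambda\Xi}]\ll_c 1$ for $\lambda$ in a fixed neighbourhood of $0$. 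Then
\begin{align*}
\P\left( \min_{1\leq j\leq N} W_{j+E_j} \geq -a \right)
\leq \P\left( \min_{1\leq j\leq N} W_{j} \geq -a - \Xi \right),
\end{align*}
and one decomposes over dyadic blocks $\Xi \in [\ell,\ell+1)$, applying the unperturbed estimate \eqref{eqA.1} (or \eqref{class2}) with $a$ replaced by $a+\ell+1$, and summing $\sum_\ell (1+a+\ell)\,\P(\Xi\in[\ell,\ell+1)) \ll_c (1+a)$ using the Gaussian tail of $\Xi$. This yields \eqref{class6bis} (taking $f\equiv 0$ first, then absorbing $f(j)\ll j^{1/10}$ exactly as in the proof of Proposition~\ref{corPEMANt}, since $f$ is increasing and sublinear so $f(j)\leq f(j+E_j)+O_f(1)$ up to the same kind of constant) and, with the extra event $A(u)$ carried through via the Cauchy--Schwarz step of Lemma~\ref{deuxIneq}, also \eqref{class6}. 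The measurability hypothesis on $A(u)$ with respect to $\sigma((W_{u+E_u+s}-W_{u+E_u})_{s\in[0,1+e_{u+1}]})$ is exactly what is needed so that, after the sandwiching, $A(u)$ becomes an event measurable with respect to the increments of $W$ over an interval of the form $[v,v+1]$ (or slightly larger, of length $1+|e_{u+1}|\leq 1+c$), which is the form required by \eqref{class2}; one may need to enlarge the reference interval to $[\lfloor u+E_u\rfloor, \lceil u+E_u+1+e_{u+1}\rceil]$ and split $\{1,\dots,N\}$ around it, exactly as in the proof of Corollary~\ref{corollary:barrier_estimate}.

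\textbf{Step 2: monotonicity of the implicit constant in $\|E\|_1$.} For the final assertion, one observes that every estimate above was obtained by comparing with the unperturbed walk after absorbing an error governed by $\Xi = \Xi(E)$, and that the only place $E$ enters quantitatively is through (i) the bound $|E_j|\leq\|E\|_1$ and (ii) the moment bound $\E[e^{\lambda\Xi}]$, both of which are monotone nondecreasing in $\|E\|_1$: a walk with larger $\ell^1$-perturbation can always be made to dominate one with smaller perturbation by adding independent increments. Hence one may replace the constant $C(\|E\|_1)$ by $C(c)$ whenever $\|E\|_1\leq c$; concretely, one runs the argument with $\|E\|_1$ replaced by its upper bound $c$ throughout, which only weakens the inequalities, so the resulting constant depends on $c$ (and on $f$) and is nondecreasing in $c$.

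\textbf{Main obstacle.} The delicate point is the bookkeeping around the auxiliary event $A(u)$: after the time-shift sandwiching, $A(u)$ no longer sits over a clean unit interval of the Brownian motion, and one must verify that it still falls under the hypotheses of \eqref{class2} (an event over an interval of length $O_c(1)$ starting at an integer-like time), possibly after enlarging the block decomposition $\{1,\dots,N\} = \{1,\dots,N_1\}\cup\{N_1,\dots,N_2\}\cup\{N_2,\dots,N\}$ as in Corollary~\ref{corollary:barrier_estimate} so that the index $u+E_u$ lands strictly inside one block with room to spare on both sides (using $N\geq 10$, say). Handling the boundary cases where $u$ is close to $0$ or to $N$, and where the perturbed time $u+E_u$ could straddle a block boundary, is the part that requires care; everything else is a routine reduction to the already-established unperturbed ballot estimates \eqref{Build0}, \eqref{build1}, \eqref{eqA.1}, \eqref{class2}, \eqref{class3} and Proposition~\ref{corPEMANt}.
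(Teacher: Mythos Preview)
Your reduction strategy is on the right track, but the central object $\Xi := \sup_{0\leq j\leq N}|W_{j+E_j}-W_j|$ does \emph{not} have the properties you claim. Since $|E_j|\leq \|E\|_1=:c$ for \emph{every} $j$ (not just finitely many), each $W_{j+E_j}-W_j$ is Gaussian with variance up to $c$, and the supremum over $N$ such variables is typically of order $\sqrt{c\log N}$, not $O_c(1)$. Equally problematic, $\Xi$ is built from the \emph{same} Brownian path as $(W_j)_j$ and is certainly not independent of it, so the factorisation $\P(\min_j W_j\geq -a-\ell-1,\,\Xi\in[\ell,\ell+1))\leq \P(\min_j W_j\geq -a-\ell-1)\,\P(\Xi\in[\ell,\ell+1))$ that underlies your summation is unjustified. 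Both defects together make the argument collapse: plugging $a+\sqrt{\log N}$ into the unperturbed ballot bound gives $(1+a+\sqrt{\log N})/\sqrt N$, which is the wrong order.

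The paper repairs this with a distributional coupling rather than a pathwise sandwich. When $e_j\geq 0$ for all $j$ (so $E_j$ is nondecreasing), one checks by computing covariances that
\[
(W_{j+E_j})_{1\leq j\leq N}\ \eqlaw\ (W_j+\widetilde W_{E_j})_{1\leq j\leq N}
\]
for an \emph{independent} Brownian motion $\widetilde W$. Now $\sup_j|\widetilde W_{E_j}|\leq \sup_{s\in[0,\|E\|_1]}|\widetilde W_s|$ is a single random variable, independent of $(W_j)_j$, with Gaussian tails depending only on $\|E\|_1$; with this $S$ in place of your $\Xi$ the decomposition you sketched goes through verbatim and yields \eqref{class6bis} via Proposition~\ref{corPEMANt}. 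The general-sign case is then reduced to the nonnegative one by observing that $(W_{j+E_j+|E|_j})_j\eqlaw (W_{j+E_j}+\widetilde W_{|E|_j})_j$ with $|E|_j=\sum_{k\leq j}|e_k|$, since $e_j+|e_j|\geq 0$. For \eqref{class6}, the paper does not carry $A(u)$ through a sandwiching at all; it applies the Markov property at time $u$ (if $u\geq N/2$) or at time $u+1$ (if $u\leq N/2$), uses \eqref{class6bis} on the remaining piece, and closes with the overshoot estimate $\E\big[(1+a+W_{u+E_u})\mathds 1_{\{\min_{j\leq u}W_{j+E_j}\geq -a\}}\big]\ll_{\|E\|_1} 1+a$, proven by a stopping-time argument. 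Your treatment of $A(u)$ is too vague to evaluate, but in any case the sandwiching-then-Cauchy--Schwarz route would inherit the same $\sqrt{\log N}$ defect.
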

\begin{proof}
In all this proof, each time we write the symbol $\ll_{||E||_1}$ or $\ll_{f,||E||_1}$, we assume an implicit
constant nondecreasing in $||E||_1$. 
Without loss of generality, we can suppose that  $N \geq 10$. We start by the proof of \eqref{class6bis}. Suppose that $(e_j)_{j\geq 0}\in \R_+^{\N}$. 
We then check (for example by computing the covariances) that if $(\widetilde{W}_s)_{s \geq 0}$ is a Brownian motion, independent of $W$, then $(W_{j+E_j})_{1 \leq j \leq N}$ has the same law as $(W_j + \widetilde{W}_{E_j})_{1 \leq j \leq N}$: note that this identity in law depends on the fact that $E_j$ increases with $j$. 

Let $S$ be the supremum of $\widetilde{W}$
on the interval $[0, ||E||_1]$. 
For any $\delta >0$, upon partitioning our event with respect to the disjoint union $\bigsqcup_{k \geq 0} \left\{ \delta k \leq S < \delta (k+1) \right\}$, 
\begin{align*}
           & \P\left( \min_{1 \leq j\leq N} (W_{j+E_j} +f(j)) \geq -a    \right) \\
\leq \quad & \P\left( \min_{1 \leq j\leq N}( W_{j} + \widetilde{W}_{E_j} +f(j))\geq  - a \right) \\
\leq \quad &  \P\left( \min_{1 \leq j\leq N}( W_{j} + S +f(j))\geq  - a \right) \\ 
\leq \quad & \sum_{k\geq 0} \P\left( \min_{1 \leq j\leq N}( W_{j} + f(j))\geq -\delta (k+1) - a  \right) \P\left( S \geq \delta k\right)\\
\stackrel{Eq. \ \ref{corPemantl} }{\ll_f} & \frac{1}{\sqrt{N}} \sum_{k\geq 0} (1+\delta (k+1)+a) e^{-\frac{(\delta k)^2}{2 ||E||_1}} \ .
\end{align*}	
By taking $\delta = \sqrt{||E||_1} $, we get the desired result. Now consider the general case $(e_j)_{j\geq 0} \in \R^{\N}$. Let $\widetilde{W}$ be a standard Brownian motion independent of $W$ and let $|E|_j:= \sum_{k=1}^{j} |e_k|$. The process $(W_{j+ E_j+ |E|_j})_{j\geq 0}$ has the same law as $\left( W_{j+E_j} + \widetilde{W}_{|E|_j} \right)_{j \in \N}$ and $e_j+|e_j|\geq 0$, for all $j$. Moreover, by the study of the case $e_j\in \R_+$, one has
\begin{align*}
  \frac{1}{\sqrt{N}}(1+a) \gg_{f,||E||_1}  & \P\left( \min_{1\leq j\leq N} \left( W_{j+E_j} +f(j)+ \widetilde{W}_{|E|_j}  \right) \geq  -a -(||E||_1)^{1/2} \right)\\
\geq & \P\left( \min_{1\leq j\leq N} \left( W_{j+E_j} +f(j)+ \widetilde{W}_{|E|_j} \right) \geq  -a -(||E||_1)^{1/2}, \ 
                \sup_{j\geq 0} |\widetilde{W}_{|E|_j}| \leq (||E||_1)^{1/2}
        \right)\\
\geq & \P\left( \min_{1\leq j\leq N}( W_{j+E_j} +f(j)) \geq -a \right) \P\left( \sup_{j\geq 1} |\widetilde{W}_{|E|_j}| \leq (||E||_1)^{1/2}  \right),
\end{align*}
which leads to
\begin{align}
\label{eq:hortie}
\P\left( \forall 1\leq j\leq N, \ (W_{j+E_j} + f(j) ) \geq -a  \right) & \ll_{f,||E||_1} \frac{1}{\sqrt{N}}(1+a).
\end{align}

Let us tackle the proof of (\ref{class6}). If $u\geq \frac{N}{2}$, by the Markov property at time $u$, we have:
\begin{align*}
     \P\left( \min_{1 \leq j\leq N}  W_{j+E_j}\geq -a , A(u)\right) 
\leq \P\left( \forall 1 \leq j\leq u,\,  W_{j+E_j}\geq -a \right) \P(A(u))
\ll_{||E||_1} \frac{1+a}{\sqrt{N}} \sqrt{\P(A(u))}.
\end{align*} 
If $u\leq \frac{N}{2}$, by the Markov property at time $u+1$ and by applying  Eq. \eqref{eq:hortie} to the Brownian motion 
$(W'_s = W_{u+1+E_{u+1} + s} - W_{u+1+ E_{u+1}})_{s \in [0, N-u-1+ E_N - E_{u+1}]}$ and the sequence $(e_{u+1+j})_{j \geq 1}$,
 we get
\begin{align*}
& \P\left( \min_{j\leq N}W_{j+E_j}\geq -a , A(u)\right)\\
& \leq  \E\left( \mathds{1}_{\{   \underset{{j\leq u+1}}{\min} W_{j+E_j}\geq -a  \ ; \ A(u)\}} 
                 \P\left(\min_{1 \leq j\leq N -u-1}  W'_{j+E_{u+1+j}-E_{u+1}}\geq -a-z \right)_{\big| z= W_{u+1+E_{u+1}}}  \right)\\
& \ll_{||E||_1} \frac{1}{\sqrt{N}}  \E\left( (1 + a+ W_{u+1+E_{u+1}})  \mathds{1}_{\{\forall  j\leq u+1,\,  W_{j+E_j}\geq -a   \}}  \mathds{1}_{A(u)}  \right) .
\end{align*}
By noticing that $W_{u+1+ E_{u+1}}\leq W_{u+E_u} + \max_{s\leq 1+ e_{u+1}}|W_{u+s+E_{u}} -W_{u+E_u}|$, we deduce that 
\begin{align*}
      \P\left( \min_{j\leq N   } W_{j+E_j}\geq -a , A(u)\right)
& \ll_{||E||_1} \frac{1}{\sqrt{N}}  \E\left( (1+a+ W_{u+E_{u}}) \mathds{1}_{\{  \min_{j\leq u  } W_{j+E_j}\geq -a \}}  \right) \P(A(u))\\
& \qquad \qquad +  \frac{1}{\sqrt{N}}  \E\left( \max_{s\leq 1+ e_{u+1}}|W_{u+s+E_{u}} -W_{u+E_u}| \mathds{1}_{A(u)}  \right)\\
& \ll_{||E||_1} \frac{1+a}{\sqrt{N}}\sqrt{\P(A(u))},
\end{align*}
where we have used the Cauchy-Schwarz inequality and the fact that $1+e_{u+1} \leq 1+ ||E||_1$ to bound the second term. For the first term, it is sufficient to show: 
\begin{equation}
\label{discretemaximum}
\E\left( (1+a+ W_{u+E_{u}}) \mathds{1}_{\{  \min_{j\leq u  } W_{j+E_j}\geq -a \}}  \right)
\ll_{||E||_1} 1+a, 
\end{equation}
which is proven as follows. 
If $T_a$ denotes the smallest integer $j$ such that 
$W_{j+E_j} < -a$ (necessarily $T_a \geq 1$), we get, by the martingale property:  
\begin{align*}
& \E [(1+ a+ W_{u+E_u}) \mathds{1}_{T_a \leq u} ] 
  = \E [(1+ a+W_{T_a  + E_{T_a}})  \mathds{1}_{T_a \leq u} ]
 \\ & = \E [(1+ a + W_{T_a-1  + E_{T_a-1}})  \mathds{1}_{T_a \leq u} ] +  \E [(W_{T_a + E_{T_a}} - W_{T_a-1 + E_{T_a - 1}} ) \mathds{1}_{T_a\leq u} ].
 \end{align*}
 In the last expression, the first term is nonnegative, and the second is nonpositive. Hence 
 $$ \E [(1+a+ W_{u+E_u}) \mathds{1}_{T_a \leq u} ] \geq  \E  [W_{T_a + E_{T_a}} - W_{T_a -1+ E_{T_a-1}}].$$ 
whereas 
$$\E [1+a+ W_{u+E_u}] = 1+a,$$
which gives \eqref{discretemaximum} by taking the difference, provided that  
$\E [W_{T_a-1 + E_{T_a-1}} - W_{T_a + E_{T_a}}] \ll_{||E||_1} 1+a$.
Now, for $A > 0$, 
\begin{align*}
\P [W_{T_a-1 + E_{T_a-1}}&  - W_{T_a + E_{T_a}} \geq A] 
 \leq \sum_{p = 0}^{\infty} 
\P \left[\sup_{j \in \{0, 1, \dots, 2^{p+1}-1\}}
W_{j+E_j} - W_{j+1+E_{j+1}} \geq A, T_a \geq 2^{p} \right] 
\\ & \leq \sum_{p = 0}^{\infty}
\left(\P \left[\sup_{j \in \{0, 1, \dots, 2^{p+1}-1\}}
W_{j+E_j} - W_{j+1+E_{j+1}} \geq A \right] \right)^{1/2} 
\left(\P [T_a \geq 2^p] \right)^{1/2}
\\ &\stackrel{Eq. \ \ref{class6bis} }{\ll_{||E_||_1}} \sum_{p=0}^{\infty} 
\left(1  \wedge 2^{p+1} e^{-\frac{A^2}{2(1+||E||_1)}} \right)^{1/2} ( (1+a)  2^{-p/2} )^{1/2}
\\ & \ll_{||E||_1} \sum_{0 \leq p \leq A^2/(2 (1+||E||_1)\log 2)}
 e^{-\frac{A^2}{4(1+||E||_1)}} (1+a)^{1/2} 2^{p/4} 
\\ & + \sum_{p  \geq A^2/(2(1+||E||_1) \log 2)} (1+a)^{1/2} 
2^{-p/4} 
\\ & \ll_{||E||_1} e^{-\frac{A^2}{8(1+||E||_1)} } (1+a)^{1/2},
\end{align*}
which gives the desired bound.

\end{proof}

By the same arguments used to deduce Corollary \ref{corollary:barrier_estimate} from Equations (\ref{eq:class2}) and (\ref{eq:class3}), one can deduce from Lemma \ref{lemma:shifted_barrier_estimate}:
\begin{corollary}
\label{corollary:shifted_barrier_estimate}
Let $f(j)$ be any increasing positive sequence such that $f(j) \ll j^{\frac{1}{10}}$ and then $f(0)=0$. Assume that $||E||_1 < \infty$.

\begin{enumerate}

\item For any $N \geq 100(1+||E||_1)$, $x\geq 0$, $b\geq a\geq 0,  \lambda \in (0.1,0.9), y\in \R$,
\begin{align}
\nonumber 
\P\left( \max_{1 \leq j\leq \lambda N} ( W_{j+E_j} -f(j))\leq x,\, \right.  & \left. \max_{\lambda N\leq j\leq N} (W_{j+E_j} -f(N-j))\leq x+y ,\, W_{N+E_N} \in x + y - [a,b]     \right)\\
\label{class7bis}
& \ll_{f,||E||_1} N^{-\frac{3}{2}} (1+x)( b-a)(1+b).
\end{align}

\item For any $N \geq 100 (1+||E||_1)$, $x\geq 0$, $b\geq a\geq 0,
u \in \{0, \dots, N-1\}$, $\lambda \in (0.1,0.9),  y\in \R$  and for any event $A(u)$ measurable with respect to $\sigma \left( 
(W_{u+E_u + s} - W_{u+E_u})_{s \in [0,1+e_{u+1}]} \right)$,
\begin{align}
\nonumber 
\P\left( \max_{1 \leq j\leq \lambda N}  W_{j+E_j} \leq x,\,  \right. & \left. \max_{\lambda N\leq j\leq N} W_{j+E_j}  \leq x+y ,\, W_{N+E_N}-y-x\in [-b,-a]  , A(u)   \right)\\
\label{class7}
& \ll_{||E||_1} N^{-\frac{3}{2}} (1+x)( b-a)(1+b)\sqrt{\P(A(u))} \ .
\end{align}
\end{enumerate}
Again, we can take implicit constants which are nondecreasing in $||E||_1$. 
\end{corollary}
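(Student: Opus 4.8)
The plan is to transcribe, almost word for word, the three–block argument by which Corollary~\ref{corollary:barrier_estimate} was deduced from \eqref{eq:class2} and \eqref{eq:class3}, replacing everywhere the flat walk $(W_j)_j$ by the time–shifted walk $(W_{j+E_j})_j$, replacing \eqref{eq:class2} by the shifted barrier estimates \eqref{class6bis}, \eqref{class6} of Lemma~\ref{lemma:shifted_barrier_estimate}, and allowing a curved barrier $f$ in the places where it occurs. First I would fix the data and set $N_1:=\lceil \lambda N/2\rceil$, $N_2:=N-\lceil (1-\lambda)N/2\rceil$; since $\lambda\in(0.1,0.9)$ and $N\geq 100(1+\|E\|_1)$, the three blocks $[1,N_1]$, $[N_1,N_2]$, $[N_2,N]$ each have length $\gg N$, so after the bounded perturbation by $E$ (whose total variation is $\le N/100$) their "time durations" $N_1+E_{N_1}$, $(N_2-N_1)+(E_{N_2}-E_{N_1})$, $(N-N_2)+(E_N-E_{N_2})$ are still $\geq cN$ for a universal $c>0$. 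According to which block contains $u$, one of $A_1,A_2,A_3$ is declared to be $A(u)$ and the other two are $\Omega$, exactly as in the proof of Corollary~\ref{corollary:barrier_estimate}; a little freedom in the choice of $N_1,N_2$ makes sure the increment interval of length $1+e_{u+1}\le 1+\|E\|_1$ on which $A(u)$ is measurable lies inside a single block (in part~1 there is no $A(u)$, which only simplifies matters).

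The key steps, in order, would then be: (i) bound the indicator of the event in the statement by the product of the indicators of the left barrier event $\{\max_{1\le j\le N_1}(W_{j+E_j}-f(j))\le x\}\cap A_1$, of the right barrier event $\{\max_{N_2\le j\le N}(W_{j+E_j}-f(N-j))\le x+y\}\cap\{W_{N+E_N}\ge x+y-b\}\cap A_3$, and of the endpoint squeeze $\{W_{N+E_N}\in x+y-[a,b]\}\cap A_2$ (in part~2 drop the $f$'s); (ii) condition on $\sigma((W_s)_{s\le N_1+E_{N_1}})$ and on $\sigma((W_{N_2+E_{N_2}+s}-W_{N_2+E_{N_2}})_{s\ge 0})$, which fixes the left and right events and $A_1,A_3$, and for the middle increment $W_{N_2+E_{N_2}}-W_{N_1+E_{N_1}}$, whose Gaussian law is unchanged, invoke a real–time, bounded–shift version of \eqref{eq:class3} — which follows from the very same proof, conditioning on whichever of the past or the future of that increment carries variance $\gg N$ — to get that its conditional probability of falling in the width–$(b-a)$ window forced by the endpoint, intersected with $A_2$, is $\ll (b-a)N^{-1/2}\P(A_2)$; (iii) what remains factorizes, by independence of disjoint increments, into a left factor on $[1,N_1]$ and a right factor on $[N_2,N]$; applying \eqref{class6bis} (part~1) or \eqref{class6} (part~2) together with the symmetry $W\leftrightarrow -W$, the left factor is $\ll_{f,\|E\|_1}(1+x)N_1^{-1/2}\sqrt{\P(A_1)}\ll (1+x)N^{-1/2}\sqrt{\P(A_1)}$; (iv) for the right factor I would time–reverse the block: put $W'_s:=W_{N+E_N}-W_{N+E_N-s}$ and $\widetilde E_j:=E_N-E_{N-j}$, so $W_{(N-j)+E_{N-j}}=W'_{j+\widetilde E_j}$ with $W'$ a Brownian motion, $j+\widetilde E_j$ positive and increasing (since $1+\widetilde e_{N-j}=1+e_{N-j}>0$ follows from the monotonicity of $k\mapsto k+E_k$), and $\sum_j|\widetilde e_j|\le\|E\|_1$; under the right barrier event the reversed walk satisfies $\min_{0\le j'\le N-N_2}(W'_{j'+\widetilde E_{j'}}+f(j'))\ge -b$, whence \eqref{class6bis}/\eqref{class6} give the right factor $\ll_{f,\|E\|_1}(1+b)(N-N_2)^{-1/2}\sqrt{\P(A_3)}\ll (1+b)N^{-1/2}\sqrt{\P(A_3)}$. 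Multiplying the three contributions and using $\sqrt{\P(A_1)\P(A_2)\P(A_3)}=\sqrt{\P(A(u))}$ (or $=1$ in part~1) yields $\ll N^{-3/2}(1+x)(b-a)(1+b)$ with the extra $\sqrt{\P(A(u))}$ in part~2. Because $\|\widetilde E\|_1\le\|E\|_1$ and Lemma~\ref{lemma:shifted_barrier_estimate} may be used with constants nondecreasing in $\|E\|_1$, the same holds for the constants here, giving the last sentence of the corollary.

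The hard part, or really the only point needing care beyond copying the proof of Corollary~\ref{corollary:barrier_estimate}, is step~(iv): I must verify that time reversal in the shifted setting produces a sequence $\widetilde E$ that again satisfies the standing hypotheses of Lemma~\ref{lemma:shifted_barrier_estimate} — positivity and monotonicity of $j+\widetilde E_j$, finiteness of $\|\widetilde E\|_1$, and measurability of $A_3$ with respect to an increment interval of $W'$ of length $\le 1+\|E\|_1$ at some index in $\{0,\dots,N-N_2-1\}$ — and that the cut points $N_1,N_2$ can be chosen compatibly with the prescribed ratio $\lambda\in(0.1,0.9)$ rather than with $1/2$ as in the original proof. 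A secondary, essentially routine, point is the middle–block bound in step~(ii), which only requires \eqref{eq:class3} for a non‑integer duration and a bounded time shift; this needs no new idea, since its proof only uses a Gaussian density bound on an increment of variance $\gg N$. Once these bookkeeping matters are dispatched, the conclusion follows by exactly the computation above.
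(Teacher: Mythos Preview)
Your proposal is correct and follows essentially the same three-block strategy as the paper: factorize into a left barrier piece, a right barrier piece handled via time reversal, and a middle Gaussian-density piece for the endpoint window, then apply \eqref{class6bis}/\eqref{class6} and \eqref{build1} to the respective factors. The only difference is organizational: you place the cuts at roughly $\lambda N/2$ and $N-(1-\lambda)N/2$ (mirroring the proof of Corollary~\ref{corollary:barrier_estimate}), whereas the paper first uses Markov at $N_1=\lfloor\lambda N\rfloor$, time-reverses the remaining right block, and then splits that reversed walk in half at $N_2=\lfloor(N-N_1)/2\rfloor$.
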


\begin{proof}
Inequalities (\ref{class7bis}) and (\ref{class7}) can be obtained similarly by combining (\ref{class6bis}) and (\ref{build1}) for the first one and (\ref{class6}) and (\ref{eq:class3}) for the second one. 
 
Let us show the inequality (\ref{class7bis}). 
If $N \geq 20$ and $N_1 := \lfloor \lambda N \rfloor$, 
we have $N_1 \in (0.05\,N, 0.95 \, N)$.  
 By the Markov property at time $N_1 + E_{N_1}$, one has:
\begin{align*}
     & \P\left( \max_{1 \leq  j\leq \lambda N} W_{j+E_j} -f(j) \leq x, \ 
                \max_{\lambda N\leq j\leq N} (W_{j+E_j} -f(N-j))\leq x+y , \ 
                W_{N+E_N}-y-x\in [-b,-a] \right) \\
\leq & \E\left( \mathds{1}_{\{ \max_{1 \leq j\leq N_1} ( W_{j+E_j} -f(j))\leq x  \}} \P\left( \max_{ 1 \leq j\leq N - N_1} \left(W'_{j+E_{j}^{(N_1)}}-f(N - N_1 -j)  \right)+z \leq x+y ,\, \right.\right. \\
     &  \qquad \qquad \qquad  \qquad \qquad \qquad \qquad \left. \left. W'_{N-N_1+E_{N-N_1}^{(N_1)}} +z-y-x\in [-b,-a]  \right)_{\big| z=W_{N_1+ E_{N_1}}} \right) \ 
\end{align*}
with for any $p,k\geq 0$, $E_p^{(k)}= E_{p+k}-E_k$, and for $s \geq 0$, $W'_s = W_{N_1+ E_{N_1} + s} - W_{N_1 + E_{N_1}}$. 

 Let us introduce  
$$(\widetilde{W}_{j+\widetilde{E}_j })_{ 0\leq j \leq N-N_1}:= \left( W'_{N-N_1+E_{N-N_1}^{(N_1)}}  -  W'_{ N-N_1-j+E_{ N-N_1 -j}^{(N_1)}}  \right)_{ 0\leq j\leq N-N_1},$$
the time reversal walk of $(W'_{j+E_{j}^{(N_1)}})_{0 \leq j\leq N-N_1}$. One can easily check that $(\widetilde{W}_{j+\widetilde{E}_j})_{0 \leq j \leq N-N_1}$ is again a standard Gaussian random walk with a time error $\widetilde{E}_j$ satisfying $||\widetilde{E}||_1 \leq ||E||_1$ and $\widetilde{E}_0=0$.  For any $z\in \R$ we have
\begin{align*}
& \P\left( \max_{ 1 \leq j\leq N-N_1} \left(W'_{j+E_{j}^{(N_1)}}-f(N-N_1 -j)  \right)+z \leq x+y ,\,   W'_{N-N_1+E_{N-N_1}^{(N_1)}} +z-y-x\in [-b,-a]  \right)\\
&\leq   \P\left(    \min_{1 \leq j\leq N-N_1} ( \widetilde{W}_{j+\widetilde{E}_j} +f(j))\geq -b,\,   \widetilde{W}_{N-N_1+\widetilde{E}_{N-N_1}} +z-y-x\in [-b,-a]  \right)\\
&\leq \P\left(   \min_{1 \leq j\leq  N_2} ( \widetilde{W}_{j+\widetilde{E}_j} +f(j))\geq -b\right) \sup_{t\in \R}\P\left( W_{ 
N-N_1 - N_2 + \widetilde{E}_{N-N_1} - \widetilde{E}_{N_2} } +t\in [-b,-a]  \right) \ ,
\end{align*}
where $N_2 := \lfloor (N-N_1)/2 \rfloor$, and where 
 in the last line we use the Markov property at time $  N_2 + \widetilde{E}_{N_2}$. Observe that this last expression does not depend in $z$ any more.  Finally we have obtained that 
\begin{align}
& \P\left( \max_{1 \leq j\leq \lambda N} ( W_{j+E_j} -f(j))\leq x,\,  \max_{\lambda N\leq j\leq N} (W_{j+E_j} -f(N-j))\leq x+y ,\, W_{N+E_N}-y-x\in [-b,-a] \right) \nonumber \\
& \leq \P\left(  \max_{1 \leq j\leq N_1} ( W_{j+E_j} -f(j))\leq x  \right)   \P\left(  \min_{1 \leq j \leq N_2} ( \widetilde{W}_{j+\widetilde{E}_j} +f(j))\geq -b\right) \times \nonumber \\
& \qquad\qquad\qquad\qquad\qquad \sup_{t\in \R}\P\left( W_{N-N_1-N_2 + E}  + t \in [-b,-a]  \right) \,  \label{troisfacteurs}
\end{align}
where $|E| \leq||E||_1$. 
For $N \geq 100(1 + ||E||_1)$, we have  
$N_1, N_2, N-N_1 - N_2 + E \gg N$. 
By applying (\ref{class6bis}) and (\ref{build1}), one gets
\begin{align*}
& \P\left( \max_{j\leq \lambda N} ( W_{j+E_j} -f(j))\leq x,\,  \max_{\lambda N\leq j\leq N} (W_{j+E_j} -f(N-j))\leq x+y ,\, W_{N+E_{N}}-y-x\in [-b,-a]     \right)\\
& \qquad\qquad\qquad\qquad\qquad \ll_{f,||E||_1} N^{-\frac{3}{2}} (1+x)( b-a)(1+b),
\end{align*}  
with an implicit constant nondecreasing in $||E||_1$. 
This concludes the proof of (\ref{class7bis}). 

The proof of (\ref{class7}) is similar, except 
that we remove the function $f$ and we add the event $A(u)$. After doing the computations, 
we get a similar product of three factors as in \eqref{troisfacteurs}, without $f$, and with the 
event $A(u)$ added in the first factor if $0 \leq u < N_1$, in the third factor if $N_1 \leq u < N- N_2$ and in the second factor if $N-N_2 \leq u < N$: note that in the two last cases, $A(u)$ is measurable with respect to the $\sigma$-algebra generated by $(\widetilde{W}_{N-u-1 + 
\widetilde{E}_{N-u-1} + s} -
\widetilde{W}_{N-u-1 + 
\widetilde{E}_{N-u-1}} )_{s \in [0, 1 + \widetilde{E}_{N-u} - \widetilde{E}_{N-u-1}]}$.
Using (\ref{class6}) if $0 \leq u < N_1$ or $N-N_2 \leq u < N$, or (\ref{eq:class3}) if $N_1 \leq u < N-N_2$, we can estimate the factor involving $A(u)$ in a suitable way in order to deduce the result of the corollary. 
\end{proof}

\subsection{Lower bound for the probability to stay in an entropic envelope}
Recall that, with the notation of the upper bound section, 
 \begin{align*}
 u_k^{(N)}:=\left\{ \begin{array}{ll} \upsilon- (k-r)^{\alpha_-},\qquad &\text{if  }r\leq  k\leq \lfloor N/2\rfloor,
 \\
 \upsilon - (N-k)^{\alpha_-} -   \frac{3}{4}  \log N,\qquad &\text{if  } \lfloor N/2\rfloor <k \leq N,
 \end{array} \right. 
 \end{align*}
 and
 \begin{align*}
 l_k^{(N)}:=\left\{ \begin{array}{ll} -\upsilon- (k-r)^{\alpha_+},\qquad &\text{if  }r\leq  k\leq \lfloor N/2\rfloor,
 \\
 -\upsilon - (N-k)^{\alpha_+} -    \frac{3}{4} \log  N,\qquad &\text{if  } \lfloor N/2\rfloor <k \leq N.
 \end{array} \right.
 \end{align*}
We now have the following lower bound: 
\begin{proposition} \label{LemmaA3}
	For any $L > 0$, for any sequence $(a_N)_{N \geq 1}$ of non-negative numbers such that  $\sup_{N\to\infty} \frac{a_N}{N^{\frac{1}{2}}} \leq  L$,  for any $C > 0$, and for any $N'$ integer with 
	$2N/3> N' > N/3$, 
	\begin{align}
\P\left( \min_{j \in \{1, \dots, N'\}} W_j \geq 0,\, \min_{j \in \{N'+1, \dots, N\}} W_j\geq a_N,\, a_N \leq W_{N} \leq a_N +C \right) \gg_{L, C}  N^{-3/2}.
	\end{align}
	for $N$ large enough depending only on $L$ and $C$. 
\end{proposition}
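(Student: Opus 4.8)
The plan is to apply the Markov property of $W$ at time $N'$ and to invoke, besides the building blocks \eqref{Build0}, \eqref{build1}, \eqref{eqA.1}, their classical lower-bound companions (in the spirit of \cite{AShi10}, \cite{bib:Mad13}). Write $N_1:=N'$ and $N_2:=N-N'$; the hypothesis $N/3<N'<2N/3$ forces $N_1,N_2\in(N/3,2N/3)$, so $N_1$, $N_2$ and $N$ are all of the same order, uniformly. Fix $J:=[\,a_N+\sqrt{N_2},\,a_N+2\sqrt{N_2}\,]$. Conditioning on $\sigma(W_s,\ s\le N_1)$ and using the Markov property at time $N_1$,
\begin{align*}
\P\Big(\min_{j\le N'}W_j\ge 0,\ \min_{N'<j\le N}W_j\ge a_N,\ W_N\in[a_N,a_N+C]\Big)
\ \ge\ \E\Big[\mathds{1}_{\{\min_{j\le N_1}W_j\ge 0,\ W_{N_1}\in J\}}\ g(W_{N_1})\Big],
\end{align*}
where $g(x):=\P\big(\min_{1\le j\le N_2}W_j\ge a_N-x,\ W_{N_2}\in[a_N-x,\,a_N-x+C]\big)$.

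Next I would bound $g$ from below uniformly for $x\in J$. For such $x$, setting $y:=x-a_N\in[\sqrt{N_2},2\sqrt{N_2}]$, we have $g(x)=\P\big(\min_{1\le j\le N_2}W_j\ge -y,\ W_{N_2}\in[-y,-y+C]\big)$, i.e. the probability that a Brownian path started at height $y$ stays above the barrier $0$ until time $N_2$ and ends in a window of fixed width $C$ sitting exactly on the barrier. Since $y\asymp\sqrt{N_2}$, the Gaussian factor $e^{-y^2/2N_2}$ is bounded below by a universal constant, and the standard lower bound matching \eqref{eqA.1} (Brownian bridge survival estimate, cf. \cite{AShi10}, \cite{bib:Mad13}) gives $g(x)\gg_C N_2^{-1}\gg_C N^{-1}$, with an implicit constant depending only on $C$.

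Then I would bound $\P\big(\min_{j\le N_1}W_j\ge 0,\ W_{N_1}\in J\big)$ from below. Here $J$ has length $\sqrt{N_2}\asymp\sqrt N$ and sits at height of order $\sqrt N$: using $\limsup a_N/\sqrt N\le L$ one has, for $N$ large, $J\subseteq[c_1\sqrt{N_1},c_2\sqrt{N_1}]$ with $c_1\ge 1/\sqrt 2$ and $c_2\le c(L)$. Bounding this probability below by the sum over the $\asymp\sqrt{N_2}$ integers $k\in J$ of $\P\big(\min_{j\le N_1}W_j\ge 0,\ W_{N_1}\in[k,k+1]\big)$, and using the lower bound companion of \eqref{eqA.1}, each summand is $\gg (1+k)N_1^{-3/2}e^{-k^2/2N_1}\gg_L \sqrt N\cdot N^{-3/2}$ (because $k\asymp\sqrt N$ uniformly, so $e^{-k^2/2N_1}\ge e^{-c(L)}$), whence the sum is $\gg_L N^{-1/2}$. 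Combining with the previous step, $\P(\cdots)\gg_{L,C} N^{-1}\cdot N^{-1/2}=N^{-3/2}$ for $N$ large depending only on $L$ and $C$, which is the claim.

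The main obstacle is the second step, i.e. obtaining the correct order for $g$ while avoiding paying the barrier-survival cost $N^{-3/2}$ twice. The key structural point is that on $[0,N_1]$ we impose only positivity (cost $N^{-1/2}$) and let $W_{N_1}$ range over an entire macroscopic window $J$ rather than pinning it down, so that landing above $a_N$ by an amount of order $\sqrt N$ is not atypical; the genuine curved ballot cost $N^{-1}$ is then incurred only on $[N_1,N]$. A secondary technical point is to keep all implicit constants uniform over the allowed ranges of $N'$ and of $a_N$, in particular covering simultaneously the regimes $a_N=o(\sqrt N)$ and $a_N\asymp\sqrt N$.
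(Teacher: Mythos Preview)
Your proposal is correct and follows precisely the standard argument: the paper itself gives no independent proof but simply refers to Lemma~A.3 of \cite{AShi10}, whose proof proceeds exactly by applying the Markov property at the intermediate time, letting the walk reach a macroscopic height of order $\sqrt{N}$ on the first piece (cost $N^{-1/2}$), and then using the ballot-type lower bound on the second piece (cost $N^{-1}$). Your sketch is thus essentially the proof the paper defers to, with the details filled in.
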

\begin{proof}
This result is very similar to Lemma A.3 in \cite{AShi10}, and the same proof works. In particular one checks that the implicit constant we can obtain depends only on $C$ and $L$ and  that the fact that $N'$ is not exactly $N/2$ but only in $(N/3, 2N/3)$ does not change anything to the arguments we need. Note that $C$ can be taken arbitrarily since the distribution of $W_1$ is non-lattice (see the proof of Lemma A.2 in \cite{AShi10}). 
\end{proof}
Now, we will estimate the probability that a Brownian motion stays inside an entropic envelope. 
\begin{proposition}
	\label{lemma:2sided_barrier_estimate}
	Fix $d,\upsilon,r >0$. Then for all $N\geq 2 r$,
	\begin{align}
	\label{eq:barrier_estimate}
 \P\left( \forall j\in \{r,r+1, \dots,N\}, \,   l_j^{(N)}\leq  d W_{j-r} \leq u_j^{(N)}\right)
 \gg_{d,\upsilon,r} N^{-3/2}.
	\end{align}
	Moreover, if $N$ is assumed to be large enough depending on $d$ and $r$, then 
	we can remove the dependence in $r$ in the implicit constant of the estimate. 
\end{proposition}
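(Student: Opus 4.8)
\textbf{Proof strategy for Proposition \ref{lemma:2sided_barrier_estimate}.}
The plan is to reduce the two-sided barrier estimate to the classical results already gathered in the appendix, namely Kozlov's estimate \eqref{Build0}, the local limit bound \eqref{build1}, the Ball\^ot-type estimate \eqref{eqA.1}, and most importantly the lower bound of Proposition \ref{LemmaA3}. First I would absorb the time shift and the affine scaling: since $d W_{j-r}$ is, up to the deterministic time change $j \mapsto j-r$ and the scaling $W \mapsto d W$, again a Brownian motion observed at integer times, it suffices to prove the statement for $d=1$ and $r=0$ and then track how the constants depend on $d$ and $r$ (the dependence on $r$ only enters through the finitely many initial steps $j \in \{0,\dots,r\}$, and for $N$ large these contribute a positive constant factor that can be bounded below uniformly once $d$ is fixed). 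So I reduce to showing
$$ \P\left( \forall j \in \{0,1,\dots,N\}, \ l_j^{(N)} \leq W_j \leq u_j^{(N)} \right) \gg_{\upsilon} N^{-3/2}. $$

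The second step is to understand the shape of the envelope. On the first half $j \leq \lfloor N/2 \rfloor$, the upper boundary is $u_j^{(N)} = \upsilon - (j-r)^{\alpha_-}$ and the lower boundary is $l_j^{(N)} = -\upsilon - (j-r)^{\alpha_+}$, with $\alpha_- = 1/10$, $\alpha_+ = 9/10$; on the second half the boundaries are the mirror images shifted down by $\frac34 \log N$. Because $\alpha_- < 1/2 < \alpha_+$, the event that $W$ stays in this curved envelope is implied by a simpler event in which $W$ stays below a flat-ish barrier that is at height $\asymp 0$ (really slightly negative) and above a barrier at height $\asymp -(j\wedge(N-j))^{9/10}$; the latter, being of smaller order than $\sqrt{N}$ at the midpoint, fits exactly the hypothesis $\sup_N a_N/\sqrt{N} \leq L$ of Proposition \ref{LemmaA3}. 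Concretely I would choose an intermediate time $N' \in (N/3, 2N/3)$ — for instance $N' = \lfloor N/2 \rfloor$ — split the trajectory there, and apply Proposition \ref{LemmaA3} with $W$ replaced by $-W$ (so that ``staying below'' becomes ``staying above''), with the level $a_N$ chosen to match the $\frac34 \log N$ drop plus the curved lower boundary, and with $C$ a fixed constant large enough that the terminal window $[\,\cdot\,, \cdot + C]$ fits inside the envelope slab at time $N$ (this is legitimate since the marginal of $W_1$ is non-lattice). One must also insert the requirement that $W$ does not go \emph{above} $u_j^{(N)}$; this is handled by noting that on the event produced by Proposition \ref{LemmaA3} the walk already stays below $0$ on the first half and below $-\frac34\log N + O(1)$ on the second half, so up to choosing the constants in $a_N$ and $C$ slightly conservatively one automatically lands inside both boundaries. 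The curved corrections $(j-r)^{\alpha_\pm}$ only \emph{enlarge} the allowed region on the upper side and \emph{shrink} it by at most a sublinear amount on the lower side, so restricting to the straight-line sub-event loses only a constant factor.

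The last step is bookkeeping of constants. Proposition \ref{LemmaA3} gives a lower bound $\gg_{L,C} N^{-3/2}$ valid for $N$ large depending on $L$ and $C$; since $L$ and $C$ here are fixed functions of $\upsilon$ (and of $d$ after undoing the scaling), the implicit constant depends only on $d,\upsilon,r$, and for $N$ large enough depending only on $d$ and $r$ the contribution of the first $r$ steps is bounded below by a constant depending only on $d$, which removes the $r$-dependence exactly as claimed in the final sentence of the statement. The main obstacle, and the only genuinely delicate point, is the matching at the midpoint: one has to verify that the downward jump of $\frac34 \log N$ in the barrier between $j \leq N/2$ and $j > N/2$ is compatible with Proposition \ref{LemmaA3}, i.e. that choosing $a_N \asymp \frac34 \log N$ indeed satisfies $a_N = o(\sqrt N)$ (it does, comfortably) and that the curved pieces $(N-j)^{\alpha_+}$ near the right endpoint do not force $a_N$ to grow like a power of $N$ — they don't, since at the relevant times $N-j$ is either $O(1)$ (near the endpoint, where we use the terminal window of width $C$) or we are still in the bulk where $(N-j)^{9/10} \ll \sqrt N$. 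Once that compatibility is checked, the estimate follows by a direct application of Proposition \ref{LemmaA3} together with the Markov property, with no further computation required.
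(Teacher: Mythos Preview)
Your reduction to Proposition \ref{LemmaA3} does not go through as stated, and the gap is not cosmetic. The core claim --- that the simpler flat-barrier event \emph{implies} the curved-envelope event --- is false for the \emph{upper} barrier. On the first half, $u_j^{(N)} = \upsilon - (j-r)^{\alpha_-}$ is roughly $-N^{1/10}$ at the midpoint; so ``$W_j \leq 0$ for all $j$'' (which is all that Proposition \ref{LemmaA3} applied to $-W$ gives on the first half) does \emph{not} imply $W_j \leq u_j^{(N)}$. The curved correction on the upper side \emph{shrinks} the allowed region, it does not enlarge it; you have the monotonicity backwards. In addition, Proposition \ref{LemmaA3} is one-sided: applied to $-W$ it imposes no lower bound on $W$ whatsoever, so the lower barrier $l_j^{(N)}$ is entirely unaddressed by your argument. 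Your choice $a_N \asymp \frac34\log N$ matches the jump in the upper barrier at $N/2$ but accounts for neither curved piece.

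The paper's proof supplies exactly the missing mechanism. It first peels off short initial and final stretches of length $A$ (the factors $p_1,p_3$), where the envelope is narrow but the probability of staying inside is a positive constant depending only on $A,d,\upsilon$. In the bulk it writes
\[
\P(\text{curved envelope}) \;\geq\; \P(\text{flat upper barrier, terminal window}) \;-\; \P(\text{excursion above the curved top}) \;-\; \P(\text{excursion below the curved bottom}),
\]
where the first term is $\gg_d N^{-3/2}$ by Proposition \ref{LemmaA3}, and each subtracted term is shown to be at most $\epsilon N^{-3/2}$ once the buffer $B=A^{1/2}$ is large. The bottom excursions (assertion i)) are handled by a Gaussian tail combined with the ballot estimate of Corollary \ref{corollary:shifted_barrier_estimate}. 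The top excursions (assertion ii)) require the entropic-repulsion estimate of \cite{Aid11}, Lemma B.3: a walk conditioned to stay nonpositive and to end near zero is unlikely ever to come within $(j\wedge(N-j))^{1/10}-B$ of zero. This is the ingredient your sketch lacks, and it cannot be replaced by a monotone inclusion of events.
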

\begin{proof}

For each value of $N \geq 2r$, the probability to be estimated is strictly positive, which implies that it is sufficient to prove the second part of the proposition, relative to the case where $N$ is large enough depending on $d$ and $r$.   For any $ 0 \leq A < \frac{N}{2} - r$, by the Markov property, the probability is bigger than the product of the following three terms:
\begin{align*}
p_1&:= \P\left( \forall j\leq \{r, \dots, A+r\},\, l_j^{(N)}\leq d W_{j-r}\leq u_j^{(N)},\, u_{A+r}^{(N)}- A^{\frac{1}{2}} - \frac{1}{2}  \leq d  W_{A}\leq u_{A+r}^{(N)}-A^{\frac{1}{2}}\right)
\\
p_2&:=   \inf_{u_{A+r}^{(N)}-A^{\frac{1}{2}} -\frac{1}{2} \leq x \leq u_{A+r}^{(N)}- A^{\frac{1}{2}}  } \P\left( \forall j\in \{A+r,\dots,N-A\},\,  l_j^{(N)} \leq d W_{j-A-r}+ x  \leq u_j^{(N)},\, \right.
\\
&\qquad \qquad\qquad\qquad\qquad\qquad\qquad\qquad \left.   u_{N-A}^{(N)} -A^{\frac{1}{2}}-1 \leq d W_{N-2A-r} +x \leq u_{N-A}^{(N)}-A^{\frac{1}{2}} \right) ,
\\
p_3&:=  \inf_{ u_{N-A}^{(N)}-A^{\frac{1}{2}}-1 \leq  z\leq  u_{N-A}^{(N)}-A^{\frac{1}{2}} }\P\left(   \forall j\in \{N-A, \dots, N\},\,   l_j^{(N)}\leq d W_{j-(N-A)}+ z \leq u_j^{(N)}  \right).
\end{align*}
If $A \geq 10$, we have 
$$A^{9/10} - A^{1/10} 
\geq  A^{1/2} (10^{0.4} -  10^{-0.4}) \geq  2 A^{1/2} > A^{1/2} + 1,$$
 which implies that $p_1$ and $p_3$ are strictly positive. Moreover, they do not depend on $r$ and $N$ if $A$ and $\upsilon$ are fixed: for $p_1$, we check this fact by shifting $j$ by $-r$, for $p_3$, by shifting $z$, $l_j^{(N)}$, $u_j^{(N)}$ by $(3/4) \log N$, and $j$ by $-(N-A)$. 
 
Now, by first fixing $y = (3/4) \log N$, one can check that for any $A\geq 100$,  $p_2$ is bigger than 
\begin{align*}
&\tilde{p}_2=  \inf_{0 \leq y \leq 1+ \log N} \P\left(   \forall  j\leq  N'_A, j \in \mathbb{N}, \,  - (j^{\frac{9}{10}}+A^{\frac{9}{10}})/10    \leq d W_{j}   \leq - j^{\frac{1}{10}} +A^{\frac{1}{2}}  ,\,\right.
\\
&\left. \forall j\in \left(N'_A,N_A \right] \cap \mathbb{N} ,\,  \right. \\ & \left. - ((N_{A}-j)^{\frac{9}{10}}+A^{\frac{9}{10}})/10  \leq d W_{j} +y\leq - (N_{A}-j)^{\frac{1}{10}} + A^{\frac{1}{2}},\,    -\frac{1}{2} \leq d W_{N_{A}} +y    \leq 0 \right)
\end{align*}
with $N_{A}:= N-r -2A$ and $N'_A := \lfloor N/2 \rfloor -r-A$. 
 For $b>0$, $1 \leq N' < N$ and $y\in \R$, let us introduce:
\begin{align*}
\mathfrak{E}_{b,N,N',y}&:= \left\{ \max_{j \in \{1, \dots, N'\}} d W_j \leq b ,\, \max_{j \in \{N'+1, \dots N\}} d  W_j \leq -y +b,\,d  W_N \geq -y - \frac{1}{2}   \right\},
\\
\mathfrak{E}^{(c)}_{b,N,N',y}&:=  \left\{   \forall j \in \{1, \dots, N'\},d  W_{j}   \leq - j^{\frac{1}{10}} +b ,\, \right. \\ & \left.  \forall j\in \{N'+1, \dots, N\},\,  d W_{j} +y\leq - (N-j)^{\frac{1}{10}} + b,\,     -\frac{1}{2} \leq d W_{N} +y\leq 0 \right\}.
\end{align*} 
It is plain to check that $\tilde{p}_2$ is bigger than
\begin{align*}
\inf_{0 \leq y \leq 1+ \log N} \left\{\P\left(  \mathfrak{E}^{(c)}_{A^{\frac{1}{2}},N_{A},N'_A,y}    \right) - \sum_{k=1}^{ N'_A}  \P\left( \mathfrak{E}_{A^{\frac{1}{2}},N_A, N'_{A},y} ,\,d W_k\leq -(k^{\frac{9}{10}}+A^{\frac{9}{10}} )/10 \right) \qquad\qquad\qquad \right.
\\
\left.-  \sum_{ N'_A< k\leq N_{A} } \P\left( \mathfrak{E}_{A^{\frac{1}{2}},N_{A},N'_A,y} ,\, d W_k +y \leq -((N_{A}-k)^{\frac{9}{10}}+A^{\frac{9}{10}})/10  \right)  \right\}.
\end{align*}
Thus  Proposition \ref{lemma:2sided_barrier_estimate} will be proved once the following two assertions are shown:
\\

i) for any $\epsilon>0$, for $B>0$ large enough depending only on $d$ and  $\epsilon$,  for $2N/3> N' > N/3 > 10 B$, $0 \leq y  \leq 2 + \log N$,
\begin{align*}
&\sum_{k=1}^{N'}  \P\left( \mathfrak{E}_{B,N,N',y} ,\, d W_k\leq -(k^{\frac{9}{10}}+B^{9/5})/10 \right) \\ &   +  \sum_{k= N' + 1}^{N} \P\left( \mathfrak{E}_{B,N,N',y} ,\, d W_k +y\leq -((N-k)^{\frac{9}{10}}-B^{9/5})/10 \right) \leq \frac{\epsilon}{N^{3/2}}.
\end{align*}

ii) There exists $c>0$ depending only on $d$ such that for any $B > 0$ large enough depending  on $d$, $2N/3 > N' > N/3 >10 B$ and for $0 \leq y \leq 2 + \log N$,
\begin{align*}
\P\left(    \mathfrak{E}^{(c)}_{B,N,N',y}  \right) \geq \frac{c}{N^{3/2}}.
\end{align*}

Indeed, if i) and ii) hold, we get the desired bound by taking $\epsilon = c/2$ (depending only on $d$), $B > 10$ large enough (depending only on $d$) in order to have the conclusions of i) and ii), $A = B^2$, and a value of $N$ in the proposition which is large enough (possibly depending only on $d$ and $r$, since one can take $A$ and $B$ depending only on $d$) in order to ensure that $2N_A/3 > N'_A > N_A/3 > 10B$ and  $1 + \log N \leq 2 + \log N_A$.

Let start the proof of assertion i). Fix $\epsilon >0$. By applying the Markov property at time $k\leq N'$, one has
\begin{align*}
&\P\left( \mathfrak{E}_{B,N,N',y} ,\, dW_k\leq -(k^{\frac{9}{10}}+B^{9/5})/10 \right)
\\
&\leq  \sum_{p\geq 0} \P\left(   dW_k\leq -(k^{\frac{9}{10}}+B^{9/5})/10 -p \right) 
\\ & \times \P_{-k_{B,p}} \left( \max_{j\leq N'-k} d W_j\leq  B,\, \max_{N'-k < j \leq N-k} d W_j \leq -y +B,\, dW_{N-k} \geq -y - \frac{1}{2}  \right),
\end{align*}
with $k_{B,p}=(k^{\frac{9}{10}}+B^{9/5})/10  +p+1$. When $k\leq N'/2$ it stems, by using Corollary
\ref{corollary:shifted_barrier_estimate} (with  $E_j = 0$), that for $N$ large enough (which is ensured by $B$ large enough),
\begin{align*}
\P\left( \mathfrak{E}_{B,N,N',y} ,\, d W_k\leq \right. & \left. -(k^{\frac{9}{10}}+B^{9/5})/10 \right) \ll  \sum_{p\geq 0} e^{ - \frac{k^{1.8} + B^{3.6} + p k^{0.9}}{200 k d^2}} 
 \left(1 + \frac{1+ B+k_{B,p}}{d} \right)^3 \frac{1}{N^{\frac{3}{2}} }
 \\ &  \ll \frac{1}{N^\frac{3}{2}} 
 e^{- \frac{k^{0.8} + B^{3.6} k^{-1} }{200 d^2} } \sum_{p \geq 0}  e^{- \frac{p k^{-0.1}}{200 d^2} }  \left( 1 + \frac{1 + B + k^{0.9} + B^{1.8}  + p + 1 }{d} \right)^3.
\end{align*}
We have 
$$ k^{0.8} + B^{3.6} k^{-1} \geq (k^{0.8})^{2/3} ( B^{3.6} k^{-1})^{1/3} 
= k^{0.2} B^{1.2},$$ 
and then 
\begin{align*}
\P\left( \mathfrak{E}_{B,N,N',y} ,\, d W_k\leq -(k^{\frac{9}{10}}+B^{9/5})/10 \right) 
& \ll_d N^{-3/2} 
e^{-\frac{k^{0.2} B^{1.2}}{200 d^2} }
(1 + k^{2.7}  + B^{5.4} ) \sum_{p \geq 0} 
(1+ p^3) e^{-\frac{p}{200 d^2 k}}
\\ & \ll_d N^{-3/2} e^{-\frac{k^{0.2} B^{1.2}}{200 d^2}} (1+ k^{2.7})(1 + B^{5.4}) (k  + k^4)
\\ & \ll_d  N^{-3/2} e^{-\frac{k^{0.2} B^{1.2}}{200 d^2}} 
k^{6.7} B^{5.4} \ll_d N^{-3/2} e^{-\frac{k^{0.2} B^{1.2}}{400d^2}}.
\end{align*}

When $N'/2 < k \leq N'$, one can simply write: 
\begin{align*} 
\P\left( \mathfrak{E}_{B,N,N',y} 
,\, dW_k
\leq - (k^{9/10}+B^{9/5})/10 \right)
 &\ll  e^{- \frac{k^{1.8} + B^{3.6}}{200 k d^2}}
 \ll e^{-\frac{k^{0.2} B^{1.2}}{200 d^2}}
 \\ & \ll_d  k^{-3/2}
 e^{-\frac{k^{0.2} B^{1.2}}{400 d^2}}
 \ll N^{-3/2}
 e^{-\frac{k^{0.2} B^{1.2}}{400 d^2}}.
 \end{align*}
A combination of these two inequalities gives:
\begin{align*}
\sum_{k=1}^{N/2}  \P\left( \mathfrak{E}_{B,N,N',y} ,\, d W_k\leq -(k^{\frac{9}{10}}+B^{9/5} )/10  \right) \ll_d
N^{-3/2} \sum_{k=1}^{\infty}  e^{-\frac{k^{0.2} B^{1.2}}{400 d^2}}  \leq \frac{\epsilon}{N^{\frac{3}{2}}}
\end{align*}
if $B$ is large enough depending on $d$ and $\epsilon$. 
 
The second sum (for $k > N'$) can be treated similarly. Indeed, by  operating a time reversal one has, for $k' = N-k$, 
\begin{align*}
&\P\left( \mathfrak{E}_{B,N,N',y} ,\, dW_k +y\leq -((N-k)^{\frac{9}{10}}+B^{9/5})/10 \right)
\\
&\leq  \P\left(   \min_{1 \leq j < N-N'} d W_j\geq -B- \frac{1}{2},\, \min_{N-N' \leq j < N} dW_j \geq -B-y - \frac{1}{2}, \right. \\ & \left.      dW_N \leq B - y,\,  d W_{k'} \geq  \frac{(k')^{\frac{9}{10}} +B^{9/5}}{10} - \frac{1}{2} \right).
\end{align*}
which can be bounded with the same arguments. 

In order to prove the assertion ii), let 
us set:  \begin{align*}
k_i= \left\{  \begin{array}{ll} i^{\frac{1}{10}},\qquad &\text{if }  i\leq N',
\\
(N-i)^{\frac{1}{10}}+y,\qquad &\text{if }  N' <i\leq N,
\end{array}   \right. 
\end{align*}
  We notice that: 
\begin{align*}
&\P\left(    \mathfrak{E}^{(c)}_{B,N,N',y}  \right) \geq \P\left( \max_{1 \leq j\leq N'} d W_j\leq 0,\, \max_{N' \leq j \leq N} dW_j\leq -y,\, d W_{N}+y\geq - \frac{1}{2} \right)
\\
& - \P\left( \max_{1 \leq j\leq N'} d W_j\leq 0,\, \max_{N' \leq j \leq N} d W_j+y \leq 0,\, d W_{N}+y\geq -\frac{1}{2},\, \exists i\in [|1,N|],\, d W_i\geq - k_i + B\right).
\end{align*} 
From Proposition \ref{LemmaA3}, applied with $C = 1/2$, and $L = d^{-1} \sup_{N \geq 1} N^{-1/2}(2 + \log N)$, and all sequences $(a_N)_{N \geq 1}$ with  $0 \leq a_N \leq 2 + \log N$, we deduce, uniformly in $y \in [0, 2 + \log N]$, 
$$ \P\left( \max_{1 \leq j\leq N'} d W_j\leq 0,\, \max_{N' \leq j \leq N} dW_j\leq -y,\, d W_{N}+y\geq - \frac{1}{2} \right)
\gg_d N^{-3/2},$$
for $N$ large enough depending only on $d$, which is guaranteed if $B$ is large enough depending on $d$.
On the other hand, by a version of \cite{Aid11}, Lemma B.3 which allows $N'$ to be in $(N/3, 2N/3)$ instead of being exactly $N/2$, and which can be proven exactly in the same way, we get for all $\epsilon > 0$, for $B > 0$ large enough depending only on $d$ and $\epsilon$, 
\begin{align*}
& \P\left( \max_{1 \leq j\leq N'} d W_j\leq 0,\, \max_{N' \leq j \leq N} d W_j+y \leq 0,\, d W_{N}+y\geq -\frac{1}{2},\, \exists i\in [|1,N|],\, d W_i\geq - k_i + B\right)
 \\ & \ll_d \epsilon N^{-3/2} + \mathcal{O}_{d, \epsilon} (N^{-1.7}).
 \end{align*}
 By taking $\epsilon$ small enough depending only on $d$, and then $B >0 $ large enough, we deduce the assertion ii), which ends the proof of Proposition \ref{lemma:2sided_barrier_estimate}. 
\end{proof}

\subsection{Asymptotic equivalence of barriers}
For any $N \geq 2r \geq 0$, $d>0$, $\delta \geq 0$, 
$(e_k)_{k \geq 1}$, $E_k := \sum_{j=1}^{k} e_j$, such that $k + E_k$ is increasing in $k$,  
 we define the event of our Gaussian random walk staying within two barriers:
\begin{align*}
	G_0v(d,\delta,E) & := \left\{   \forall j\in [|r, N |] ,\, l_j^{(N)}-\delta \leq d W_{j-r + E_{j-r}} \leq u_j^{(N)}+\delta    \right\}.
\end{align*}
Note that $G_0v(d,\delta,E)$ depends implicitly on $\upsilon, r$ and $N$. 
The following proposition enhances the estimate in Proposition \ref{lemma:2sided_barrier_estimate} by showing that small shifts in space or time have little influence:
\begin{proposition}
	\label{PropProclas1}
	Let $\upsilon, d > 0$. The following equality holds for all $r \geq 1$: 
	\begin{align}
	\label{ProClas0}
	\lim_{\delta\to 0} \sup_{N \geq 2r} \left| \frac{\P\left(  G_0v(d,\delta,0) \right) }{\P\left( G_0v(d,0,0)  \right) } - 1 \right|
	=
	\lim_{\delta, ||E||_1\to 0} \sup_{N \geq 2r} \left| \frac{\P\left( G_0v(d,\delta,E)    \right) }{\P\left( G_0v(d,0,0)   \right) } - 1 \right|
	= 0,
	\end{align}
	where we recall that $||E||_1 = \sum_{k=1}^{\infty} e_k$. Moreover, there exists a function $g$
	from $ \mathbb{R}_+^*  \times \mathbb{N}$ to $\mathbb{N}$, 
	such that $g(d,r) \geq 2r$ for all $d > 0$, $r \geq 1$, and: 
	\begin{align}
	\label{ProClas1}
	\lim_{\delta\to 0} \sup_{r \geq 1} 
	\sup_{N \geq g(d,r)} \left| \frac{\P\left(  G_0v(d,\delta,0) \right) }{\P\left( G_0v(d,0,0)  \right) } - 1 \right|
	=
	\lim_{\delta, ||E||_1\to 0} \sup_{r \geq 1} 
	\sup_{N \geq g(d,r)} \left| \frac{\P\left( G_0v(d,\delta,E)    \right) }{\P\left( G_0v(d,0,0)   \right) } - 1 \right|
	= 0. 
	\end{align}
\end{proposition}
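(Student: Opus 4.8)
\textbf{Proof proposal for Proposition \ref{PropProclas1}.}

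The plan is to reduce everything to Proposition \ref{lemma:2sided_barrier_estimate} and its shift-robust companion, Corollary \ref{corollary:shifted_barrier_estimate}, by a standard sandwiching argument. The key point is that the barrier functions $u_j^{(N)}$ and $l_j^{(N)}$ are of the form (linear in $j$, or ``curved'' with exponent $\alpha_\pm \in (0,1)$) $\pm(3/4)\log N$, so both a vertical shift by $\delta$ and a time shift encoded by $E$ can be absorbed into an enlargement of $\delta$ at the cost of a multiplicative error that vanishes with $\delta$ and $\|E\|_1$. More precisely, I would first handle the time shift: since $(k+E_k)_{k\ge 0}$ is increasing and $\|E\|_1<\infty$, one can write $W_{j-r+E_{j-r}} = W_{j-r} + (W_{j-r+E_{j-r}}-W_{j-r})$, and condition on the supremum $S := \sup_{|t|\le \|E\|_1}$ of the relevant Brownian fluctuations (as in the proof of Lemma \ref{lemma:shifted_barrier_estimate}); this sandwiches $\P(G_0v(d,\delta,E))$ between $\P(G_0v(d,\delta-dS,0))$ and $\P(G_0v(d,\delta+dS,0))$ up to an event of probability $\le e^{-cS^2/\|E\|_1}$, reducing the two-variable limit in \eqref{ProClas0} and \eqref{ProClas1} to the one-variable limit ($\delta\to 0$, $E\equiv 0$).

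Next, for the $\delta$-only statement I would prove the two-sided inequality
\begin{align*}
\P\left(G_0v(d,0,0)\right) \ \le\ \P\left(G_0v(d,\delta,0)\right) \ \le\ (1+\omega(\delta))\,\P\left(G_0v(d,0,0)\right),
\end{align*}
where $\omega(\delta)\to 0$. The lower bound is trivial since enlarging the envelope only increases the probability. For the upper bound, I would decompose $G_0v(d,\delta,0)$ according to the first time $k$ the walk exits the (un-enlarged) envelope $[l_k^{(N)},u_k^{(N)}]$ — so it lies in a thin slab of width $O(\delta)$ just outside the barrier at time $k$ — and then uses the Markov property together with Corollary \ref{corollary:shifted_barrier_estimate} (which already provides the needed ``in-a-slab-of-width-$h$'' estimate with the correct $N^{-3/2}$ order and linear dependence in the slab width). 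Summing these contributions over $k$ gives a bound of the form $\omega(\delta)\,N^{-3/2}$, while Proposition \ref{lemma:2sided_barrier_estimate} gives the matching lower bound $\P(G_0v(d,0,0))\gg_{d,\upsilon,r} N^{-3/2}$; dividing yields the claim. The uniformity in $N\ge 2r$ in \eqref{ProClas0} comes from the fact that all these estimates (slab probabilities, renewal-type bounds, the lower bound) hold with constants depending only on $d,\upsilon,r$; the passage to uniformity in $r$ in \eqref{ProClas1} is exactly why one must allow $N\ge g(d,r)$ rather than $N\ge 2r$ — Corollary \ref{corollary:shifted_barrier_estimate} and Proposition \ref{lemma:2sided_barrier_estimate} only give $r$-independent constants once $N$ is large enough depending on $d$ and $r$, so one simply takes $g(d,r)$ to be that threshold (and $g(d,r)\ge 2r$).

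The main obstacle I anticipate is bookkeeping the overshoot/undershoot when the walk first leaves the envelope: one must check that the curvature of the barrier ($(k-r)^{\alpha_\pm}$ and $(N-k)^{\alpha_\pm}$ terms) does not spoil the ``thin slab'' decomposition — in particular that near the changeover point $k=\lfloor N/2\rfloor$ and near the endpoints the enlargement by $\delta$ really does cost only a factor $1+O(\delta)$ and not something worse. This is handled by the same subadditivity/monotonicity properties of $j\mapsto j^{\alpha_\pm}$ already exploited in the upper-bound section (e.g.\ around \eqref{eq:upper_bound_estimate}), so it is routine but needs care. A secondary technical point is that the time-shift conditioning must be done so that the enlarged envelope after conditioning is still of the same structural form; this is immediate because a time shift of a linear-plus-curved barrier is again linear-plus-curved up to an additive constant bounded by $d\|E\|_1\cdot(\text{Lipschitz-type bound})$, which again feeds into $\omega$. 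Once these are in place, the proposition follows by combining the two sandwich estimates.
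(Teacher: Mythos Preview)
Your proposal is essentially correct and follows the same architecture as the paper's proof: (i) a thin–slab decomposition of $G_0v(d,\delta,0)\setminus G_0v(d,0,0)$ over the time $j_0$ at which the walk leaves the original envelope, each term controlled via the Markov property together with Corollary~\ref{corollary:shifted_barrier_estimate}, summed to $\ll_d \delta\, N^{-3/2}$ and then divided by the lower bound from Proposition~\ref{lemma:2sided_barrier_estimate}; and (ii) reduction of the $E$–shifted case to the $E=0$ case by a Brownian coupling, exactly as in Lemma~\ref{lemma:shifted_barrier_estimate}. The paper also splits the sum over $j_0$ into the ranges $[r,N/3]$, $[N/3,2N/3]$, $[2N/3,N]$ and uses time–reversal for the last range, which is precisely the ``bookkeeping near the changeover and endpoints'' you anticipate.

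Two places where your sketch needs to be tightened, though neither is a missing idea. First, the decomposition $W_{j-r+E_{j-r}}=W_{j-r}+(W_{j-r+E_{j-r}}-W_{j-r})$ does not give you an \emph{independent} fluctuation to condition on; what is actually used (and what Lemma~\ref{lemma:shifted_barrier_estimate} does) is the distributional identity $(W_{j+E_j})_j\eqlaw (W_j+\widetilde W_{E_j})_j$ with $\widetilde W$ independent, valid when $E_j$ is nondecreasing, together with the $|E|$–trick to reduce the general signed case. Second, a single threshold $S$ does not suffice for the upper bound with $E$: the tail $\P(\sup|\,d\widetilde W\,|>s)$ is not $o(N^{-3/2})$ uniformly in $N$, so one must sum over dyadic levels of the supremum and use that on level $k$ the walk is still confined to an enlarged tube with probability $\ll_d N^{-3/2}(1+k)^3$ (this is exactly the paper's display around~\eqref{Gov}). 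Relatedly, your sandwich implicitly needs the $\delta$–only estimate in the \emph{shrinking} direction ($\P(G_0v(d,-\delta,0))\ge(1-\omega(\delta))\,\P(G_0v(d,0,0))$) for the lower bound with $E$; the same thin–slab argument gives it, but you should say so. With these three clarifications your proof goes through and coincides with the paper's.
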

\begin{proof}
We first prove the first limit in
 \eqref{ProClas0} and \eqref{ProClas1}. For any $\delta >0$, we have
\begin{align*}
	& \P\left(   G_0v(d,\delta,0)  \right) = \P\left(    G_0v(d,0,0) \right)
	+ \P\left( G_0v(d,\delta,0);\,\exists j_0\in [|r,N|],\, d W_{j_0-r}\leq l_{j_0}^{(N)} \text{or}\, d W_{j_0-r}\geq u_{j_0}^{(N)} \right).
\end{align*}
By Proposition \ref{lemma:2sided_barrier_estimate}, it suffices to prove
 the following two estimates for $\delta \in [0,1/2]$: 
\begin{align}
\label{etoil1}
\sup_{N \geq 2r} N^{\frac{3}{2}} 
\sum_{j_0=r}^{N}  \P\left( G_0v(d,\delta,0),\, d W_{j_0-r}\geq u_{j_0}^{(N)} \right)
\ll_{r, d} \delta (1+ \upsilon)^4 \ ,\\
\label{etoil2} 
\sup_{N \geq 2r} N^{\frac{3}{2}} 
\sum_{j_0=r}^{N}  \P\left( G_0v(d,\delta,0),\, d W_{j_0-r}\leq l_{j_0}^{(N)} \right) 
\ll_{r, d} \delta (1 + \upsilon)^4 \ ,
\end{align}
where the dependence in $r$ in the implicit constants can be removed, provided that $N$ is large enough depending on $d$ and $r$. 
The control of the dependence in $\upsilon$ of the estimates will be useful at the end of the proof of the proposition.  
We can now directly assume that $N$ is large enough depending on $d$ and $r$.  
 Indeed, all probabilities involved in the sums are bounded by the probability that $d W_s$ is in an interval of size $\delta$, for some $s \geq 1$, and then they are $\ll_d \delta$. This implies that for any function $g$ as in the proposition, the supremums 
in \eqref{etoil1} and \eqref{etoil2} restricted to $2r \leq N \leq g(d,r)$ are $\ll_d \delta g(r,d)^{5/2}  \ll_{r,d} \delta$ if the function $g$ is fixed.

Thanks to Corollaries \ref{corollary:barrier_estimate} and \ref{corollary:shifted_barrier_estimate} (with $f = E = 0$) and the Markov property, for any $\delta \in [0,1/2]$, and $N$ large enough depending only on $r$, the probability in \eqref{etoil1} is smaller than the following quantities: 

i)  when  $j_0\in [r, \frac{N}{3}]$,
\begin{align*}
&\P\left(  \forall j\in [|r,j_0|],\, d W_{j-r}\leq \delta+\upsilon ,\,  u_{j_0}^{(N)}+\delta \geq d W_{j_0-r}\geq u_{j_0}^{(N)}\right) \times 
\\
& \P\left(  \max_{j\leq (N/2)-j_0} d W_j\leq \delta,\, \max_{(N/2)-j_0 < j\leq N-j_0} d W_j \leq (j_0-r)^{1/10} -\frac{3}{4}\log  N + \delta,  \right. \\ 
& \left. \, \qquad \qquad  d W_{N-j_0} \geq  (j_0 - r)^{1/10} -\frac{3}{4}\log  N - 2 \delta   - 2 \upsilon \right)
\\ & \ll_d \left( \frac{\delta(1+ \delta + \upsilon)( 1+ \delta + (j_0-r)^{1/10})}{1 + (j_0-r)^{3/2}} \right) \left( \frac{(\delta + \upsilon)(1+ \delta) (1+ \delta + \upsilon)}{(N-j_0)^{3/2}} \right) 
\\ & \ll_{d} \delta (1+ \upsilon)^3 N^{-3/2} (1+ j_0-r)^{-1.4}
\end{align*}
ii) when $j_0\in [ \frac{N}{3},\frac{2N}{3}]$,
\begin{align*}
&\P\left(  \forall j\in [|r,j_0|],\, d W_{j-r}\leq \delta+\upsilon ,\,  u_{j_0}^{(N)}+\delta \geq  d W_{j_0-r}\geq u_{j_0}^{(N)}\right) \times \\
&   \qquad \qquad \P\left(  \max_{j\leq N-j_0} d W_j\leq  - u_{j_0}^{(N)} - \frac{3}{4} \log N 
 + \upsilon + \delta,\,   d W_{N-j_0} \geq   - u_{j_0}^{(N)} -\frac{3}{4}\log  N - 2 \delta   -  \upsilon  \right)
\\
& \ll_d  \left( \frac{\delta(1 + \delta + \upsilon)(1 + \delta + N^{1/10})}{N^{3/2}} \right)
\left( \frac{(1 + N^{1/10} + \upsilon + \delta)^3}{N^{3/2}} \right) 
 \ll_{d} \delta (1 + \upsilon)^4 N^{-2.6}.
\end{align*}

iii) When $j_0\in [ \frac{2N}{3},N]$, by using the time reversal property of the random walk $(W_j)_{0 \leq j\leq N-r}$, i.e $(W_j)_{0 \leq j\leq N-r} \eqlaw ( W_{N-r}- W_{N-r-j})_{0 \leq j\leq N-r}$, the probability in \eqref{etoil1} is equal to the probability of the intersection of the following three events:
\begin{align*}
&\left\{\forall j < \frac{N}{2},\,    -\upsilon +j^{1/10}-\delta \leq d W_j -\left(\frac{3}{4}\log N + dW_{N-r} \right) \leq \upsilon +j^{9/10} + \delta \right\},
\\
&\{d W_{N-j_0} - \left(  \frac{3}{4}\log N + dW_{N-r}  \right)\leq -\upsilon  +(N-j_0)^{1/10}  \},
\\
&\left\{ \forall j\in \left[\frac{N}{2},\, N-r \right],\,  -\upsilon +(N-r-j)^{1/10}-\delta\leq dW_j-d W_{N-r} \leq \upsilon +(N-r-j)^{9/10}+\delta   \right\}.
\end{align*}
We know that $-\upsilon -\delta   \leq \frac{3}{4}\log N +d W_{N-r} \leq \upsilon +\delta $ on this intersection (take $j =0$). For $a \in [-\upsilon-\delta, \upsilon+ \delta]$, if we restrict the intersection with the event  $a \leq \frac{3}{4}  \log N  +d  W_{N-r} \leq a + \delta$, what we get is included in the following intersection: 
\begin{align*}
&\left\{\forall j < \frac{N}{2},\,   a -\upsilon +j^{1/10}-\delta \leq d W_j  \leq a + \upsilon +j^{9/10} +  2 \delta \right\},
\\
&\{ d W_{N-j_0} \leq a + \delta -\upsilon  +(N-j_0)^{1/10}  \},
\\
&\left\{ \forall j\in \left[\frac{N}{2},\, N-r \right],\,- \frac{3}{4} \log N + a -\upsilon +(N-r-j)^{1/10}-\delta\leq d W_j,  \right. \\ & \left. d W_j \leq  - \frac{3}{4} \log N + a + \upsilon +(N-r-j)^{9/10}+ 2 \delta, \; 
a- \frac{3}{4} \log N \leq dW_{N-r} \leq 
a+ \delta -   \frac{3}{4} \log N \right\}.
\end{align*}
By doing similarly as in case i), and by using the fact that $|a| \leq \upsilon + \delta$, we get that the probability of this intersection is at most: 
\begin{align*}
&\P\left(  \forall j\in [|0,N-j_0|],\, d W_{j}\geq a - \upsilon - \delta ,\right. \\ & \left.  \qquad a - \upsilon + (N-j_0)^{1/10} - \delta  \leq d W_{N-j_0} \leq  a - \upsilon + (N-j_0)^{1/10} + \delta \right) \times 
\\
& \P\left(  \min_{j < j_0 - (N/2)} d W_j\geq - 2 \delta,\, \min_{ j_0 - (N/2) \leq j \leq  j_0 - r} d W_j \geq -  (N-j_0)^{1/10} -\frac{3}{4}\log  N - 2 \delta,  \right. \\ 
& \left. \, \qquad \qquad  -(N-j_0)^{1/10} -\frac{3}{4}\log  N - \delta   + \upsilon  \leq 
 d W_{j_0-r} \leq  - (N-j_0)^{1/10} -\frac{3}{4}\log  N + 2 \delta   + \upsilon \right)
\\ & \ll_d \left( \frac{\delta(1+ \delta + \upsilon)( 1+ \delta + \upsilon + (N-j_0)^{1/10})}{1 + (N-j_0)^{3/2}} \right) \left( \frac{\delta (1+ \delta) (1+ \delta + \upsilon)}{(j_0-r)^{3/2}} \right) 
\\ &  \qquad \qquad  \ll_{d} \delta^2 (1+ \upsilon)^3 N^{-3/2} (1+ N-j_0)^{-1.4}
\end{align*}
By adding these estimates for $a = - \upsilon - \delta + k \delta$, $0 \leq k \leq \lfloor 2 (\upsilon + \delta)/\delta \rfloor$, we deduce that the probability in \eqref{etoil1} is 
$\ll_{d} \delta(1+ \upsilon)^4 N^{-3/2} (1+ N-j_0)^{-1.4}$. 

Finally, by adding the estimates of all the terms in \eqref{etoil1}, we get 
\begin{align*}
	& \sum_{j_0=r}^{N}  \P\left( G_0v(d,\delta,0),\,  d W_{j_0-r}\geq u_{j_0}^{(N)} \right)   \ll_{d}    \sum_{ r \leq j_0 \leq N/3} \delta (1+ \upsilon)^3 N^{-3/2} (1 + j_0 - r)^{-1.4} \\ & 
	+ \sum_{ N/3 \leq j_0 \leq 2N/3} \delta(1+ \upsilon)^4 N^{-2.6}  + \sum_{ 2N/3 \leq j_0 \leq N}
	\delta (1+ \upsilon)^4 N^{-3/2} (1 + N-j_0)^{-1.4}
	\ll \delta (1+ \upsilon)^4 N^{-3/2}, 
\end{align*}
which proves \eqref{etoil1} (without dependence in $r$ if $N$ is large enough depending on $d$ and $r$). 

Now we shall prove \eqref{etoil2}. By the Markov property, and Corollaries \ref{corollary:barrier_estimate} and 
\ref{corollary:shifted_barrier_estimate}, when $N$ is large enough depending on $r$, and  $r < j_0\leq \frac{N}{3}$, the probability we want to estimate is at most: 
\begin{align*}
	&\P\left( d W_{j_0 - r} \in [ - (j_0-r)^{9/10} - \upsilon - \delta, - (j_0-r)^{9/10} - \upsilon] \right) \\ & \times 
 \P\left(  \max_{ j \leq (N/2)-j_0} d W_j \leq 
(j_0 - r)^{9/10} + 2 \upsilon + 2 \delta, 
\right. \\ & \left.
\max_{(N/2)-j_0 < j \leq N-j_0} d W_j 
\leq  (j_0 - r)^{9/10} + 2 \upsilon + 2 \delta
- \frac{3}{4} \log N, 
dW_{N-j_0}  \geq (j_0 - r)^{9/10} - \delta 
- \frac{3}{4} \log N
\right)
\\ & \ll_d \frac{\delta}{d \sqrt{j_0-r}}e^{- \frac{(j_0 - r)^{1.8}}{2 d^2(j_0-r)}} \, 
\frac{(\upsilon + \delta)(1+ \upsilon + \delta)( 1 + \upsilon + \delta 
+ (j_0 - r)^{9/10})}{(N - j_0)^{3/2}} 
\\ & \ll_{d}(1+ \upsilon)^3 \delta e^{- \frac{(j_0 - r)^{4/5}}{2 d^2}} (j_0 - r)^{9/10} N^{-3/2}.
\end{align*}
When $j_0 \in [\frac{2N}{3}, N]$, we can use the time-reversal property of the Brownian motion. 
We get the probability of the intersection of the following events: 
\begin{align*}
&\left\{\forall j < \frac{N}{2},\,    -\upsilon +j^{1/10}-\delta \leq d W_j -\left(\frac{3}{4}\log N + dW_{N-r} \right) \leq \upsilon +j^{9/10} + \delta \right\},
\\
&\{d W_{N-j_0} - \left(  \frac{3}{4}\log N + dW_{N-r}  \right)\geq \upsilon  +(N-j_0)^{9/10}  \},
\\
&\left\{ \forall j\in \left[\frac{N}{2},\, N-r \right],\,  -\upsilon +(N-r-j)^{1/10}-\delta\leq dW_j-d W_{N-r} \leq \upsilon +(N-r-j)^{9/10}+\delta   \right\}.
\end{align*}
If we restrict this intersection with the event 
$a \leq \frac{3}{4} \log N + d W_{N-r} \leq a + \delta$ for $a \in [-\upsilon-\delta, \upsilon + \delta]$, we get something included in the intersection of: 
\begin{align*}
&\left\{\forall j < \frac{N}{2},\,   a -\upsilon +j^{1/10}-\delta \leq d W_j  \leq a + \upsilon +j^{9/10} +  2 \delta \right\},
\\
&\{ d W_{N-j_0} \geq a  + \upsilon  +(N-j_0)^{9/10}  \},
\\
&\left\{ \forall j\in \left[\frac{N}{2},\, N-r \right],\,- \frac{3}{4} \log N + a -\upsilon +(N-r-j)^{1/10}-\delta\leq d W_j, \right. \\ & \left.  dW_j \leq  - \frac{3}{4} \log N + a + \upsilon +(N-r-j)^{9/10}+ 2 \delta, \; 
a- \frac{3}{4} \log N \leq dW_{N-r} \leq 
a+ \delta -   \frac{3}{4} \log N \right\}.
\end{align*}
The probability of this intersection is at most: 
\begin{align*}
	&\P\left( d W_{N-j_0} \in [  (N-j_0)^{9/10}+ a +  \upsilon , (N-j_0)^{9/10}+ a +  \upsilon + 2 \delta] \right) \\ & \times 
 \P\left(  \min_{ j < j_0 - (N/2)} d W_j \geq 
-(N- j_0)^{9/10} - 2 \upsilon  - 3 \delta, 
\right. \\ & \left.
\max_{j_0 - (N/2) < j \leq j_0-r} d W_j 
\geq  -(N-j_0)^{9/10} -2 \upsilon - 3 \delta
- \frac{3}{4} \log N,  \right.  \\ & 
\left.
\qquad  -(N-j_0)^{9/10} - \upsilon 
- \frac{3}{4} \log N - 2 \delta \leq  dW_{j_0 - r}  \leq -(N-j_0)^{9/10} - \upsilon 
- \frac{3}{4} \log N + \delta 
\right).
\end{align*}
If $2N/3 \leq j_0 \leq N-1$, we get a quantity dominated by 
\begin{align*}
& \frac{\delta}{d \sqrt{N-j_0}}e^{- \frac{((N - j_0)^{9/10} - \delta)_+^2 }{2 d^2(N-j_0)}} \, 
\frac{\delta(1+ \upsilon + \delta)( 1 + \upsilon + \delta 
+ (N - j_0)^{9/10})}{(j_0 - r)^{3/2}} 
\\ & \ll_{d} \delta^2 (1+ \upsilon)^2 e^{- \frac{(N-j_0)^{4/5}}{8 d^2}} (N-j_0)^{9/10} N^{-3/2}.
\end{align*}
If $j_0 = N$, we get a probability equal to zero for 
$a \in (- \upsilon, \upsilon + \delta]$ (since $W_0 = 0$) and
dominated by 
\begin{align*} 
\frac{\delta(1+ \upsilon + \delta)( 1 + \upsilon + \delta)}{(N-r)^{3/2}} \ll_{d} \delta (1+ \upsilon)^2 N^{-3/2}
\end{align*}
for $a \in [-\upsilon - \delta, - \upsilon]$. 
Hence, by adding the estimates for $a = - \upsilon - \delta +  k \delta$, $0 \leq k \leq \lfloor 2 (\upsilon + \delta)/\delta \rfloor$, 
we get, for all $j_0 \in [2N/3, N]$, a probability which is 
$$\ll_{d} \delta (1+ \upsilon)^3 e^{- \frac{(N-j_0)^{4/5}}{8 d^2}} (1 + N-j_0)^{9/10} N^{-3/2}.$$
Finally when $j_0\in [\frac{N}{3}, \frac{2N}{3}]$ we simply observe that for $N \geq 12 r$, 
which implies that $l_{j_0}^{(N)} \leq - (N/3 - r)^{9/10} 
\leq - (N/4)^{9/10}$, 
\begin{align*}
&	\P\left(d W_{j_0-r}\leq l_{j_0}^{(N)},\,   G_0v(d,\delta,0) \right)  \leq \P 
	\left(l_{j_0}^N   - \delta  \leq d W_{j_0 - r} \leq l_{j_0}^{N} 
	\right)
	\\ & \ll \frac{\delta}{d \sqrt{j_0 - r}} e^{- \frac{(N/4)^{1.8}}{2(j_0 - r) d^2}} \ll_d \delta e^{- \frac{(N/4)^{1.8}}{2N d^2} }
	\leq \delta e^{-\frac{N^{4/5}}{25 d^2}}. 
\end{align*}
Adding the previous estimates, we deduces that for $N$ large enough depending only on $r$,  the sum in \eqref{etoil2} is 
\begin{align*}
 & \ll_{d} 
\sum_{r < j_0 \leq N/3} \delta (1+ \upsilon)^3 e^{- \frac{(j_0-r)^{4/5}}{2 d^2}}(j_0 - r)^{9/10} N^{-3/2} + \sum_{N/3 \leq j_0 \leq 2N/3}\delta  e^{-\frac{N^{4/5}}{25 d^2}}
\\ &  \qquad + \sum_{2N/3 \leq j_0 \leq N}
\delta (1+ \upsilon)^3 e^{- \frac{(N-j_0)^{4/5}}{8 d^2}} (1 + N -j_0)^{9/10} N^{-3/2} \ll_{d,\upsilon} \delta (1+ \upsilon)^3 N^{-3/2},
\end{align*}
which concludes the proof of \eqref{etoil2}, and thus the proof of the first limit in \eqref{ProClas0} and  \eqref{ProClas1}.

\paragraph{}It remains to prove the second limit in \eqref{ProClas1}. By using the first limit, it suffices to prove that for any 
 $N\geq r$, we have
\begin{align*}
	\P\left(  G_0v(d,0,0)   \right)(1 - \theta_1)\leq \P\left(   G_0v(d,\delta ,E)   \right) \leq\P\left(   G_0v(d, \delta + \theta_2,0) \right)(1+ \theta_3),
\end{align*}
where $\theta_1, \theta_2, \theta_3 \geq 0$ can depend on $\upsilon, d, \delta, ||E||_1$ but not on $r$ and $N$, as soon as $N$ is large enough depending on 
$d$ and $r$, and tend to zero for $\upsilon, d$ fixed and $\delta, ||E||_1 \rightarrow 0$. 

We first prove the right-hand side inequality. Let us first assume that $e_j\geq0$ for all $j \geq 1$. Let $\widetilde{W}$ be a standard Brownian motion, independent of $W$. Observe that $(W_{j+E_j})_{j\leq N} \eqlaw \left( W_j + \widetilde{W}_{E_j}  \right)_{j \leq N}$. It follows that for any $\delta, \theta_2 >0$, 
\begin{align}
\P\left( G_0v(d,\delta,E)\right) & = \P\left( \forall j\in [r,N],\, l_j^{(N)} -\delta \leq  d W_{j-r} + d \widetilde{W}_{E_{j-r}}  \leq u_j^{(N)} +\delta \right) \nonumber \\
&\leq  \sum_{k = 1}^{\infty}  \P\left(  G_0v(d,\delta + k \theta_2,0) \right) \P\left( \sup_{j\geq 0} d|\widetilde{W}_{E_j}| \geq \theta_2 (k-1)\right)
\nonumber \\
&\leq  \P\left( G_0v(d,\delta+ \theta_2,0)  \right) + 
\mathcal{O}_{\upsilon,d} \left(  N^{-\frac{3}{2}} \sum_{k\geq 1} (1+(\delta +\theta_2) k)^3 e^{-\frac{(\theta_2 k)^2}{2 d^2 ||E||_1}} \right), \label{Gov}
\end{align}
where in the last inequality we used Corollaries \ref{corollary:barrier_estimate} and 
\ref{corollary:shifted_barrier_estimate} to bound $\P\left(  G_0v(1,\delta+ k \theta_2,0) \right) $: 
note that in these corollaries, we use the fact that $N$ is large enough depending on $r$, since 
the length of the two parts of the trajectory are $ \lfloor N/2 \rfloor - r$ and $N - \lfloor N/2 \rfloor$. 
If we take $\theta_2 = ||E||_1^{1/3}$, we deduce 
$$\P\left( G_0v(d,\delta,E)\right)
\leq \P\left( G_0v(d,\delta+ \theta_2,0)  \right) ( 1+ \theta_3),$$
where
$$\theta_3 \ll_{\upsilon,d}
N^{-3/2} \left[ \P\left( G_0v(d,\delta+ \theta_2,0)  \right) \right]^{-1} \sum_{k \geq 1}(1+(\delta +||E||_1^{1/3}) k)^3 e^{-\frac{k^2}{2 d^2 ||E||_1^{1/3}}}.$$
By dominated convergence, the last sum tends to zero when $\delta, ||E||_1$ go to zero, whereas  by \eqref{eq:barrier_estimate},
 
$$ N^{-3/2} \left[ \P\left( G_0v(d,\delta+ \theta_2,0)  \right) \right]^{-1}
\leq  N^{-3/2} \left[ \P\left( G_0v(d,0,0)  \right) \right]^{-1} \ll_{\upsilon, d} 1.$$
Hence, we can majorize $\theta_3$ by a quantity depending only on  $\upsilon, d, \delta, ||E||_1$ and
tending to zero with $\delta$ and $||E||_1$. Now, let us extend the majorization 
to the general case, for which $e_j$ can be negative. 
As for Lemma \ref{lemma:shifted_barrier_estimate}, we  introduce $|E|_j:= \sum_{k=1}^{j}|e_k|$. 
We know that 
$(W_{j+E_j+|E|_j})_{j \geq 0}$ has the same law as $(W_{j+E_j} + \widetilde{W}_{|E|_j})_{j \geq 0}$. Hence, 
\begin{align*}\P (G_0v(d,\delta + ||E||_1^{1/3} ,E + |E|) ) & \geq \P( G_0v(d,\delta ,E))
\P \left( \sup_{j \geq 0}  d|\widetilde{W}_{|E|_j}| \leq ||E||_1^{1/3} \right),
\end{align*}
and then 
$$ \P( G_0v(d,\delta ,E)) 
\leq \P (G_0v(d,\delta + ||E||_1^{1/3} ,E + |E|) ) (1+ \theta_4),$$
where 
$$\theta_4 := \frac{\P \left( \sup_{s \in [0,1]}|W_s|
\geq d^{-1}||E||_1^{-1/6} \right)} 
{\P \left( \sup_{s \in [0,1]}|W_s|
\leq d^{-1}||E||_1^{-1/6} \right)}.$$
depends only on $d$ and $||E||_1$ and tends to zero with $||E||_1$. 
We then deduce the majorization we want from the case where $\delta$ and $E$ are replaced by $\delta + ||E||_1^{1/3}$ and $E + |E|$. 


It remains to prove the left-hand side inequality, which is deduced from the case $\delta = 0$. By taking $\theta_1 = 1$ for 
$||E||_1 \geq ((1\wedge\upsilon)/4)^3$, we can assume 
$||E||_1 < ((1\wedge\upsilon)/4)^3$. The inequality we want to prove can be rewritten as follows: 
$$\P (G_0v (d,0,0) )
(1-\theta_1) \leq \P (G_0v_{||E||^{1/3}_1}(d,||E||^{1/3}_1,E) ),$$
$G_0v_{\delta}(d,\delta',E)$  denotes the event obtained from $G_0v(d,\delta',E)$ by changing the implicit value of $\upsilon$ to $\upsilon - \delta$. We first assume that $e_j \geq 0$ for all $j \geq 1$. In this case, let us prove the 
 slightly stronger estimate: 
$$\P (G_0v (d,0,0) )(1-\theta_1)
\leq  \P (G_0v_{2 ||E||^{1/3}_1}(d,||E||^{1/3}_1,E) ) = \P (G_0v_{||E||^{1/3}_1}(d,0,E) ).$$
We have
\begin{align*}
\P (G_0v_{2||E||^{1/3}_1}(d,||E||^{1/3}_1,E) )
& \geq  \P (G_0v_{2||E||^{1/3}_1}(d,0,0) )
\P \left( \sup_{j \geq 0} d |\widetilde{W}_{E_j}| \leq ||E||_1^{1/3} \right)
\\ & \geq \P (G_0v_{2||E||^{1/3}_1}(d,0,0) \, \P \left( \sup_{s \in [0,1]}|W_s| \leq d^{-1} ||E||_1^{-1/6} \right), 
\end{align*}
which shows that it is sufficient to prove an equality of the form
$$\P (G_0v_{2||E||^{1/3}_1}(d,0,0))
\geq \P (G_0v(d,0,0)) (1 - \theta_1)
= \P (G_0v_{2||E||^{1/3}_1}(d,2||E||_1^{1/3},0))(1-\theta_1).$$
From the equations \eqref{etoil1} and \eqref{etoil2}, valid since $2||E||^{1/3}_1 \leq 1/2$,  we get for $N$ large 
enough depending on $d$ and $r$, 
\begin{align*}\P (G_0v_{2||E||^{1/3}_1}(d,2||E||_1^{1/3},0)) & = \P (G_0v_{2||E||^{1/3}_1}(d,0,0)) + \mathcal{O}_{d} (||E||_1^{1/3} N^{-3/2} (1+ \upsilon  - 2||E||^{1/3})^4)
\\ & =  \P (G_0v_{2||E||^{1/3}_1}(d,0,0))
+ \mathcal{O}_{d,\upsilon} (||E||_1^{1/3} N^{-3/2}).
\end{align*}
Note that in this estimate, we use the control of the dependence in $\upsilon$  in \eqref{etoil1} and \eqref{etoil2}.  On the other hand, since 
$2||E||_1^{1/3} \leq \upsilon/2$,  we have by 
\eqref{eq:barrier_estimate} (with $\upsilon$ changed to $\upsilon/2$),  
$$\P (G_0v_{2||E||^{1/3}_1}(d,0,0))
\geq \P (G_0v_{\upsilon/2}(d,0,0)) \gg_{d,\upsilon} N^{-3/2},$$
and then 
$$\P (G_0v_{2||E||^{1/3}_1}(d,2||E||_1^{1/3},0)) \leq \P (G_0v_{2||E||^{1/3}_1}(d,0,0)) ( 1 + \mathcal{O}_{d, \upsilon} 
(||E||_1^{1/3})),$$
which gives the desired bound. 

Finally, we remove the assumption that $e_j \geq 0$ for all $j \geq 1$. In this case, we can take $\theta_1 = 1$ for  $3||E||_1 \leq   ((1 \wedge \upsilon)/4)^3$ and then assume that 
$3||E||_1 <   ((1 \wedge \upsilon)/4)^3$.
We have
\begin{align*} 
&\P (G_0v_{||E||^{1/3}} (d, 0,
 E + 2 |E|))  \\ &  \leq 
 \sum_{k=1}^{\infty}
 \P (G_0v_{||E||_1^{1/3}} (d, k ||E||_1^{1/3}, E))  \, \P \left( \sup_{j \geq 0} 
  d |\widetilde{W}_{2 |E|_j}| \geq ||E||_1^{1/3} (k-1)  \right).
 \end{align*}
 Moreover, the computation given in \eqref{Gov}
 implies 
\begin{align*} \P (G_0v_{||E||_1^{1/3}} & (d, k ||E||_1^{1/3}, E))
 =  \P (G_0v (d, (k-1) ||E||_1^{1/3}, E)) \\ & \leq  \P (G_0v (d, k ||E||_1^{1/3}, 0) + \mathcal{O}_{\upsilon,d} 
\left( N^{-3/2} \sum_{\ell = 1}^{\infty} 
(1 + k \ell ||E||^{1/3}_1)^{3} e^{- \frac{\ell^2}{2 d^2 ||E||_1^{1/3}}} \right).
\end{align*}
By using Corollaries \ref{corollary:barrier_estimate} and \ref{corollary:shifted_barrier_estimate}, we get 
(for $N$ large enough depending on $r$): 
$$ \P (G_0v (d, k ||E||_1^{1/3}, 0) )
\ll_{d, \upsilon} N^{-3/2} ( 1+ k ||E||_1^{1/3})^3,
$$
and then, since we assume $||E||_1 < 1/192$, 
 $$\P (G_0v_{||E||_1^{1/3}}  (d, k ||E||_1^{1/3}, E)) \ll_{d, \upsilon}
 N^{-3/2} (1+ k^3),$$
which gives 
\begin{align*} 
&\P (G_0v_{||E||^{1/3}} (d, 0,
 E + 2 |E|))  \\ &  \leq 
 \P (G_0v_{||E||_1^{1/3}} (d, ||E||_1^{1/3}, E))
 + \mathcal{O}_{d, \upsilon} \left(
 \sum_{k=2}^{\infty}  N^{-3/2}(1+ k^3)
 e^{- \frac{(k-1)^2}{4 d^2 ||E||_1^{1/3}}} \right).
 \end{align*}
On the other hand, since 
$||E||_1 \leq ||E + 2 |E| \, ||_1 \leq 3 ||E||_1 
\leq [(1 \wedge \upsilon)/4]^3$, we deduce from the 
particular case $e_j \geq 0$ previously studied (indeed, $e_j + 2 |e_j| \geq 0$):
$$\P (G_0v_{||E||^{1/3}} (d, 0,
 E + 2 |E|)) \geq
 \P (G_0v_{||E + 2|E|\,||^{1/3}} (d, 0,
 E + 2 |E|)) \geq \P(G_0v (d, 0, 0)) (1 - \theta_1),
  $$
where $\theta_1$ satisfies the same conditions as above. 

We then get 
\begin{align*} & \P(G_0v (d, 0, 0)) (1 - \theta_1)
\\ & \leq \P(G_0v (d, 0, E)) + 
\mathcal{O}_{d, \upsilon} \left(
 \sum_{k=2}^{\infty}  N^{-3/2}(1+ k^3)
 e^{- \frac{(k-1)^2}{4 d^2 ||E||_1^{1/3}}} \right).
 \end{align*}
and then 
$$\P(G_0v (d, 0, 0)) (1 - \theta_1 - \theta_4)
\leq \P(G_0v (d, 0, E)),$$ 
where 
$$\theta_4 \ll_{d, \upsilon} 
[\P(G_0v (d, 0, 0))]^{-1} 
 \sum_{k=2}^{\infty}  N^{-3/2}(1+ k^3)
 e^{- \frac{(k-1)^2}{4 d^2 ||E||_1^{1/3}}}$$
 tends to zero with $||E||_1$ by dominated convergence and \eqref{eq:barrier_estimate}. 
 This gives the desired bound.

\end{proof}

\bibliographystyle{halpha}
\bibliography{Bib_Max_CBE}

\end{document}